\documentclass[12pt]{article}

\pdfpagewidth 8.5in
\pdfpageheight 11in
\topmargin -1.3in
\headheight 0in
\headsep 0in
\textheight 8.5in
\textwidth 6.5in
\oddsidemargin 0in
\evensidemargin 0in
\headheight 77pt
\headsep 0in
\footskip .75in
\usepackage{dsfont,amsmath,amsfonts,amsthm,amssymb,cite}
\usepackage[T1]{fontenc}
\usepackage[toc,page]{appendix} 
\usepackage{graphicx}
\usepackage{graphics}
\usepackage{stmaryrd}
\usepackage{verbatim}
\usepackage{hyperref}
\title{
         \huge{General Fragmentation Trees}}
\author{Robin Stephenson\footnote{CEREMADE, Université Paris-Dauphine}}
\begin{document}

\newtheorem{theo}{Theorem}[section]
\newtheorem{lemma}{Lemma}[section]
\newtheorem{prop}{Proposition}[section]
\newtheorem{cor}{Corollary}[section]
\newtheorem{defi}{Definition}[section]
\newtheorem{rem}{Remark}
\numberwithin{equation}{section}
\newcommand{\e}{{\mathrm e}}
\newcommand{\rep}{{\mathrm{rep}}}
\newcommand{\R}{{\mathbb{R}}}
\newcommand{\T}{\mathcal{T}}
\newcommand{\TT}{\mathbb{T}}
\newcommand{\s}{\mathcal{S}^{\downarrow}}
\newcommand{\p}{\mathcal{P}}
\newcommand{\N}{\mathbb{N}}
\newcommand{\D}{\mathcal{D}}
\newcommand{\TREE}{\mathrm{TREE}}
\maketitle

\begin{abstract}
We show that the genealogy of any self-similar fragmentation process can be encoded in a compact measured $\R$-tree. Under some Malthusian hypotheses, we compute the fractal Hausdorff dimension of this tree through the use of a natural measure on the set of its leaves. This generalizes previous work of Haas and Miermont which was restricted to conservative fragmentation processes.
\end{abstract}

\section{Introduction}
In this work, we study a family of trees derived from self-similar fragmentation processes. Such processes describe the evolution of an object which constantly breaks down into smaller fragments, each one then evolving independently from one another, just as the initial object would, but with a rescaling of time by the size of the fragment to a certain power called the index of self-similarity. This breaking down happens in two ways: erosion, a process by which part of the object is continuously being shaved off and thrown away, and actual splittings of fragments which are governed by a Poisson point process. Erosion is parametered by a nonnegative number $c$ called the erosion rate, while the splitting Poisson point process depends on a dislocation measure $\nu$ on the space \[\s=\{\mathbf{s}=(s_i)_{i\in\N}: s_1\geq s_2\geq\ldots\geq0,\sum s_i\leq 1\}.\] Precise definitions can be found in the main body of the article.

Our main inspiration is the 2004 article of Bénédicte Haas and Grégory Miermont \cite{HM04}. Their work focused on \emph{conservative} fragmentations, where there is no erosion and splittings of fragments do not change the total mass. They have shown that, when the index of self-similarity is negative, the genealogy of a conservative fragmentation process can be encoded in a continuum random tree, the genealogy tree of the fragmentation, which is compact and naturally equipped with a probability measure on the set of its leaves. Our main goal here will be to generalize the results they have obtained to the largest possible class of fragmentation processes: the conservation hypothesis will be discarded, though the index of self-similarity will be kept negative. We will show (Theorem \ref{11}) that we can still define some kind of fragmentation tree, but its natural measure will not be supported by the leaves, and we thus step out of the classical continuum random tree context set in \cite{A3}.

That the measure of a general fragmentation tree gives mass to its skeleton will be a major issue in this paper, and its study will therefore involve creating a new measure on the leaves of the tree. To do this we will restrict ourselves to \emph{Malthusian} fragmentations. Informally, for a fragmentation process to be Malthusian means that there is a number $p^*\in(0,1]$ such that, infinitesimally, calling $(X_i(t))_{i\in \N}$ the sizes of the fragments of the process at time $t$, the expectation of $\sum_{i\in\N} X_i(t)^{p^*}$ is constant. This allows us to use martingale methods and define a Malthusian measure $\mu^*$ on the leaves of the tree. The use of this measure then lets us obtain the fractal Hausdorff dimension of the set of leaves of the fragmentation tree, under a light regularity condition, which we will call "assumption $\mathbf{(H)}$":

\medskip
\begin{quote} The function $\psi$ defined on $\R$ by $\psi(p)= cp + \int_{\s} (1-\sum_i s_i^{p})d\nu(\mathbf{s}) \in[-\infty,+\infty)$ takes at least one finite strictly negative value on the interval $[0,1]$.\end{quote}
\begin{theo}\label{00} Assume $\mathbf{(H)}$. Then, almost surely, if the set of leaves of the fragmentation tree derived from an $\alpha$-self-similar fragmentation process with erosion rate $c$ and dislocation measure $\nu$ is not countable, its Hausdorff dimension is equal to $\frac{p^*}{|\alpha|}.$
\end{theo}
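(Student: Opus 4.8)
The plan is to establish the two matching bounds $\dim_{H}\mathcal{L}\le p^{*}/|\alpha|$ (which I will prove on the whole probability space) and $\dim_{H}\mathcal{L}\ge p^{*}/|\alpha|$ on the event that $\mathcal{L}$ is uncountable, where $\mathcal{L}$ denotes the set of leaves of $\T$. The two ingredients used throughout are the compactness and self-similarity of $\T$ (Theorem~\ref{11}) and the additive martingale $M_{t}=\sum_{i}X_{i}(t)^{p^{*}}$, which has constant expectation because $\psi(p^{*})=0$, its almost sure limit $W=\mu^{*}(\mathcal{L})$, and the branching property. By the construction of $\mu^{*}$ and the non-degeneracy of $W$, the event $\{\mathcal{L}\text{ uncountable}\}$ coincides up to a null set with $\{W>0\}$; on $\{W=0\}$ the set $\mathcal{L}$ is countable so $\dim_{H}\mathcal{L}=0$ and there is nothing to prove, and it is on $\{W>0\}$, where $\mu^{*}$ is a non-zero measure carried by $\mathcal{L}$, that the lower bound must be argued.

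For the upper bound, fix $s>p^{*}/|\alpha|$, write $|\alpha|s=p^{*}+\eta$ with $\eta>0$, pick $\beta\in(0,\eta/p^{*})$, and for $\delta\in(0,1)$ let $\mathcal{C}_{\delta}$ be the stopping line consisting of the first fragment along each ancestral line whose size drops below $\delta$; the subtrees $(\T_{B})_{B\in\mathcal{C}_{\delta}}$ cover $\mathcal{L}$. By self-similarity $\mathrm{diam}(\T_{B})=|B|^{|\alpha|}D_{B}$ with $(D_{B})$ i.i.d.\ copies of the (a.s.\ finite, and — as I expect was established together with compactness — light-tailed) diameter of the size-one tree, independent of $\mathcal{C}_{\delta}$, so
\[
\sum_{B\in\mathcal{C}_{\delta}}\mathrm{diam}(\T_{B})^{s}\ \le\ 2^{s}\sum_{B\in\mathcal{C}_{\delta}}|B|^{|\alpha|s}D_{B}^{s},
\]
which I split according to whether $|B|>\delta^{1+\beta}$ or $|B|\le\delta^{1+\beta}$. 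Optional stopping for $M$ along $\mathcal{C}_{\delta}$ gives $\mathbb{E}\bigl[\sum_{B\in\mathcal{C}_{\delta}}|B|^{p^{*}}\bigr]=1$, and since $\delta\mapsto\sum_{B\in\mathcal{C}_{\delta}}|B|^{p^{*}}$ is a martingale in $\delta$ converging almost surely as $\delta\downarrow0$, this sum is a.s.\ bounded uniformly in $\delta$; hence there are at most $O(\delta^{-(1+\beta)p^{*}})$ fragments $B\in\mathcal{C}_{\delta}$ with $|B|>\delta^{1+\beta}$, and the corresponding part of the sum is at most $\delta^{\eta-\beta p^{*}}$ times a polylogarithmic factor (the maximum of $D_{B}$ over those fragments), which tends to $0$. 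For the remaining fragments, $\sum|B|^{|\alpha|s}D_{B}^{s}\le\delta^{(1+\beta)\eta}\sum_{B\in\mathcal{C}_{\delta}}|B|^{p^{*}}D_{B}^{s}$, and since the conditional expectation of the last sum given $\mathcal{C}_{\delta}$ is $\mathbb{E}[D^{s}]\sum_{B}|B|^{p^{*}}$, a Borel--Cantelli argument along $\delta=2^{-n}$ makes this part tend to $0$ as well. Thus $\mathcal{H}^{s}(\mathcal{L})=0$ for every $s>p^{*}/|\alpha|$, i.e.\ $\dim_{H}\mathcal{L}\le p^{*}/|\alpha|$ almost surely.

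For the lower bound, work on $\{W>0\}$; by the mass distribution principle it suffices to show $\liminf_{r\downarrow0}\frac{\log\mu^{*}(B(x,r))}{\log r}\ge p^{*}/|\alpha|$ for $\mu^{*}$-almost every $x$. Fix such a leaf $x$, with ancestral fragment-size process $(\xi(t))_{0\le t\le h_{x}}$ and death time $h_{x}$. Since any leaf at tree-distance less than $r$ from $x$ branches off the ancestral line of $x$ above height $h_{x}-r$, the ball $B(x,r)$ is contained in the subtree hanging from that line at time $h_{x}-r$, whose $\mu^{*}$-mass, by the self-similar branching structure of $\mu^{*}$, equals $\xi(h_{x}-r)^{p^{*}}\widehat{W}_{r}$ with $\widehat{W}_{r}$ distributed as $W$. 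Under the spinal decomposition of $\mu^{*}$ the process $(\xi(t))$ is a $p^{*}$-tilted $\alpha$-self-similar Markov process, hence through the Lamperti transform a time change of a tilted subordinator, and $r=\xi(h_{x}-r)^{|\alpha|}\widetilde{I}_{r}$, where by the Markov property and self-similarity $\widetilde{I}_{r}$ is distributed as the death time of a size-one tilted tagged fragment. Controlling the upper and lower tails of $\widetilde{I}_{r}$ and of $\widehat{W}_{r}$ — both have finite mean, and $\xi(h_{x}-r)\to0$, so Borel--Cantelli along $r=2^{-n}$ applies — one obtains $\log\xi(h_{x}-r)\sim\frac{1}{|\alpha|}\log r$ and $\log\widehat{W}_{r}=o(\log r)$; since $\mu^{*}(B(x,r))\le\xi(h_{x}-r)^{p^{*}}\widehat{W}_{r}$ and $\log r<0$,
\[
\liminf_{r\downarrow0}\frac{\log\mu^{*}(B(x,r))}{\log r}\ \ge\ \liminf_{r\downarrow0}\Bigl(p^{*}\,\frac{\log\xi(h_{x}-r)}{\log r}+\frac{\log\widehat{W}_{r}}{\log r}\Bigr)\ =\ \frac{p^{*}}{|\alpha|}.
\]
Applying the mass distribution principle with $s=p^{*}/|\alpha|-1/k$ and letting $k\to\infty$ yields $\dim_{H}\mathcal{L}\ge p^{*}/|\alpha|$ on $\{W>0\}$, completing the proof.

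I expect the genuine difficulty to be the near-death asymptotics of the tagged fragment, namely proving $\log\xi(h_{x}-r)\sim\frac{1}{|\alpha|}\log r$ with enough uniformity in $r$: this is exactly where the non-conservative features — erosion $c$, loss of mass to dust, and $p^{*}<1$ — enter, through the Esscher-tilted Lamperti subordinator and the upper and lower tails of the fragment's death time. By comparison, the covering bound and the optional-stopping identity for $M$ along stopping lines should be routine once the required tail estimates on $W$ and on the diameter of the tree are in hand.
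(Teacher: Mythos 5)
Your upper bound is essentially sound and close in spirit to the paper's own argument (the paper also stops each line of descent when its mass first drops below a threshold, covers the leaves by the hanging pieces, and concludes from a single moment computation using size-biasing of the stopped partition together with exponential moments of the extinction time; your splitting at $\delta^{1+\beta}$ plus Borel--Cantelli is a workable, if slightly heavier, variant, and the optional-stopping identity $E[\sum_{B\in\mathcal{C}_\delta}|B|^{p^*}]=1$ can indeed be justified). The genuine gap is in the lower bound: everything you do there presupposes that under $\mathbf{(H)}$ alone the additive martingale limit $W$ is non-degenerate, i.e.\ $E[W]=1$, that $\{W>0\}$ coincides with non-extinction (equivalently with uncountably many leaves), and that $\mu^*$ is a non-zero measure carried by the leaves. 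None of this follows from $\mathbf{(H)}$. The construction of $\mu^*$ in the paper requires $E[W]=1$, which is obtained from the $L^q$-convergence of the martingale under the extra integrability hypothesis $\mathbf{(M_q)}$; without a Kesten--Stigum/$X\log X$-type condition the martingale at the Malthusian exponent may very well converge to $W=0$ almost surely even though the fragmentation survives, and then your mass-distribution argument has no mass to distribute. The same problem recurs in your tail estimates: the negative moments of the tilted exponential functional need $\int_{\s}\sum_i|\log s_i|s_i^{p^*}d\nu<\infty$ (Lemma \ref{25}), and controlling the size-biased copies of $W$ hanging off the spine needs moments of $W$ beyond $L^1$; neither is implied by $\mathbf{(H)}$.

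This is exactly why the paper does not run the lower bound on $(c,\nu)$ directly. It first proves a Frostman/energy estimate under the tilted law (two marked leaves, spine decomposition), valid when $E[W]=1$ and giving only $\dim_{\mathcal{H}}\geq\frac{A\wedge(|\alpha|+\underline{p})}{|\alpha|}$, and then introduces the reduced measures $\nu_{N,\epsilon}$ (keep at most $N$ fragments, kill small dislocations), for which $\mathbf{(M_q)}$ holds for every $q>1$, $A_{N,\epsilon}=p^*_{N,\epsilon}$, $\underline{p}_{N,\epsilon}\geq p^*_{N,\epsilon}$, and $p^*_{N,\epsilon}\uparrow p^*$; the trees $\T_{N,\epsilon}$ embed in $\T$, and a Galton--Watson comparison shows that on non-extinction of $\Pi$ some $\Pi^{N,\epsilon}$ survives as well, which is what transfers the bound $p^*_{N,\epsilon}/|\alpha|$ to $\T$ and lets $N\to\infty$, $\epsilon\to0$. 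Your proposal contains no substitute for this approximation step, and also misidentifies where the real difficulty lies: it is not the near-death asymptotics of the tagged fragment but the possible degeneracy of $W$ and the missing integrability under $\mathbf{(H)}$ alone. If you add the truncation scheme (or restrict the theorem to pairs $(c,\nu)$ satisfying $E[W]=1$ plus the auxiliary moment conditions), your local-density route for the lower bound becomes a plausible alternative to the paper's energy method, but as written the proof of the lower bound is incomplete.
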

In \cite{HM04}, a dimension of $\frac{1}{|\alpha|}$ was found for conservative fragmentation trees, also under a regularity condition. We can see that non-conservation of mass makes the tree smaller in the sense of dimension. Note as well that the event where the leaves of the tree are countable only has positive probability if $\nu(0,0,\ldots,0)>0$, that is, if a fragment can suddenly disappear without giving any offspring.

\bigskip

\noindent\emph{Note:} in this paper, we use the convention that, when we take $0$ to a nonpositive power, the result is $0$. We therefore abuse notation slightly by omitting an indicator function such as $\mathbf{1}_{x\neq0}$ most of the time. In particular, sums such as $\sum_{i\in\N} x_i^p$ are implicitly taken on the set of $i$ such that $x_i\neq0$ even when $p\leq 0$.

\section{Background, preliminaries and some notation}
\subsection{Self-similar fragmentation processes}
\subsubsection{Partitions}

We are going to look at two different kinds of partitions. The first ones are \emph{mass partitions}. These are nonincreasing sequences $\mathbf{s}=(s_1,s_2,\ldots)$ with $s_i\geq0$ for every $i$ and such that $\sum_i s_i\leq 1$. These are to be considered as if a particle of mass $1$ had split up into smaller particles, some of its mass having turned into \emph{dust} which is represented by $s_0=1-\sum_i s_i$. We call $\s$ the set of mass partitions, it can be metrized with the restriction of the uniform norm and is then compact.

The more important partitions we will consider here are the set-theoretic partitions of finite and countable sets. For such a set $S$, we let $\p_S$ be the set of partitions of $S$. The main examples are of course the cases of partitions of $\N=\{1,2,3,\ldots\}$ (for countable sets) and, for $n\in\N$, $[n]=\{1,2,\ldots,n\}$. Let us focus here on $\p_{\N}$. A partition $\pi\in\p_{\N}$ will be written as a countable sequence of subsets of $\N$, called the blocks of the partition: $\pi=(\pi_1,\pi_2,\ldots)$ where every intersection between two different blocks is empty and the union of all the blocks is $\N$. The blocks are ordered by increasing smallest element: $\pi_1$ is the block containing $1$, $\pi_2$ is the block containing the smallest integer not in $\pi_1$, and so on. If $\pi$ has finitely many blocks, we complete the sequence with an infinite repeat of the empty set. (When not referring to a specific partition, the word "block" simply means "subset of $\N$".)

A partition can also be interpreted as an equivalence relation on $\N$: for a partition $\pi$ and two integers $i$ and $j$, we will write $i\sim_{\pi}j$ if $i$ and $j$ are in the same block of $\pi$.
We will also call $\pi_{(i)}$ the block of $\pi$ containing $i$.

We now have two ways to identify the blocks of a partition $\pi$: either with their rank in the partition's order or with their smallest element. Most of the time one will be more useful than the other, but sometimes we will want to mix both, which is why we will call $\rep(\pi)$ the set of smallest elements of blocks of $\pi$.

Let $B$ be a block. For all $\pi \in \p_{\N}$, we let $\pi\cap B$ be the restriction of $\pi$ to $B$, i.e. the partition of $B$ whose blocks are, up to reordering, the $(\pi_i \cap B)_{i\in\N}$.

We say that a partition $\pi$ is \emph{finer} than another partition $\psi$ if every block of $\pi$ is a subset of a block of $\psi$. This defines a partial order on the set of partitions.

Intersection and union operators can be defined on partitions: let $X$ be a set and, for $x\in X$, $\Pi_x$ be a partition. Then we define $\underset{x\in X}\cap \Pi_x$ to be the unique partition $\Psi$ such that, $\forall i,j\in\N, i\sim_{\Psi}j \Leftrightarrow \forall x\in X, i\sim_{\Pi_x}j$. The blocks of $\underset{x\in X}\cap \Pi_x$ are the intersections of blocks of the $(\Pi_x)_{x\in X}$. Similarly, assuming that all the $(\Pi_x)_{x\in X}$ are comparable, then we define $\underset{x\in X}\cup \Pi_x$ to be the unique partition $\Psi$ such that, $\forall i,j\in\N, i\sim_{\Psi}j \Leftrightarrow \exists x\in X, i\sim_{\Pi_x}j$.

We endow $\p_{\N}$ with a metric: for two partitions $\pi$ and $\pi'$, let $n(\pi,\pi')$ be the highest integer $n$ such that $\pi\cap[n]$ and $\pi'\cap[n]$ are equal ($n(\pi,\pi')=\infty$ if $\pi=\pi'$) and let $d(\pi,\pi')=2^{-n(\pi,\pi')}$. This defines a distance function on $\p_{\N}$, which in fact satisfies the ultra-metric triangle inequality. This metric provides a topology on $\p_{\N}$, for which convergence is simply characterized: a sequence $(\pi_n)_{n\in\N}$ of partitions converges to a partition $\pi$ if, and only if, for every $k$, there exists $n_k$ such that $\pi_{n}\cap[k]=\pi\cap [k]$ for $n$ larger than $n_k$. The metric also provides $\p_{\N}$ with a Borel $\sigma$-field, which is easily checked to be the $\sigma$-field generated by the restriction maps, i.e. the functions which which map $\pi$ to $\pi\cap[n]$ for all integers $n$.

Let $S$ and $S'$ be two sets with a bijection $f: S \to S'$. Then we can easily transform partitions of $S'$ into partitions of $S$: let $\pi$ be a partition of $S'$, we let $f\pi$ be the partition defined by: $\forall i,j\in S, i\sim_{f\pi}j \Leftrightarrow f(i)\sim_{\pi}f(j)$. This can be used to generalize the metric $d$ to $\p_S$ for infinite $S$ (note that the notion of convergence does not depend on the chosen bijection), and then $\pi \mapsto f\pi$ is easily seen to be continuous.

Special attention is given to the case where $f$ is a permutation: we call \emph{permutation} of $\N$ any bijection $\sigma$ of $\N$ onto itself. A $\p_{\N}$-valued random variable (or random partition) $\Pi$ is said to be \emph{exchangeable} if, for all permutations $\sigma$, $\sigma\Pi$ has the same law as $\Pi$.

Let $B$ be a block. If the limit $\lim_{n\to\infty} \frac{1}{n}\#(B\cap[n])$ exists then we write it $|B|$ and call it the \emph{asymptotic frequency} or more simply \emph{mass} of $B$. If all the blocks of a partition $\pi$ have asymptotic frequencies, then we call $|\pi|^{\downarrow}$ their sequence in decreasing order, which is an element of $\s$. This defines a measurable, but not continuous, map.

A well-known theorem of Kingman \cite{Kingman} links exchangeable random partitions of $\N$ and random mass partitions through the "paintbox construction". More precisely: let $\mathbf{s}\in\s$, and $(U_i)_{i\in\N}$ be independent uniform variables on $[0,1]$, we define a random partition $\Pi_{\mathbf{s}}$ by $\forall i\neq j,\, i\sim_{\Pi_\mathbf{s}}j \Leftrightarrow \exists k,\; U_i,U_j\in[\sum_{p=1}^{k}s_p,\sum_{p=1}^{k+1}s_p[$. This random partition is exchangeable, all its blocks have asymptotic frequencies, and $|\Pi_{\mathbf{s}}|^{\downarrow}=\mathbf{s}$. By calling $\kappa_{\mathbf{s}}$ the law of $\Pi_{\mathbf{s}}$, Kingman's theorem states that, for any exchangeable random partition $\Pi$, there exists a random mass partition $S$ such that, conditionally on $S$, $\Pi$ has law $\kappa_S$. A useful consequence of this theorem is found in \cite{Ber}, Corollary $2.4$: for any integer $k$, conditionally on the variable $S$, the asymptotic frequency $|\Pi_{(k)}|$ of the block containing $k$ exists almost surely and is a size-biased pick amongst the terms of $S$, which means that its distribution is $\sum_i S_i\delta_{S_i} + S_0\delta_{S_0}$ (with $S_0=1-\underset{i\in\N}\sum S_i$).

Let $\Pi$ and $\Psi$ be two independent exchangeable random partitions. Then, for any $i$ and $j$, the block $\Pi_i\cap\Psi_j$ of $\Pi\cap\Psi$ almost surely has asymptotic frequency $|\Pi_i||\Psi_j|$. This stays true if we take countably many partitions, as is stated in \cite{Ber}, Corollary 2.5.

\subsubsection{Definition of fragmentation processes}
Partition-valued fragmentation processes were first introduced in \cite{B01} (homogeneous processes only) and \cite{B02} (the general self-similar kind).
\begin{defi} Let $(\Pi(t))_{t\geq0}$ be a $\p_{\N}$-valued process with càdlàg paths, which satisfies $\Pi(0)=((\N,\emptyset, \emptyset,\ldots))$, which is exchangeable as a process (i.e. for all permutations $\sigma$, the process $(\sigma\Pi(t))_{t\geq0}$ has the same law as $(\Pi(t))_{t\geq0}$ ) and such that, almost surely, for all $t\geq0$, all the blocks of $\Pi(t)$ have asymptotic frequencies. Let $\alpha$ be any real number. We say that $\Pi$ is a \emph{self-similar fragmentation process with index $\alpha$} if it also satisfies the following self-similar fragmentation property: for all $t\geq0$, given $\Pi(t)=\pi$, the processes $\big(\Pi(t+s)\cap \pi_i\big)_{s\geq0}$ (for all integers $i$) are mutually independent, and each one has the same law as $\big(\Pi(|\pi_i|^{\alpha}(s)) \cap \pi_i\big)_{s\geq0}$.
\end{defi}

When $\alpha=0$, we will say that $\Pi$ is a \emph{homogeneous} fragmentation process instead of $0$-self-similar fragmentation process.

\begin{rem} One can give a Markov process structure to an $\alpha$-self-similar fragmentation process $\Pi$ by defining, for any partition $\pi$, the law of $\Pi$ starting from $\pi$. Let $(\Pi^i)_{i\in\N}$ be independent copies of $\Pi$ (each one starting at $(\N,\emptyset,\ldots)$ ), then we let, for all $t\geq0$, $\Pi(t)$ be the partition whose blocks are exactly those of $((\Pi^i(|\pi_i|^{\alpha}t)\cap\pi_i)_{i\in\N}$. In this case the process isn't exchangeable with respect to all permutations of $\N$, but only with respect to permutations which stabilize the blocks of the initial value $\pi$.
\end{rem}

Fragmentation processes are seen as random variables in the Skorokhod space $\mathcal{D}=\mathcal{D}([0,+\infty),\p_{\N})$, which is the set of càdlàg functions from $[0,+\infty)$ to $\p_{\N}$. An element of $\D$ will typically be written as $(\pi_t)_{t\geq0}$. This space can be metrized with the \emph{Skorokhod metric} and is then Polish. More importantly, the Borel $\sigma$-algebra on $\D$ is then the $\sigma$-algebra spanned by the \emph{evaluation functions} $(\pi_t)_{t\geq0}\mapsto\pi_s$ (for $s\geq0$), implying that the law of a process is characterized by its finite-dimensional marginal distributions. The definition of the Skorokhod metric and generalities on the subject can be read in \cite{JSh}, Section VI.1.

Let us give a lemma which makes self-similarity easier to handle at times:

\begin{lemma} Let $(\Pi(t))_{t\geq0}$ be any exchangeable $\p_{\N}$-valued process, and $A$ any infinite block. Take any bijection $f$ from $A$ to $\N$, then the two $\p_{A}$-valued processes $(\Pi(t)\cap A)_{t\geq0}$ and $(f\Pi(t))_{t\geq0}$ have the same law.
\end{lemma}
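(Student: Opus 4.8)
The plan is to exploit the two defining symmetries of an exchangeable process: invariance under permutations of $\N$, and the way the metric $d$ on $\p_\N$ (and hence on $\p_A$) transports along bijections. Recall that for a bijection $g\colon S\to S'$ and a partition $\pi$ of $S'$, the map $\pi\mapsto g\pi$ is a continuous bijection from $\p_{S'}$ to $\p_S$, and it is in fact a bijective isometry once we use the metric transported by $g$; consequently it is measurable in both directions, and the same statements hold for the induced maps on Skorokhod space $\D([0,\infty),\p_{S'})\to\D([0,\infty),\p_S)$. So it suffices to compare the laws of two $\p_A$-valued processes, and by characterization of laws through finite-dimensional marginals, it suffices to check that for every finite set of times $t_1<\dots<t_k$ and every choice of restriction events, the two processes agree.

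First I would reduce to the case $A=\N$ by applying the bijection $f$: what we must show is that $(f\Pi(t))_{t\geq0}$ and $(f(\Pi(t)\cap A))_{t\geq0}$ have the same law. Note $f(\Pi(t)\cap A)$ is exactly the partition of $\N$ obtained by transporting the restriction $\Pi(t)\cap A$ via $f$; since restricting to $A$ and then relabelling by $f$ just amounts to "reading off the block structure of $\Pi(t)$ along the elements of $A$ in the order prescribed by $f$", this is a genuinely simpler object to handle, but the cleanest route is still to go through exchangeability. The key step is to build, for each finite $n$, a permutation $\sigma_n$ of $\N$ that agrees with $f^{-1}$ on $[n]$, i.e. $\sigma_n(i)=f^{-1}(i)$ for $i\le n$; this is possible because $f^{-1}$ maps $[n]$ injectively into the infinite set $A\subseteq\N$, so the partial injection $[n]\to\N$ extends to a permutation of $\N$ (complete it to a bijection using that both $\N\setminus[n]$ and $\N\setminus\sigma_n([n])$ are countably infinite).

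Then for such $\sigma_n$, exchangeability of $\Pi$ gives that $(\sigma_n\Pi(t))_{t\geq0}\overset{d}{=}(\Pi(t))_{t\geq0}$, and hence $(f\sigma_n\Pi(t))_{t\geq0}\overset{d}{=}(f\Pi(t))_{t\geq0}$ since $\pi\mapsto f\pi$ is a fixed measurable map (composition of maps preserves equality in law). On the other hand, I claim that the $\p_\N$-valued random variables $f\sigma_n\Pi(t)$ and $f(\Pi(t)\cap A)$ have the same restriction to $[n]$: indeed for $i,j\le n$ one has $i\sim_{f\sigma_n\Pi(t)}j \Leftrightarrow \sigma_n f(i)\sim_{\Pi(t)} \sigma_n f(j)$, wait — more carefully, $i\sim_{f\sigma_n\Pi(t)}j\Leftrightarrow f(i)\sim_{\sigma_n\Pi(t)}f(j)\Leftrightarrow \sigma_n(f(i))\sim_{\Pi(t)}\sigma_n(f(j))$, and since $f(i),f(j)\in A$, writing $f(i),f(j)$ as elements $\le n$... this is exactly where one uses $\sigma_n f(\cdot)$ restricted appropriately; after unwinding, both conditions reduce to $i\sim_{\Pi(t)\cap A, \text{read via }f}j$. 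Granting this pointwise identity of the $[n]$-restrictions for all $t$, we get that the processes $(f\sigma_n\Pi(t))_{t\ge0}$ and $(f(\Pi(t)\cap A))_{t\ge0}$ have the same law when composed with the restriction-to-$[n]$ map $\D([0,\infty),\p_\N)\to\D([0,\infty),\p_{[n]})$; combining with the previous display yields that $(f\Pi(t))_{t\ge0}$ and $(f(\Pi(t)\cap A))_{t\ge0}$ agree on all $[n]$-restriction events. Since $n$ is arbitrary and the Borel $\sigma$-field on $\D([0,\infty),\p_\N)$ is generated by these restriction maps (equivalently by the evaluations composed with $\pi\mapsto\pi\cap[n]$), a monotone class / $\pi$-$\lambda$ argument gives equality of the two laws, and transporting back by the isometry $f^{-1}$ on $\p_A$ finishes the proof.

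The step I expect to be the main (albeit mild) obstacle is the bookkeeping in identifying $f\sigma_n\Pi(t)$ with $f(\Pi(t)\cap A)$ on $[n]$: one has to be careful that $\sigma_n$ was chosen to agree with $f^{-1}$ precisely on the relevant finite set, that the asymptotic-frequency hypothesis plays no role here (it is a property preserved under all these relabellings, so nothing to check beyond invoking it if one wants to stay inside the class of exchangeable processes with frequencies), and that the passage from "equal on every $[n]$-restriction" to "equal in law on $\D$" is legitimate — this last point is exactly the statement, quoted in the excerpt, that the Borel $\sigma$-algebra on $\D$ is generated by the evaluation maps, together with the fact that the Borel $\sigma$-field on $\p_\N$ is generated by $\pi\mapsto\pi\cap[n]$.
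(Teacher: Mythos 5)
Your proof is essentially the paper's: extend the finite partial bijection given by $f^{-1}$ on $[n]$ to a permutation $\sigma_n$ of $\N$, invoke exchangeability of the whole process, identify the relevant restricted partitions deterministically, and conclude because the $\sigma$-field on the Skorokhod space is generated by the finite restrictions and evaluations. The only repair needed is the type mismatch in your key identification (with the paper's convention $f\sigma_n\Pi(t)$ is a partition of $A$, while your $f(\Pi(t)\cap A)$ is a partition of $\N$): either drop the outer $f$ and check that $\sigma_n\Pi(t)$ and the transport of $\Pi(t)\cap A$ to $\p_{\N}$ agree when restricted to $[n]$, or keep the outer $f$ and compare on $A_n=f^{-1}([n])$ rather than on $[n]$, after which the argument closes exactly as in the paper.
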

\begin{proof} For all $n\in\N$, let $A_n=\{f^{-1}(1),f^{-1}(2),\ldots,f^{-1}(n)\}$. Recall then that, with the $\sigma$-algebra which we have on $\p_A$, we only need to check that, for all $n\in\N$, $(\Pi_{|A_n})$ has the same law as $f(\Pi\cap[n])$. If $G$ is a nonnegative measurable function on $\mathcal{D}([0,+\infty),\p_{A_n})$, we have, by using the fact that the restriction of $f$ from $[n]$ to $A_n$ can be extended to a bijection of $\N$ onto itself
	\[E[G(\Pi\cap A_n)]=E\big[G((f\Pi)\cap A_n)\big] =E\big[G(f(\Pi\cap[n]))\big],
\]
which is all we need.
\end{proof}

This lemma will make it easier to show the fragmentation property for some $\D$-valued processes we will build throughout the article.

%Here is a fact concerning measurability which is worthy of note: 

%\begin{lemma}\label{10} We say that an element of $\D$ is \emph{constantly refining} if, for all $s<t$, $\pi(t)$ is finer than $\pi(s)$. The set of constantly refining elements of $\D$ is measurable.

%\end{lemma}
%\begin{proof} We just need to show that, if $\pi(t)$ is finer than $\pi(s)$ for all rational $s<t$, then $\pi$ is constantly refining, since measurability will follow by countability. Indeed, having this, the right-continuity of $\pi$ then gives us, for all $t\geq0$ and $n\in\N$,
%	\[\pi(t)\cap [n]=\bigcup_{\substack{u>t \\	    u\in\mathbb{Q}}} \pi(u) \cap[n],
%	\]
%and therefore
%	\[\pi(t)=\bigcup_{\substack{u>t \\	    u\in\mathbb{Q}}} \pi(u).
%	\]
%Thus, if we take $s<t$, $\pi(t)$ is indeed finer than $\pi(s)$.
%\end{proof}

\subsubsection{Characterization and Poissonian construction}

A famous result of Bertoin (detailed in \cite{Ber}, Chapter 3) states that the law of a self-similar fragmentation process is characterized by three parameters: the index of self-similarity $\alpha$, an \emph{erosion coefficient} $c\geq0$ and a \emph{dislocation measure} $\nu$, which is a $\sigma$-finite measure on $\s$ such that
	\[ \nu(1,0,0,\ldots)=0\text{  and  } \int_{\s}(1-s_1)d\nu(\mathbf{s})<\infty.
\]

Bertoin's result can be formulated this way: for any fragmentation process, there exists a unique triple $(\alpha,c,\nu)$ such that our process has the same law as the process which we are about to explicitly construct.

First let us describe how to build a fragmentation process with parameters $(0,0,\nu)$ which we will call $\Pi^{0,0}$. Let $\kappa_{\nu}(d\pi)=\int_{\s} \rho_\mathbf{s}(d\pi)d\nu(\mathbf{s})$ where $\kappa_s(d\pi)$ denotes the paintbox measure on $\mathcal{P}_{\mathbb{N}}$ corresponding to $\mathbf{s} \in \s$. For every integer $k$, let $(\Delta^k_t)_{t\geq0}$ be a Poisson point process with intensity $\kappa_\nu$, such that these processes are all independent. Now let $\Pi^{0,0}(t)$ be the process defined by $\Pi^{0,0}(0)=(\mathbb{N},\emptyset,\emptyset, \ldots)$ and which jumps when there is an atom $(\Delta^k_t)$: we replace the $k$-th block of $\Pi^{0,0}(t-)$ by its intersection with $\Delta^k_t$. This might not seem well-defined since the Poisson point process can have infinitely many atoms. However, one can check (as we will do in Section $5.2$ in a slightly different case) that this is well defined by restricting to the first $N$ integers and taking the limit when $N$ goes to infinity.

To get a $(0,c,\nu)$-fragmentation which we will call $\Pi^{0,c}$, take a sequence $(T_i)_{i \in \mathbb{N}}$ of exponential variables with parameter $c$ which are independent from each other and independent from $\Pi^{0,0}$. Then, for all $t$, let $\Pi^{0,c}(t)$ be the same partition as $\Pi^{0,0}(t)$ except that we force all integers $i$ such that $T_i\leq t$ to be in a singleton if they were not already.

Finally, an $(\alpha,c,\nu)$-fragmentation can then be obtained by applying a Lamperti-type time-change to all the blocks of $\Pi^{0,c}$: let, for all $i$ and $t$,
	\[\tau_i(t) = \inf \Big\{u, \int_0^u |\Pi_{(i)}^{0,c}(r)|^{-\alpha}dr >t\Big\}.
\] 
Then, for all $t$, let $\Pi^{\alpha,c}(t)$ be the partition such that two integers $i$ and $j$ are in the same block of $\Pi^{\alpha,c}(t)$ if and only if $j\in\Pi_{(i)}^{0,c}(\tau_i(t))$. Note that if $t\geq \int_0^{\infty} |\Pi_{(i)}^{0,c}(r)|^{-\alpha}dr$, then the value of $\tau_i(t)$ is infinite, and $i$ is in a singleton of $\Pi^{\alpha,c}(t)$. Note also that the time transformation is easily invertible: for $s\in [0,\infty)$, we have
	\[\tau_i^{-1}(s) = \inf \Big\{u, \int_0^u |\Pi_{(i)}^{\alpha,c}(r)|^{+\alpha}dr >s\Big\}.
\] 
\medskip

This time-change can in fact be done for any element $\pi$ of $\D$: since, for all $i\in\N$ and $t\geq0$, $\tau_i(t)$ is a measurable function of $\Pi^{0,c}$, there exists a measurable function $G^{\alpha}$ from $\D$ to $\D$ which maps $\Pi^{0,c}$ to $\Pi^{\alpha,c}$.

Let us once and for all fix our notations for the processes: in this article, $c$ and $\nu$ will be fixed (with $c\neq0$ or $\nu\neq0$ to remove the trivial case), however we will often jump between a homogeneous $(0,c,\nu)$-fragmentation and the associated self-similar $(\alpha,c,\nu)$-fragmentation. This is why we will rename things and let $\Pi=\Pi^{0,c}$ as well as $\Pi^{\alpha}=\Pi^{\alpha,c}$. We then let $(\mathcal{F}_t)_{t\geq0}$ be the canonical filtration associated to $\Pi$ and $(\mathcal{G}_t)_{t\geq0}$ the one associated to $\Pi^{\alpha}$.

\subsubsection{A few key results}
One simple but important consequence of the Poissonian construction is that the notation $|\Pi^{\alpha}_{(i)}(t^-)|$ is well-defined for all $i$ and $t$: it is equal to both the limit, as $s$ increases to $t$, of $|\Pi^{\alpha}_{(i)}(s)|$, and the asymptotic frequency of the block of $\Pi^{\alpha}(t^-)$ containing $i$.

For every integer $i$, let $\mathcal{G}_i$ be the canonical filtration of the process $(\Pi^{\alpha}_{(i)}(t))_{t\geq0}$, and consider a family of random times $(L_i)_{i\in\N}$ such that $L_i$ is a $\mathcal{G}_i$-stopping time for all $i$. We say that $(L_i)_{i\in\N}$ is a \emph{stopping line} if, for all integers $i$ and $j$, $j\in \Pi^{\alpha}_{(i)}(L_i)$ implies $L_i=L_j$. Under this condition, $\Pi^{\alpha}$ then satisfies an extended fragmentation property (proved in \cite{Ber}, Lemma 3.14): we can define for every $t$ a partition $\Pi^{\alpha}(L+t)$ whose blocks are the $(\Pi^{\alpha}_{(i)}(L_i+t))_{i\in\N}$. Then conditionally on the sigma-field $\mathcal{G}_L$ generated by the $\mathcal{G}_i(L_i)$ ($i\in\N$), the process $(\Pi^{\alpha}(L+t))_{t\geq0}$ has the same law as $\Pi$ started from $\Pi^{\alpha}(L)$.

\smallskip
One of the main tools of the study of fragmentation processes is the \emph{tagged fragment}: we specifically look at the block of $\Pi^{\alpha}$ containing the integer $1$ (or any other fixed integer). Of particular interest, its mass can be written in terms of L\'evy processes: one can write, for all $t$, $|\Pi^{\alpha}_{(1)}(t)|=\e^{-\xi_{\tau(t)}}$ where $\xi$ is a killed subordinator with Laplace exponent $\phi$ defined for nonnegative $q$ by
	\[\phi(q)= c(q+1) + \int_{\s}(1-\sum_{n=1}^{\infty}s_n^{q+1})d\nu(\mathbf{s}),
\] and $\tau(t)$ is defined for all $t$ by $\tau(t)=\inf \Big\{u, \int_0^u \e^{\alpha\xi_r}dr >t\Big\}.$ Note that standard results on Poisson measures then imply that, if $q\in\R$ is such that $\int_{\s}(1-\sum_{n=1}^{\infty}s_n^{q+1})d\nu(\mathbf{s})>-\infty$, then we still have $E[e^{-q\xi_t}\mathbf{1}_{\{\xi_t<\infty\}}]=e^{-t\phi(q)}.$

In particular, the first time $t$ such that the singleton $\{1\}$ is a block of $\Pi^{\alpha}(t)$ is equal to $\int_0^{\infty}\e^{\alpha\xi_s}ds$, the exponential functional of the Lévy process $\alpha\xi$, which has been studied for example in \cite{CPY97}. In particular it is finite a.s. whenever $\alpha$ is strictly negative and $\Pi$ is not constant.

\subsection{Random trees}
\subsubsection{$\R$-trees}
\begin{defi} Let $(\T,d)$ be a metric space. We say that it is an \emph{$\R$-tree} if it satisfies the following two conditions:

\textbullet \hspace{2 mm} for all $x,y \in \T$, there exists a unique distance-preserving map $\phi_{x,y}$ from $[0,d(x,y)]$ into $\mathcal{T}$ such  $\phi_{x,y}(0)=x$ and $\phi_{x,y}(d(x,y))=y;$

\textbullet \hspace{2 mm} for all continuous and one-to-one functions $c$: $[0,1] \to \T $, we have $\\ c([0,1]) = \phi_{x,y}([0,d(x,y)]),$ where $x=c(0)$ and $y=c(1)$.

\end{defi}
For any $x,y$ in a tree, we will denote by $\llbracket x,y\rrbracket$ the image of $\phi_{x,y}$, i.e. the path between $x$ and $y$.
Here is a simple characterization of $\R$-trees which we will use in the future. It can be found in \cite{Ev}, Theorem 3.40.

\begin{prop} A metric space $(\T,d)$ is an $\R$-tree if and only if it is connected and satisfies the following property, called the \emph{four-point condition}:
	\[\forall x,y,u,v \in \T, d(x,y)+d(u,v)\leq \max\big(d(x,u)+d(y,v),d(x,v)+d(y,u)\big).
\]
\end{prop}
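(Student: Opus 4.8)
The plan is to prove the two implications separately. The direction ``$\R$-tree $\Rightarrow$ connected $+$ four-point'' is the routine one, while ``connected $+$ four-point $\Rightarrow$ $\R$-tree'' carries essentially all the difficulty, and within it there is a single genuinely delicate step.

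\emph{$\R$-tree $\Rightarrow$ connected and four-point.} Connectedness is immediate, since any two points $x,y$ are joined by the arc $\phi_{x,y}([0,d(x,y)])$, so $\T$ is even path-connected. For the four-point condition I would first establish the \emph{median lemma}: for any three points $x,y,z$ there is a unique point $m=m(x,y,z)$ lying simultaneously on $\llbracket x,y\rrbracket$, $\llbracket y,z\rrbracket$ and $\llbracket x,z\rrbracket$, with $d(x,y)=d(x,m)+d(m,y)$, $d(x,z)=d(x,m)+d(m,z)$ and $d(y,z)=d(y,m)+d(m,z)$. One obtains $m$ as the endpoint, away from $x$, of $\llbracket x,y\rrbracket\cap\llbracket x,z\rrbracket$: the set of parameters on which $\phi_{x,y}$ and $\phi_{x,z}$ agree is a closed initial interval $[0,s]$ — because the restriction of any distance-preserving path to a subinterval is, by uniqueness in the first axiom, the geodesic between its own endpoints — and then the concatenation of $\llbracket y,m\rrbracket\subseteq\llbracket y,x\rrbracket$ with $\llbracket m,z\rrbracket\subseteq\llbracket x,z\rrbracket$, which meet only at $m$, is a continuous injective path from $y$ to $z$ and hence, by the second axiom, equals $\llbracket y,z\rrbracket$. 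Granting the lemma, $x,y,u,v$ span a finite subtree of $\T$ (obtained by repeatedly attaching segments at medians), and the median identities express each of the six pairwise distances as a sum of edge-lengths of that subtree; the four-point inequality then reduces to its elementary, purely combinatorial verification for finite metric trees, where in each configuration $d(x,y)+d(u,v)$ is the \emph{smaller} of the two right-hand quantities, the larger exceeding it by twice the length of the path joining the two branch points.

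\emph{Connected and four-point $\Rightarrow$ $\R$-tree.} For $x,y\in\T$ set $I(x,y)=\{z:\ d(x,z)+d(z,y)=d(x,y)\}$. Applying the four-point condition to $x,y,z,w$ with $z,w\in I(x,y)$ and substituting $d(y,z)=d(x,y)-d(x,z)$, $d(y,w)=d(x,y)-d(x,w)$ gives $d(z,w)\le|d(x,z)-d(x,w)|$, which together with the triangle inequality forces $d(z,w)=|d(x,z)-d(x,w)|$; hence $z\mapsto d(x,z)$ is an isometry of $I(x,y)$ onto some $A\subseteq[0,d(x,y)]$ with $0,d(x,y)\in A$. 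The heart of the matter is showing $A=[0,d(x,y)]$ — that the metric segments have no gaps — and this is the only point where connectedness enters. I would do it via the Gromov-product function $g(w)=\tfrac12\bigl(d(x,w)+d(x,y)-d(w,y)\bigr)$: it is continuous, $g(x)=0$, $g(y)=d(x,y)$, it agrees with $d(x,\cdot)$ on $I(x,y)$, and — the technical core — for every $w\in\T$ the four-point condition produces a point of $I(x,y)$ at height exactly $g(w)$ (the would-be median $m(x,y,w)$), so $g(\T)\subseteq A$; since $\T$ is connected, $g(\T)$ is a connected subset of $[0,d(x,y)]$ containing both endpoints, whence $A=[0,d(x,y)]$. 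Once each $I(x,y)$ is a genuine isometric copy of $[0,d(x,y)]$, the first axiom follows (that isometry is the required $\phi_{x,y}$, any distance-preserving $\phi\colon[0,d(x,y)]\to\T$ with the right endpoints has image inside $I(x,y)$, and the isometry statement above gives uniqueness), and the second axiom follows from the same projection-and-continuity ideas, which force every injective continuous path from $x$ to $y$ to have image precisely $I(x,y)$.

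The step I expect to be the real obstacle is exactly this ``no gaps'' claim: manufacturing, for an arbitrary point $w$, the median $m(x,y,w)\in I(x,y)$ before one knows any segments exist or are complete. Doing this cleanly means first proving $I(x,y)$ is closed and that its height-image $A$ is an \emph{interval} (not merely a subset of one), and only then feeding connectedness of $\T$ in through the continuity of Gromov products. It is also the one ingredient of the statement that genuinely uses connectedness rather than the four-point inequality alone.
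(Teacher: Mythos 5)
The paper does not prove this proposition at all (it quotes it from \cite{Ev}, Theorem 3.40), so your attempt has to be judged against the standard proof. Your forward direction (\emph{$\R$-tree $\Rightarrow$ connected $+$ four-point}) via the median lemma and reduction to finite trees is fine. The converse, however, has a genuine gap, and it sits exactly at the step you yourself single out: the claim that ``for every $w\in\T$ the four-point condition produces a point of $I(x,y)$ at height exactly $g(w)$''. The four-point condition is an inequality among distances of \emph{existing} points; it cannot manufacture the median. Indeed the pointwise claim is simply false for general four-point (i.e.\ $0$-hyperbolic) spaces: take the three-point space $\{x,y,w\}$ with all pairwise distances equal to $2$. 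It satisfies the four-point condition (with only three distinct points the condition degenerates to the triangle inequality), yet $I(x,y)=\{x,y\}$, so $A=\{0,2\}$, while $g(w)=1\notin A$. Since in your architecture connectedness enters only \emph{after} this step (through continuity of $g$ and the intermediate value theorem), it cannot repair it: connectedness must be used in the construction of the median itself, not merely to show $g(\T)$ is an interval. Your fallback — ``first prove $I(x,y)$ is closed and that its height-image $A$ is an interval'' — does not help either: $A$ being an interval \emph{is} the no-gaps claim you are trying to prove, and closedness of $I(x,y)$ in $\T$ buys nothing because $\T$ is not assumed complete, so a Cauchy sequence in $I(x,y)$ realizing a missing height need not converge.

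The mechanism that actually closes this gap (as in Evans' proof of Theorem 3.40, which the paper cites) is different: one first shows that \emph{any} $0$-hyperbolic metric space embeds isometrically into some honest $\R$-tree (a separate construction, built from the Gromov products, with no connectedness needed), and only then uses connectedness, as follows: if a point $m$ of the ambient segment between $x$ and $y$ were missing from the image of $\T$, removing $m$ disconnects the ambient tree into components, and intersecting these with $\T$ yields a separation of $\T$ into nonempty relatively open sets containing $x$ and $y$ respectively — contradicting connectedness. So segments are entirely contained in $\T$, medians exist, and your remaining steps (the isometry $z\mapsto d(x,z)$ on $I(x,y)$, uniqueness of geodesics, and the arc axiom) then go through essentially as you wrote them. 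In short: your reduction of the problem to ``$A=[0,d(x,y)]$'' is correct and your computation $d(z,w)=|d(x,z)-d(x,w)|$ on $I(x,y)$ is right, but the bridge you propose across the one hard step is not valid, and some device such as the embedding theorem (or an equivalent completion/gluing construction) is needed before connectedness can be brought to bear.
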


By permuting $x,y,z,t$, one gets a more explicit form of the four-point condition: out of the three numbers $d(x,y)+d(u,v)$, $d(x,u)+d(y,v)$ and $d(x,v)+d(y,u)$, at least two are equal, and the third one is smaller than or equal to the other two.

For commodity we will, for an $\R$-tree $(\T,d)$ and $a>0$, call $a\T$ the $\R$-tree $(\T,ad)$ which is the same tree as $\T$, except that all distances have been rescaled by $a$.

\subsubsection{Roots, partial orders and height functions}
All the trees which we will consider will be \emph{rooted}: we will fix a distinguished vertex $\rho$ called the \emph{root}. This provides $\T$ with a \emph{height function} $ht$ defined by $ht(x)=d(\rho,x)$ for $x\in\T$.

We use the height function to define, for $t\geq0$, the subset $\T_{\leq t}=\{x\in \T: \; ht(x)\leq t\}$, as well as the similarly defined $\T_{<t}$, $\T_{\geq t}$ and $\T_{>t}$. Note that $\T_{\leq t}$ and $\T_{<t}$ are both $\R$-trees, as well as the connected components of $\T_{\geq t}$ and $\T_{>t}$, which we will call the \emph{tree components} of $\T_{\geq t}$ and $\T_{>t}$.

Having a root on $\T$ also lets us define a partial order, by declaring that $x\leq y$ if $x\in \llbracket \rho,y\rrbracket$. We will often say that $x$ is an ancestor of $y$ in this case, or simply that $x$ is lower than $y$. We can then define for any $x$ in $\T$ the \emph{subtree of $\T$ rooted at $x$}, which we will call $\T_x$: it is the set $\{y \in\T: y\geq x\}.$ We will also say that two points $x$ and $y$ are \emph{on the same branch} if they are comparable, i.e. if we have $x\leq y$ or $y \leq x$. For every subset $S$ of $\T$ we can define the \emph{greatest common ancestor} of $S$, which is the highest point which is lower than all the elements of $S$. The greatest common ancestor of two points $x$ and $y$ of $\T$ will be written $x\wedge y$.

One convenient property is that we can recover the metric from the order and the height function. Indeed, for any two points $x$ and $y$, we have $d(x,y)=ht(x)+ht(y)-2ht(x\wedge y)$.

We also call \emph{leaf} of $\T$ any point $L$ such that the $\T_L=\{L\}$. The set of leaves of $\T$ will be written $\mathcal{L}(\T)$, and its complement is called the \emph{skeleton} of $\T$.

\subsubsection{Gromov-Hausdorff distances, spaces of trees}

\smallskip

Recall that, if $A$ and $B$ are two compact nonempty subsets of a metric space $(E,d)$, then we can define the Hausdorff distance between $A$ and $B$ by 
	\[d_{E,H}(A,B)=\inf \{\epsilon>0; A\subset B_\epsilon\text{ and }B\subset A_\epsilon\},
\]
 where $A_{\epsilon}$ and $B_{\epsilon}$ are the closed $\epsilon$-enlargements of $A$ and $B$ (that is, $A_{\epsilon}=\{x\in E, \exists a\in A, d(x,a)\leq\epsilon\}$ and the corresponding definition for $B$).

Now, if one considers two compact rooted $\R$-trees $(\T,\rho,d)$ and $(\T',\rho',d')$, define their \emph{Gromov-Hausdorff distance}:
	\[d_{GH}(\T,\T') = \inf [ \max (d_{\mathcal{Z},H} (\phi(\T),\phi'(\T')), d_\mathcal{Z}(\phi(\rho),\phi'(\rho')))],
\]
where the infimum is taken over all pairs of isometric embeddings $\phi$ and $\phi'$ of $\T$ and $\T'$ in the same metric space $(\mathcal{Z},d_{\mathcal{Z}}).$

We will also want to consider pairs $(\T,\mu)$, where $\T$ ($d$ and $\rho$ being implicit) is a compact rooted $\R$-tree and $\mu$ a Borel probability measure on $\T$. Between two such compact rooted measured trees $(\T,\mu)$ and $(\T',\mu')$, one can define the \emph{Gromov-Hausdorff-Prokhorov} distance by

  \[d_{GHP}(\T,\T') = \inf [ \max (d_{\mathcal{Z},H} (\phi(\T),\phi'(\T')), d_\mathcal{Z}(\phi(\rho),\phi'(\rho')),d_{\mathcal{Z},P}(\phi_*\mu,\phi'_*\mu')],
\]
where the infimum is taken on the same space, and $d_{\mathcal{Z},P}$ denotes the Prokhorov distance between two Borel probability measures on $\mathcal{Z}$. The only thing we need about this metric is that convergence for $d_{\mathcal{Z},P}$ is equivalence to convergence to weak convergence of Borel probability measures on $\mathcal{Z}$, see \cite{Bill}.

These two metrics allow for study of spaces of trees, and it can be shown (see \cite{EPW06} and \cite{ADH}) that these spaces are well-behaved.
\begin{prop} Let $\TT$ and $\TT_W$ be respectively the set of equivalence classes of compact rooted trees and the set of classes of compact rooted measured trees, where two trees are said to be equivalent if there is a root-preserving (and measure-preserving in the measured case) isometric bijection between them. Then $(\TT,d_{GH})$ and $(\TT_W,d_{GHP})$ are Polish spaces.
\end{prop}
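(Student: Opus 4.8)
The plan is to verify the four standard properties in turn: that $d_{GH}$ and $d_{GHP}$ are genuine metrics on the respective sets of equivalence classes, and that the resulting spaces are separable and complete. Symmetry is immediate, and the triangle inequality follows from the usual gluing construction: given near-optimal isometric embeddings of $\T,\T'$ into a space $\mathcal{Z}_1$ and of $\T',\T''$ into $\mathcal{Z}_2$, one forms the metric space obtained by identifying the two copies of $\T'$ and embeds all three trees there. The only delicate axiom is separation: if $d_{GH}(\T,\T')=0$ then $\T$ and $\T'$ are equivalent. For each $n$ choose isometric embeddings $\phi_n,\phi'_n$ of $\T,\T'$ into a common space realizing the distance up to $1/n$; using finite $\epsilon$-nets of the compact trees and a diagonal extraction, the corresponding near-isometries subconverge to a root-preserving isometric bijection. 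For $d_{GHP}$ the same argument applies, and one notes in addition that the Prokhorov distance between $(\phi_n)_*\mu$ and $(\phi'_n)_*\mu'$ tends to $0$, so the limiting isometry is measure-preserving.

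For separability, the countable family of finite rooted $\R$-trees with rational edge lengths is dense in $(\TT,d_{GH})$: given a compact rooted $\R$-tree $\T$ and $\epsilon>0$, pick a finite $\epsilon$-net of $\T$ containing $\rho$, let $\T^\epsilon$ be the finite subtree of $\T$ spanned by these points, rooted at $\rho$, and round its edge lengths to nearby rationals; the obvious correspondence shows $d_{GH}(\T,\T^\epsilon)=O(\epsilon)$. For $(\TT_W,d_{GHP})$ one enlarges this family with all convex combinations, with rational weights, of Dirac masses at the vertices of these finite trees; pushing $\mu$ forward onto the chosen $\epsilon$-net and rounding the resulting weights yields a measure within $O(\epsilon)$ of $\mu$ for the Prokhorov distance, and the enlarged family is again countable.

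For completeness, let $(\T_n,\rho_n)$ be a $d_{GH}$-Cauchy sequence; passing to a subsequence with $d_{GH}(\T_n,\T_{n+1})<2^{-n}$, one builds iteratively, by chaining near-optimal correspondences between consecutive trees, a single complete separable metric space $\mathcal{Z}$ containing isometric copies of all the $\T_n$ with their roots identified and with the copies forming a Cauchy sequence for the Hausdorff distance on compact subsets of $\mathcal{Z}$. By completeness of the space of nonempty compact subsets of a complete space for the Hausdorff distance, this sequence converges to a compact limit $K$ containing the common root, and $\T_n\to K$. It remains to check that $K$ is an $\R$-tree: as a Hausdorff limit of connected sets it is connected, and the four-point condition, being a closed condition on quadruples of points, passes to the limit, so $K$ is an $\R$-tree by the characterization recalled above. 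For $(\TT_W,d_{GHP})$, the construction can be arranged so that in addition the pushforward measures $\mu_n$ on $\mathcal{Z}$ are Cauchy for the Prokhorov metric; since Borel probability measures on a complete separable space are complete for that metric, $\mu_n$ converges weakly to some $\mu_\infty$, and a short tightness argument shows $\mu_\infty$ is carried by $K$, so $(K,\mu_\infty)\in\TT_W$ is the desired limit.

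The genuinely technical step is this completeness argument, and within it the simultaneous embedding of a Cauchy sequence of trees into one ambient space so that the images are Cauchy for the Hausdorff distance; the rest is either routine or handled cleanly by the four-point condition. For the measured version one must additionally be careful that no mass of the limit measure escapes the limit tree, which is where the tightness of $\{\mu_n\}$ inherited from the Cauchy property is used. (In fact both statements are established in the literature, and we only need to quote \cite{EPW06} and \cite{ADH}.)
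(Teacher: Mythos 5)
Your outline is correct, and it reconstructs the standard arguments: the gluing construction for the triangle inequality, $\epsilon$-nets and diagonal extraction for separation, finite trees with rational edge lengths (and rational-weight atoms) for separability, and the chained-embedding/Hausdorff-completeness argument, with the four-point condition and connectedness passing to the limit, for completeness. The paper itself does not prove this proposition at all --- it simply invokes the results of \cite{EPW06} and \cite{ADH}, which is exactly the shortcut you acknowledge in your final parenthesis --- so there is nothing in the paper to diverge from; your sketch is essentially the proof found in those references, and the only steps left implicit (the Hausdorff--Cauchy property of the chained images and the tightness argument confining the limit measure to the limit tree) are routine and correctly identified by you as the technical core.
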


\subsubsection{Decreasing functions and measures on trees}
Let us give a tool which will allow us to define measures on a compact rooted tree $\T$ only through their values on all the subtrees $\T_x$ for $x\in\T$. Let $m$ be a decreasing function on $\T$ taking values in $[0,\infty)$. One can easily define the left-limit $m(x^-)$ of $m$ at any point $x\in \T$, since $\llbracket \rho,x\rrbracket$ is isometric to a line segment, for example by setting $m(x^-)=\underset{t\to ht(x)^-}\lim m(\phi_{\rho,x}(t))$. Let us also define the \emph{additive right-limit} $m(x^+)$: since $\T$ is compact, the set $\T_x\setminus\{x\}$ has countably many connected components, say $(\T_i)_{i\in S}$ for a finite or countable set $S$. Let, for all $i\in S$, $x_i\in\T_i$. We then set 
	\[m(x^+)= \underset{i\in S}\sum \lim_{t\to ht(x)^+} m(\phi_{\rho,x_i}(t)).
\]
 This is well-defined, because it does not depend on our choice of $x_i\in\T_i$ for all $i$. We say that $m$ is \emph{left-continuous} at a point $x$ if $m(x^-)=m(x)$.

\begin{prop}\label{01} Let $m$ be a decreasing, positive and left continuous function on $\T$ such that, for all $x\in\T$, $m(x)\geq m(x^+)$. Then there exists a unique Borel measure $\mu$ on $\T$ such that
	\[\forall x\in \T, \mu(\T_x)=m(x).
\]
\end{prop}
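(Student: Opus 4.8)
The plan is to build the measure $\mu$ on cylinder-type sets and then invoke a Carathéodory-style extension, using the decreasing and left-continuity hypotheses to guarantee countable additivity. First I would fix, for each $t\geq 0$, the partition of $\T_{>t}$ into its tree components $(\T^t_i)_{i\in S_t}$; each such component is of the form $\T_{x}$ for its (infimum) root $x$, or rather $\overline{\T^t_i}=\{x\}\cup\T^t_i$. The natural algebra $\mathcal{A}$ to work with is the one generated by the sets $\T_x$ for $x\in\T$ together with the singletons, or equivalently the finite unions of sets of the form $\T_x\setminus(\T_{y_1}\cup\dots\cup\T_{y_k})$ where $y_1,\dots,y_k$ are (incomparable, above $x$) points of $\T$. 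On a generator $\T_x$ we of course set $\mu(\T_x)=m(x)$, and on a difference set we set $\mu\big(\T_x\setminus\bigcup_j\T_{y_j}\big)=m(x)-\sum_j m(y_j)$; one checks this is well-defined and finitely additive on $\mathcal{A}$, the nonnegativity of the assigned values coming exactly from monotonicity of $m$ together with the hypothesis $m(x)\geq m(x^+)$ applied finitely many times along the tree.

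Next I would verify $\sigma$-additivity of this pre-measure on $\mathcal{A}$, which by the standard criterion reduces to showing: if $(A_n)$ is a decreasing sequence in $\mathcal{A}$ with $\bigcap_n A_n=\emptyset$, then $\mu(A_n)\to 0$. Here I would use compactness of $\T$ crucially. Each $A_n$ is a finite union of "annular" pieces; passing to a subsequence and using a diagonal argument, if $\mu(A_n)$ stayed bounded below by $\delta>0$ one would extract a point $z$ lying in the closure of all $A_n$, and the left-continuity of $m$ at $z$ (together with $m(z)\geq m(z^+)$ to control the mass escaping just above $z$) would force $z$ — or a descendant of $z$ — to lie in $\bigcap_n A_n$, a contradiction. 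More precisely, the mass $\mu(A_n)$ is, up to the error controlled by left-continuity, the $m$-mass of a nested family of compact subtrees; nestedness plus compactness gives a nonempty intersection, and the error terms vanish by the left-limit hypothesis. Once $\sigma$-additivity on $\mathcal{A}$ is established, Carathéodory's extension theorem produces a Borel measure $\mu$ on $\sigma(\mathcal{A})$; since the sets $\T_x$ separate points of the compact tree and generate the Borel $\sigma$-field, $\mu$ is a genuine Borel measure, and it is finite because $\mu(\T)=\mu(\T_\rho)=m(\rho)<\infty$.

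Finally, uniqueness: any two Borel measures agreeing on all $\T_x$ agree on the $\pi$-system $\{\T_x:x\in\T\}\cup\{\T\}$ (it is closed under finite intersections, since $\T_x\cap\T_y$ is either empty, $\T_x$, or $\T_y$ according to the order relation between $x$ and $y$, when $x,y$ are comparable, and empty otherwise — so one should include $\emptyset$), which generates the Borel $\sigma$-field; the $\pi$--$\lambda$ (Dynkin) theorem then gives equality on all Borel sets, using $\mu(\T)=m(\rho)<\infty$.

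The main obstacle I expect is the $\sigma$-additivity step, i.e. ruling out "mass leaking to infinity" inside the tree. The delicate point is that a decreasing sequence of annular sets can have mass concentrating near a limiting point $z$ without $z$ itself being retained, and only the precise interplay of left-continuity of $m$ at $z$ (killing the contribution from below $z$) and the condition $m(z)\geq m(z^+)$ (bounding the contribution from the components just above $z$) closes this gap; formalizing the diagonal extraction in the non-locally-compact-looking but actually compact tree is where the real care is needed.
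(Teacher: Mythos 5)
Your proposal is correct and follows essentially the same route as the paper: the same semi-ring of sets $\T_x\setminus\bigcup_j\T_{y_j}$ with the set function $m(x)-\sum_j m(y_j)$, nonnegativity from $m(x)\geq m(x^+)$, a Carath\'eodory-type extension, and uniqueness from the fact that these sets generate the Borel $\sigma$-field. The only difference is cosmetic: you verify countable additivity via the ``decreasing to $\emptyset$'' criterion with inner approximation by compact sets (shrinking each annulus by pushing the removed roots $y_j$ slightly down, controlled by left-continuity), whereas the paper proves $\sigma$-additivity directly on the semi-ring by enlarging the pre-balls to open sets and extracting a finite subcover --- two standard, equivalent uses of compactness and the left-continuity hypothesis.
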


While the idea behind the proof of Proposition \ref{01} is fairly simple, the proof itself is relatively involved and technical, which is why we postpone it for Appendix A.

\section{The fragmentation tree}
\subsection{Main result}

We are going to show a bijective correspondence between the laws of fragmentation processes with negative index and a certain class of random trees. We fix from now on an index $\alpha<0.$ If $(\T,\mu)$ is a measured tree and $S$ is a measurable subset of $\T$ with $\mu(S)>0$, we let $\mu_S$ be the measure $\mu$ conditioned on $S$.
\begin{defi}\label{def1} Let $(\T,\mu)$ be a random variable in $\TT_W$. For all $t\geq0$, let $\T_1(t),\T_2(t),\ldots$ be the connected components of $\T_{>t}$, and let, for all $i$, $x_i(t)$ be the point of $\T$ with height $t$ which makes $\T_i(t)\cup\{x_i(t)\}$ connected. We say that $\T$ is \emph{self-similar with index $\alpha$} if $\mu(\T_i(t))>0$ for all choices of $t\geq0$ and $i$ and if, for any $t\geq0$, conditionally on $\big(\mu(\T_i(s))\big)_{i\in\N,s\leq t}$, the trees $(\T_i(t)\cup\{x_i(t)\},\mu_{\T_i(t)})_{i\in\N}$ are independent and, for any $i$, $(\T_i(t)\cup \{x_i(t)\},\mu_{\T_i(t)})$ has the same law as $(\mu(\T_i(t))^{-\alpha} \T',\mu')$ where $(\T',\mu')$ is an independent copy of $(\T,\mu)$.
\end{defi}
The similarity with the definition of an $\alpha$-self-similar fragmentation process must be pointed out: in both definitions, the main point is that each "component" of the process after a certain time is independent of all the others and has the same law as the initial process, up to rescaling. In fact, the following is an straightforward consequence of our definitions:

\begin{prop}\label{1more} Let $(\T,\mu)$ be a self-similar tree with index of similarity $\alpha$. Let $(P_i)_{i\in\N}$ be an exchangeable sequence of variables directed by $\mu$. Define for every $t\geq0$ a partition $\Pi_{\T}(t)$ by saying that $i$ and $j$ are in the same block of $\Pi_{\T}(t)$ if and only if $P_i$ and $P_j$ are in the same connected component of $\{x\in \T, ht(x)>t\}$ (in particular an integer $i$ is in a singleton if $ht(P_i)\leq t$). Then $\Pi_{\T}$ is an $\alpha$-self-similar fragmentation process.
\end{prop}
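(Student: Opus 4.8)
The plan is to verify the three defining properties of an $\alpha$-self-similar fragmentation process in turn: that $\Pi_\T$ takes values in $\p_{\N}$ with càdlàg paths and $\Pi_\T(0) = (\N,\emptyset,\ldots)$, that it is exchangeable as a process, and that it satisfies the self-similar fragmentation property. The first two should be quick. For each $t$, "being in the same connected component of $\T_{>t}$" is an equivalence relation on $\{i : ht(P_i)>t\}$, so $\Pi_\T(t)$ is genuinely a partition, and since the root is at height $0$ (after possibly checking $ht(\rho)=0$, which holds by definition of the height function), at $t=0$ all the $P_i$ with $ht(P_i)>0$ lie in the single component $\T_{>0}$ is connected — wait, $\T_{>0}$ need not be connected; but its components all attach to $\rho$, so $i \sim j$ unless one of them has $ht(P_i)=0$, i.e. $P_i=\rho$, which happens with $\mu$-probability zero for each fixed $i$. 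So almost surely $\Pi_\T(0)=(\N,\emptyset,\ldots)$. Càdlàg-ness follows from the fact that, along the segment $\llbracket\rho,P_i\rrbracket$, the component containing $P_i$ only changes at countably many heights (branch points) and is right-continuous in $t$ because $\T_{>t} = \bigcup_{s>t}\T_{>s}$ combined with compactness; this is essentially the same bookkeeping used to define $m(x^+)$. Exchangeability of $\Pi_\T$ as a process is immediate: for a permutation $\sigma$, $\sigma\Pi_\T$ is the partition process built from $(P_{\sigma(i)})_{i\in\N}$, and since $(P_i)_{i\in\N}$ is exchangeable, $(P_{\sigma(i)})_{i\in\N}$ has the same law, hence so do the two processes.

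The heart of the matter is the fragmentation property. Fix $t\geq 0$ and condition on $\Pi_\T(t)=\pi$. I would first record that, conditionally on $(\mu(\T_i(s)))_{i,s\le t}$, the blocks of $\pi$ correspond exactly to the components $\T_i(t)$ of $\T_{>t}$ (with singletons for the $P_i$ of height $\le t$): indeed $j \in \pi_{(i)}$ for $j\ne i$ means $P_i,P_j$ lie in the same $\T_k(t)$. The key observation is a consistency statement: for a block $B=\pi_{(i)}$ associated to the component $\T_k(t)\cup\{x_k(t)\}$, the restricted process $(\Pi_\T(t+s)\cap B)_{s\ge 0}$ is precisely the partition process (in the sense of Proposition \ref{1more}, but for the tree $\T_k(t)\cup\{x_k(t)\}$ rooted at $x_k(t)$) built from the points $(P_j)_{j\in B}$ — because for $j,j'\in B$, $P_j$ and $P_{j'}$ are in the same component of $\T_{>t+s}$ if and only if they are in the same component of $(\T_k(t)\cup\{x_k(t)\})_{>t+s}$, heights being measured from $x_k(t)$ after subtracting $t$. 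Here I need that $(P_j)_{j\in B}$, conditioned appropriately, is an exchangeable sequence directed by $\mu_{\T_k(t)}$: this follows because the $(P_j)_{j\in\N}$ are i.i.d.\ $\mu$ conditionally on $(\T,\mu)$ (or exchangeable directed by $\mu$), and conditioning a block $B$ to consist of indices whose points fall in $\T_k(t)$ makes $(P_j)_{j\in B}$ i.i.d.\ $\mu_{\T_k(t)}$ — up to the relabeling handled by the Lemma on exchangeable processes restricted to infinite blocks. Then Definition \ref{def1} gives that, conditionally on $(\mu(\T_i(s)))_{i,s\le t}$, the trees $(\T_k(t)\cup\{x_k(t)\},\mu_{\T_k(t)})$ are independent and each distributed as $\mu(\T_k(t))^{-\alpha}$ times an independent copy of $(\T,\mu)$; applying Proposition \ref{1more}'s construction to each and using the self-similar rescaling $\T' \mapsto a\T'$, which multiplies all heights by $a=\mu(\T_k(t))^{-\alpha}$ and hence slows the fragmentation time by the same factor, yields exactly: the processes $(\Pi_\T(t+s)\cap\pi_i)_{s\ge0}$ are mutually independent and $(\Pi_\T(t+s)\cap\pi_i)_{s\ge0}$ has the law of $(\Pi_\T(|\pi_i|^\alpha s)\cap\pi_i)_{s\ge0}$, once we identify $|\pi_i| = \mu(\T_i(t))$.

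That last identification — that the asymptotic frequency of the block $\pi_i$ equals the $\mu$-mass of the component $\T_i(t)$ — is itself a point to nail down: it is the law of large numbers for the exchangeable (conditionally i.i.d.) sequence $(P_j)$, giving $|B| = \lim_n \frac1n\#\{j\le n : P_j \in \T_k(t)\} = \mu(\T_k(t))$ almost surely, so in particular all blocks of $\Pi_\T(t)$ have asymptotic frequencies, as required. One must also check there is no measurability or null-set issue when doing this simultaneously for all blocks and all rational $t$, then extending to all $t$ by right-continuity; the countable-additivity structure of $\mu$ and the fact that there are only countably many components at each level make this routine.

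The main obstacle I anticipate is bookkeeping rather than conceptual: carefully matching the time-rescaling in Definition \ref{def1} (where the $i$-th subtree is distributed as $\mu(\T_i(t))^{-\alpha}\T'$, a spatial rescaling) with the time-change $s\mapsto |\pi_i|^\alpha s$ in the fragmentation property, and making sure the conditioning $\sigma$-fields line up — Definition \ref{def1} conditions on $(\mu(\T_i(s)))_{i,s\le t}$, while the fragmentation property conditions on the partition $\Pi_\T(t)$, and one must argue the extra randomness in $(P_j)$ used to read off $\Pi_\T$ does not disturb the independence. Invoking the Lemma on restricting exchangeable processes to infinite blocks (via an arbitrary bijection $B\to\N$) is what lets me phrase the restricted process on each block as a genuine $\Pi_\T$-type construction and thereby close the argument cleanly.
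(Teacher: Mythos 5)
Your main argument — restricting $\Pi_{\T}(t+\cdot)$ to each block, transporting the block to $\N$ by a bijection, noting that the sampled points in a component are (conditionally) i.i.d.\ with law $\mu_{\T_i(t)}$, and then reading the fragmentation property off Definition \ref{def1} via the rescaling $\Pi_{a\T'}(s)=\Pi_{\T'}(s/a)$ with $a=|\pi_i|^{-\alpha}$ — is the paper's route. However, two of the steps you dismiss as quick contain genuine gaps. First, your justification of $\Pi_{\T}(0)=(\N,\emptyset,\ldots)$ is not valid: by the definition of $\Pi_{\T}$, indices whose points lie in \emph{different} connected components of $\T_{>0}$ are not equivalent, whether or not those components ``attach to $\rho$''. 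Nothing in Definition \ref{def1} says that $\T\setminus\{\rho\}$ is connected (if the root were a branch point, $\Pi_{\T}(0)$ would be a nontrivial partition), and the fact that each $P_i$ has positive height requires $\mu(\{\rho\})=0$, which you assert without proof. The missing idea, used in the paper, is to apply the self-similarity at $t=0$: $\T_1(0)\cup\{\rho\}$ is distributed as a rescaled independent copy of $\T$, and $\T_1(0)$ is connected by construction, so $\T\setminus\{\rho\}$ is a.s.\ connected; a similar argument gives $\mu(\{\rho\})=0$ a.s.

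Second, the definition of a fragmentation process demands that almost surely, for \emph{all} $t\ge0$ simultaneously, every block of $\Pi_{\T}(t)$ has an asymptotic frequency, and your plan ``law of large numbers at rational times, then extend by right-continuity'' does not deliver this. Writing $Y_j=ht(P_1\wedge P_j)$ and $F$ for its conditional distribution function given $(\T,\mu,P_1)$, sandwiching between rational times only gives $\liminf_n \frac1n\#\{j\le n: Y_j>t\}\ge 1-F(t)$ and $\limsup_n\le 1-F(t^-)$; these differ precisely at atoms of $F$, i.e.\ at heights where the $\mu$-mass of the tagged component jumps down. Such jump times occur in essentially every nontrivial case (at each dislocation the tagged component's mass drops to that of the child containing $P_1$, and in the non-conservative setting there are in addition dead leaves and mass lost to dust), so right-continuity alone cannot close the argument. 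The paper handles this by applying the Glivenko--Cantelli theorem conditionally on $(\T,\mu,P_1)$, which gives the convergence uniformly in $t$; alternatively one can apply the conditional strong law also at the (conditionally countable) set of atoms of $F$, but some step of this kind is needed — it is not routine bookkeeping.
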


\begin{proof} First of all, we need to check that, for all $t\geq0$, $\Pi_{\T}(t)$ is a random variable. We therefore fix $t>0$ and notice that the definition of $\Pi_{\T}(t)$ entails that, for all $i\in\N$ and $j\in\N$, 
	\[i\sim_{\Pi_{\T}(t)} j \Leftrightarrow ht (P_i\wedge P_j)>t,
\]
which is a measurable event. Thus, for all integers $n$ and all partitions $\psi$ of $[n]$, the event $\{\Pi_{\T}(t)\cap[n]=\psi\}$ is also measurable. It then follows that $\Pi_{\T}(t)\cap[n]$ is measurable for all $n\in\N$, and therefore $\Pi_{\T}(t)$ itself is measurable.

Next we need to check that $\Pi_{\T}$ is càdlàg. It is immediate from the definition that $\Pi_{\T}$ is decreasing (in the sense that $\Pi_{\T}(s)$ is finer than $\Pi_{T}(t)$ for $s>t$), and then that, for any $t$, $\Pi_{\T}(t)=\underset{s>t}\cup \Pi_{\T}(s)$, and thus the process is right-continuous. Similarly, the process has a left-limit at $t$ for all $t$, which is indentified as $\Pi_{\T}(t^-)=\underset{s<t}\cap \Pi_{\T}(s)$.

Exchangeability as a process of $\Pi_{\T}$ is an immediate consequence of the exchangeability of the sequence $(P_i)_{i\in\N}$.

The fact that, almost surely, all the blocks of $\Pi_{\T}(t)$ for $t\geq0$ have asymptotic frequencies is a consequence of the Glivenko-Cantelli theorem (see \cite{Dud}, Theorem 11.4.2). For $i\geq 2$, let $Y_i=ht(P_1\wedge P_i)$, then, for $t< Y_i$, $1$ and $i$ are in the same block of $\Pi_{\T}(t)$, and for $t\geq Y_i$, they are not. Then we have, for all $t\geq0$,
	\[\#(\Pi_{\T}(t)\cap[n])_{(1)}= 1 + \sum_{i=2}^n \mathbf{1}_{Y_i>t}.
\]
It then follows from the Glivenko-Cantelli theorem (applied conditionally on $\T$, $\mu$ and $P_1$) that, with probability one, for all $t\geq0$, $\frac{1}{n}\#(\Pi_{\T}(t)\cap[n])_{(1)}$ converges as $n$ goes to infinity, the limit being the $\mu$-mass of the tree component of $\T_{>t}$ containing $P_1$ (or $0$ if $ht(P_1)<t$). By replacing $1$ with any integer $i$, we get the almost sure existence of the asymptotic frequencies of $\Pi_{\T}$ at all times.

Let us now check that $\Pi_{\T}(0)=(\N,\emptyset,\ldots)$ almost surely, which amounts to saying that $\T\setminus\{\rho\}$ is connected. Apply the self-similar fragmentation property at time $0$: the tree $\T_1(0)\cup\{\rho\}$ (as in Definition \ref{def1}) has the same law as $\T$ up to a random multiplicative constant, and $\T_1$ is almost surely connected by definition. Thus $\T\setminus\{\rho\}$ is almost surely connected. A similar argument also shows that $\mu(\{\rho\})$ is almost surely equal to zero.

Finally, we need to check the $\alpha$-self-similar fragmentation property for $\Pi_{\T}$. Let $t\geq0$ and $\pi=\Pi_{\T}(t)$. For every integer $k$, we let $i(k)$ be the unique integer such that $k\in\pi_{i(k)}$ and, for every $i$, we let $\T_i(t)$ be the tree component of $\T_{>t}$ containing the points $P_k$ with $k\in\N$ such that $i(k)=i$ (if $\pi_i$ is a singleton, then $\T_i(t)$ is the empty set). We also add the natural rooting point $x_i$ of $\T_i$. Since, for all $k$, $i(k)$ is measurable knowing $\Pi_{\T}(t)$, we get that, conditionally on $(\T,\mu)$ and $\Pi_{\T}(t)$, $P_k$ is distributed according to $\mu_{\T_{i(k)}}$. From the independence property in Definition \ref{def1} then follows that the $(\Pi_{\T}(t+.)\cap\pi_i)_{i\in\mathbb{N}}$ are independent. We now just need to identify their law. If $i\in\N$ is such that $\pi_i$ is a singleton then there is nothing to do. Otherwise $\pi_i$ is infinite: let $f$ be any bijection $\N\to\pi_i$, and rename the points $P_k$ with $k$ such that $i(k)=i$ by letting $P'_k=P_{f^(k)}$. By the self-similarity of the tree, the partition-valued process built from $\T_i\cup\{x_i\}$ and the $P'_j$ (with $j\in\N$) has the same law as $\Pi_{\T}(|\pi_i|^{-\alpha}s)_{s\geq0}$, and therefore $\Pi_{\T}(t+.)\cap \pi_i$ has the same law as $\big(f\Pi^i(|\pi_i|^{\alpha}s)\big)_{s\geq0}$, which is what we wanted.

\end{proof}

Our main result is a kind of converse of this proposition, in law.

\begin{theo}\label{11} Let $\Pi^{\alpha}$ be a non-constant fragmentation process with index of similarity $\alpha<0$. Then there exists a random $\alpha$-self-similar tree $(\T_{\Pi^{\alpha}},\mu_{\Pi^{\alpha}})$ such that $\Pi_{\T_{\Pi^{\alpha}}}$ has the same law as $\Pi^{\alpha}$.
\end{theo}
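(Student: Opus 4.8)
The plan is to construct the tree $\T_{\Pi^\alpha}$ directly from the process $\Pi^\alpha$ by building a pseudometric on $\N$ (or rather on a suitable index set) out of the times at which integers separate, then completing and taking the quotient. Concretely, for two integers $i$ and $j$, let $D_{ij}$ be the first time $t$ at which $i$ and $j$ lie in different blocks of $\Pi^\alpha(t)$; I would also need the "death times" $D_i$, the first time $\{i\}$ is a singleton, which by the key results quoted in the excerpt (the exponential-functional representation of the tagged fragment) is almost surely finite since $\alpha<0$ and $\Pi$ is non-constant. Using self-similarity one checks the consistency relations needed to define, for each $i$, a height $h_i$ and then set $d(i,j) = h_i + h_j - 2 (\text{height of the branch point})$; the branch point height should be $D_{ij}$, suitably interpreted. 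The first real task is to verify that $d$ (extended by $d(i,i)=0$) satisfies the four-point condition, which will follow from the nesting/ultrametric-like structure of the separation times: for any three integers, two of the pairwise separation times are equal and the third is larger, exactly mirroring the tree structure.

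Next I would take the completion $(\overline{X}, \bar d)$ of $(\N, d)$ after quotienting by $d(\cdot,\cdot)=0$, and argue it is a compact rooted $\R$-tree. Connectedness and the four-point condition give the $\R$-tree property via the characterization (Proposition in the excerpt); compactness is where negativity of $\alpha$ is essential — it should follow from the fact that, for each $\epsilon>0$, only finitely many "macroscopic" fragments (mass bounded below) survive past any fixed level, combined with the finiteness of death times, so the tree has no infinite branching that escapes to infinity in a way that would violate total boundedness. This is the kind of argument Haas–Miermont use; I expect to reproduce it with the extra care that mass can be lost to dust and to sudden disappearance, so a branch can simply end (a leaf at finite height) without producing children, which is fine and does not hurt compactness. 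The root $\rho$ is the common ancestor of everything, at height $0$.

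Then I would put the measure $\mu_{\Pi^\alpha}$ on the tree. The natural candidate is the "empirical measure" limit: sample an exchangeable $(P_i)$, identify $P_i$ with the leaf (or skeleton point) corresponding to integer $i$, and let $\mu$ be the almost-sure weak limit of $\frac1n\sum_{i=1}^n \delta_{P_i}$, which exists by the same Glivenko–Cantelli / law-of-large-numbers reasoning used in the proof of Proposition \ref{1more} (indeed the mass of each subtree $\T_x$ is the asymptotic frequency of the corresponding block, which exists a.s.). Equivalently, I could define $m(x) = $ the asymptotic frequency of the block at $x$ and invoke Proposition \ref{01} to get a measure with $\mu(\T_x)=m(x)$; here I must check $m$ is decreasing, positive on the relevant subtrees, left-continuous, and satisfies $m(x)\ge m(x^+)$ — the last inequality encodes possible mass loss (dust/erosion) at the branch point, which is precisely the phenomenon that distinguishes this from the conservative case. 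One then verifies $\mu(\T_i(t))>0$ for all $t,i$, which holds because a macroscopic fragment always carries positive mass.

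Finally I would check the two required properties: that $(\T_{\Pi^\alpha},\mu_{\Pi^\alpha})$ is $\alpha$-self-similar in the sense of Definition \ref{def1}, and that $\Pi_{\T_{\Pi^\alpha}}$ has the same law as $\Pi^\alpha$. The second is almost tautological from the construction: the $(P_i)$ re-read off the tree the same separation times $D_{ij}$, so the induced partition process is $\Pi^\alpha$ on the nose (or has the same law, if the $(P_i)$ are reintroduced independently). The first is where the self-similar fragmentation property of $\Pi^\alpha$ and the extended/stopping-line version quoted in the excerpt get used: conditionally on the masses up to level $t$, the subtrees above level $t$ correspond to the blocks of $\Pi^\alpha(t)$, each of which — by the self-similarity and the Lamperti time change — evolves like an independent rescaled copy of the whole, with the spatial rescaling factor $|\pi_i|^{-\alpha}$ matching the tree rescaling in Definition \ref{def1}. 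The main obstacle, as I see it, is the compactness of the tree: one must rule out, using $\alpha<0$, that infinitely much structure accumulates, and handle carefully the two ways mass disappears (continuous erosion and the atom at $(0,0,\ldots)$ in $\nu$) so that they produce only leaves at finite height rather than breaking total boundedness; getting the measurability and càdlàg bookkeeping right while doing the limit over restrictions to $[n]$ is the technical heart of the argument.
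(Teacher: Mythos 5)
Your overall strategy (separation times, four-point condition, compactness via the Haas--Miermont argument, the measure via Proposition \ref{01}, self-similarity via the fragmentation property and the Lamperti time change) parallels the paper, but there is a genuine gap at the step where you build the tree itself: you propose to take the metric completion of the countable set of death points (the integers $i$ with $d(i,j)=D_i+D_j-2D_{\{i,j\}}$, plus the root) and assert that connectedness plus the four-point condition then yield an $\R$-tree. Completion does not fill in the paths between points, and the resulting space is in general \emph{not} connected. Concretely, take $c=0$ and a finite conservative dislocation measure (say binary splitting at rate one): above any skeleton point the current block persists unchanged for an exponential time before its first split, so every death point lying above that skeleton point is at distance bounded away from it; the closure of the set $\{Q_i\}$ therefore contains no skeleton point at all and is a totally disconnected set of leaves, not the genealogy tree. (Death points are dense in the tree only in special situations, e.g.\ with erosion or sudden total dislocations, and the theorem is claimed in full generality.) The fix is exactly what the paper does: one must adjoin the skeleton explicitly before completing, either abstractly by indexing points by pairs $(i,t)$ with $0\leq t\leq D_i$ and the pseudo-distance $d\big((i,t),(j,s)\big)=t+s-2\min(D_{\{i,j\}},s,t)$, or via the stick-breaking embedding in $\ell^1$ where the tree is defined as the closure of $\bigcup_i\llbracket 0,Q_i\rrbracket$. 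Your four-point verification and everything downstream then go through, but as written the object you complete is the wrong one.

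Two smaller points. First, the identification of the law of $\Pi_{\T}$ with that of $\Pi^{\alpha}$ is not tautological: the sampled points $(P_i)$ are i.i.d.\ from $\mu$ and are not the death points, so the induced partition process does not reproduce the separation times ``on the nose''; one shows instead (law of large numbers, as in the paper) that for each $t$ the blocks of $\Pi_{\T}(t)$ have the same ranked asymptotic frequencies as those of $\Pi^{\alpha}(t)$, and then invokes Bertoin's Poissonian characterization, i.e.\ that the ranked-mass process determines $(\alpha,c,\nu)$ and hence the law. Second, you correctly flag measurability of $(\T,\mu)$ as an element of $\TT_W$ as a technical burden but propose no mechanism; the paper's $\ell^1$ embedding together with the $d_{GHP}$-convergence of the finite trees $(\T_{[n]},\mu_{[n]})$ is precisely what makes the map $\TREE$ measurable, and some such device is needed for the statement ``there exists a random tree'' to make sense.
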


\begin{rem} This is analogous to a recent result obtained by Chris Haulk and Jim Pitman in \cite{HP11}, which concerns exchangeable hierarchies. An exchangeable hierarchy can be seen as a fragmentation of $\mathbb{N}$ where one has forgotten time. Haulk and Pitman show that, just as with self-similar fragmentations, in law, every exchangeable hierarchy can be sampled from a random measured tree.
\end{rem}

The rest of this section is dedicated to the proof of Theorem \ref{11}. We fix from now on a fragmentation process $\Pi^{\alpha}$ (defined on a certain probability space $\Omega$) and will build the tree $\T$ and the measure $\mu$ (now omitting the index $\Pi^{\alpha}$).

\subsection{The genealogy tree of a fragmentation}
We are here going to give an explicit description of $\T$ which has the caveat of not showing that $\T$ is a random variable, i.e. a $d_{GH}$-measurable function of $\Pi^{\alpha}$ (something we will do in the following section). Since this construction is completely deterministic, we will slightly change our assumptions and at first consider a single element $\pi$ of $\D$ which is decreasing (the partitions get finer with time). For every integer $i$, let $D_i$ be the smallest time at which $i$ is in a singleton of $\pi$ and for every block $B$ with at least two elements, let $D_B$ be the smallest time at which all the elements of $B$ are not in the same block of $\pi$ anymore. We will assume that $\pi$ is such that all these are finite.

\begin{prop} There is, up to bijective isometries which preserve roots, a unique complete rooted $\R$-tree $\T$ equipped with points $(Q_i)_{i\in\N}$ such that:

(i) For all $i$, $ht(Q_i)=D_i$.

(ii) For all pairs of integers $i$ and $j$, we have $ht(Q_i \wedge Q_j)= D_{\{i,j\}}$.

(iii) The set $\underset{i\in\N}\cup\llbracket\rho,Q_i\rrbracket$ is dense in $\T$.
\end{prop}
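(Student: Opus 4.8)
The plan is to build $\T$ as a quotient of an abstract set indexed by the integers and their genealogical data, and then to verify the four-point condition. Concretely, I would first define a pseudometric directly on the index set $\N$ by setting, for $i\neq j$,
\[
d_0(i,j) = D_i + D_j - 2 D_{\{i,j\}},
\]
which is the formula forced on us by the requirement that distances be recovered from heights and greatest common ancestors together with $(i)$ and $(ii)$. The first task is to check that $d_0$ is genuinely a pseudometric; symmetry is clear, and the triangle inequality amounts to the combinatorial fact that for any three integers $i,j,k$, among the three times $D_{\{i,j\}}$, $D_{\{i,k\}}$, $D_{\{j,k\}}$ the two smallest are equal — this holds because $\pi$ is decreasing, so the blocks $\pi_{(i)}(t)$ form a nested family as $t$ grows and the three "separation times" behave like the heights of pairwise meets in an ultrametric-type picture. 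From this one deduces $d_0(i,j)\le\max(d_0(i,k),d_0(j,k))$-type inequalities that give the triangle inequality and, more importantly, the four-point condition for quadruples of the $Q_i$'s.

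Next I would embed this discrete data into a genuine $\R$-tree. The natural move is: for each $i$, attach an isometric copy of the segment $[0,D_i]$ (thought of as $\llbracket\rho,Q_i\rrbracket$), take the disjoint union of all these segments together with a common basepoint $\rho$ at height $0$, and glue segment $i$ to segment $j$ along their initial sub-segments of length $D_{\{i,j\}}$ — i.e. identify the point at height $s$ on segment $i$ with the point at height $s$ on segment $j$ for all $s\le D_{\{i,j\}}$. The consistency of these gluings across triples is exactly the "two smallest of the three are equal" statement above, so the quotient is well-defined; call the resulting metric space $\T_0$. Then I take $\T$ to be the completion of $\T_0$. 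By construction each $\llbracket\rho,Q_i\rrbracket$ is isometric to $[0,D_i]$, property $(i)$ holds, property $(ii)$ holds because the meet of segments $i$ and $j$ is precisely the glued portion of length $D_{\{i,j\}}$, and property $(iii)$ holds because $\T_0=\bigcup_i\llbracket\rho,Q_i\rrbracket$ is dense in its completion by definition.

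It then remains to check that $\T$ is an $\R$-tree, for which I would use the four-point characterization (the Proposition quoted from \cite{Ev}): connectedness is clear since every point is joined to $\rho$ by a segment, and the four-point condition on all of $\T$ follows from the four-point condition on the dense subset $\bigcup_i\llbracket\rho,Q_i\rrbracket$ by continuity of the metric, which in turn reduces — since any point of a segment $\llbracket\rho,Q_i\rrbracket$ lies below $Q_i$ — to the four-point inequality among the $Q_i$'s established combinatorially above. Finiteness of all $D_i$ and $D_B$ (our standing assumption on $\pi$) guarantees the segments have finite length so the construction makes sense and $\rho$ is a genuine point. Finally, uniqueness up to root-preserving isometry: if $(\T',\rho',(Q'_i))$ is another such space, the map $Q_i\mapsto Q'_i$ extends uniquely to an isometry $\llbracket\rho,Q_i\rrbracket\to\llbracket\rho',Q'_i\rrbracket$ for each $i$ (segments are rigid), these maps agree on overlaps by $(ii)$, hence they glue to an isometry of $\bigcup_i\llbracket\rho,Q_i\rrbracket$ onto $\bigcup_i\llbracket\rho',Q'_i\rrbracket$, and by density $(iii)$ plus completeness this extends to an isometry $\T\to\T'$ sending $\rho$ to $\rho'$.

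I expect the main obstacle to be the bookkeeping in the gluing construction: making precise that the family of identifications "height $\le D_{\{i,j\}}$ on segment $i$ $\sim$ height $\le D_{\{i,j\}}$ on segment $j$" generates an equivalence relation whose quotient is a metric space with the metric induced by $d_0$ (and not merely a pseudometric), and that the quotient metric restricted to each segment is still the Euclidean one. All of this hinges on the single combinatorial lemma that for every triple $i,j,k$ the two smallest of $D_{\{i,j\}},D_{\{i,k\}},D_{\{j,k\}}$ coincide; once that is in hand the rest is routine, so I would state and prove that lemma first and carefully.
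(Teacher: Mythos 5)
Your proposal is correct and follows essentially the same route as the paper: the paper's existence proof is exactly your gluing construction, phrased as the quotient of $\{(i,t):i\in\N,\,0\leq t\leq D_i\}$ by the pseudometric $d((i,t),(j,s))=t+s-2\min(D_{\{i,j\}},s,t)$ (your endpoint formula extended along segments), followed by a case-analysis check of the four-point condition, metric completion, and the same density-plus-rigidity argument for uniqueness. The only cosmetic difference is that the paper verifies the four-point inequality directly on arbitrary quadruples $(i,t)$ rather than reducing to the $Q_i$'s, which amounts to the same nested-splitting-times case analysis you isolate as your key combinatorial lemma.
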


$\T$ will then be called the \emph{genealogy tree} of $\pi$ and for all $i$, $Q_i$ will be called the \emph{death point} of $i$.

\begin{proof} Let first prove the uniqueness of $\T$. We give ourselves another tree $\T'$ with root $\rho'$ and points $(Q'_i)_{i\in\N}$ which also satisfy $(i)$, $(ii)$ and $(iii)$. First note that, if $i$ and $j$ are two integers such that $Q_i=Q_j$, then $D_{\{i,j\}}=D_i=D_j$ and thus $Q'_i=Q'_j$. This allows us to define a bijection $f$ between the two sets $\{\rho\}\cup\{Q_i,\; i\in\N\}$ and $\{\rho'\}\cup\{Q'_i,\; i\in\N\}$ by letting $f(\rho)=\rho'$ and, for all $i$, $f(Q_i)=Q'_i$. Now recall that we can recover the metric from the height function and the partial order: we have, for all $i$ and $j$, $d(Q_i,Q_j)=D_i+D_j-2D_{\{i,j\}},$ and the same is true in $\T'$. Thus $f$ is isometric and we can (uniquely) extend it to a bijective isometry between $\underset{i\in\N}\cup\llbracket\rho,Q_i\rrbracket$ and $\underset{i\in\N}\cup\llbracket\rho',Q'_i\rrbracket$, by letting, for $i\in\N$ and $t\in [0,D_i]$, $f(\phi_{\rho,Q_i}(t))=\phi_{\rho',Q'_i}(t)$. To check that this is well defined, we just need to note that, if $i$, $j$ and $t$ are such that $\phi_{\rho,Q_i}(t)=\phi_{\rho,Q_j}(t)$, then $t\leq D_{\{i,j\}}$ and thus we also have $\phi_{\rho',Q'_i}(t)=\phi_{\rho',Q'_j}(t)$. This extension is still an isometry because it preserves the height and the partial order and is surjective by definition, thus it is a bijection. By standard properties of metric completions, $f$ then extends into a bijective isometry between $\T$ and $\T'$.

\medskip

To prove the existence of $\T$, we are going to give an abstract construction of it. Let 
	\[\mathcal{A}_0 = \{(i,t), \, i\in\mathbb{N}, 0\leq t\leq D_i\}.
\]
A point $(i,t)$ of $\mathcal{A}_0$ should be thought of as representing the block $\pi_{(i)}(t)$. We equip $\mathcal{A}_0$ with the pseudo-distance function $d$ defined such: for all $x=(i,t)$ and $y=(j,s)$ in $\mathcal{A}_0$,
	\[d(x,y)= t+s-2\min(D_{\{i,j\}},s,t).
\]
(equivalently, $d(x,y)= t+s-2D_{\{i,j\}}$ if $D_{\{i,j\}}\leq s,t$ and $d(x,y)=|t-s|$ otherwise.) Let us check that $d$ verifies the four-point inequality (which in particular, implies the triangle inequality). Let $x=(i,t)$, $y=(j,s)$, $u=(k,a)$, $v=(l,b)$ be in $\mathcal{A}_0$, we want to check that, out of $\min(D_{\{i,j\}},t,s) + \min(D_{\{k,l\}},a,b)$,  $\min(D_{\{i,k\}},t,a)+\min(D_{\{j,l\}},s,b)$ and $\min(D_{\{i,l\}},t,b)+\min(D_{\{j,k\}},s,a)$, two are equal and the third one is bigger. Now, there are, up to reordering, two possible cases: either $i$ and $j$ split from $k$ and $l$ at the same time or $i$ splits from $\{j,k,l\}$ at time $t_1\geq0$, then splits $j$ from $\{k,l\}$ at time $t_2\geq t_1$ and then splits $k$ from $l$ at time $t_3 \geq t_2$. After distinguishing these two cases, the problem can be brute-forced through.

Now we want to get an actual metric space out of $\mathcal{A}_0$: this is done by identifying two points of $\mathcal{A}_0$ which represent the same block. More precisely, let us define an equivalence relation $\sim$ on $\mathcal{A}_0$ by saying that, for every pair of points $(i,t)$ and $(j,s)$, $(i,t)\sim(j,s)$ if and only if $d\big((i,t),(j,s)\big)=0$ (which means that $s=t$ and that $i\sim_{\Pi(t^-)}j$). Then we let $\mathcal{A}$ we the quotient set of $\mathcal{A}_0$ by this relation:
	\[\mathcal{A} = \mathcal{A}_0/\sim \; .
\]
The pseudo-metric $d$ passes through the quotient and becomes an actual metric. Even better, the four-point condition also passes through the quotient, and $\mathcal{A}$ is trivially path-connected: every point $(i,t)$ has a simple path connecting it to $(i,0)\sim(1,0)$, namely the path $(i,s)_{0\leq s\leq t}$. Therefore, $\mathcal{A}$ is an $\R$-tree, and we will root it at $\rho=(1,0)$. Finally, we let $\T$ be the metric completion of $\mathcal{A}$. It is still a tree, since the four-point condition and connectedness easily pass over to completions.

It is simple to see that $\T$ does satisfy assumptions $(i)$, $(ii)$, $(iii)$ by chosing $Q_i=(i,D_i)$ for all $i$: $(i)$ and $(iii)$ are immediate, and $(ii)$ comes from the definition of $d$, which is such that for all $i$ and $j$, $d\big((i,D_i),(j,D_j)\big)=D_i+D_j-2D_{i,j}$.
\end{proof}

%From now on, when talking about pairs $(i,t)$ with integer $i$ and nonnegative $t$, we will always have the equivalence relation $d=0$ in mind. This means that, when we write $(i,t)=(j,s)$, we don't mean that $i=j$ and $t=s$ but only that the two elements of $\T$ which they represent are equal, i.e. that $d((i,t),(j,s))=0.$

The natural order on $\T$ is simply described in terms of $\pi$:
\begin{prop} Let $(i,t)$ and $(j,s)$ be in $\mathcal{A}$. We have $(i,t)\leq (j,s)$ if and only if $t\leq s$ and $j$ and $i$ are in the same block of $\pi(t^-)$.
\end{prop}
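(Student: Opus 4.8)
The plan is to verify the characterization of the order $(i,t)\leq(j,s)$ directly from the construction of $\T$ given in the previous proposition, using the fact that in an $\R$-tree the order is recovered by $x\leq y \iff ht(x)+d(x,y)=ht(y)$ (equivalently $x\wedge y = x$). Here $ht$ is the height measured from $\rho=(1,0)$, so $ht((i,t))=d((1,0),(i,t))=t$ by the formula for $d$ (since $D_{\{1,i\}}\le t$ is automatic only when... actually one must note $\min(D_{\{1,i\}},t,0)=0$, giving $ht((i,t))=t$). So the claim $(i,t)\le(j,s)$ is equivalent to $t + d((i,t),(j,s)) = s$.

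First I would dispose of the easy direction. Suppose $t\le s$ and $i\sim_{\pi(t^-)}j$, which by definition means $D_{\{i,j\}}\geq t$. Then $\min(D_{\{i,j\}},s,t)=t$, so $d((i,t),(j,s)) = t + s - 2t = s-t$, and hence $t + d((i,t),(j,s)) = s$, giving $(i,t)\le(j,s)$. Conversely, suppose $(i,t)\le(j,s)$, i.e. $t+d((i,t),(j,s))=s$. Since $d\geq 0$ this forces $t\le s$. Plugging in the formula, $t + \big(t+s-2\min(D_{\{i,j\}},s,t)\big) = s$, i.e. $2t = 2\min(D_{\{i,j\}},s,t)$, so $\min(D_{\{i,j\}},s,t)=t$; since $t\le s$ this means $D_{\{i,j\}}\ge t$, which is exactly $i\sim_{\pi(t^-)}j$ (using the convention from the construction that $(i,t)\sim(j,t)$ iff $i\sim_{\pi(t^-)}j$). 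This handles points of $\mathcal{A}_0$; since the statement is about classes in $\mathcal{A}$ one should remark that both the hypothesis and the conclusion are invariant under the equivalence $\sim$, so the computation descends to $\mathcal{A}$ without change.

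The one genuine subtlety — and the step I expect to need the most care — is justifying that the order on the $\R$-tree $\T$, when restricted to the dense subset $\mathcal{A}$, is detected by the identity $ht(x)+d(x,y)=ht(y)$, and that no issue arises from passing to the completion. This is really just the standard fact that in a rooted $\R$-tree $x\leq y$ (meaning $x\in\llbracket\rho,y\rrbracket$) iff $\llbracket\rho,x\rrbracket\subset\llbracket\rho,y\rrbracket$ iff $d(\rho,x)+d(x,y)=d(\rho,y)$, which follows from the uniqueness of geodesics in the definition of an $\R$-tree; I would cite this as an immediate property rather than reprove it. Since $(i,t)$ and $(j,s)$ both lie in $\mathcal{A}$ and the root $\rho=(1,0)$ lies in $\mathcal{A}$, the relevant geodesics $\llbracket\rho,(i,t)\rrbracket$ and $\llbracket\rho,(j,s)\rrbracket$ are the explicit paths $(i,r)_{0\le r\le t}$ and $(j,r)_{0\le r\le s}$ exhibited in the construction, so the whole argument stays inside $\mathcal{A}$ and the completion plays no role. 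With that observation in place the proof is just the two short computations above.
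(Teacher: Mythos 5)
Your proof is correct, and it takes a mildly different route from the paper's. The paper's own argument detects the order by identifying the segment $\llbracket\rho,(j,s)\rrbracket$ explicitly as $\{(j,u),\,0\leq u\leq s\}$ (asserted without further justification), so that $(i,t)\leq(j,s)$ iff $t\leq s$ and $(i,t)\sim(j,t)$, and then unpacks the equivalence exactly as you do, via $t\leq D_{\{i,j\}}$ iff $i\sim_{\pi(t^-)}j$. You instead detect the order metrically, through the standard fact that in a rooted $\R$-tree one has $x\leq y$ iff $ht(x)+d(x,y)=ht(y)$, and then compute with the explicit formula $d\big((i,t),(j,s)\big)=t+s-2\min(D_{\{i,j\}},s,t)$ together with the observation $ht\big((i,t)\big)=t$. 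The two arguments are of comparable length; yours buys a slightly more self-contained verification (no need to check that the geodesic from the root to $(j,s)$ is the explicit path), at the cost of invoking the metric characterization of the order, which, as you note, follows from uniqueness of geodesics in an $\R$-tree and is fair to cite as standard (the converse direction uses the concatenation of $\llbracket\rho,x\rrbracket$ and $\llbracket x,y\rrbracket$ being an isometric embedding when $d(\rho,x)+d(x,y)=d(\rho,y)$). Your remarks that both sides of the equivalence are invariant under $\sim$, so the computation descends to $\mathcal{A}$, and that the completion plays no role, are correct points that the paper leaves implicit.
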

\begin{proof} By definition, we have $(i,t)\leq (j,s)$ if and only if $(i,t)$ is on the segment joining the root and $(j,s)$. Since this segment is none other than $(j,u)_{u\leq s}$, this means that $(i,t)\leq (j,s)$ if and only if $t\leq s$ and $(i,t)\sim(j,t)$. Now, recall that $(i,t)\sim(j,t)$ if and only if $2t - 2\min(D_{i,j},t)=0$, i.e. if and only if $t\leq D_{i,j}$, and then notice that this last equation is equivalent to the fact that $i$ and $j$ are in the same block of $\pi(t^-).$ This ends the proof.
\end{proof}

The genealogy tree has a canonical measure to go with it, at least under a few conditions: assume that $\T$ is compact, that, for all times $t$, $\pi(t^-)$ has asymptotic frequencies, and that, for all $i$, the function $t\mapsto |\pi_{(i)}(t^-)|$ (the asymptotic frequency of the block of $\pi(t^-)$ containing $i$) is left-continuous (this is not necessarily true, but when it is true it implies that the notation is in fact not ambiguous). Then Proposition \ref{01} tells us that there exists a unique measure $\mu$ on $\T$ such that, for all $(i,t)\in\T, \mu(T_{i,t})=|\pi_{(i)}(t^-)|$.

\subsection{A family of subtrees, an embedding in $\ell^1$, and measurability}

\begin{prop}\label{12}
There exists a measurable function $\TREE: \D \to \TT_W$ such that, when $\Pi^{\alpha}$ is a self-similar fragmentation process, $\TREE(\Pi^{\alpha})$ is the genealogy tree $\T$ of $\Pi^{\alpha}$ equipped with its natural measure.
\end{prop}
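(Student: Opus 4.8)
The plan is to build $\TREE$ explicitly as a composition of measurable maps, approximating the genealogy tree by an increasing sequence of finite subtrees spanned by the first $n$ death points. First I would reduce to the homogeneous case: recall that there is a measurable map $G^\alpha: \D\to\D$ sending $\Pi=\Pi^{0,c}$ to $\Pi^\alpha=\Pi^{\alpha,c}$, so it suffices to work on generic decreasing elements $\pi$ of $\D$ for which all the times $D_i$ and $D_B$ are finite (a measurable subset of $\D$ of full measure under the law of $\Pi^\alpha$, since $\alpha<0$ forces a.s. finiteness of the exponential functionals); off this set we may send $\pi$ to an arbitrary fixed tree, say a point. So fix such a $\pi$.

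For each $n$, let $\T^{(n)}(\pi)$ be the finite rooted $\R$-tree with $n$ labelled leaves obtained from the matrix of real numbers $\big(D_{\{i,j\}}\big)_{1\le i,j\le n}$ together with the heights $(D_i)_{1\le i\le n}$: concretely, $\T^{(n)}$ is $\underset{i\le n}\cup\llbracket\rho,Q_i\rrbracket\subset\T$, whose isometry class (with its $n$ marked points) is a measurable function of that finite family of reals — each $D_i$ and each $D_{\{i,j\}}$ is a measurable functional of $\pi$ through the evaluation maps $\pi\mapsto\pi\cap[n]$, and the reconstruction of a finite tree from pairwise ``split times'' is an explicit continuous map on the (measurable) set of matrices satisfying the ultrametric-type compatibility forced by the four-point condition. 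Equip $\T^{(n)}$ with the measure $\mu^{(n)}=\frac1n\sum_{i=1}^n\delta_{Q_i}$. Then $(\T^{(n)},\mu^{(n)})$ is a measurable $\TT_W$-valued function of $\pi$. The genealogy tree $\T$ is the completion of $\underset{n}\cup\T^{(n)}$, and I claim $(\T^{(n)},\mu^{(n)})\to(\T,\mu)$ in $d_{GHP}$ along the full-measure set of $\pi$ on which $\T$ is compact and the asymptotic frequencies $|\pi_{(i)}(t^-)|$ are well-defined and left-continuous: for the Gromov–Hausdorff part one uses compactness of $\T$ together with condition (iii) (density of $\underset i\cup\llbracket\rho,Q_i\rrbracket$), so that for $\varepsilon>0$ a finite $\varepsilon$-net is captured by $\T^{(N)}$ for $N$ large; for the measure part one uses the paintbox/Glivenko–Cantelli reasoning already run in the proof of Proposition~\ref{1more} (applied conditionally), which shows $\frac1n\#\{i\le n:\, Q_i\in\T_x\}\to|\pi_{(i_x)}(ht(x)^-)|=\mu(\T_x)$ for each $x$ of the form $(i,t)$, hence $\mu^{(n)}\to\mu$ weakly. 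Since $\TT_W$ is Polish (Proposition on Polishness of $(\TT_W,d_{GHP})$) and a pointwise limit of measurable maps into a metric space is measurable, $\pi\mapsto(\T,\mu):=\lim_n(\T^{(n)},\mu^{(n)})$ is a measurable map; define $\TREE$ to be this composed with $G^\alpha$ (and the cut-off to a point on the bad set). By the uniqueness part of the genealogy-tree proposition, $\TREE(\Pi^\alpha)$ is indeed the genealogy tree with its natural measure whenever $\Pi^\alpha$ is a self-similar fragmentation.

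The main obstacle is the measure-convergence step, i.e. verifying $\mu^{(n)}\to\mu$ and that the limit really is the measure $\mu$ produced by Proposition~\ref{01}. Two things need care here. First, $\mu(\T_x)=m(x)$ for $m(x)=|\pi_{(i)}(t^-)|$ only determines $\mu$ on subtrees of the form $\T_x$ with $x$ a ``rational'' point $(i,t)$, and one must check these generate the Borel $\sigma$-field of $\T$ and that weak limits are pinned down by their values on this class — this is exactly the content (and delicacy) of Proposition~\ref{01}, which we are told to assume, so the burden reduces to checking left-continuity of $t\mapsto|\pi_{(i)}(t^-)|$ and the inequality $m(x)\ge m(x^+)$ a.s., both of which follow from the Poissonian construction and from $\sum$-superadditivity of asymptotic frequencies under refinement. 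Second, the Glivenko–Cantelli argument must be made uniform enough to upgrade pointwise convergence of $\mu^{(n)}(\T_x)$ into genuine Prokhorov convergence on the compact space $\T$; this is handled exactly as in Proposition~\ref{1more} by running the classical Glivenko–Cantelli theorem along the countable family of leaves and invoking compactness to promote it to a statement about all of $\T$ simultaneously. Everything else — finiteness of the relevant death times, measurability of the finite reconstruction maps, and the reduction via $G^\alpha$ — is routine bookkeeping with the evaluation $\sigma$-field on $\D$.
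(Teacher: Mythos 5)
Your overall strategy is the paper's: approximate the genealogy tree by the finite subtrees spanned by the first $n$ death points, equip them with the empirical measures $\frac{1}{n}\sum_{i\le n}\delta_{Q_i}$, show each approximation is a measurable function of $\pi$ (the paper does this via an explicit $\ell^1$ embedding, you via reconstruction from the matrix of split times, which is an acceptable variant), and define $\TREE$ as the limit on the measurable set where the limit exists, using Polishness of $\TT_W$. Two points, however, are genuine problems. First, your final definition, ``$\TREE$ is the limit map composed with $G^{\alpha}$,'' does not meet the statement: the proposition requires $\TREE(\Pi^{\alpha})$ to be the genealogy tree of $\Pi^{\alpha}$, and $\Pi^{\alpha}$ is already a decreasing element of $\D$ with a.s. finite death times, so $\TREE$ must be the limit map itself; composing with $G^{\alpha}$ applies the Lamperti time change a second time and would produce the tree of a different process. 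The ``reduction to the homogeneous case'' also goes the wrong way, since for the homogeneous process the times $D_i$ are in general infinite.

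Second, and more substantively, you assert that the set of $\pi$ on which $\T$ is compact (hence on which $d_{\ell^1,H}(\T^{(n)},\T)\to 0$ by density) has full measure under the law of $\Pi^{\alpha}$, justifying this only by the a.s. finiteness of the exponential functionals, i.e. of each $D_i$. Finiteness of every $D_i$ does not give compactness of the completion: the needed input is the uniform statement that for every $t$ and $\epsilon>0$ only finitely many blocks of $\Pi^{\alpha}(t)$ survive beyond time $t+\epsilon$, which yields tightness of the death points and the Cauchy property of the sequence of finite subtrees (Lemma \ref{13} and the lemma following it, resting on Lemma 5 of Haas--Miermont and crucially on $\alpha<0$). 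Without this, your Gromov--Hausdorff step is circular: you restrict to the event where $\T$ is compact but never show that this event is almost sure. On the measure side your sketch is essentially right in outline (pin the limit down on the sets $\T_{(i,t)}$ and $\T_{(i,t^+)}$ and invoke the uniqueness in Proposition \ref{01}), though the appeal to Glivenko--Cantelli is misplaced: the convergence $\mu_{[n]}(\T_{(i,t)})\to|\pi_{(i)}(t^-)|$ is just the definition of asymptotic frequency, and the upgrade to weak convergence is obtained from Prokhorov compactness together with the portmanteau theorem, as in Lemma \ref{14}.
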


This will be proven by providing an embedding of $\T$ in the space $\ell^1$ of summable real-valued sequences:
	\[\ell^1=\{x=(x_i)_{i\in\N}; \sum_{i=1}^{\infty}|x_i| <\infty\}
\]
and approximating $\T$ by a family of simpler subtrees. For any finite block $B$, let $\T_B$ be the tree obtained just as before but limiting $\pi$ to the integers which are in $B$:
	\[\T_B = \{(i,t), \, i\in B, 0\leq t\leq D_i\}/\sim \; .
\]
Every $\T_B$ is easily seen to be an $\R$-tree since it is a path-connected subset of $\T$, and is also easily seen to be compact since it is just a finite union of segments. Also note that one can completely describe $\T_B$ by saying that it is the reunion of segments indexed by $B$, such that the segment indexed by integer $i$ has length $D_i$ and two segments indexed by integers $i$ and $j$ split at height $D_{\{i,j\}}$.

The tree $\T_B$ is also equipped with a measure called $\mu_B$, which we define by

  \[\mu_{B}=\frac{1}{\# B}\sum_{i\in B} \delta_{Q_i}.
\]

\begin{figure}[ht]
\centering

{\includegraphics{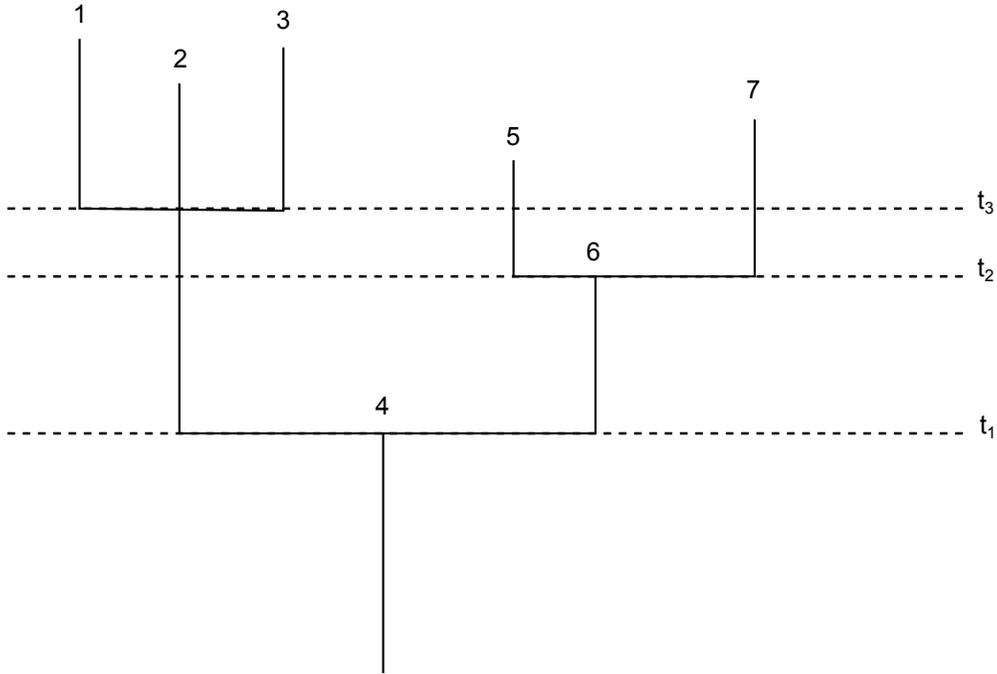}}
\caption{A representation of $\T_{[7]}.$ Here, $D_{[7]}=t_1$, $D_{\{5,6,7\}}=t_2$ and $D_{\{1,2,3\}}=t_3$}
\end{figure}

Let us provide a simultaneous embedding of $\T_B$ in $l^1$ for all $B$ such that, if $B\subset C$, $\T_B\subset \T_C$. It should be clear that the crucial part of this embedding will be the points $(i,D_i)$ for integers $i$. We are therefore going first to build points $Q_i$ in $l^1$ which will be the images of all the $(i,D_i)$ through our embedding. We use a method inspired by Aldous' "stick-breaking" method used in \cite{A3}: the path from $0$ to $Q_i$ will be followed by "increasing the coordinate corresponding to the smallest integer in the block containing $i$".

More precisely, let $i\in B$ and $j\leq i$, we let $Q_i^j$ be the total time for which $j$ has been the smallest element of the block of $\pi$ containing $i$. If $1<j<i$, this can be written as
	\[Q_i^j= \underset{k\leq j}\max \, D_{\{k,i\}}-\underset{k\leq j-1}\max\, D_{\{k,i\}},
\]
while $Q_i^1=D_{\{1,i\}}$ and $Q_i^i=D_i - \underset{k\leq i-1}\max \, D_{\{k,i\}}$. By then letting
	\[Q_i=(Q^1_i,Q^2_i,\ldots,Q^i_i,0,0,\ldots),
\]
we have defined a point $Q_i$ which has norm $D_i$.

Now that we have constructed what are going to be the endpoints of $T_B$, we need to explicit the paths from $0$ to those endpoints. Let, for every $n$, $p_n$ be the natural projection of $\ell^1$ onto $\R^n\times\{(0,0,\ldots)\}$ which sets all coordinates after the first $n$ ones to $0$. Then, for $x\in \ell^1$, we define the specific path
	\[\llbracket 0,x\rrbracket=\cup_{n=0}^{\infty} [p_n(x),p_{n+1}(x)]
\]
(where, for two points $a$ and $b$, $[a,b]$ is the line segment between those two points).

We will now prove that the set $\cup_{i\in B} \llbracket 0,Q_i\rrbracket,$ equipped with the metric inherited from the $\ell^1$ norm, is isometric to $\T_B$. We only need to check that, for integers $i$ and $j$, the segments $\llbracket 0,Q_i\rrbracket$ and $\llbracket 0,Q_j\rrbracket$ coincide until time $D_{\{i,j\}}$ and never cross afterwards. Notice that, for integers $k$ such that $D_{\{k,i\}}<D_{\{i,j\}}$, we have $D_{\{k,i\}}=D_{\{k,j\}}$. Then by construction, the two segments do indeed coincide until time $D_{\{i,j\}}$. After this time, the smallest element of the blocks containing $i$ and $j$ will always be different, so the paths will always follow different coordinates, and therefore they will never cross again.

\begin{lemma} For every finite block $B$, there exists a measurable function $\TREE_B: \D \to \TT_W$ such that, when $\pi$ is a decreasing element of $\D$ such that $D_i$ is finite for all $i$, $\TREE_B(\pi)$ is the tree $\T_B$ defined above, equipped with the measure $\mu_B$.
\end{lemma}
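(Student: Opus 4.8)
The plan is to lean on the explicit $\ell^1$-embedding built above, which exhibits $(\T_B,\mu_B)$ as a compact rooted measured subset of the \emph{fixed} Polish space $\ell^1$ depending on $\pi$ through only finitely many real numbers; measurability of $\TREE_B$ then splits into two routine verifications: that those numbers are measurable functionals of $\pi$, and that the passage from them to $\TT_W$ is continuous.

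First I would check measurability of the quantities entering the construction. For $i,j$ with $\max(i,j)=:N$, the set $\{\pi\in\p_\N:i\not\sim_\pi j\}$ is clopen (the restriction map $\pi\mapsto\pi\cap[N]$ is continuous and $\p_{[N]}$ is finite, hence discrete), so along a c\`adl\`ag path $\pi$ the function $t\mapsto\mathbf 1_{\{i\not\sim_{\pi(t)}j\}}$ is $\{0,1\}$-valued and c\`adl\`ag; therefore $D_{\{i,j\}}=\inf\{t\ge0:i\not\sim_{\pi(t)}j\}$ satisfies $\{D_{\{i,j\}}<s\}=\bigcup_{q\in\mathbb Q\cap[0,s)}\{i\not\sim_{\pi(q)}j\}$ by right-continuity, and is thus a measurable $[0,\infty]$-valued functional of $\pi$; the same argument applies to $D_i=\inf\{t:\pi_{(i)}(t)=\{i\}\}$. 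Consequently the set $\D_B$ of $\pi\in\D$ on which all the $D_{\{k,i\}}$ and $D_i$ ($i\in B$, $k\le i$) used in the construction are finite is a measurable subset of $\D$, and on $\D_B$ the map $\pi\mapsto(Q_i(\pi))_{i\in B}\in(\ell^1)^B$ is measurable, since each coordinate $Q_i^j$ is a finite combination of maxima and differences of the $D_{\{k,i\}}$ and $D_i$.

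Next I would show that the assembly map $\Phi\colon(\ell^1)^B\to\TT_W$ sending $(x_i)_{i\in B}$ to the isometry class of $\big(\bigcup_{i\in B}\llbracket 0,x_i\rrbracket,\,0,\,\tfrac1{\#B}\sum_{i\in B}\delta_{x_i}\big)$ is continuous. The key point is that $x\mapsto\llbracket 0,x\rrbracket$ is $1$-Lipschitz from $\ell^1$ to the compact subsets of $\ell^1$ equipped with the Hausdorff metric: using the broken-line formula $\llbracket 0,x\rrbracket=\bigcup_n[p_n(x),p_{n+1}(x)]$, a point $(1-\theta)p_n(x)+\theta p_{n+1}(x)$ of $\llbracket 0,x\rrbracket$ ($n\ge0$, $\theta\in[0,1]$) is matched with $(1-\theta)p_n(y)+\theta p_{n+1}(y)\in\llbracket 0,y\rrbracket$, and the two are at $\ell^1$-distance at most $\|x-y\|_1$ because $\|p_m(x)-p_m(y)\|_1\le\|x-y\|_1$ for all $m$. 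Together with $d_{\ell^1,H}(\bigcup_iA_i,\bigcup_iA_i')\le\max_i d_{\ell^1,H}(A_i,A_i')$ this makes $(x_i)\mapsto\bigcup_i\llbracket 0,x_i\rrbracket$ Hausdorff-continuous; meanwhile $(x_i)\mapsto\tfrac1{\#B}\sum_i\delta_{x_i}$ is continuous for the Prokhorov topology and the root $0$ is fixed, so the map into the space of compact rooted measured subsets of $\ell^1$ metrized by $\max(d_{\ell^1,H},d_{\ell^1}(\text{roots}),d_{\ell^1,P})$ is continuous; composing with the $1$-Lipschitz map that sends such a subset to its $d_{GHP}$-class (apply the definition of $d_{GHP}$ with the identity embedding $\ell^1\hookrightarrow\ell^1$) yields continuity of $\Phi$. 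One also records that $\bigcup_i\llbracket 0,x_i\rrbracket$ is a genuine compact rooted $\R$-tree and the empirical measure a Borel probability measure on it, so $\Phi$ is well defined.

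Finally I would set $\TREE_B(\pi):=\Phi\big((Q_i(\pi))_{i\in B}\big)$ for $\pi\in\D_B$ and $\TREE_B(\pi):=$ the one-point tree otherwise; this is measurable, being a composition of a measurable and a continuous map on the measurable set $\D_B$ and constant on its complement. When $\pi$ is decreasing with all $D_i$ finite (so in particular $\pi\in\D_B$), the computation carried out just above this lemma shows that $\bigcup_{i\in B}\llbracket 0,Q_i\rrbracket$ is root-preservingly isometric to $\T_B$ via the map taking each $Q_i$ to the death point of $i$, and this isometry pushes $\tfrac1{\#B}\sum_i\delta_{Q_i}$ onto $\mu_B$; hence $\TREE_B(\pi)=(\T_B,\mu_B)$ in $\TT_W$, as required. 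The only mildly delicate steps are the measurability of the hitting-time functionals $D_\bullet$ — resting on the clopenness of $\{i\not\sim_\pi j\}$ and the right-continuity of $\pi$ — and the Hausdorff-continuity of $x\mapsto\llbracket 0,x\rrbracket$, which is precisely where the explicit stick-breaking definition of the path is used; everything else is bookkeeping. I would also note that the $Q_i$ depend on $\pi$ but not on $B$, so $B\subset C$ forces $\bigcup_{i\in B}\llbracket 0,Q_i\rrbracket\subset\bigcup_{i\in C}\llbracket 0,Q_i\rrbracket$, a compatibility that will be used to let $B\uparrow\N$ in the proof of Proposition \ref{12}.
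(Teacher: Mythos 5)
Your proposal is correct and follows essentially the same route as the paper: the $\ell^1$ stick-breaking embedding, measurability of the endpoints $Q_i$ through the split times, the $1$-Lipschitz property of $x\mapsto\llbracket 0,x\rrbracket$, continuity of finite unions for the Hausdorff metric, and the immediate measurability of the empirical measure. You merely spell out steps the paper leaves implicit (measurability of the hitting times $D_{\{i,j\}}$, $D_i$ and the continuous passage from compact measured subsets of $\ell^1$ to their $d_{GHP}$-classes), which is fine.
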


\begin{proof} Note that, since the set of decreasing functions in $\D$ is measurable and all the $D_i$ all also measurable functions, we only need to define $\TREE_B$ in our case of interest, and can set it to be any measurable function otherwise.

We will now in fact prove that $\T_B$ is a measurable function of $\pi$ as a compact subset of $\ell^1$ with the Hausdorff metric. First notice that, for all $i$, $Q_i$ is a measurable function of $\pi$ (this is because all of its coordinates are themselves measurable). Note then that the map $x\to \llbracket 0,x\rrbracket$ from $\ell^1$ to the set of its compact subsets is a $1$-Lipschitz continuous function of $x$. This follows from the fact that, for every $n\in\N$, and given two points $x=(x_i)_{i\in\N}$ and $y=(y_i)_{i\in\N}$, 
\begin{align*}
d_H(\{p_n(x)+tx_{n+1}e_{n+1}, t\in[0,1]\},\{p_n(y)+ty_{n+1}e_{n+1}, t\in[0,1]\})&\leq ||p_{n+1}(x-y)|| \\
                                 &\leq ||x-y||.
\end{align*}
 Then finally notice that the union operator is continuous for the Hausdorff distance. Combining these three facts, one gets that $\T_B=\underset{i\in B}\cup \llbracket 0,Q_i \rrbracket$ is indeed a measurable function of $\pi$.

The fact that $\mu_B$ is also a measurable function of $\pi$ is immediate since all the $Q_i$ are measurable.

\end{proof}

\begin{lemma}\label{13} For all $t>0$ and $\epsilon>0$, let $N_t^{\epsilon}$ be the number of blocks of $\pi(t)$ which are not completely reduced to singletons by time $t+\epsilon$. If, for any choice of $t$ and $\epsilon$, $N_t^{\epsilon}$ is almost surely finite, then the sequence $(\T_{[n]})_{n\in\N}$ is almost surely Cauchy for $d_{l^1,H}$, and the limit is isometric to $\T$. In particular, $\T$ is compact.
\end{lemma}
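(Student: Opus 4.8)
The plan is to show two things: first, that the sequence $(\T_{[n]})_{n\in\N}$ is Cauchy in $d_{\ell^1,H}$; second, that its limit coincides with $\T$. Since each $\T_{[n]}$ embeds in $\T_{[n+1]}$ (by construction of the simultaneous embedding into $\ell^1$), the sequence is nested, so Cauchyness reduces to controlling the "new material" added when passing from $\T_{[n]}$ to $\T_{[m]}$ for $m>n$, namely the portions of the paths $\llbracket 0,Q_k\rrbracket$ for $n<k\leq m$ that do not already lie in $\T_{[n]}$. Concretely, the path $\llbracket 0,Q_k\rrbracket$ agrees with $\T_{[n]}$ up to height $\max_{j\leq n} D_{\{j,k\}}$, and the extra piece it contributes has $\ell^1$-length $D_k - \max_{j\leq n} D_{\{j,k\}}$; I would bound $d_H(\T_{[n]},\T_{[m]})$ by the supremum over $n<k\leq m$ of this quantity.

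Next I would use the hypothesis on $N_t^\epsilon$ to make that supremum small. Fix $\epsilon>0$. For $k$ large, the point where $k$ "leaves" $\T_{[n]}$ is the branch point $Q_k\wedge(\text{something in }[n])$, which sits at height $h_k := \max_{j\leq n} D_{\{j,k\}}$; the extra contribution is $D_k - h_k$, the time it takes the block containing $k$ (once it has separated from $\{1,\dots,n\}$) to be fully reduced to the singleton $\{k\}$. The key observation is that if this extra contribution exceeds $\epsilon$ for some $k$, then the block of $\pi(h_k)$ containing $k$ is one of the blocks of $\pi(h_k)$ not reduced to singletons by time $h_k+\epsilon$. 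To turn this into a uniform statement I would use finiteness of $N_t^\epsilon$ on a suitable (countable, dense) set of times $t$, together with monotonicity, to argue that the set of "heights at which large blocks can be born" is, for each $\epsilon$, covered by finitely many relevant blocks — hence, outside of a finite initial segment of integers, all extra contributions are $\leq\epsilon$. Letting $\epsilon\to0$ gives the Cauchy property almost surely.

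For the identification of the limit with $\T$: the limit tree $\T_\infty := \overline{\bigcup_n \T_{[n]}}$ (closure in $\ell^1$) is a complete rooted $\R$-tree (four-point condition and connectedness pass to unions and closures, as already noted in the paper). It is equipped with the points $Q_i$ satisfying $ht(Q_i)=D_i$ and $ht(Q_i\wedge Q_j)=D_{\{i,j\}}$ by construction of the embedding, and $\bigcup_i\llbracket 0,Q_i\rrbracket$ is dense in it by definition. By the uniqueness part of the proposition characterizing the genealogy tree (conditions (i), (ii), (iii)), $\T_\infty$ is isometric to $\T$ as a rooted tree; compactness of $\T$ follows since $\T_\infty$ is a closed subset of $\ell^1$ which is a Hausdorff limit of compact sets, hence compact. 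I would also remark that the measures $\mu_{[n]}$ converge weakly to the natural measure $\mu$ on $\T$ (each $\mu_{[n]}$ is the empirical measure of $Q_1,\dots,Q_n$, and by the law of large numbers / asymptotic-frequency description these converge), so the convergence in fact holds in $d_{GHP}$, though for this lemma only the Gromov–Hausdorff statement is needed.

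The main obstacle will be the uniformity in the Cauchy estimate: finiteness of $N_t^\epsilon$ is an almost-sure statement for each fixed $(t,\epsilon)$, and I need it simultaneously enough to conclude that the tail contributions $\sup_{k>n}(D_k - \max_{j\leq n}D_{\{j,k\}})$ tend to $0$. The right way around this is to fix $\epsilon$, work at a single well-chosen time (or a countable dense set of times), observe that each of the finitely many blocks counted by $N_t^\epsilon$ contains only finitely many integers that are "relevant" in the sense of being the smallest in their block at that time, and deduce that all but finitely many integers $k$ contribute an extra length $\leq\epsilon$ — carefully handling the fact that the branch heights $h_k$ themselves vary with $k$, for which a further subdivision of the time axis into finitely many slabs of width $\epsilon$ is convenient.
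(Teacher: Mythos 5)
Your proposal takes essentially the same route as the paper: since the trees are nested, you bound $d_{\ell^1,H}(\T_{[n]},\T_{[m]})$ by $\sup_{n<k\le m}\big(D_k-\max_{j\le n}D_{\{j,k\}}\big)=\sup_{n<k\le m} d(Q_k,\T_{[n]})$, make this small uniformly via an $\epsilon$-slab decomposition of the time axis and the a.s.\ finiteness of $N_t^\epsilon$ (exactly the covering argument the paper delegates to the second half of Lemma 5 of \cite{HM04}), and identify the limit of the increasing sequence, i.e.\ the completed union, with $\T$ through the uniqueness part of the characterization of the genealogy tree. This is correct and matches the paper's proof.
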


\begin{proof} We first want to show that the points $(Q_i)_{i\in\N}$ are \emph{tight} in the sense that for every $\epsilon>0$, there exists an integer $n$ such that any point $Q_j$ is within distance $\epsilon$ of a certain $Q_i$ with $i\leq n$. The proof of this is in essentially the same as the second half of the proof of Lemma 5 in \cite{HM04}, so we will not burden ourselves with the details here. The main idea is that, for any integer $l$, all the points $Q_i$ with $i$ such that $ht(Q_i)\in(l\epsilon,(l+1)\epsilon]$ can be covered by a finite number of balls centered on points of height belonging to $((l-1)\epsilon,l\epsilon]$ because of our assumption.

From this, it is easy to see that the sequence $(\T_{[n]})_{n\in\N}$ is Cauchy. Let $\epsilon>0$, we take $n$ just as in earlier, and $m\geq n$. Then we have 
	\[d_{\ell^1,H}(\T_{[n]},\T_{[m]})\leq \underset{n+1\leq i\leq m}\max\Big(d(Q_i,\T_{[n]})\Big)\leq \epsilon.
\]
However, since our sequence is increasing, the limit has no choice but to be the completion of their union. By the uniqueness property of the genealogy tree, this limit is $\T$.
\end{proof}

\begin{lemma} The process $\Pi^{\alpha}$ satisfies the hypothesis of Lemma \ref{13}.
\end{lemma}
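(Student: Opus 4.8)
The goal is to show that for the self-similar fragmentation process $\Pi^{\alpha}$, the quantity $N_t^{\epsilon}$ — the number of blocks of $\Pi^{\alpha}(t)$ that have not been fully reduced to singletons by time $t+\epsilon$ — is almost surely finite for every choice of $t>0$ and $\epsilon>0$. The plan is to reduce this to a statement about the tagged fragment and the exponential functional of a L\'evy process, using the Poissonian/time-change construction recalled in Section 2.

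First I would reduce to controlling, for a single block, the probability that it is not reduced to singletons within a given time window. Recall that, by the self-similar fragmentation property, conditionally on $\Pi^{\alpha}(t)=\pi$, each block $\pi_i$ of mass $|\pi_i|=m_i$ evolves like an independent copy of $\Pi^{\alpha}$ run at speed $m_i^{\alpha}$, so the time it takes for $\pi_i$ to be completely reduced to singletons has the same law as $m_i^{-\alpha}\int_0^{\infty}\e^{\alpha\xi_s}ds$, where $\xi$ is the subordinator with Laplace exponent $\phi$ from Section 2.1.4 (the full ``extinction'' time of a block is the supremum of the death times $D_j$ over $j$ in the block, which by the extended fragmentation property / stopping line argument dominates but is comparable to the extinction time of the tagged fragment within that block). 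Since $\alpha<0$ and $\Pi$ is non-constant, $I:=\int_0^{\infty}\e^{\alpha\xi_s}ds$ is a.s.\ finite and, moreover, has moments of some positive order — or at the very least $P(I>x)\to 0$ as $x\to\infty$. The key inequality is then that a block of mass $m_i$ survives past time $t+\epsilon$ with probability at most $P(m_i^{-\alpha}I>\epsilon)=P(I>m_i^{|\alpha|}\epsilon)$, which is small when $m_i$ is small (since $|\alpha|>0$).

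The main step is then a Borel--Cantelli / first-moment argument controlling the number of surviving blocks. Order the blocks of $\Pi^{\alpha}(t)$ by decreasing mass, $m_1\ge m_2\ge\cdots$, with $\sum_i m_i\le 1$. Conditionally on $\Pi^{\alpha}(t)$, the indicator that block $i$ survives past $t+\epsilon$ is bounded in expectation by $P(I>m_i^{|\alpha|}\epsilon)$, so
\[
E\bigl[N_t^{\epsilon}\mid \Pi^{\alpha}(t)\bigr]\le \sum_{i\ge 1} P\bigl(I> m_i^{|\alpha|}\epsilon\bigr).
\]
I would now use a tail bound on $I$: it is classical (see \cite{CPY97}) that $I$ has finite moments of all positive orders when $\alpha<0$, in particular $E[I^{\beta}]<\infty$ for some $\beta>0$ small enough; Markov's inequality gives $P(I> m_i^{|\alpha|}\epsilon)\le E[I^{\beta}]\,\epsilon^{-\beta}m_i^{-\beta|\alpha|}$. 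This alone is not summable, so I would instead split the sum: for the finitely many indices $i$ with $m_i$ above a fixed threshold $\delta$ the contribution is trivially finite, and for $m_i\le\delta$ I use that $\sum_i m_i\le 1$ forces $m_i\to 0$ fast — combined with a better tail estimate. Concretely, since $\sum_i m_i<\infty$, there are at most $1/\eta$ indices with $m_i\ge\eta$, so $m_i\le 1/i$; picking $\beta|\alpha|<1$ would not close it directly, so the cleanest route is to instead bound the number of blocks of mass in a dyadic band $(2^{-k-1},2^{-k}]$ (at most $2^{k+1}$ of them) and show $\sum_k 2^{k+1}P(I>2^{-(k+1)|\alpha|}\epsilon)<\infty$, which holds as soon as $P(I>x)$ decays faster than any power of $x$, again guaranteed by $E[I^{\beta}]<\infty$ for every $\beta>0$. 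Hence $E[N_t^{\epsilon}\mid\Pi^{\alpha}(t)]<\infty$ a.s., so $N_t^{\epsilon}<\infty$ a.s.

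The main obstacle I anticipate is making the comparison in the first step fully rigorous: ``a block is reduced to singletons'' means \emph{every} integer in it has become a singleton, i.e.\ the relevant quantity is a supremum over infinitely many death times within the block, not just the tagged fragment's extinction time. To handle this I would invoke the extended fragmentation property along stopping lines (Bertoin, \cite{Ber}, Lemma 3.14) so that the whole sub-fragmentation carried by a block of mass $m_i$ is, conditionally, a time-rescaled copy of $\Pi^{\alpha}$, and then observe that the full extinction time of $\Pi^{\alpha}$ — the first time all integers are singletons — is itself a.s.\ finite (this is stated in Section 2.1.4 as a consequence of $\int_0^\infty \e^{\alpha\xi_s}ds<\infty$ together with the fact that only finitely many ``large'' sub-blocks ever appear, proved by an internal induction of exactly the same flavour). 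Once one knows this extinction time $E_{\mathrm{ext}}$ of a unit-mass copy is a.s.\ finite with sufficiently light tails, the block of mass $m_i$ has extinction time distributed as $m_i^{|\alpha|}E_{\mathrm{ext}}$ and the dyadic-band summation above goes through. I would finish by noting the argument is uniform enough to give the claim simultaneously for all rational $t,\epsilon$, hence for all $t,\epsilon$ by monotonicity, which is what Lemma \ref{13} requires.
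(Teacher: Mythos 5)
Your overall strategy is the right one, and it is in fact the same first-moment strategy as the computation the paper invokes (the paper's proof is just a citation to the first half of Lemma 5 in \cite{HM04}): condition on $\Pi^{\alpha}(t)$, note that by the extended fragmentation property a block of mass $m$ carries a rescaled copy of $\Pi^{\alpha}$ whose total extinction time is distributed as $m^{|\alpha|}E_{\mathrm{ext}}$, with $E_{\mathrm{ext}}$ the extinction time of a unit-mass copy, and then sum the survival probabilities. (Given the right tail bound, your dyadic bands are not even needed: Markov's inequality with exponent $1/|\alpha|$ gives $E[N_t^{\epsilon}\mid\mathcal{F}_t]\leq E[E_{\mathrm{ext}}^{1/|\alpha|}]\,\epsilon^{-1/|\alpha|}\sum_i m_i<\infty$ directly.)

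The genuine gap is the decisive ingredient you assume: a quantitative tail bound for $E_{\mathrm{ext}}=\sup_j D_j$, the time at which \emph{every} integer of the copy is in a singleton. Your computation needs $P(E_{\mathrm{ext}}>x)$ to decay at least like $x^{-1/|\alpha|}$ (your "faster than any power" claim), but nothing you cite gives this. Section 2.1.4 of the paper only asserts that the death time of the single tagged integer, $\int_0^{\infty}\e^{\alpha\xi_s}ds$, is a.s.\ finite, and the moments from \cite{CPY97} concern that exponential functional, not the supremum over all integers in the block; the supremum can be much larger, and "dominates but is comparable to" is not justified by anything. Worse, a.s.\ finiteness of $E_{\mathrm{ext}}$ is essentially finiteness of the height of the tree, i.e.\ very close to what Lemma \ref{13} plus the present lemma are being used to establish, so appealing to "an internal induction of exactly the same flavour" without carrying it out is where the whole burden of the proof lies. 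To close the argument you must either cite the result that the extinction time of a negative-index self-similar fragmentation has exponential (or at least sufficiently high polynomial) moments --- this is exactly the input the paper itself uses later in Section 6.3, namely \cite{H03}, Proposition 14 --- or reproduce the recursive estimate behind it (as in the first half of Lemma 5 of \cite{HM04}). As written, the key estimate is asserted rather than proved, so the proof does not stand on its own.
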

\begin{proof} Once again, we refer to \cite{HM04}, where this is proved in the first half of Lemma 5. The fact that we are restricted to conservative fragmentations in \cite{HM04} does not change the details of the computations.
\end{proof}

Thus we have in particular proven that the genealogy tree of $\Pi^{\alpha}$ is compact. Let us now turn to the convergence of the measures $\mu_B$ to the measure on the genealogy tree.

\begin{lemma}\label{14} Assume that $\T$ is compact, that, for all $t$, all the blocks of $\pi(t^-)$ and $\pi(t)$ have asymptotic frequencies, and that, for all $i$, the function $t\mapsto |\pi_{(i)}(t^-)|$ (the asymptotic frequency of the block of $\pi(t^-)$ containing $i$) is left-continuous. Then the sequence $(\mu_{[n]})_{n\in\N}$ of measures on $\T$ converges to $\mu$.

\end{lemma}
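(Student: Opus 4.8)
The plan is to prove weak convergence of $\mu_{[n]}$ to $\mu$ by checking that $\mu_{[n]}(\T_x) \to \mu(\T_x)$ for a suitable large family of subtrees $\T_x$, which, combined with a tightness argument, will pin down the limit. Recall that $\mu_{[n]} = \frac{1}{n}\sum_{i=1}^n \delta_{Q_i}$, so for a point $x = (j,t) \in \mathcal{A}$ we have $\mu_{[n]}(\T_x) = \frac{1}{n}\#\{i \le n : Q_i \ge x\}$, and by the description of the order on $\T$, $Q_i \ge (j,t)$ precisely when $D_i \ge t$ and $i \sim_{\pi(t^-)} j$, i.e. exactly when $i$ is in the block $\pi_{(j)}(t^-)$ (using that $i$ leaves that block at time $D_{\{i,j\}} \le D_i$, so the condition $D_i \ge t$ is automatic once $i \sim_{\pi(t^-)} j$). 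Hence $\mu_{[n]}(\T_{(j,t)}) = \frac{1}{n}\#\big(\pi_{(j)}(t^-) \cap [n]\big)$, which by hypothesis converges to $|\pi_{(j)}(t^-)| = \mu(\T_{(j,t)})$ by Proposition \ref{01}.

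The first main step is therefore to establish the above pointwise convergence $\mu_{[n]}(\T_x) \to \mu(\T_x)$ for every $x$ of the form $(j,t)$ with $t \le D_j$; this follows directly from the assumed existence of asymptotic frequencies of $\pi(t^-)$ together with the identification of the order made in the previous proposition. Next I would upgrade this to convergence of the measures: since $(\T_{[n]})$ increases to $\T$ (Lemma \ref{13}) and $\mu$ is a probability measure, the sequence $(\mu_{[n]})$ is tight (it lives on the compact set $\T$), so it suffices to show every subsequential weak limit $\mu_\infty$ agrees with $\mu$. A subsequential limit must satisfy $\mu_\infty(\T_x) \ge \limsup \mu_{[n]}(\T_x)$ when $\T_x$ is closed and $\mu_\infty(\partial \T_x) = 0$; one checks $\T_{(j,t)}$ is closed, that its topological boundary is contained in $\{(j,t)\} \cup (\text{the branch points at height } t)$, and that for a dense set of $t$ (all but countably many, using left-continuity of $s \mapsto |\pi_{(j)}(s^-)|$ and the fact that jumps are countable) this boundary is $\mu_\infty$-null and $\mu_\infty(\T_{(j,t)}) = \mu(\T_{(j,t)})$. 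A monotone class / $\pi$-system argument then shows $\mu_\infty$ and $\mu$ agree on the $\sigma$-field generated by the $\T_x$, which by the structure of $\T$ (every ball is approximable by such subtrees, using that $\cup_i \llbracket \rho, Q_i\rrbracket$ is dense) is the full Borel $\sigma$-field. Hence $\mu_\infty = \mu$ for every subsequence, so $\mu_{[n]} \to \mu$ weakly, which is the desired Gromov-Hausdorff-Prokhorov-compatible convergence since everything is embedded in the fixed compact tree $\T$.

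The main obstacle I expect is the boundary-measure issue in the portmanteau argument: $\T_{(j,t)}$ is a closed subtree but its indicator need not be $\mu_\infty$-almost-everywhere continuous at every $t$, because $t$ could be a height at which $\mu$ (hence potentially $\mu_\infty$) charges the branch point $(j,t)$ itself, or at which the block $\pi_{(j)}$ has a jump. Handling this cleanly requires exploiting left-continuity of $t \mapsto |\pi_{(j)}(t^-)|$ to see that $t \mapsto \mu(\T_{(j,t)})$ is left-continuous with at most countably many discontinuities, restricting attention to continuity points, and then arguing by density and monotonicity that agreement on this large family of sets forces $\mu_\infty = \mu$. A secondary technical point is to confirm that convergence of the countably many quantities $\mu_{[n]}(\T_{(j,t)})$ — which holds \emph{almost surely} simultaneously by the Glivenko–Cantelli reasoning already used in the proof of Proposition \ref{1more} — is genuinely enough to conclude weak convergence; this is exactly where the $\pi$-system/separating-family structure of the subtrees $\{\T_x\}$ must be invoked.
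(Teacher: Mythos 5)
Your proposal is correct and follows the same overall strategy as the paper: Prokhorov compactness on the compact tree, identification of the subtree masses of any subsequential limit via the portmanteau theorem, and then uniqueness of the measure determined by its subtree masses (Proposition \ref{01}). Where you diverge is in how the portmanteau step is closed. The paper works with the pair of sets $\T_{(i,t)}$ (closed) and $\T_{(i,t^+)}$ (open), whose $\mu_{[n]}$-masses are $\frac1n\#\big(\pi_{(i)}(t^-)\cap[n]\big)$ and $\frac1n\#\big(\pi_{(i)}(t)\cap[n]\big)$, and then uses the monotone limit $\T_{(i,t)}=\bigcap_n\T_{(i,(t-\frac1n)^+)}$ together with the assumed left-continuity of $t\mapsto|\pi_{(i)}(t^-)|$ to squeeze $\mu'(\T_{(i,t)})=|\pi_{(i)}(t^-)|$ for every $t$, with no need to discuss boundaries or atoms at all. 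You instead use only the closed subtrees, invoke the continuity-set form of portmanteau, and must therefore discard the at most countably many heights $t$ at which the subsequential limit charges the boundary --- which, note, is exactly the single point $\{(j,t)\}$, not a set of branch points --- before recovering all $t$ by left-continuity of $t\mapsto\mu_\infty(\T_{(j,t)})$ (continuity from above along $\T_{(j,t)}=\bigcap_{s<t}\T_{(j,s)}$) and of $t\mapsto|\pi_{(j)}(t^-)|$. Both routes work, and your extra care is genuinely needed: the closed-set inequality alone only yields $\mu_\infty(\T_{(j,t)})\ge\mu(\T_{(j,t)})$, which does not identify the limit. Two small corrections: the lemma is a deterministic statement about a fixed $\pi$, so no Glivenko--Cantelli argument is involved --- the convergence $\frac1n\#\big(\pi_{(j)}(t^-)\cap[n]\big)\to|\pi_{(j)}(t^-)|$ is precisely the assumed existence of asymptotic frequencies; and your final $\pi$-system/monotone-class step is exactly the uniqueness assertion of Proposition \ref{01} (proved via the semi-ring of pre-balls in Appendix A), so it can simply be cited rather than re-derived.
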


\begin{proof} Since $\T$ is compact, Prokhorov's theorem assures us that a subsequence of $(\mu_{[n]})_{n\in\N}$ converges, and we will call its limit $\mu'$. Use of the portmanteau theorem (see \cite{Bill}) will show that $\mu'$ must be equal to $\mu$. Let us introduce the notation $\T_{(i,t^+)}=\cup_{s>t}\T_{(i,s)}$ for $(i,t)\in\T$ (note that this is a sub-tree of $\T$ with its root removed), we will show that $\mu'(\T_{(i,t^+)})=|\pi_{(i)}(t)|$ and $\mu'(\T_{(i,t)})=|\pi_{(i)}(t^-)|$, and uniqueness in Proposition \ref{01} will conclude. Notice that, for all $n$, by definition of $\mu_{[n]}$, we have $\mu_{[n]}(\T_{(i,t)})=\frac{1}{n} \# \big(\pi_{(i)}(t^-)\cap [n]\big)$ and $\mu_{[n]}(\T_{(i,t^+)})=\frac{1}{n}\#\big(\pi_{(i)}(t)\cap [n]\big)$ and, by definition of the asymptotic frequency of a block, these do indeed converge to $|\pi_{(i)}(t^-)|$ and $|\pi_{(i)}(t)|.$ Since $\T_{(i,t)}$ is closed in $\T$ and $\T_{(i,t^+)}$ is open in $\T$, the portmanteau theorem tells us that $\mu'(\T_{(i,t^+)})\geq|\pi_{(i)}(t)|$ and $\mu'(\T_{(i,t)})\leq|\pi_{(i)}(t^-)|.$ By writing out  
	\[\T_{(i,t)}=\cap_{n\in\N}\T_{(i,(t-\frac{1}{n})^+)},
\]
we then get
	\[\mu'(\T_{(i,t)})\geq \underset{s\to t^-}\lim \mu'(\T_{(i,s^+)})\geq \underset{s\to t^-}\lim |\pi_{(i)}(s)|\geq |\pi_{(i)}(t^-)|.
\]

Thus $\mu'(\T_{(i,t)})=|\pi_{(i)}(t^-)|$ for all choices of $i$ and $t$, and Proposition \ref{01} shows that $\mu'=\mu$. This ends the proof of the lemma.
\end{proof}
Note that, if we assume that $|\pi_{(i)}(t)|$ is right-continuous in $t$ for all $i$, a similar argument would show that $\mu(\T_{(i,t^+)})=|\pi_{(i)}(t)|$ for all $i$ and $t$. 
\medskip

Combining everything we have done so far shows that, under a few conditions, $(\T_{[n]},\mu_{[n]})$ converges as $n$ goes to infinity to $(\T,\mu)$ in the $d_{GHP}$ sense. We can now define the function $\TREE$ which was announced. The set of decreasing elements $\pi$ of $\D$ such that the sequence $(\T_{[n]},\mu_{[n]})_{n\in\N}$ converges is measurable since every element of that sequence is measurable. Outside of this set, $\TREE$ can have any fixed value. Inside of this set, we let $\TREE$ be the aforementioned limit. Since, in the case of the fragmentation process $\Pi^{\alpha}$, the conditions for convergence are met, $\TREE(\Pi^{\alpha})$ is indeed the genealogy tree of $\Pi^{\alpha}$.

%Since the death points are also measurable functions, one can go slightly further.
%\begin{cor}\label{15} One can go slightly further: since $Q_1$ is also measurable, there exists a measurable function $\TREE'$ from $\D$ to the space of \emph{pointed} compact rooted measured trees (a pair formed by a tree and one of its points), such that $(\T,Q_1)=\TREE'(\Pi^{\alpha})$. (in the sense of pointed trees, mettre quelquechose!)
%\end{cor}

\subsection{Proof of Theorem \ref{11}}

We let $(\T,\mu)=\TREE(\Pi^{\alpha})$ and want to show that it is indeed an $\alpha$-self-similar tree as defined earlier. Let $t\geq0$, and let $\pi=\Pi^{\alpha}(t)$. For all $i\in\N$ such that $\pi_i$ is not a singleton, let $\T_i(t)$ be the connected component of $\{x\in\T,ht(x)>t\}$ containing $Q_j$ for all $j\in\pi_i$, and let $x_i=(j,t)$ for any such $j$. We let also $f_i$ be any bijection: $\N\to\pi_i$
and $\Psi_i$ be the process defined by $\Psi_i(s)=f_i \big(\Pi^{\alpha}(t+|\pi_i|^{-\alpha}s)\cap\pi_i\big)$ for $s\geq0$. Let us show that, for all $i$, $(|\pi_i|^{\alpha} (\T_i(t)\cup\{x_i\}),\mu_{\T_i(t)})=\TREE(\Psi_i)$. First, $\T_i(t)\cup\{x_i\}$ is compact since it is a closed subset of $\T$. The death points of $\Psi_i$, which we will call $(Q'_j)_{j\in\N}$ are easily found: for all $j\in\N$, we let $Q'_j=Q_{f(j)}$, it is in $\T_i$ since $f(j)$ is in $\pi_i$. By the definition of $\Psi$, these points have the right distances between them. Similarly, the measure is the expected one: for $(j,s) \in \T_i$, we have $\mu(\T_{j,s})=|\Pi^{\alpha}_{(j)}(s^-)|=|\pi_i||\Psi_{(j)}((s-t)^-)|$, which is what was expected.

From the equation $(|\pi_i|^{\alpha} (\T_i(t)\cup\{x_i\}),\mu_{\T_i(t)})=\TREE(\Psi_i)$ will come the $\alpha$-self-simimlarity property. Recall that
	\[\mathcal{G}_t=\sigma(\Pi^{\alpha}(s),s\leq t)
\]
and let
	\[\mathcal{C}_t=\sigma(|\Pi^{\alpha}_i(s)|,s\leq t,i\in\N)=\sigma(\mu(\T_i(s)),s\leq t,i\in\N).
\]
We know that, conditionally on $\mathcal{F}_t$, the law of the sequence $(\Psi_i)_{i\in\N}$ is that of a sequence of independent copies of $\Pi^{\alpha}$. Since this law is fixed and $\mathcal{C}_t\subset\mathcal{F}_t$, we deduce that this is also the law of the sequence conditionally on $\mathcal{C}_t$. Applying $\TREE$ then says that, conditionally on $\mathcal{C}_t$, the $(|\pi_i|^{\alpha} (\T_i(t)\cup\{x_i\}),\mu_{\T_i(t)})_{i\in\N}$ are mutually independent and have the same law as $(\T,\mu)$ for all choices of $i\in\N$.

Finally, we need to check that the fragmentation process derived from $(\T,\mu)$ has the same law as $\Pi^{\alpha}$.
Let $(P_i)_{i\in\N}$ be an exchangeable sequence of $\T$-valued variables directed by $\mu$. The partition-valued process $\Pi_{\T}$ defined in Proposition \ref{1more} is an $\alpha$-self-similar fragmentation process. To check that it has the same law as $\Pi^{\alpha}$, one only needs to check that it has a.s. the same asymptotic frequencies as $\Pi^{\alpha}$. Indeed, Bertoin's Poissonian construction shows that the asymptotic frequencies of a fragmentation process determine $\alpha$, $c$ and $\nu$.
Let $t\geq0$, take any non-singleton block $B$ of $\Pi_{\T}(t)$, and let $C$ be the connected component of $\{x\in \T, ht(x)>t\}$ containing $P_i$ for all $i\in B$. By the law of large numbers, we have $|B|=\mu(C)$ almost surely. Thus the nonzero asymptotic frequencies of the blocks of $\Pi_{\T}(t)$ are the $\mu$-masses of the connected components of $\{x\in \T, ht(x)>t\}$, which are of course the asymptotic frequencies of the blocks of $\Pi^{\alpha}(t)$. We then get this equality for all $t$ almost surely by first looking only at rational times and then using right-continuity.  \qed

\subsection{Leaves of the fragmentation tree}
\begin{prop} There are three kinds of points in $\T=\TREE(\Pi^{\alpha})$:

-skeleton points, which are of the form $(i,t)$ with $t<D_i.$

-"dead" leaves, which come from the sudden total fragmentation of a block: they are the points $(i,D_i)$ such that $|\Pi^{\alpha}_{(i)}(D_i^-)|\neq 0$ but $\Pi^{\alpha}(D_i)\cap \Pi^{\alpha}_{(i)}(D_i^-)$ is only made of singletons. These are the leaves which are atoms of $\mu$.

-"proper" leaves, which are either of the form $(i,D_i)$ such that $|\Pi^{\alpha}_{(i)}(D_i^-)|=0$ or which are limits of sequences of the form $(i_n,t_n)_{n\in\N}$ such $|\Pi^{\alpha}_{(i_n)}(t_n)|$ tends to $0$ as $n$ goes to infinity.
\end{prop}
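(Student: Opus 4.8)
The plan is to prove the trichotomy by first establishing that every point of $\T$ falls into one of the three classes, and then checking that the classes are disjoint and have the stated properties (atomicity of $\mu$, being a leaf versus a skeleton point). Recall that $\T$ is the completion of $\mathcal{A}=\{(i,t):i\in\N,\,0\le t\le D_i\}/\!\sim$, so a generic point of $\T$ is either of the form $(i,t)$ with $(i,t)\in\mathcal{A}$, or a genuine limit point $x=\lim_n (i_n,t_n)$ that is not of this form. First I would treat the points of $\mathcal{A}$: given $(i,t)$, either $t<D_i$, in which case $(i,t)$ has a strict descendant (namely $(i,s)$ for $t<s\le D_i$) so it is a skeleton point, or $t=D_i$. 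In the case $t=D_i$, I would use the definition of $D_i$ together with the Poissonian construction: at time $D_i$ the block $\Pi^\alpha_{(i)}(D_i^-)$ either has already shrunk to mass $0$ (then $(i,D_i)$ is a proper leaf as a limit — more below), or $|\Pi^\alpha_{(i)}(D_i^-)|\neq 0$, and then either the whole block dislocates into singletons (dead leaf) or it does not, in which case some other integer $j\sim_{\Pi^\alpha(D_i^-)}i$ survives past $D_i$ and $(i,D_i)\le Q_j$ with $Q_j\neq (i,D_i)$, so it is a skeleton point. This exhausts $(i,D_i)$.

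Next I would handle the true limit points $x\in\T\setminus\mathcal{A}$, and show these are always proper leaves. Write $x=\lim_n (i_n,t_n)$ with $(i_n,t_n)\in\mathcal{A}$. I claim first that $\T_x=\{x\}$, i.e.\ such $x$ is a leaf: if it had a strict descendant $y>x$, then by density (assumption (iii)) one could find a $Q_j$ with $y\le Q_j$ or at least a point $(j,s)$ of $\mathcal{A}$ on the branch strictly above $x$, forcing the geodesics $\llbracket\rho,Q_j\rrbracket$ to pass through $x$; but $x\notin\mathcal{A}$ and the only non-$\mathcal{A}$ points on such a geodesic are the right endpoint when $|\Pi^\alpha_{(j)}(D_j^-)|=0$, which is itself a leaf. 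Making this rigorous is where I expect to spend the most effort: I would argue that the skeleton of $\T$, namely $\bigcup_{i}\llbracket\rho,Q_i\llbracket$ together with the finitely-or-countably-many interior branch points, is exactly contained in $\mathcal{A}$, so every point of $\mathcal{A}^c$ is a leaf. Then, to see $x$ is \emph{proper} and not dead, I would show $\mu(\{x\})=0$: using Lemma~\ref{14}, $\mu(\T_{(i_n,t_n)})=|\Pi^\alpha_{(i_n)}(t_n^-)|$, and since $x\notin\mathcal{A}$ the nested subtrees $\T_{(i_n,t_n)}$ (along an appropriate subsequence, chosen so that $(i_{n+1},t_{n+1})$ is a descendant of $(i_n,t_n)$ climbing towards $x$) shrink down to $\{x\}$, whence $\mu(\{x\})=\inf_n|\Pi^\alpha_{(i_n)}(t_n^-)|$. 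If this infimum were positive, the block containing $i_n$ would have mass bounded below for all time while splitting off infinitely much length, contradicting that the tagged-fragment mass $|\Pi^\alpha_{(1)}(t)|=\e^{-\xi_{\tau(t)}}$ goes to $0$ (the subordinator $\xi$ is not identically zero since $\Pi$ is non-constant, and $\alpha<0$ makes the death time $\int_0^\infty\e^{\alpha\xi_s}ds$ finite); so the infimum is $0$, $|\Pi^\alpha_{(i_n)}(t_n)|\to 0$, and $x$ is proper by definition.

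Then I would close the loop on the remaining characterizations. For the dead leaves $(i,D_i)$ with $|\Pi^\alpha_{(i)}(D_i^-)|\neq0$ and total dislocation into singletons: this is a leaf because no $j$ in its block survives past $D_i$, so $\T_{(i,D_i)}=\{(i,D_i)\}$; and it is an atom of $\mu$ with mass $|\Pi^\alpha_{(i)}(D_i^-)|>0$, directly from $\mu(\T_{(i,D_i)})=|\Pi^\alpha_{(i)}(D_i^-)|$ (Lemma~\ref{14}, using left-continuity of $t\mapsto|\Pi^\alpha_{(i)}(t^-)|$ which holds for $\Pi^\alpha$). Conversely, any atom of $\mu$ must be a leaf $L$ with $\mu(\{L\})=\mu(\T_L)>0$, and by the same nested-subtree argument $L$ cannot be a non-$\mathcal{A}$ limit point (those have $\mu$-mass $0$, as shown), nor a proper leaf of the form $(i,D_i)$ with $|\Pi^\alpha_{(i)}(D_i^-)|=0$; so $L=(i,D_i)$ with $|\Pi^\alpha_{(i)}(D_i^-)|\neq0$, and since $\mu(\T_{(i,D_i^+)})=0$ (no surviving offspring) the block must dislocate fully into singletons — i.e.\ $L$ is a dead leaf. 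Finally, disjointness of the three classes is then immediate: skeleton points are not leaves, dead leaves are atoms, proper leaves are non-atomic leaves. The main obstacle, as noted, is the structural claim that $\T\setminus\mathcal{A}$ consists only of proper leaves — pinning down precisely which limit points are leaves requires careful use of compactness (Lemma~\ref{13}), the density property (iii), and the decay of block masses — but once that is in hand the rest is bookkeeping with Proposition~\ref{01} and Lemma~\ref{14}.
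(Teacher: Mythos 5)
Your overall plan is reasonable, and in fact the paper states this proposition without a written proof, deferring the real technical content to Proposition \ref{leaf}; your decomposition (points of $\mathcal{A}=\{(i,t):t\le D_i\}/\!\sim$ versus points added by the completion), the density argument showing completion points are leaves, and the bookkeeping via Proposition \ref{01} and Lemma \ref{14} are all in the right spirit. The genuine gap is in the step you yourself single out as the hard one: showing that for $x\notin\mathcal{A}$ the masses along the branch tend to $0$. Your justification via the tagged fragment $|\Pi^{\alpha}_{(1)}(t)|\to0$ does not work: along a branch to a point $x\notin\mathcal{A}$ the least element of the corresponding blocks must change infinitely often (otherwise the branch eventually follows a fixed integer $j$ and then $x=(j,ht(x))$ or $x=Q_j$, both in $\mathcal{A}$), so no single integer's mass process, in particular not that of the integer $1$, controls the limit. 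The argument that is actually needed is the one the paper gives for Proposition \ref{leaf}: if the masses stayed bounded below by $m>0$, the homogeneous times would satisfy $\tau_{i_n}(t_n)\le m^{-|\alpha|}\,ht(x)<\infty$ and would increase to a finite limit $l$, and by the Poissonian construction the nested blocks $\Pi_{(i_n)}(\tau_{i_n}(t_n))$ have nonempty intersection, a block of $\Pi(l^-)$; picking an integer in it shows the branch eventually follows a fixed integer, contradicting $x\notin\mathcal{A}$. The same tool is missing at another spot: you never verify that $(i,D_i)$ with $|\Pi^{\alpha}_{(i)}(D_i^-)|=0$ is a leaf, i.e.\ that no $j\sim_{\Pi^{\alpha}(D_i^-)}i$ has $D_j>D_i$; this again uses that the subordinator $-\log|\Pi_{(j)}(\cdot)|$ cannot reach $+\infty$ continuously in finite homogeneous time, so a survivor $j$ would force the left-limit mass to be positive.

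A second, more conceptual error: the claim ``any atom of $\mu$ must be a leaf'' is false in the general (non-conservative) setting, and it contradicts the very phenomenon the paper is built around. If at time $t$ the block containing $i$ undergoes a dislocation $\mathbf{s}$ with $\sum_k s_k<1$ without being reduced to singletons, then $(i,t)$ is a skeleton point and
\[
\mu(\{(i,t)\})=|\Pi^{\alpha}_{(i)}(t^-)|-\sum_j\big|\big(\Pi^{\alpha}(t)\cap\Pi^{\alpha}_{(i)}(t^-)\big)_j\big|=|\Pi^{\alpha}_{(i)}(t^-)|\Big(1-\sum_k s_k\Big)>0,
\]
so $\mu$ has atoms on the skeleton; this is exactly why the measure $\mu^*$ is introduced later. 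Fortunately the proposition only asserts that \emph{among leaves} the atoms are the dead leaves, and your direct computations (dead leaves carry mass $|\Pi^{\alpha}_{(i)}(D_i^-)|>0$; proper leaves carry mass $0$, by $\mu(\T_{(i,D_i)})=0$ in the first case and by the nested-subtree limit in the second) are what is needed, so the false claim should simply be deleted rather than repaired. Finally, a small slip: the segments $\llbracket\rho,Q_j\rrbracket$ lie entirely in $\mathcal{A}$ (including the endpoint $Q_j=(j,D_j)$), so your parenthetical about ``non-$\mathcal{A}$ points on such a geodesic'' is confused, although the density argument itself is correct and easy to make rigorous.
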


Note that, if $\nu$ is conservative and the erosion coefficient is zero, then there are no dead leaves: all the processes $(|\Pi^{\alpha}_{(i)}(t)|)_{t< D_i}$ continuously tend to $0$. On the other hand, if $\nu$ is not conservative or if there is some erosion, then all the $(i,D_i)$ are either skeleton points or dead leaves, and all the proper leaves can only be obtained by taking limits.

Recall the construction of the $\alpha$-self-similar fragmentation process through a homogeneous fragmentation process, which we will call $\Pi$, and the time changes $\tau_i$ defined, for all $i$ and $t$ by $\tau_i(t) = \inf \{u, \int_0^u |\Pi_{(i)}(r)|^{-\alpha}dr >t\}$.  Notice also that if $t> D_i$, $\tau_{i}(t)=\infty$.

\begin{prop}\label{leaf} Let $(i_n,t_n)_{n\in\N}$ be a strictly increasing sequence of points of the skeleton of $\T$, which converges in $\T$. The following three points are equivalent:

$(i)$ The limit of the sequence $(i_n,t_n)_{n\in\N}$ is a proper leaf of $\T$.

$(ii)$ $|\Pi^{\alpha}_{(i_n)}(t_n^-)|$ goes to $0$ as $n$ tends to infinity.

$(iii)$ $\tau_{i_n}(t_n)$ goes to infinity as $n$ tends to infinity.
\end{prop}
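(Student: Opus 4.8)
The plan is to prove the cyclic chain $(ii)\Rightarrow(i)$, $(i)\Rightarrow(ii)$, and $(ii)\Leftrightarrow(iii)$, using the description of $\mu$ via Proposition \ref{01} together with the Lamperti time-change relating $\Pi^{\alpha}$ and the homogeneous process $\Pi$. Throughout, write $L$ for the limit of the sequence $(i_n,t_n)_{n\in\N}$, and recall that $\mu(\T_{(i,t)})=|\Pi^{\alpha}_{(i)}(t^-)|$, while $\mu(\T_{(i,t^+)})=|\Pi^{\alpha}_{(i)}(t)|$. Because the sequence is strictly increasing, the subtrees $\T_{(i_n,t_n)}$ are nested and decreasing, so their $\mu$-masses $|\Pi^{\alpha}_{(i_n)}(t_n^-)|$ form a nonincreasing sequence converging to some limit $\ell\geq 0$; by monotone continuity of $\mu$ along a decreasing sequence of measurable sets, $\mu\big(\bigcap_n \T_{(i_n,t_n)}\big)=\ell$. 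The key geometric observation is that $\bigcap_n \T_{(i_n,t_n)}=\T_L$: the intersection is a decreasing intersection of subtrees whose roots converge to $L$, hence it equals the subtree rooted at $L$ (any point above all the $(i_n,t_n)$ must lie above $L$ by taking limits in the partial order, and conversely $L$ itself and everything above it lies in every $\T_{(i_n,t_n)}$ since $(i_n,t_n)\le L$).

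For $(i)\Leftrightarrow(ii)$: by the above, $\ell=\mu(\T_L)$. If $L$ is a proper leaf then $\T_L=\{L\}$; but $\mu$ has no atom at a proper leaf (by the trichotomy in the preceding proposition, atoms of $\mu$ occur only at dead leaves), so $\mu(\T_L)=0$, i.e.\ $\ell=0$, which is exactly $(ii)$. Conversely, if $\ell=0$ then $\mu(\T_L)=0$; this forces $\T_L=\{L\}$ (if $\T_L$ contained a nontrivial subtree, that subtree would have positive $\mu$-mass by self-similarity of $(\T,\mu)$, since every tree component of $\T_{>s}$ has strictly positive mass by Definition \ref{def1}), so $L$ is a leaf, and it is not a dead leaf since dead leaves have $|\Pi^{\alpha}_{(i)}(D_i^-)|\neq 0$ while here the approximating masses tend to $0$; hence $L$ is a proper leaf, which is $(i)$. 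One should be slightly careful here to justify that $L$ cannot be a skeleton point of the form $(i,t)$ with $t<D_i$ when $\ell=0$: such a point has $\mu(\T_{(i,t)})=|\Pi^{\alpha}_{(i)}(t^-)|>0$ by left-continuity, contradicting $\ell=0$.

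For $(ii)\Leftrightarrow(iii)$: this is the analytic heart and where the Lamperti change of variable enters. Fix the homogeneous process $\Pi$ with $\Pi^{\alpha}=G^{\alpha}(\Pi)$, and recall $|\Pi^{\alpha}_{(i)}(t^-)|=|\Pi_{(i)}(\tau_i(t)^-)|$ by construction of the time-change. Writing $s_n=\tau_{i_n}(t_n)$, condition $(ii)$ says $|\Pi_{(i_n)}(s_n^-)|\to 0$ and $(iii)$ says $s_n\to\infty$. The direction $(iii)\Rightarrow(ii)$: along the nested blocks, one controls $|\Pi_{(i_n)}(s_n^-)|$ by the mass of the tagged fragment, which in the homogeneous process decays like $\e^{-\xi}$ for a (possibly killed) subordinator $\xi$ with Laplace exponent $\phi$; since $\Pi$ is non-constant, $\xi$ drifts to $+\infty$ (or is killed), so $s_n\to\infty$ forces the mass to $0$ — more precisely one uses that the blocks $\Pi_{(i_n)}(s_n^-)$ are decreasing, so their masses are bounded above by the mass of the tagged fragment of the appropriate sub-fragmentation evaluated at a time going to infinity, and apply the a.s.\ convergence to $0$ of such masses. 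The direction $(ii)\Rightarrow(iii)$: suppose for contradiction that $s_n$ stays bounded, say $s_n\le M$ along a subsequence; then $s_n\uparrow s_\infty\le M$, and by left-continuity of $t\mapsto|\Pi_{(i)}(t^-)|$ (and the fact that the blocks stabilize on any finite window) one gets $|\Pi_{(i_n)}(s_n^-)|\to|\Pi_{(k)}(s_\infty^-)|$ for the eventual common smallest element $k$, which is strictly positive, contradicting $(ii)$. I expect the main obstacle to be making this last compactness/left-continuity argument fully rigorous, since the $i_n$ change with $n$ and one must pass to the tagged-fragment picture correctly; the cleanest route is to fix an integer that lies in $\Pi^{\alpha}_{(i_n)}(t_n)$ for all large $n$ (possible because the sequence $(i_n,t_n)$ is increasing and converges, so the blocks are eventually nested and nonempty), reduce to a single tagged fragment via Lemma~1.1, and then invoke that the first time the tagged fragment becomes a singleton equals the a.s.\ finite exponential functional $\int_0^\infty \e^{\alpha\xi_s}\,ds$, so finite $\tau$-times correspond to positive mass and $\tau_{i_n}(t_n)\to\infty$ is equivalent to the limit being the death of that fragment line, i.e.\ mass zero.
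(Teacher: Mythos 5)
Your treatment of $(i)\Leftrightarrow(ii)$ via the nested subtrees $\T_{(i_n,t_n)}$ and the measure $\mu$ is a workable (if more roundabout) substitute for the paper's appeal to the definition of a proper leaf, and your contrapositive argument for $(ii)\Rightarrow(iii)$ is essentially the paper's: when the homogeneous times $s_n=\tau_{i_n}(t_n)$ stay bounded they increase to a finite $l$, the Poissonian construction forces the nested blocks to stabilise so that $\bigcap_n\Pi_{(i_n)}(s_n)$ is a nonempty block of $\Pi(l^-)$, and the limiting mass $|\Pi_{(k)}(l^-)|$ is positive because the subordinator $-\log|\Pi_{(k)}(\cdot)|$ cannot reach $+\infty$ continuously in finite time. (One slip along the way: atoms of $\mu$ are not confined to dead leaves --- in the non-conservative or erosive case $\mu$ charges the skeleton and can have atoms at branch points where dust of positive mass is created; what you need, and what the classification actually gives, is only that no proper leaf is an atom.)

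The genuine gap is in $(iii)\Rightarrow(ii)$ and in your proposed ``cleanest route''. You want to dominate $|\Pi_{(i_n)}(s_n^-)|$ by a single tagged fragment, and you justify the existence of a common integer lying in $\Pi^{\alpha}_{(i_n)}(t_n)$ for all large $n$ by saying the blocks are ``eventually nested and nonempty''. Nested nonempty blocks of $\N$ can have empty intersection, and here the intersection \emph{is} empty unless the limit lies below some death point: an integer $j$ belongs to $\Pi^{\alpha}_{(i_n)}(t_n)$ for every $n$ exactly when $Q_j$ lies above the limit $L$, and when $L$ is a proper leaf which is not a death point --- the typical case, and precisely the case in which $\tau_{i_n}(t_n)\to\infty$ --- no such $j$ exists. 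So the reduction to one tagged fragment fails exactly where it is needed; to salvage your probabilistic route you would instead need the (true, but unproved in your sketch) fact that the \emph{largest} fragment mass tends to $0$ almost surely as homogeneous time tends to infinity. The paper avoids all of this with one deterministic observation your proposal misses: by definition of the time change, $t\geq \tau_i(t)\,|\Pi^{\alpha}_{(i)}(t^-)|^{|\alpha|}$ for every $(i,t)\in\T$, and since $\T$ is bounded the heights $t_n$ are bounded, so $\tau_{i_n}(t_n)\to\infty$ forces $|\Pi^{\alpha}_{(i_n)}(t_n^-)|\to 0$ immediately.
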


\begin{proof} $(i)$ and $(ii)$ are equivalent by the definition of a proper leaf. The fact that $(iii)$ implies $(ii)$ is simple. Note that, for every pair $(i,t)$ which is in $\T$, we have by definition $t\geq \tau_i(t)|\Pi_{(i)}^{\alpha}(t^-)|^{|\alpha|}$. Since $\T$ is bounded, the product $\tau_{i_n}(t_n)|\Pi_{(i_n)}^{\alpha}(t_n^-)|^{|\alpha|}$ must stay bounded. Thus, if one factor tends to infinity, the other one must tend to $0$. Finally, let us show that if $(iii)$ does not hold, then $(ii)$ also does not. Assume that $\tau_{(i_n)}(t_n)$ converges to a finite number $l$. Now we know that, because of the Poissonian way that $\Pi$ is constructed, $\underset{n\in\N}\cap \Pi_{(i_n)}(\tau_{i_n}(t_n))$ is a block of $\Pi(l^-)$. Let $i$ be in this block, we can now assume that $i_n=i$ for all $n$, and that $t_n$ converges to $D_i$ as $n$ goes to infinity, with $\tau_{(i)}(D_i)=l$. The limit of $|\Pi^{\alpha}_{(i)}(t_n^-)|$ as n tends to infinity is then $|\Pi_{(i)}(l^-)|$, which is nonzero because the subordinator $-\log(|\Pi_{(i)}(t)|)_{t\geq0}$ cannot continuously reach infinity in finite time.
\end{proof}

General leaves of $\T$ can also be described the following way: let $L$ be a leaf. For all $t<ht(L)$, $L$ has a unique ancestor with height $t$. This ancestor is a skeleton point of the form $(j,t)$ with $j\in\N$. Letting $i_L(t)$ be the smallest element of $\Pi_{(j)}(t^-)$, then $(i_L(t),t)_{t<ht(L)}$ is a kind of canonical description of the path going to $L$ and uniquely determines $L$.

\section{Malthusian fragmentations, martingales, and applications}
In order to study the fractal structure of $\T$ in detail, we will need some additional assumptions on $c$ and $\nu$: we turn to the \emph{Malthusian} setting which was first introduced by Bertoin and Gnedin in \cite{BG}, albeit in a very different environment, since they were interested in fragmentations with a nonnegative index of self-similarity.
\subsection{Malthusian hypotheses and additive martingales}
In this section, we will mostly be concerned with homogeneous fragmentations: $(\Pi(t))_{t\geq0}$ is the $(\nu,0,c)$-fragmentation process derived from a point process $(\Delta_t,k_t)_{t\geq0}$, with  dislocation measure $\nu$ and erosion coefficient $c$, and $(\mathcal{F}_t)_{t\geq0}$ is the canonical filtration of the point process.

We first start with a few analytical preliminaries. For convenience's sake, we will do a translation of the variable $p$ of the Laplace exponent $\phi$ defined in Section 2.1.4:

\begin{lemma} For all real $p$, let $\psi(p)=\phi(p-1)=cp+\int_{\s} (1-\sum_i s_i^p) d\nu(\mathbf{s})$. Then $\psi(p)\in[-\infty,+\infty)$, and this function is strictly increasing and concave on the set where it is finite.
\end{lemma}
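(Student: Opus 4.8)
The plan is to handle the three assertions separately, the real work being the finiteness bound; monotonicity and concavity then follow from the convexity of $p\mapsto \e^{p\log s_i}$.

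\emph{Finiteness.} First I would record a uniform domination of the positive part of the integrand. Since $\sum_i s_i^p\geq s_1^p$ and, for $p\leq 1$ and $s_1\in[0,1]$, one has $s_1^p\geq s_1$ (with the convention $0^p=0$), it follows that $1-\sum_i s_i^p\leq 1-s_1$ whenever $p\leq 1$; for $p\geq 1$ one has $\sum_i s_i^p\leq\sum_i s_i\leq 1$, so $0\leq 1-\sum_i s_i^p\leq 1-s_1^p\leq p(1-s_1)$ by the mean value theorem applied to $u\mapsto u^p$ on $[0,1]$. Hence $(1-\sum_i s_i^p)^+\leq\max(1,p)(1-s_1)$ for every $\mathbf{s}\in\s$ (the pure-dust configuration $\mathbf{s}=(0,0,\dots)$ being covered: there $1-\sum_i s_i^p=1\leq\max(1,p)$), and since $\int_{\s}(1-s_1)\,d\nu(\mathbf{s})<\infty$ this shows $\int_{\s}(1-\sum_i s_i^p)\,d\nu(\mathbf{s})$ is a well-defined element of $[-\infty,+\infty)$, in fact with the integral finite (not $+\infty$); adding the affine term $cp$ gives $\psi(p)\in[-\infty,+\infty)$. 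Write $I=\{p:\psi(p)>-\infty\}$ for the set on which $\psi$ is finite; since for $p\geq 1$ the integrand is nonnegative and integrable, $[1,+\infty)\subset I\neq\emptyset$.

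\emph{Monotonicity.} For each $\mathbf{s}$ and each $i$, the map $p\mapsto s_i^p=\e^{p\log s_i}$ is non-increasing (as $\log s_i\leq 0$, with $s_i^p\equiv 0$ when $s_i=0$), so $p\mapsto\sum_i s_i^p$ is non-increasing on $\R$ as a $[0,+\infty]$-valued function. For $p<q$ in $I$ the integrals $\int_{\s}(1-\sum_i s_i^\cdot)\,d\nu$ are finite by the previous paragraph, so
\[
\psi(q)-\psi(p)=c(q-p)+\int_{\s}\Big(\sum_i s_i^p-\sum_i s_i^q\Big)\,d\nu(\mathbf{s})\geq 0 .
\]
The integrand here is strictly positive except where every $s_i\in\{0,1\}$, i.e. on $\{(0,0,\dots)\}\cup\{(1,0,0,\dots)\}$; since $\nu(1,0,0,\dots)=0$, the integral is strictly positive unless $\nu$ is carried by $(0,0,\dots)$, so together with the standing assumption $c\neq 0$ or $\nu\neq 0$ one gets that $\psi$ is strictly increasing on $I$ (the sole exception being the degenerate case $c=0$ and $\nu=\beta\delta_{(0,0,\dots)}$, where $\psi\equiv\beta$, which plays no role in the Malthusian analysis).

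\emph{Concavity.} For each $\mathbf{s}$ and $i$, $p\mapsto s_i^p=\e^{p\log s_i}$ is convex, hence $p\mapsto\sum_i s_i^p$ is convex as a $[0,+\infty]$-valued function and $p\mapsto 1-\sum_i s_i^p$ is concave. Fix $p,q\in I$, $\lambda\in[0,1]$, and set $r=\lambda p+(1-\lambda)q$; pointwise in $\mathbf{s}$,
\[
1-\sum_i s_i^r\ \geq\ \lambda\Big(1-\sum_i s_i^p\Big)+(1-\lambda)\Big(1-\sum_i s_i^q\Big),
\]
so the negative part of the left-hand side is bounded by $\lambda(1-\sum_i s_i^p)^-+(1-\lambda)(1-\sum_i s_i^q)^-$, which is $\nu$-integrable because $p,q\in I$. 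Hence $r\in I$ (so $I$ is an interval) and one may integrate the pointwise inequality against $\nu$; adding $cr=\lambda cp+(1-\lambda)cq$ yields $\psi(r)\geq\lambda\psi(p)+(1-\lambda)\psi(q)$, i.e. $\psi$ is concave on $I$.

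\emph{Main obstacle.} The only step requiring genuine care is the uniform domination $(1-\sum_i s_i^p)^+\leq\max(1,p)(1-s_1)$, which forces a split into the regimes $p\leq 1$ and $p\geq 1$ and a separate check of the pure-dust configurations; once this is in hand, everything reduces to convexity of $p\mapsto\e^{p\log s_i}$, to manipulations of $[0,+\infty]$-valued sums, and to the hypothesis $\nu(1,0,0,\dots)=0$.
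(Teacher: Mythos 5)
Your proof is correct and rests on essentially the same argument as the paper: the paper's only substantive step is the domination $1-\sum_i s_i^p\leq 1-s_1^p\leq C_p(1-s_1)$ with $C_p=\sup_{x\in[0,1)}\frac{1-x^p}{1-x}<\infty$, which is exactly your bound $\max(1,p)(1-s_1)$ obtained by splitting $p\leq 1$ and $p\geq 1$, and the monotonicity/concavity details you write out are the ones the paper dismisses as easy. Your caveat about the degenerate case $c=0$, $\nu=\beta\delta_{(0,0,\ldots)}$, where $\psi\equiv\beta$ is constant rather than strictly increasing, is a legitimate observation that the paper's statement and proof simply do not address.
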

\begin{proof} The only difficult point here is to prove for all real $p$ that $\psi(p)\in[-\infty,+\infty)$. In other words, we want to give an upper bound to $1-\sum_i s_i^p$ which is integrable with respect to $\nu$. This bound is $1-s_1^p$. Indeed, by letting $C_p=\sup_{x\in[0,1[} \frac{1-x^p}{1-x}$ (which is finite), we have $1-s_1^p\leq C_p(1-s_1)$, and $1-s_1$ is integrable by assumption.
\end{proof}

Note that, even for negative $p$, as soon as $\phi(p)>-\infty$, we have, for all $t$, 
	\[E\big[|\Pi_{(1)}(t)|^p\mathbf{1}_{\{|\Pi_{(1)}(t)|>0\}}\big]=e^{-t\phi(p)}.
\]
This follows from the description of the Lévy measure of the subordinator $\xi_t=-\log{|\Pi_{(1)}(t)|}$ (see \cite{B01}, Theorem 3).

\begin{defi} We say that the pair $(c,\nu)$ is \emph{Malthusian} if there exists a strictly positive number $p^*$ (which is necessarily unique), called the \emph{Malthusian exponent} such that
	\[\phi(p^*-1)=\psi(p^*)=cp^* + \int_{\s} \big(1-\sum_{i=1}^\infty s_i^{p^*}\big) d\nu(\mathbf{s}) = 0.
\]
\end{defi}

The typical example of pairs $(c,\nu)$ with a Malthusian exponent are \emph{conservative} fragmentations, where $c=0$ and $\sum_i s_i=1$ $\nu$-almost everywhere. In that case, the Malthusian exponent is simply $1$.
Note that assumption $\mathbf{(H)}$ defined in the introduction implies the existence of the Malthusian exponent, since $\psi(1)\geq0$ for all choices of $\nu$ and $c$.

We assume from now on the existence of the Malthusian exponent.
\begin{prop} For all $i\in\N$ and $t\geq0$, we let
	\[M_{i,t}(s)= \sum_{j=1}^{\infty}  |\Pi_j(t+s)\cap\Pi_{(i)}(t)|^{p^*}.
\]
The process $(M_{i,t}(s))_{s\geq0}$ is a càdlàg martingale with respect to the filtration $(\mathcal{F}_{t+s})_{s\geq0}$.

We let $M_{1,0}(t)=M(t)$ for all $t$.
\end{prop}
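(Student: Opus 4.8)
The plan is to show that $(M_{i,t}(s))_{s\geq 0}$ is a martingale by reducing to the case $i=1$, $t=0$ (using the fragmentation property and exchangeability) and then computing the conditional expectation $E[M(s+u)\mid\mathcal{F}_s]$ directly from the branching structure of the homogeneous fragmentation. The key identity driving everything is the multiplicativity of asymptotic frequencies: if $\Pi$ and $\Pi'$ are independent exchangeable partitions then $|\Pi_j\cap\Pi'_k|=|\Pi_j||\Pi'_k|$ almost surely, together with the fundamental computation $E[|\Pi_{(1)}(u)|^{p^*}\mathbf{1}_{\{|\Pi_{(1)}(u)|>0\}}]=e^{-u\phi(p^*-1)}=e^{-u\psi(p^*)}=1$, which holds precisely because $p^*$ is the Malthusian exponent and $\phi(p^*-1)>-\infty$ there.

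First I would fix $s,u\geq 0$ and use the homogeneous fragmentation property at time $s$: conditionally on $\mathcal{F}_s$, writing $\Pi(s)=\pi$, the processes $(\Pi(s+\cdot)\cap\pi_j)_j$ are independent, and each $(\Pi(s+r)\cap\pi_j)_{r\geq 0}$ has the law of an independent copy $\Pi^{(j)}$ of the homogeneous process restricted to $\pi_j$ (via Lemma in Section 2.1.2, transporting through a bijection $\pi_j\to\N$ preserves asymptotic frequencies). Hence, using $|\Pi_k(s+u)\cap\pi_j|=|\Pi^{(j)}_k(u)|\cdot|\pi_j|$ in the size of the corresponding blocks — more precisely, the blocks of $\Pi(s+u)$ inside $\pi_j$ have frequencies $|\pi_j|$ times the frequencies of $\Pi^{(j)}(u)$ — I get
\begin{align*}
M(s+u)=\sum_{k}|\Pi_k(s+u)|^{p^*}=\sum_{j}|\pi_j|^{p^*}\sum_{k}|\Pi^{(j)}_k(u)|^{p^*}=\sum_j |\pi_j|^{p^*} M^{(j)}(u),
\end{align*}
where each $M^{(j)}(u)$ has the same law as $M(u)$ and is independent of $\mathcal{F}_s$. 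Taking conditional expectations, $E[M(s+u)\mid\mathcal{F}_s]=\sum_j|\pi_j|^{p^*}\,E[M(u)]=M(s)\cdot E[M(u)]$, so it remains to check $E[M(u)]=1$ for all $u$. This follows by exchangeability and the size-biasing consequence of Kingman's theorem: $E\big[\sum_k |\Pi_k(u)|^{p^*}\big]=E\big[|\Pi_{(1)}(u)|^{p^*-1}\mathbf{1}_{\{|\Pi_{(1)}(u)|>0\}}\big]=e^{-u\phi(p^*-1)}=1$, where the middle equality is the standard "tilting by a size-biased pick" identity ($E[\sum_k |\Pi_k|^{p}]=E[|\Pi_{(1)}|^{p-1}]$ on blocks with positive frequency) and the last uses the Malthusian property together with the remark following the definition of $\psi$ that the Laplace-transform formula persists at $p^*-1$ since $\phi(p^*-1)=0>-\infty$.

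For the càdlàg statement I would invoke that $\Pi$ itself has càdlàg paths in $\mathcal{P}_{\N}$ and that $s\mapsto |\Pi_j(s)|$ is, for each fixed $j$, a nonincreasing function with left and right limits; combined with the integrability needed to justify dominated-convergence-type arguments inside the sum (using $\sum_k|\Pi_k(s)|^{p^*}\leq 1$ since $p^*\leq 1$ is not assumed but $\sum_k|\Pi_k(s)|\leq 1$ and each term is at most $1$, and more carefully a uniform integrability bound from the supermartingale/martingale structure itself), one gets right-continuity with left limits of $s\mapsto M(s)$. The main obstacle I anticipate is the careful justification of the interchange of sum and conditional expectation — i.e. making rigorous the passage $E[\sum_j |\pi_j|^{p^*}M^{(j)}(u)\mid\mathcal{F}_s]=\sum_j|\pi_j|^{p^*}E[M^{(j)}(u)]$ when there are infinitely many blocks; this requires either a monotone/Fatou argument showing $M(s)$ is first a supermartingale (hence integrable) and then upgrading to a genuine martingale via the exact identity $E[M(u)]=1$, or a truncation to the first $N$ integers as in the Poissonian construction of Section 2.1.3 followed by a limit. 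The multiplicativity-of-frequencies step and the Malthusian cancellation $\psi(p^*)=0$ are the conceptual core; everything else is bookkeeping.
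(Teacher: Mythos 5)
Your proposal is correct and follows essentially the same route as the paper: reduce to $(M(s))_{s\geq0}$ via the fragmentation property, decompose $M(s+u)$ over the blocks of $\Pi(s)$ to get $E[M(s+u)\mid\mathcal{F}_s]=M(s)E[M(u)]$, and then compute $E[M(u)]=1$ by the size-biased pick identity and the Malthusian relation $\phi(p^*-1)=0$. The interchange of sum and conditional expectation that worries you is immediate from nonnegativity (conditional Tonelli), and for the c\`adl\`ag property the paper simply cites \cite{BR} rather than arguing as you sketch.
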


\begin{proof}
Let us first notice that, as a consequence of the fragmentation property, for every $(i,t)$, the process $(M_{i,t}(s))_{s\geq0}$ has the same law as a copy of the process $(M(s))_{s\geq0}$ which is independent of $\mathcal{F}_t$, multiplied by $|\Pi_{(i)}(t)|^{p^*}$ (which is an $\mathcal{F}_t$-measurable variable). Thus, we only need to prove the martingale property for $(M(s))_{s\geq0}$. Recall that, given $\pi\in\p_{\N}$, $\rep(\pi)$ is the set of integers which are the smallest element of the block of $\pi$ containing them and let $t\geq0$ and $s\geq0$, we have, 
\begin{align*}
E[M(t+s)\, | \, \mathcal{F}_s]&=E\left[\sum_{i\in \rep(\Pi(s))} M_{i,s}(t)\, | \, \mathcal{F}_s\right] \\
                              &=\sum_{i\in\N}E[\mathbf{1}_{\{i\in \rep(\Pi(s))\}} M_{i,s}(t)\, | \, \mathcal{F}_s] \\
                              &=\sum_{i\in\N}\mathbf{1}_{\{i\in \rep(\Pi(s))\}}|\Pi_{(i)}(s)|^{p^*} E[M(t)]\\
                              &=\sum_{i\in \rep(\Pi(s))} |\Pi_{(i)}(s)|^{p^*} E[M(t)] \\
                              &=M(s) E[M(t)].
\end{align*}

Thus we only need to show that $E[M(t)]=1$ for all $t$ and our proof will be complete. To do this, one uses the fact that, since $\Pi(t)$ is an exchangeable partition, the asymptotic frequency of the block containing $1$ is a size-biased pick from the asymptotic frequencies of all the blocks. This tells us that
\begin{align*}
E\left[\sum_i |\Pi_i(t)|^{p*}\right]&=E\left[\sum_i |\Pi_i(t)||\Pi_i(t)|^{p*-1}\mathbf{1}_{|\Pi_{(i)}(t)|\neq0}\right] \\
                       &=E[|\Pi_{(1)}(t)|^{p^*-1}\mathbf{1}_{\{|\Pi_{(i)}(t)|\neq0\}}] \\
                       &=\exp[-t\phi(p^*-1)] \\
                       &=1.
\end{align*}

We refer to \cite{BR} for the proof that $(M(t))_{t\geq0}$ is càdlàg (it is assumed in \cite{BR} that $c=0$ and that $\nu$ is conservative but these assumptions have no effect on the proof).
\end{proof}

Since these martingales are nonnegative, they all converge almost surely. For integer $i$ and real $t$, we will call $W_{i,t}$ the limit of the martingale $M_{i,t}$ on the event where this martingale converges. We also write $W$ instead of $W_{1,0}$ for simplicity. Our goal is now to investigate these limits. To this effect, let us introduce a family of integrability conditions indexed by parameters $q>1$, we let $\mathbf{(M_q)}$ be the assumption that
	\[\int_{\s}\left|1-\sum_{i=1}^{\infty}s_i^{p^*}\right|^q d\nu(\mathbf{s}) <\infty.
\]
We will assume through the rest of this section that there exists some $q>1$ such that $\mathbf{(M_q)}$ holds.

The following is a generalization of Theorem 1.1 and Proposition 1.5 of \cite{Ber} which were restricted to the case where $\nu$ has finite total mass.
\begin{prop}\label{Lq} Assume $\mathbf{(M_q)}$ for some $q>1$. Then the martingale $(M(t))_{t\geq0}$ converges to $W$ in $L^q$.
\end{prop}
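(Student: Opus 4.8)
The plan is to follow the classical strategy for $L^q$-convergence of additive martingales associated with branching structures (à la Biggins, Bertoin, Liu), adapted to the homogeneous fragmentation $\Pi$ and the Malthusian exponent $p^*$. The key idea is to exploit the branching/self-similar decomposition of $M(t)$ over the blocks present at a small fixed time, combine it with a martingale-difference telescoping along a discrete time grid, and use the von Bahr--Esseen / Topchii--Vatutin inequality for sums of independent centered variables (valid for $1<q\le 2$; for general $q>1$ one reduces to $q\le 2$ since $\mathbf{(M_q)}$ for large $q$ implies $\mathbf{(M_{q'})}$ for smaller $q'$, so we may as well assume $1<q\le2$). Concretely, writing for a fixed small $h>0$ and $n\ge 0$
\[
M((n+1)h) - M(nh) = \sum_{i\in\rep(\Pi(nh))} |\Pi_{(i)}(nh)|^{p^*}\bigl( M^{(i)}_{nh}(h) - 1\bigr),
\]
where, conditionally on $\mathcal{F}_{nh}$, the variables $M^{(i)}_{nh}(h)-1$ are independent, centered, and each distributed as $M(h)-1$, one gets by the von Bahr--Esseen inequality, conditionally on $\mathcal{F}_{nh}$,
\[
E\bigl[\,|M((n+1)h)-M(nh)|^q \,\big|\, \mathcal{F}_{nh}\bigr] \le 2\, E\bigl[|M(h)-1|^q\bigr] \sum_{i\in\rep(\Pi(nh))} |\Pi_{(i)}(nh)|^{qp^*}.
\]
Since $qp^* > p^*$ and $\psi$ is strictly increasing, $\phi(qp^*-1) = \psi(qp^*) > \psi(p^*) = 0$, so $E\bigl[\sum_i |\Pi_i(nh)|^{qp^*}\bigr] = e^{-nh\,\phi(qp^*-1)}$ decays geometrically in $n$. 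Hence $\sum_{n} E[|M((n+1)h)-M(nh)|^q]^{1/q} < \infty$ provided the single quantity $E[|M(h)-1|^q]$ is finite, and then $(M(nh))_{n\ge0}$ is Cauchy in $L^q$; by the $L^q$ maximal inequality applied on each interval $[nh,(n+1)h]$ one upgrades this to convergence of the full process $(M(t))_{t\ge0}$ in $L^q$, and the $L^q$-limit must coincide with the a.s.\ limit $W$.

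The remaining — and genuinely substantive — point is to show that $E[|M(h)-1|^q] < \infty$ (indeed $\to 0$ as $h\to 0$), which is where assumption $\mathbf{(M_q)}$ enters. Here one writes $M(h)-1$ as the terminal value of the compensated martingale $(M(s)-1)_{0\le s\le h}$ and controls it via the jumps of the driving Poisson point process together with the erosion drift. Each atom $(\Delta_s, k_s)$ of the point process, acting on the block with smallest element $k_s$ of frequency $|\Pi_{(k_s)}(s^-)|$, changes $M$ by $|\Pi_{(k_s)}(s^-)|^{p^*}\bigl(\sum_j |\Delta_{s,j}\cap\Pi_{(k_s)}(s^-)|^{p^*}/|\Pi_{(k_s)}(s^-)|^{p^*} - 1\bigr)$, and the conditional $q$-th moment of this, integrated against the intensity, is bounded (using exchangeability to replace the randomly-chosen block by a size-biased one, and $|\cdot|\le 1$) by a constant times $\int_{\s} |1-\sum_i s_i^{p^*}|^q \, d\nu(\mathbf{s})$, which is finite exactly by $\mathbf{(M_q)}$ — for $q\le 2$ via the Burkholder--Davis--Gundy or von Bahr--Esseen bound applied to the purely discontinuous martingale, with the erosion part contributing a bounded Lipschitz drift that is harmless for $h$ bounded. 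I expect this moment estimate on $M(h)-1$ to be the main obstacle: one must carefully separate the erosion contribution from the dislocation contribution, handle the fact that $\nu$ may be infinite (so there are infinitely many small jumps, requiring an $L^2$ compensation argument for the small jumps and a direct $L^q$ bound, via subadditivity of $x\mapsto x^{q/2}$ when $q\le 2$, for the large ones), and invoke the size-biasing identity from Kingman's theorem to reduce the block-indexed sum to an expression controlled by $\mathbf{(M_q)}$. Once $E[|M(h)-1|^q]<\infty$ is in hand, the telescoping argument above is routine. Finally, the cited references \cite{Ber} (for finite-mass $\nu$) and \cite{BR} furnish the template; the only new input is checking that none of the steps used conservativeness or finiteness of $\nu$, which they do not once $\mathbf{(M_q)}$ replaces the finite-total-mass hypothesis.
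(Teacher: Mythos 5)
Your route is genuinely different from the paper's. The paper shows that $(M(t))_{t\geq0}$ is a purely discontinuous martingale (via a finite-variation argument), invokes L\'epingle's inequality to reduce $L^q$-boundedness to the finiteness of $E[\sum_t|M(t)-M(t^-)|^q]$, and evaluates that expectation exactly with the Master formula, obtaining $\int_0^\infty \e^{-t\psi(qp^*)}dt\cdot\int_{\s}|1-\sum_i s_i^{p^*}|^q d\nu(\mathbf{s})$, finite because $\psi(qp^*)>0$ and $\mathbf{(M_q)}$ holds. You instead run the classical branching-random-walk skeleton argument: the block decomposition at times $nh$, von Bahr--Esseen conditionally on $\mathcal{F}_{nh}$, the identity $E[\sum_i|\Pi_i(nh)|^{qp^*}]=\e^{-nh\psi(qp^*)}$ giving geometric decay, then Doob's maximal inequality on each interval. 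That outer skeleton is correct (the conditional independence and the size-biasing identity you use are exactly those the paper uses elsewhere), and it avoids the purely-discontinuous-martingale machinery. Be aware, though, that your remaining estimate $E[|M(h)-1|^q]<\infty$ is not lighter than the paper's core step: for $q\leq2$ the natural way to prove it is to write $M(h)-1$ as a compensated Poisson integral and use BDG together with subadditivity of $x\mapsto x^{q/2}$ to reduce to $E[\sum_{s\leq h}|M(s)-M(s^-)|^q]$, which is then computed by the very same Master-formula calculation as in the paper; so your proposal defers, rather than replaces, the substantive work, and that sketch would need to be carried out in full. Also, the "erosion drift" comment is slightly off as phrased: $M$ is a martingale, so there is no drift to discard; either factor out $\e^{-cp^*t}$ as the paper does, or note that the erosion term is exactly absorbed by the compensator when $M(h)-1$ is written as a compensated Poisson integral.

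One point is a genuine gap rather than a stylistic difference: the reduction "WLOG $1<q\leq2$" does not prove the stated result when $q>2$. Deducing $\mathbf{(M_{q'})}$ for some $q'\leq2$ only yields convergence in $L^{q'}$, which is weaker than the asserted $L^q$ convergence (it suffices for the later use $E[W]=1$, but not for the proposition as written), whereas the paper's jump-sum criterion is applied directly at the exponent $q$. Moreover, the implication $\mathbf{(M_q)}\Rightarrow\mathbf{(M_{q'})}$ for $1<q'<q$ is not purely formal: on the region where $|1-\sum_i s_i^{p^*}|\leq1$ the smaller exponent makes the integrand larger, and since $\nu$ may have infinite mass you need the observation that $\mathbf{(M_1)}$ holds automatically --- the Malthusian relation $\psi(p^*)=0$ forces $\int_{\s}|1-\sum_i s_i^{p^*}|\,d\nu(\mathbf{s})<\infty$ --- and then split the integral at $|1-\sum_i s_i^{p^*}|=1$. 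With that justification added, and with the $q>2$ case either handled by a different inequality or the claim restricted, your argument is a viable alternative for $1<q\leq2$.
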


\begin{proof} We will first show that the martingale $(M(t))_{t\geq0}$ is purely discontinuous in the sense of \cite{DM}, which we will do by proving that it has finite variation on any bounded interval $[0,T]$ with $T>0$. To this effect, write, for all $t$, $M(t)=\e^{-cp^*t}\sum_i (X_i(t))^{p^*}$ where the $(X_i(t))_{i\in\N}$ are the sizes of the blocks of a homogeneous fragmentation with dislocation measure $\nu$, but no erosion. Since the product of a bounded nonincreasing function with a bounded function of finite variation has finite variation, we only need to check that $t\mapsto \sum_i X_i(t)^{p^*}$ has finite variation on $[0,T]$. Since this function is just a sum of jumps, its total variation is equal to the sum of the absolute values of these jumps. Thus we want to show that $|\underset{t\leq T}\sum \underset{i}\sum(X_i(t))^{p^*}-(X_i(t^-))^{p^*}|$ is finite. This sum is equal to $\underset{t\leq T}\sum e^{cp^*t} |M(t)-M(t^-)|$, which is bounded above by $e^{cp^*T}\underset{t\leq T}\sum |M(t)-M(t^-)|$. We will not show the finiteness of this sum, which is done by computing its expectation similarly to our next computation.

Knowing that the martingale is purely discontinuous, according to \cite{Lep} (at the bottom of page 299), to show that the martingale is bounded in $L^q$, one only needs to show that the sum of the $q-th$ powers of its jumps is also bounded in $L^q$, i.e. that
	\[E\big[\sum_t |M(t)-M(t^-)|^q\big]<\infty.
\]

This expected value can be computed with the Master formula for Poisson point processes ( see\cite{RY}, page 475). Recall the construction of $\Pi$ through a family of Poisson point processes $((\Delta^k(t))_{t\geq0})_{k\in\N}$: for $t$ and $k$ such that there is an atom $\Delta^k(t)$, the $k$-th block of $\Pi(t^-)$ is replaced by its intersection with $\Delta^k(t)$. We then have
\begin{align*}
E\left[\sum_{t\geq0} |M(t)-M(t^-)|^q\right]&=E\left[\sum_{t\geq0}|\Pi_{k_t}(t-)|^{qp^*}\left(|1-\sum_{i=1}^{\infty}|\Delta^k_i(t)|^{p^*}|\right)^q\right] \\
                                &=E\left[\sum_{k=1}^{\infty}\sum_{t\geq 0} |\Pi_k(t-)|^{qp^*}\left(|1-\sum_{i=1}^{\infty}|\Delta^k_i(t)|^{p^*}|\right)^{q}\right]\\
                                &=E\left[\int_0^\infty \sum_k |\Pi_k(t-)|^{qp^*} dt\right] \int_{\s} |1-\sum_i s_i^{p^*}|^q d\nu(\mathbf{s}) \\
                                &=\int_0^\infty \e^{-t\psi(qp^*)} dt \int_{\s} |1-\sum_i s_i^{p^*}|^q d\nu(\mathbf{s}).                               
\end{align*}
Since $qp^*>p^*$, we have $\psi(qp^*)>0$ and thus the expectation is finite. 
\end{proof}

\begin{prop} Assume that $E[W]=1$ (which is equivalent to assuming that the martingale $M((t))_{t\geq0}$ converges in $L^1$). Then, almost surely, if $\Pi$ does not die in finite time then $W$ is strictly positive.
\end{prop}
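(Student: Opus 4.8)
The plan is to run the classical non-degeneracy argument for additive martingales: use the fragmentation property to write $W$ as a sum of rescaled independent copies of itself, and then apply L\'evy's $0$--$1$ law to the indicator of $\{W=0\}$. \emph{Step 1: a branching identity for $W$.} Fix $t\geq0$. As recalled in the proof of the previous proposition, conditionally on $\mathcal{F}_t$ the processes $\big(M_{i,t}(s)\big)_{s\geq0}$, $i\in\rep(\Pi(t))$, are independent and $\big(M_{i,t}(s)\big)_{s\geq0}$ has the law of $|\Pi_{(i)}(t)|^{p^*}$ times an independent copy of $\big(M(s)\big)_{s\geq0}$. Since $M$ converges to $W$ in $L^1$ by hypothesis, letting $s\to\infty$ shows that the limits $W_{i,t}$ exist, that conditionally on $\mathcal{F}_t$ they are independent with $W_{i,t}/|\Pi_{(i)}(t)|^{p^*}\stackrel{d}{=}W$ (and $W_{i,t}=0$ when $|\Pi_{(i)}(t)|=0$). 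From $M(t+s)=\sum_i M_{i,t}(s)$ and Fatou's lemma, $\sum_i W_{i,t}\leq W$ a.s.; since
\[
E\Big[\sum_{i\in\rep(\Pi(t))}W_{i,t}\Big]=E\Big[\sum_i|\Pi_{(i)}(t)|^{p^*}\Big]\,E[W]=E[M(t)]=1=E[W],
\]
this inequality is in fact an a.s.\ equality $W=\sum_{i\in\rep(\Pi(t))}W_{i,t}$.

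\emph{Step 2: the extinction probability is a fixed point.} Write $\theta=P(W=0)$ and let $N(t)=\#\{i:|\Pi_{(i)}(t)|>0\}$, an $\mathcal{F}_t$-measurable random variable valued in $\N\cup\{\infty\}$. All summands in Step 1 being nonnegative, $\{W=0\}=\bigcap_{i:\,|\Pi_{(i)}(t)|>0}\{W_{i,t}=0\}$, and because the $W_{i,t}/|\Pi_{(i)}(t)|^{p^*}$ are, given $\mathcal{F}_t$, independent copies of $W$,
\[
P\big(W=0\mid\mathcal{F}_t\big)=\theta^{\,N(t)},
\]
with the convention $\theta^{\infty}=0$ when $\theta<1$.

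\emph{Step 3: conclusion.} If $\theta=0$ there is nothing to prove, and $\theta=1$ is impossible since it would give $E[W]=0$; so assume $\theta\in(0,1)$. The variable $W$, hence the event $\{W=0\}$, is $\mathcal{F}_\infty$-measurable, so L\'evy's upward convergence theorem gives $\theta^{N(t)}=P(W=0\mid\mathcal{F}_t)\to\mathbf{1}_{\{W=0\}}$ almost surely as $t\to\infty$ along integers. On $\{W=0\}$ this limit is $1$; since $0<\theta<1$ we have $\theta^k\leq\theta<1$ for every integer $k\geq1$, so $\theta^{N(t)}\to1$ forces $N(t)=0$ for all large $t$. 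As the total mass $\sum_i|\Pi_i(t)|$ is nonincreasing in $t$, $N(t_0)=0$ implies $N(t)=0$ for all $t\geq t_0$, i.e.\ $\Pi$ dies in finite time. Hence $\{W=0\}\subseteq\{\Pi\text{ dies in finite time}\}$ up to a null event, which is exactly the assertion (and, as a byproduct, $N(t)\to\infty$ on the survival event). The only point requiring genuine care is the a.s.\ identity $W=\sum_i W_{i,t}$ of Step 1 when $\Pi(t)$ has infinitely many blocks: Fatou only yields ``$\leq$'', and the reverse inequality is recovered solely through the first-moment computation — which is precisely where the hypothesis $E[W]=1$ is used, beyond merely ensuring $\theta<1$.
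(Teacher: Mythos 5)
Your proof is correct, but it takes a genuinely different route from the paper's. The paper discretizes time and regards $Z_n$, the number of positive-mass blocks of $\Pi(n)$, as a Galton--Watson process with possibly infinite offspring (this is why the paper needs its Appendix B); it then notes that $\{W=0\}$ is hereditary, so that $P(W=0)$ is a fixed point of the offspring generating function and must equal either $1$ or the extinction probability, rules out $1$ using $E[W]=1$, and concludes from the inclusion of the extinction event in $\{W=0\}$. You instead re-derive the branching identity $W=\sum_{i\in\rep(\Pi(t))}W_{i,t}$ (this is essentially the paper's Lemma \ref{22}, and your Fatou-plus-first-moment argument is the same as the one given there), deduce $P(W=0\mid\mathcal{F}_t)=\theta^{N(t)}$ from the conditional independence and scaling of the $W_{i,t}$, and conclude via L\'evy's upward theorem that on $\{W=0\}$ one must have $N(n)=0$ for some finite $n$, i.e.\ $\{W=0\}$ is almost surely contained in the extinction event; your handling of the degenerate cases $\theta=0$ and $\theta=1$ and of the monotonicity giving $N(t)=0$ for all later times is sound. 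What your route buys: it bypasses Galton--Watson extinction theory altogether (no generating-function fixed points, no infinite-offspring extension), and it makes explicit the two distinct uses of the hypothesis $E[W]=1$, namely ruling out $\theta=1$ and upgrading Fatou's inequality to the exact decomposition. What the paper's route buys: given its appendix it is shorter, and it identifies $P(W=0)$ directly as the extinction probability of the discretized process, a fact you obtain only as a byproduct. Both arguments exploit the same hereditary structure of $\{W=0\}$; the difference lies in the tool used to exploit it, extinction/fixed-point theory in the paper versus the L\'evy $0$--$1$ law in your argument.
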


\begin{proof}
We discretize the problem and only look at integer times: for $n\in\N$, let $Z_n$ is the number of blocks of $\Pi(n)$ which have nonzero mass. The process $(Z_n)_{n\in\N}$ is a Galton-Watson process (possibly taking infinite values. See Appendix 2 to check that standard results stay true in this case). If it is critical or subcritical then there is nothing to say, and if it is supercritical, notice that the event $\{W=0\}$ is hereditary (in the sense that $W=0$ if and only if all the $W_{i,1}$ are also zero). This implies that the probability of the event $\{W=0\}$ is either equal to $1$ or to the probability of extinction. But since $E[W]=1$, $W$ cannot be $0$ almost surely and thus $\{W=0\}$ and the event of extinction have the same probabilities. Since $\{W=0\}$ is a subset of the event of extinction, $W$ is nonzero almost surely on nonextinction.

\end{proof}

The following proposition states the major properties of these martingale limits.

\begin{prop}\label{21} There exists an event of probability $1$ on which the following are true:

(i) For every $i$ and $t$, the martingale $M_{i,t}$ converges to $W_{i,t}$.

(ii) For every integer $i$, and any times $t$ and $s$ with $s>t$, we have
	\[W_{i,t}=\sum_{j \in \Pi_{(i)}(t)\cap \rep(\Pi(s))} W_{j,s}.
\]

(iii) For every $i$, the function $t\mapsto W_{i,t}$ is nonincreasing and right-continuous. The left-limits can be described as follows: for every $t$, we have
	\[W_{i,t^-}=\sum_{j \in \Pi_{(i)}(t^-)\cap \rep(\Pi(t))} W_{j,t}.
\]
\end{prop}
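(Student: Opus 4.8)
The plan is to establish (i), (ii), (iii) in that order, obtaining a single almost-sure event by taking a countable intersection. Each martingale $M_{i,t}$ is nonnegative, hence converges a.s. to its limit $W_{i,t}$; but there are uncountably many pairs $(i,t)$, so I first need to reduce to a countable family. The natural move is to use the branching structure: by the extended fragmentation property, for a fixed $i$ and a fixed \emph{rational} $t$, the process $(M_{i,t}(s))_{s\geq 0}$ is, conditionally on $\mathcal{F}_t$, a copy of $M$ scaled by $|\Pi_{(i)}(t)|^{p^*}$, so it converges a.s. Intersecting over $i\in\N$ and $t\in\mathbb{Q}_{\geq 0}$ gives a probability-one event on which all these converge. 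For irrational $t$, I would express $W_{i,t}$ via (ii): since the block $\Pi_{(i)}(t)$ refines and is a.s. eventually constant along a rational sequence decreasing to $t$ (right-continuity of $\Pi$ as a c\`adl\`ag process), the identity $M_{i,t}(s) = \sum_{j\in \Pi_{(i)}(t)\cap \rep(\Pi(t'))} M_{j,t'}(s-(t'-t))$ for rational $t'>t$ close to $t$ lets me \emph{define} convergence at time $t$ from convergence at rational times, and this will be consistent. So really (i) and (ii) should be proven together.

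For (ii), the key observation is the additive decomposition already used in the martingale proof: for $s>t$, summing over the sub-blocks, $M_{i,t}(u) = \sum_{j\in \Pi_{(i)}(t)\cap\rep(\Pi(s))} M_{j,s}(u-(s-t))$ for all $u\geq s-t$ — this is just the identity $\sum_k |\Pi_k(\cdot)\cap \Pi_{(i)}(t)|^{p^*} = \sum_j \sum_k |\Pi_k(\cdot)\cap\Pi_{(j)}(s)|^{p^*}$ grouping the level-$t$ descendants of $i$ according to which level-$s$ block they fall in. Letting $u\to\infty$ term by term gives $W_{i,t} = \sum_j W_{j,s}$, \emph{provided} I can pass the limit inside the (possibly infinite) sum. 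This is the main obstacle: a priori the sum has infinitely many terms and one needs a domination or monotonicity argument. Here I would invoke $\mathbf{(M_q)}$ and Proposition \ref{Lq}: the martingale $M$ is bounded in $L^q$, hence uniformly integrable, hence so is each $M_{i,t}$ (being $|\Pi_{(i)}(t)|^{p^*}$ times an independent $L^q$-bounded copy), and the convergence $M_{i,t}(s)\to W_{i,t}$ holds in $L^1$. Then $E[M_{i,t}(u)] = E[W_{i,t}]$ and, applying $E[\,\cdot\,|\mathcal{F}_s]$ and letting $u\to\infty$, Fatou on each term combined with the fact that $E\big[\sum_j W_{j,s}\,|\,\mathcal{F}_s\big] = \sum_j E[W_{j,s}\,|\,\mathcal{F}_s] = \sum_j M_{j,s}(\cdot)\big|_{\text{at time }s}$'s limit $= M_{i,t}$-value forces equality, ruling out mass loss. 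Concretely: $M_{i,t}(s-t) = \sum_j M_{j,s}(0)$ and $M_{j,s}(0) \geq E[W_{j,s}\mid\mathcal{F}_s]$ with equality in $L^1$, so summing and using $\sum_j M_{j,s}(0) = M_{i,t}(s-t)$ together with $E[M_{i,t}(s-t)] = E[W_{i,t}] = E[\sum_j W_{j,s}]$ (monotone convergence for the last) pins down $W_{i,t} = \sum_j W_{j,s}$ a.s. Again one does this for rational $s>t$, $i\in\N$, and takes a countable intersection.

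For (iii), monotonicity in $t$ is immediate from (ii): if $s>t$ then the blocks $\Pi_{(i)}(s)$ occurring in the sum for $W_{i,s}$ (taking $i' = i$ if still in the same block, or rather summing $W$ over $\rep$ of descendants) are a sub-collection relative to $W_{i,t}$, so $W_{i,t}\geq W_{i,s}$ — more precisely $W_{i,t} = \sum_{j\in\Pi_{(i)}(t)\cap\rep(\Pi(s))} W_{j,s} \geq W_{i,s}$ since the $j=$ (rep of $i$'s block at time $s$) term alone is $W_{i,s}$ and all terms are nonnegative. Right-continuity follows because $\Pi$ is c\`adl\`ag: for $s\downarrow t$, eventually $\Pi_{(i)}(s) = \Pi_{(i)}(t)$ (the block is locally constant to the right), so the sum in (ii) eventually has a single term, $W_{i,s} = W_{i,s}$ — to make this rigorous, for $s$ slightly larger than $t$, $\Pi_{(i)}(t)\cap\rep(\Pi(s))$ stabilizes and $\sum_{j} W_{j,s} \to W_{i,t}$ by (ii) read the other way, or simply: $W_{i,t} = \sum_{j\in\Pi_{(i)}(t)\cap\rep(\Pi(s))} W_{j,s}$ and as $s\downarrow t$ along rationals the index set becomes $\{$rep of $\Pi_{(i)}(t)\}$ and the single surviving term is $W_{i,s}$, whose right-limit is $W_{i,t}$ by the $L^1$ martingale continuity. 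The left-limit formula is the same identity applied "just below" $t$: $W_{i,t^-} = \lim_{s\uparrow t} W_{i,s} = \lim_{s\uparrow t}\sum_{j\in\Pi_{(i)}(s)\cap\rep(\Pi(t))} W_{j,t}$, and since $\Pi_{(i)}(s)\uparrow\Pi_{(i)}(t^-)$ as $s\uparrow t$ (definition of c\`adl\`ag left-limit of partitions), monotone convergence gives $W_{i,t^-} = \sum_{j\in\Pi_{(i)}(t^-)\cap\rep(\Pi(t))} W_{j,t}$. The one point requiring care throughout is the interchange of limits with infinite sums, which is why I lean on the $L^q$-boundedness from Proposition \ref{Lq} rather than trying to argue pathwise.
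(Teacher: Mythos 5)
Your Fatou-plus-expectation-matching step (for fixed $i$ and fixed rational $t<s$) is sound and is essentially the paper's Lemma \ref{22}; the trouble is in the extension from this countable family to \emph{all} times, which is where the real content of the proposition lies. For an irrational $t$ you write $M_{i,t}(s)=\sum_{j\in\Pi_{(i)}(t)\cap\rep(\Pi(t'))}M_{j,t'}(s-(t'-t))$ with $t'>t$ rational and want to let $s\to\infty$ term by term. The only justification you offer for interchanging this limit with the infinite sum is uniform integrability / $L^q$-boundedness, i.e.\ an argument at the level of expectations. That argument produces, for each \emph{fixed} pair of times, an almost-sure identity with a null set depending on the pair; it cannot be summed over the uncountably many irrational $t$ (nor over uncountably many $s$ in (ii)), so it does not yield the single probability-one event the proposition asserts. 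What is missing is a \emph{pathwise} mechanism for the interchange: the paper supplies it with the deterministic Lemma \ref{23} (if $f=\sum_i f_i$, $f_i=\sum_j f_{i,j}$, each $f_{i,j}$ converges, and $f$ converges to the full double sum of the limits, then every group sum $f_i$ converges to $\sum_j l_{i,j}$ — a liminf/limsup sandwich using the complementary groups). Once the no-mass-loss identity $W=\sum_{j\in\rep\Pi(s_n)}W_{j,s_n}$ is secured on a single a.s.\ event for one sequence $s_n\to\infty$ (via Lemma \ref{22}), this deterministic lemma transfers convergence and additivity to all $i$ and all real $t<s$ simultaneously, with no further null sets. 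Your proposal never articulates this sandwich, and claiming it "will be consistent" does not close the gap. Incidentally, your remark that each $M_{i,t}$ is UI because it is $|\Pi_{(i)}(t)|^{p^*}$ times an independent copy of an $L^q$-bounded martingale is fine, but note it is used in the paper only to guarantee $E[W]=1$, which is the one place where $\mathbf{(M_q)}$ enters.

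Two smaller inaccuracies in part (iii): when $\nu$ has infinite total mass every block fragments immediately, so $\Pi_{(i)}(s)$ is \emph{not} eventually equal to $\Pi_{(i)}(t)$ as $s\downarrow t$, nor "locally constant to the right"; the correct statement is that $\Pi_{(i)}(s)$ increases to $\Pi_{(i)}(t)$ as $s\downarrow t$, and right-continuity then follows by monotone convergence of the sums in (ii), as in the paper. Similarly, for the left limit the blocks $\Pi_{(i)}(s)$ \emph{decrease} to $\Pi_{(i)}(t^-)$ as $s\uparrow t$ (you wrote an increasing limit), so the passage to the limit is over a decreasing family and is justified because the sums are finite (take the infimum on both sides of the identity at level $t$). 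These are fixable, but as written the argument for (iii) relies on a false local-constancy claim.
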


To prove this we will need the help of several lemmas. The first is an intermediate version of point $(ii)$

\begin{lemma}\label{22} For any integer $i$ and any times $t$ and $s$ such that $s>t$, there exists an event of probability $1$ on which the martingales $M_{i,t}$ and $M_{j,s}$ converge for all $j$ and we have the relation
\[W_{i,t}=\sum_{j \in \Pi_{(i)}(t)\cap \rep(\Pi(s))} W_{j,s}.
\]
\end{lemma}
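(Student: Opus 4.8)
The statement to prove, Lemma~\ref{22}, is the "fixed-times" version of point~$(ii)$ of Proposition~\ref{21}: for fixed $i,t,s$ with $s>t$, almost surely
\[
W_{i,t}=\sum_{j \in \Pi_{(i)}(t)\cap \rep(\Pi(s))} W_{j,s}.
\]
The plan is to deduce this from the martingale property together with an $L^1$ (indeed $L^q$) convergence argument, exactly so that one may pass the identity $M_{i,t}(u)=\sum_{j\in\Pi_{(i)}(t)\cap\rep(\Pi(s))}M_{j,s}(u-?)$-type decomposition to the limit. Concretely, first reduce to the case $i=1$, $t=0$ by the now-standard remark (used already in the proof that $M_{i,t}$ is a martingale) that, conditionally on $\mathcal F_t$, the process $(M_{i,t}(u))_{u\ge 0}$ is $|\Pi_{(i)}(t)|^{p^*}$ times an independent copy of $(M(u))_{u\ge0}$, and the fragmentation property makes the blocks $\Pi_{(i)}(t)$ spawn independent sub-fragmentations; so it suffices to prove $W=\sum_{j\in\rep(\Pi(s))}W_{j,s}$ a.s.\ for fixed $s>0$.

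**Main step.** For fixed $s$, the branching/fragmentation property at time $s$ gives, for every $u\ge s$,
\[
M(u)=\sum_{j\in\rep(\Pi(s))}\bigl|\Pi_{(j)}(s)\bigr|^{p^*}\,\widetilde M^{(j)}(u-s)
     =\sum_{j\in\rep(\Pi(s))}M_{j,s}(u-s),
\]
where, conditionally on $\mathcal F_s$, the $\widetilde M^{(j)}$ are independent copies of $M$. Letting $u\to\infty$, the left side converges a.s.\ (and in $L^q$, by Proposition~\ref{Lq}) to $W$, and each summand $M_{j,s}(u-s)\to W_{j,s}$ a.s. The only thing to check is that the limit of the sum is the sum of the limits, i.e.\ that one may interchange limit and the (infinite) sum over $j\in\rep(\Pi(s))$. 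The cleanest route is $L^1$-domination: compute
\[
E\Bigl[\sum_{j\in\rep(\Pi(s))}\sup_{u\ge s}M_{j,s}(u-s)\Bigr]
  =E\Bigl[\sum_{j\in\rep(\Pi(s))}\bigl|\Pi_{(j)}(s)\bigr|^{p^*}\Bigr]\;E\Bigl[\sup_{v\ge0}M(v)\Bigr],
\]
where the first factor equals $E[M(s)]=1$ and the second is finite because $(M(v))_{v\ge0}$ is an $L^q$-bounded martingale (Proposition~\ref{Lq}) and Doob's $L^q$-maximal inequality applies. Hence $\sum_j\sup_u M_{j,s}(u-s)\in L^1$, which dominates all partial sums $\sum_{j\le N}M_{j,s}(u-s)$ uniformly in $u$; dominated convergence (first in $u\to\infty$ for the tail bound, then letting $N\to\infty$) yields
\[
W=\lim_{u\to\infty}M(u)=\lim_{u\to\infty}\sum_{j\in\rep(\Pi(s))}M_{j,s}(u-s)
   =\sum_{j\in\rep(\Pi(s))}\lim_{u\to\infty}M_{j,s}(u-s)=\sum_{j\in\rep(\Pi(s))}W_{j,s}
\]
almost surely. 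Undoing the reduction (multiply by $|\Pi_{(i)}(t)|^{p^*}$ and restrict to the block $\Pi_{(i)}(t)$, using $\Pi_{(i)}(t)\cap\rep(\Pi(s))$ as index set) gives the statement for general $i,t$.

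**Expected obstacle.** The one genuinely delicate point is the interchange of the infinite sum over $j\in\rep(\Pi(s))$ with the limit $u\to\infty$ — there can be infinitely many blocks at time $s$, so a bare "limit of a sum is the sum of the limits" is not automatic. The remedy above (producing an integrable dominating random variable $\sum_j\sup_u M_{j,s}(u-s)$, via $E[M(s)]=1$ together with the $L^q$-boundedness and Doob's maximal inequality from Proposition~\ref{Lq}) handles it; if one wanted to avoid invoking $\mathbf{(M_q)}$ here, an alternative is to work in $L^1$ directly — $M(u)\to W$ in $L^1$, each $M_{j,s}(u-s)\to W_{j,s}$ in $L^1$, and a uniform-integrability argument for the partial-sum tails — but the $L^q$ route is shorter given that $\mathbf{(M_q)}$ is already in force throughout the section. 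Everything else (the reduction to $i=1,t=0$, the branching decomposition at time $s$) is a direct application of the fragmentation property already recorded in the excerpt.
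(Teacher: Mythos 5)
Your proof is correct, and its overall skeleton (reduction to $i=1$, $t=0$; the pathwise decomposition $M(u)=\sum_{j\in\rep(\Pi(s))}M_{j,s}(u-s)$ for $u\geq s$; the recognition that the whole difficulty is interchanging the infinite sum over $j$ with the limit $u\to\infty$) matches the paper's. Where you diverge is precisely at that key step. The paper does not construct a dominating variable: it applies Fatou's lemma to get the one-sided inequality $W\geq\sum_{j\in\rep(\Pi(s))}W_{j,s}$, and then shows the two sides have the same expectation, computing $E[\sum_j W_{j,s}]=E[M(s)]\,E[W]=1=E[W]$ via the fragmentation property (writing $W_{j,s}=|\Pi_{(j)}(s)|^{p^*}W'_{j,s}$ with $W'_{j,s}$ an independent mean-one copy of $W$); equality of expectations of ordered nonnegative variables then forces a.s. equality. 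You instead dominate the partial sums by $\sum_j\sup_u M_{j,s}(u-s)$, whose integrability you get from $E[M(s)]=1$ together with $L^q$-boundedness and Doob's maximal inequality, and conclude by dominated convergence. Both arguments ultimately lean on Proposition \ref{Lq} through the standing assumption $\mathbf{(M_q)}$, but they use it with different strength: the paper only needs $E[W]=1$ (i.e.\ $L^1$ convergence/uniform integrability of the additive martingale), so its argument would survive under hypotheses weaker than $\mathbf{(M_q)}$, whereas your route genuinely needs an integrable maximal function, which you extract from $L^q$-boundedness with $q>1$. What your approach buys in exchange is a one-shot limit interchange with no separate expectation computation for the limit variables, and the Fatou inequality becomes unnecessary. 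One small presentational caveat: keep the pathwise identity $M(u)=\sum_j M_{j,s}(u-s)$ (which is just regrouping blocks) separate from the conditional-law representation $M_{j,s}(\cdot)=|\Pi_{(j)}(s)|^{p^*}\widetilde M^{(j)}(\cdot)$ with $\widetilde M^{(j)}$ independent copies given $\mathcal F_s$ — the former is what you pass to the limit, the latter is only used (via Tonelli, conditioning on $\mathcal F_s$) to evaluate $E\bigl[\sum_j\sup_u M_{j,s}(u-s)\bigr]$; as written you slightly blur the two, but the argument is sound.
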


\begin{proof} For clarity's sake, we are going to restrict ourselves to the case where $i=1$ and $t=0$, but the proof for the other cases is similar. We have, for all $r\geq s$,
	\[M(r)=\sum_{j\in \rep(\Pi(s))} M_{j,s}(r-s).
\]
We cannot immediately take the limits as $r$ goes to $\infty$ because we do not have any kind of dominated convergence under the sum. However, Fatou's Lemma does give us the inequality
	\[W\geq\sum_{j\in \rep(\Pi(s))} W_{j,s}.
\]
 To show that these are actually equal almost surely, we show that their expectations are equal. We know that $E[W]=1$ and that, for all $j\in\N$ and $s\geq0$, one can write $W_{j,s}=|\Pi_{(j)}(s)|^{p^*} W'_{j,s}$ where $W'_{j,s}$ is a copy of $W$ which is independent of $\mathcal{F}_s$. We thus have

\begin{align*}
E\left[\sum_{j\in \rep(\Pi(s))}W_{j,s}\right] &= E\left[\sum_{j\in \N} \mathbf{1}_{j\in \rep(\Pi(s))} |\Pi_{(j)}(s)|^{p^*} W'_{j,s}\right] \\
                                  &= \sum_{j\in \N} E[\mathbf{1}_{j\in \rep(\Pi(s))} |\Pi_{(j)}(s)|^{p^*} W'_{j,s}] \\
                                  &=\sum_{j\in \N} E[W'_{j,s}]E[\mathbf{1}_{j\in \rep(\Pi(s))} |\Pi_{(j)}(s)|^{p^*}] \\
                                  &=\sum_{j\in \N} E[\mathbf{1}_{j\in \rep(\Pi(s))} |\Pi_{(j)}(s)|^{p^*}] \\
                                  &=E[M(s)] \\
                                  &=1.
\end{align*}

\end{proof}

\begin{lemma}\label{23} For every pair of integers $i$ and $j$, let $f_{i,j}$ be a nonnegative function defined on $[0,+\infty)$. For every $i$, we let $f_i$ be the function $\sum_j f_{i,j}$, and we also let $f=\sum_i f_i$. We assume that, for every $i$ and $j$, the function $f_{i,j}$ converges at infinity to a limit called $l_{i,j}$, and we also assume that $f$ converges, its limit being $l=\sum_{i,j} l_{i,j}$. Then, for every $i$, the function $f_i$ also converges at infinity and its limit is $l_i=\sum_j l_{i,j}$.
\end{lemma}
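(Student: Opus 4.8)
The statement is a purely analytic fact about nonnegative functions, and the plan is to establish the two inequalities $\liminf_{t\to\infty} f_i(t)\ge l_i$ and $\limsup_{t\to\infty} f_i(t)\le l_i$ separately. The only real tool needed is Fatou's lemma, applied to sums read as integrals against counting measure on $\N$ (resp.\ on a subset of $\N\times\N$). Before anything else I would record that, since $f$ converges at infinity, its limit $l$ is finite, so $f(t)<\infty$ for all large $t$; by Tonelli, the hypothesis $l=\sum_{i,j}l_{i,j}$ then forces every $l_{i,j}$ to be finite, $l_i=\sum_j l_{i,j}<\infty$ for each $i$, and $l=\sum_i l_i$.

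For the lower bound, fix $i$ and an arbitrary sequence $t_n\to\infty$. Viewing $j\mapsto f_{i,j}(t_n)$ as nonnegative functions on $\N$, Fatou's lemma gives
\[
\liminf_{n\to\infty} f_i(t_n)=\liminf_{n\to\infty}\sum_j f_{i,j}(t_n)\ \ge\ \sum_j \liminf_{n\to\infty} f_{i,j}(t_n)=\sum_j l_{i,j}=l_i .
\]
Since $(t_n)$ was arbitrary, $\liminf_{t\to\infty}f_i(t)\ge l_i$.

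For the upper bound, set $g=\sum_{i'\ne i} f_{i'}=\sum_{(i',j):\,i'\ne i} f_{i',j}$. The same Fatou argument, now over the index set $\{(i',j):i'\ne i\}$, yields $\liminf_{t\to\infty} g(t)\ge \sum_{i'\ne i} l_{i'}=l-l_i$. For $t$ large, $f(t)$ and $g(t)$ are finite and $f_i(t)=f(t)-g(t)$, so
\[
\limsup_{t\to\infty} f_i(t)=\lim_{t\to\infty}f(t)-\liminf_{t\to\infty} g(t)\ \le\ l-(l-l_i)=l_i .
\]
Combining the two bounds shows that $f_i(t)\to l_i$ as $t\to\infty$, which is the claim.

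The argument carries no genuine analytic difficulty; the one point I would watch most closely is the bookkeeping that keeps all the relevant sums finite, so that the decomposition $f_i=f-g$ and the manipulation $\limsup(f-g)=\lim f-\liminf g$ never involve an $\infty-\infty$, and so that Fatou's lemma is invoked with the double series correctly re-indexed over the sub-collection of indices.
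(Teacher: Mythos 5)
Your proof is correct. The lower bound is exactly the paper's: superadditivity of $\liminf$ for nonnegative terms (Fatou with respect to counting measure) gives $\liminf_{t\to\infty} f_i(t)\ge \sum_j l_{i,j}$. Where you genuinely differ is the upper bound. The paper argues globally: it asserts $l\ge \sum_{i\le N}\limsup f_i$ by ``taking the upper limit'' in $f\ge\sum_{i\le N}f_i$, lets $N\to\infty$, and then closes the sandwich $\sum_{i,j}l_{i,j}\le\sum_i\liminf f_i\le\sum_i\limsup f_i\le\sum_{i,j}l_{i,j}$, from which $\liminf f_i=\limsup f_i=l_i$ follows term by term (using finiteness of $l$). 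You instead bound each $\limsup f_i$ separately, writing $f_i=f-g$ with $g=\sum_{i'\ne i}f_{i'}$ and applying Fatou to $g$ over the complementary index set to get $\liminf g\ge l-l_i$. Both arguments rest on the same two ingredients (Fatou plus convergence of the full sum $f$ to $\sum_{i,j}l_{i,j}$), but your subtraction step is the more careful route: read literally, the paper's step uses $\limsup\sum_{i\le N}f_i\ge\sum_{i\le N}\limsup f_i$, which is not a valid general inequality ($\limsup$ is subadditive, not superadditive); it holds here precisely because of the kind of argument you give (subtract the remaining terms and use their $\liminf$ bound). So your version proves the lemma and at the same time supplies the justification that the paper's upper-bound step leaves implicit; your preliminary finiteness bookkeeping ($l<\infty$ forces all $l_{i,j}$ and $l_i$ finite and $l=\sum_i l_i$) is likewise needed for the paper's sandwich and is handled correctly, as is the absence of any $\infty-\infty$ in $\limsup(f-g)=\lim f-\liminf g$.
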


\begin{proof} We are going to prove that $\liminf f_i = \limsup f_i =l_i$ for all $i$. Let $N$ be any integer, taking the upper limit in the relation $f\geq \sum_{i\leq N} f_i$ gives us $l\geq \sum_{i\leq N} \limsup f_i$, and by taking the limit as $N$ goes to infinity, we have $l\geq \sum_{i} \limsup f_i$. Similarly, for every $i$, the relation $f_i=\sum_j f_{i,j}$ gives us $\liminf f_i \geq \sum_j l_{i,j}$. We thus have the following chain:
	\[\sum_{i,j} l_{i,j} \leq \sum_i \liminf f_i \leq \sum_i \limsup f_i \leq \sum_{i,j} l_{i,j},
\]
and this implies that, for every $i$, $\liminf f_i = \limsup f_i =l_i.$
\end{proof}
\emph{Proof of Proposition \ref{21}:} let $t<s$ be two times and assume that the martingale $M_{j,s}$ converges for all $j$, and also assume the relation $W=\underset{j\in\rep{\Pi(s)}}\sum W_{j,s}$. Apply Lemma \ref{23} with $f(r)=M(s+r)$, $f_i(r)=\mathbf{1}_{\{i\in\rep \Pi(t)\}}M_{i,t}(r+s-t)$ and $f_{i,j}(r)=\mathbf{1}_{\{i\in\rep \Pi(t)\}}\mathbf{1}_{\{j\in\rep \Pi(s)\cap \Pi_{(i)}(t)\}}M_{j,s}(r)$. Then, for all $i$, the martingale $M_{i,t}$ does indeed converge, and point $(ii)$ of the proposition is none other than the relation $l_i=\sum_j l_{i,j}$. We also get that $W=\underset{i\in\rep{\Pi(t)}}\sum W_{i,t}$ and thus can use the same reasoning to obtain $W_{i,r}=\underset{j\in\Pi_{(i)}(r)\cap\rep\Pi(t)}\sum W_{j,t}$ for all $r<t<s$.

By Lemma \ref{22}, the assumption of the previous paragraph is true for any value of $s$ with probability $1$, we then obtain points $(i)$ and $(ii)$ by taking a sequence of values of $s$ tending to infinity.

%we restrict ourselves to the event where, for every integer $i$ and every integer time $t$, the martingale $M_{i,t}$ converges and we have $W=\sum_{i\in \rep(\Pi(t))} W_{i,t}$, which has probability $1$ by Lemma \ref{22}. Now let $t$ be any time and $s$ be an integer time greater than $t$ and a: we get that the martingale $W_{i,t}$ converges for all $i$ and $t$. Applying Lemma \ref{23} again with general $s$ then gives us the relation
%	\[W_{i,t}=\sum_{j \in \Pi_{(i)}(t)\cap \rep(\Pi(s))} W_{j,s}
%\]
%whenever $s>t$ and $s$ is an integer. To prove point $(ii)$, we just need to have this for general $s$. Let therefore $N$ be any integer greater than $s$. We know that $W_{i,t}=\sum_{k \in \Pi_{(i)}(t)\cap \rep(\Pi(N))} W_{k,N}$ and that, for every $j$ in $\Pi_{(i)}(t)\cap \rep(\Pi(s))$, we have $W_{j,s}=\sum_{k \in \Pi_{(j)}(s)\cap \rep(\Pi(N))} W_{k,N}.$ Then it is just a matter of noticing that the blocks of $\Pi_{(i)}(t)\cap\Pi(N)$ can be grouped according to which block of $\Pi(s)$ they come from, which means that 
%	\[\Pi_{(i)}(t)\cap \rep(\Pi(N)) =\underset{j\in\Pi_{(i)}(t)\cap \rep(\Pi(s))}\bigcup \Pi_{(j)}(s)\cap \rep(\Pi(N)).
%\]

We can turn ourselves to point $(iii)$. Fixing an integer $i$, it is clear that $t\mapsto W_{i,t}$ is nonincreasing. Right-continuity is obtained by the monotone convergence theorem, noticing that $\Pi_{(i)}(t)\cap \rep(\Pi(s))$ is the increasing union, as $u$ decreases to $t$, of sets $\Pi_{(i)}(u)\cap \rep(\Pi(s)).$ Similarly, the fact that $W_{i,t^-}=\sum_{j \in \Pi_{(i)}(t^-)\cap \rep(\Pi(t))} W_{j,t}$ is only a matter of noticing that $\Pi_{(i)}(t^-)$ is the decreasing intersection, as $u$ increases to $t$, of sets $\Pi_{(i)}(u)$ and taking the infimum on both sides of the relation $W_{i,u}=\sum_{j \in \Pi_{(i)}(u)\cap \rep(\Pi(t))} W_{j,t}$.
\qed
\smallskip

From now on we will restrict ourselves to the aforementioned almost-sure event: all the additive martingales are now assumed to converge, and the limits satisfy the natural additive properties.

\subsection{A measure on the leaves of the fragmentation tree.}
%In this section and the next, we are going to frequently jump between homogeneous and $\alpha$-self similar processes, so we will use a notation to make those jumps simple: $(\Pi(t))_{t\geq0}$ is the homogeneous fragmentation process built from a point Poisson process, and $(\Pi^{\alpha}(t))_{t\geq0}$ is the $\alpha$-self-similar process obtained by applying the time change
%	\[\tau_i(t) = \inf \Big\{u, \int_0^u |\Pi_{(i)}(r)|^{-\alpha}dr >t\Big\}.
%\] 
%to the block containing $i$.

In this section we are going to assume that $E[W]=1$. We let $\T$ be the genealogy tree of the self-similar process $\Pi^{\alpha}$ and are going to use the martingale limits to define a new measure on $\T$.

\begin{theo}\label{24} On an event with probability one, there exists a unique measure $\mu^*$ on $\T$ which is fully supported by the proper leaves of $\T$ and which satisfies

	\[\forall i\in\N,t\geq0, \mu^* (\T_{(i,t^+)})=W_{i,\tau_i(t)}.
\]

where $\T_{i,t^+}$ is as defined in the proof of Lemma \ref{14}: $\T_{(i,t^+)}=\cup_{s>t}\T_{(i,s)}$.
\end{theo}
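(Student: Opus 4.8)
The natural strategy is to apply Proposition \ref{01} to a suitable decreasing function on $\T$ and then identify its support. First I would define, for a point $x=(i,t)\in\mathcal{A}$ of the skeleton, the candidate value $m(x)=W_{i,\tau_i(t)}$, and check this is well-defined: if $(i,t)$ and $(j,t)$ represent the same point of $\T$ then $i\sim_{\Pi^\alpha(t^-)}j$, hence $\Pi^\alpha_{(i)}(t^-)=\Pi^\alpha_{(j)}(t^-)$, and since $\tau_i,\tau_j$ agree on this common block the value $W_{i,\tau_i(t)}$ depends only on the point. For a general point of $\T$ (a leaf in the completion), set $m(x)=\inf\{m(y):y\le x,\ y\in\mathcal{A}\}$. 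The translation to the homogeneous picture via $\tau_i$ is what lets me invoke Proposition \ref{21}: along the branch to a skeleton point, $t\mapsto\tau_i(t)$ is an increasing time-change, so $t\mapsto W_{i,\tau_i(t)}$ is nonincreasing and right-continuous in $t$ by Proposition \ref{21}(iii), i.e. $m$ is decreasing on $\T$.

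The three hypotheses of Proposition \ref{01} must then be verified: $m$ is positive (on the event $E[W]=1$ combined with non-extinction of the relevant sub-fragmentations, using that $W_{i,s}>0$ a.s. on non-death of $\Pi_{(i)}$), $m$ is left-continuous, and $m(x)\ge m(x^+)$. Left-continuity at a skeleton point $(i,t)$ translates, under the time change, to $\lim_{s\uparrow\tau_i(t)}W_{i,s}=W_{i,\tau_i(t)^-}$, together with the fact that $\tau_i$ is continuous and strictly increasing where finite, so no jump is created; at a leaf it is the definition of $m$ via the infimum over ancestors. For the additive right-limit inequality $m(x)\ge m(x^+)$: the tree components of $\T_x\setminus\{x\}$ correspond, when $x=(i,t)$ is a branch point, exactly to the blocks of $\Pi^\alpha(\cdot)$ that split off from $\Pi^\alpha_{(i)}$ at (or just after) time $t$; translating to the homogeneous fragmentation at time $\tau_i(t)$ and applying the left-limit formula of Proposition \ref{21}(iii) gives $W_{i,\tau_i(t)^-}=\sum_{j\in\Pi_{(i)}(\tau_i(t)^-)\cap\rep(\Pi(\tau_i(t)))}W_{j,\tau_i(t)}$, and a short argument comparing left-limits at $\tau_i(t)$ with the sum over components yields $m(x)=W_{i,\tau_i(t)}\ge m(x^+)=\sum_{\text{components}}(\cdots)$ — here one must be careful about mass lost to erosion and to sudden total dislocation, which is precisely where equality can fail and the inequality (rather than equality) is used. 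Proposition \ref{01} then produces a unique Borel measure $\mu^*$ with $\mu^*(\T_x)=m(x)$ for all $x$, and in particular $\mu^*(\T_{(i,t^+)})=m(x^+)$; matching this with $W_{i,\tau_i(t)}$ requires the identity $W_{i,\tau_i(t)}=\sum_{\text{components of }\T_{(i,t)}\setminus\{(i,t)\}}(\cdots)$, which holds because the tagged block $\Pi_{(i)}$ at time $\tau_i(t)$ does contribute: going from $\T_{(i,t)}$ to $\T_{(i,t^+)}$ only removes the single point $(i,t)$, which is $\mu^*$-null unless it is a dead leaf, and dead leaves are not in the component decomposition, giving $\mu^*(\T_{(i,t^+)})=\mu^*(\T_{(i,t)})=W_{i,\tau_i(t)}$ as required. (A cleaner route: show directly $m((i,t))=W_{i,\tau_i(t)}$ and $m((i,t)^+)=W_{i,\tau_i(t)}$ coincide by the continuity of $\tau_i$ at $t$ when $(i,t)$ is not a branch/death point, and handle branch points via Proposition \ref{21}(iii).)

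Finally, to show $\mu^*$ is supported by the proper leaves I would argue that $\mu^*$ charges no skeleton point and no dead leaf: for a skeleton point $x=(i,t)$ with $t<D_i$, $\mu^*(\{x\})=m(x)-m(x^+)$, and along the branch the value $W_{i,\tau_i(\cdot)}$ can be split further at each later split time with the masses summing back up by Proposition \ref{21}(ii), forcing $m(x)=m(x^+)$; a dead leaf $(i,D_i)$ has all of $\Pi^\alpha_{(i)}(D_i^-)$ breaking into singletons, so $\tau_i(D_i)=\infty$ and $W_{i,\tau_i(D_i^-)}=W_{i,\infty-}$ over a region where the martingale limit is already $0$ (every descendant block is a singleton, mass to the power $p^*$ contributes nothing), giving $m(x)=0$. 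Then I would show $\mu^*(\mathcal{L}_{\text{proper}}(\T))=\mu^*(\T)=W_{1,0}=W>0$ using $\mu^*(\T)=\mu^*(\T_{(1,0^+)})=W_{1,\tau_1(0)}=W$. Uniqueness is immediate from the uniqueness clause in Proposition \ref{01} since the values $\mu^*(\T_{(i,t^+)})$ on the $\pi$-system $\{\T_{(i,t^+)}\}$ determine $\mu^*$. \textbf{The main obstacle} I anticipate is the bookkeeping of the additive right-limit $m(x^+)$ at branching points versus $m(x)$ when mass disappears through erosion or through a $\nu$-atom that sends a positive-mass block straight to dust/singletons: one must correctly match the "missing" mass in $W_{i,\tau_i(t)}=\sum W_{j,\cdot}$ (Proposition \ref{21}(iii)) with the fact that dead-leaf and dust contributions are $\mu^*$-null, so that the component sum recovers exactly $W_{i,\tau_i(t)}$ and not something smaller — equivalently, verifying that $m$ is continuous (not merely $m(x)\ge m(x^+)$) off the dead leaves.
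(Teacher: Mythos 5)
Your overall strategy (apply Proposition \ref{01} to a decreasing function built from the martingale limits, then identify the support) is the paper's, but two steps as written would fail. First, the function $m(i,t)=W_{i,\tau_i(t)}$ is not well defined on $\T$, and your well-definedness check misses the real issue: at a time $t$ at which the block containing $i$ dislocates, $(i,t)$ and $(j,t)$ are the \emph{same} point of $\T$ for every $j\in\Pi^{\alpha}_{(i)}(t^-)$, yet $W_{i,\tau_i(t)}$ and $W_{j,\tau_j(t)}$ are limits of martingales attached to the \emph{distinct post-split} blocks $\Pi_{(i)}(\tau_i(t))$ and $\Pi_{(j)}(\tau_j(t))$, so they differ in general; the agreement of $\tau_i$ and $\tau_j$ up to time $t$ is irrelevant. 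At these same branch points your $m$ is not left-continuous (as $s\uparrow t$ one gets $W_{i,\tau_i(t)^-}>W_{i,\tau_i(t)}$ at a genuine split; continuity of $\tau_i$ does not remove jumps of $s\mapsto W_{i,s}$), and the claim that passing from $\T_{(i,t)}$ to $\T_{(i,t^+)}$ "only removes the single point $(i,t)$" is false there: $\T_{(i,t)}\setminus\{(i,t)\}$ is the disjoint union of $\T_{(j,t^+)}$ over all blocks $j$ created at time $t$, of which $\T_{(i,t^+)}$ is only one, and indeed $\mu^*(\T_{(i,t)})=W_{i,\tau_i(t)^-}$, not $W_{i,\tau_i(t)}$. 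The paper's fix is exactly to feed Proposition \ref{01} the left-limit function $m(i,t)=W_{i,(\tau_i(t))^-}$, which is well defined, decreasing, left-continuous, and satisfies $m(x^+)=m(x)$ by Proposition \ref{21}(iii); the stated identity $\mu^*(\T_{(i,t^+)})=W_{i,\tau_i(t)}$ then follows from the increasing union $\T_{(i,t^+)}=\cup_{s>t}\T_{(i,s)}$ together with right-continuity of $s\mapsto W_{i,s}$.

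Second, your support argument does not prove what is needed: showing $\mu^*(\{x\})=0$ for every skeleton point and every dead leaf only excludes atoms, and the skeleton is uncountable, so it could still carry diffuse $\mu^*$-mass. This is not a technicality in this paper: the natural measure $\mu$ also has no atoms on the skeleton, yet it charges the skeleton whenever there is erosion or dust, which is precisely why $\mu^*$ is introduced. The paper instead kills the whole complement of the proper leaves at once: by Proposition \ref{leaf} this complement is $\cup_{N\in\N}\{(i,s):\,i\in\N,\ \tau_i(s)\leq N\}$, and for each fixed $N$ the set $\{(i,s):\tau_i(s)\leq N\}$ has $\mu^*$-mass $W-\sum_{i\in \rep(\Pi(N))}W_{i,N}=0$ by Proposition \ref{21}(ii). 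Some global truncation argument of this kind is indispensable; the pointwise computation you propose cannot be upgraded to it.
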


\begin{proof} This will be a natural consequence of proposition \ref{01}, and our previous study of the convergence of additive martingales. Note that, since, for all $(i,t)\in\T$, we have 
	\[\T_{(i,t)}=\underset{j\in \Pi_{(i)}(t^-)\cap \rep (\Pi(t))}\bigcup \T_{(j,t^+)},
\]
 any candidate for $\mu^*$ would then have to satisfy, for every $(i,t)$, the relation 
	\[\mu^* (\T_{(i,t)})=\underset{j\in \Pi_{(i)}(t^-)\cap \rep (\Pi(t))}\sum W_{j,\tau_i(t)}=W_{i,(\tau_i(t))^-}.
\]
We thus know to apply Proposition \ref{01} to the function $m$ defined by $m(i,t)=W_{i,(\tau_i(t))^-}$. This function is indeed decreasing and left-continuous on $\T$, and we also have, for every $(i,t)$, $m((i,t)^+)=m(i,t)$ in the sense of Section 2.2.4 (this is point $(iii)$ of Proposition \ref{21}). Thus $\mu^*$ exists and is unique, and we only now need to check that it is fully supported by the set of proper leaves of $\T$. To do this, notice first that, by Proposition \ref{leaf}, the complement of the set of proper leaves can be written as $\cup_{N\in\N} \{(i,s), \; i\in\N, \tau_i(s)\leq N\},$ and then thatt, for every integer $N$,
\[
\mu^* (\{(i,s), \; i\in\N, \tau_i(s)\leq N\}) =W - \sum_{i\in \rep (\Pi(N))} W_{i,N} =0.
\]
\end{proof}

The measure $\mu^*$ has total mass $W$, which is in general not $1$. However, having assumed that $E[W]=1$, we will be able to create some probability measures involving $\mu^*$. First, recall that to every leaf $L$ of $\T$ corresponds a family of integers $(i_L(t))_{t<ht(L)}$ such that, for all $t$, $i_L(t)$ is the smallest integer such that $(i_L(t),t)\leq L$ in $\T$.

\begin{prop} Define a probability measure $Q$ on the space $\D([0,+\infty))$ of càdlàg functions from $[0,+\infty)$ to $[0,+\infty)$ by setting, for all nonnegative measurable functionals $F: \D([0,+\infty))\to[0,+\infty)$,
	\[Q(F)=E\left[\int_{\T} F\big((|\Pi^{\alpha}_{(i_L(t))}(t)|)_{t\geq0}\big) d\mu^*(L)\right].
\]
Let $(x_t)_{t\geq0}$ be the canonical process, and let $\zeta$ be the time-change defined for all $t\geq0$ by:
	\[\zeta(t) = \inf \Big\{u, \int_0^u x_t^{\alpha}dr >u\Big\}.
\] 

Under the law $Q$, the process $(\xi_t)_{t\geq0}$ defined by $\xi_t=-\log(x_{\zeta(t)})$ for all $t\geq0$ is a subordinator whose Laplace exponent $\phi^*$ satisfies, for $p$ such that $\psi(p+p^*)$ is defined:
	\[\phi^*(p)= cp+\int_{\s} \big(\sum_i(1-s_i^p)s_i^{p^*}\big) d\nu(\mathbf{s}) =\psi(p+p^*).
\]
\end{prop}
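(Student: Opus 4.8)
The plan is to identify the law of the tagged-fragment process under the size-biased-type measure $Q$ by a many-to-one (spine) computation, exactly as for the ordinary tagged fragment but now biased by the Malthusian martingale. The starting point is the definition of $\mu^*$ via the limits $W_{i,t}$ and the change-of-measure interpretation: because $E[W]=1$, the functional $Q(F) = E[\int_{\T} F((|\Pi^\alpha_{(i_L(t))}(t)|)_{t\ge0})\, d\mu^*(L)]$ should, when restricted to the homogeneous time scale, coincide with expectation under a tilted measure in which the spine is the path followed by a $\mu^*$-typical leaf. Concretely, I would first reduce to the homogeneous fragmentation $\Pi$ by undoing the Lamperti time change: since $\zeta$ is the inverse of $t\mapsto\int_0^t x_r^\alpha dr$ and the genealogy of $\Pi^\alpha$ is that of $\Pi$ with branch lengths reparametrised, the process $\xi_t = -\log x_{\zeta(t)}$ along a $\mu^*$-leaf is exactly $-\log$ of the mass of the homogeneous tagged block along the corresponding spine, read in homogeneous time. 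So it suffices to compute, for a nonnegative measurable $g$ on $\D([0,\infty))$,
\[
E\Big[\sum_{i\in\rep(\Pi(t))} W_{i,t}\, g\big((|\Pi_{(i)}(s)|)_{s\le t}\big)\Big]
\]
and show it equals $E^{Q^{\mathrm{hom}}}[g((e^{-\xi_s})_{s\le t})]$ for a subordinator $\xi$ with the claimed exponent, then pass to $t\to\infty$ using $W_{i,t}=|\Pi_{(i)}(t)|^{p^*}W'_{i,t}$ with $W'_{i,t}$ an independent copy of $W$ of mean $1$.

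The key computation: using $W_{i,t} = |\Pi_{(i)}(t)|^{p^*}W'_{i,t}$ and independence, the above expectation becomes $E[\sum_{i\in\rep(\Pi(t))} |\Pi_{(i)}(t)|^{p^*} g((|\Pi_{(i)}(s)|)_{s\le t})]$. Now apply the same size-biasing identity already used to prove $E[M(t)]=1$: the block containing the integer $1$ has asymptotic frequency that is a size-biased pick, so $\sum_i |\Pi_i(t)|^{p^*}(\cdots) = \sum_i |\Pi_i(t)|\cdot|\Pi_i(t)|^{p^*-1}(\cdots)$ and the whole thing collapses to $E[|\Pi_{(1)}(t)|^{p^*-1} g((|\Pi_{(1)}(s)|)_{s\le t})]$. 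Writing $\xi_s = -\log|\Pi_{(1)}(s)|$, which under the original law is a (killed) subordinator with Laplace exponent $\phi(q)=\psi(q+1)$, this is $E[e^{-(p^*-1)\xi_t}\, g((e^{-\xi_s})_{s\le t})]$. By the exponential-tilting (Esscher) transform of a subordinator — legitimate because $\phi(q+p^*-1)<\infty$ wherever we need it and because $\phi(p^*-1)=\psi(p^*)=0$ makes the tilt a genuine probability change with no normalising constant — the tilted process is again a subordinator with Laplace exponent $q\mapsto \phi(q+p^*-1)-\phi(p^*-1) = \psi(q+p^*)$. Matching notation, $\phi^*(p)=\psi(p+p^*)$, and expanding $\psi(p+p^*) = c(p+p^*)+\int_\s(1-\sum_i s_i^{p+p^*})d\nu = cp + cp^* + \int_\s(1-\sum_i s_i^{p^*})d\nu + \int_\s\sum_i(s_i^{p^*}-s_i^{p+p^*})d\nu = cp + \int_\s \sum_i(1-s_i^p)s_i^{p^*}\,d\nu$, using $\psi(p^*)=0$. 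This is precisely the stated formula.

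The main obstacle is making the passage to the limit $t\to\infty$ rigorous: the identity is clean at each finite horizon $t$, but $\mu^*$ is defined through the a.s. limits $W_{i,t}$ and its total mass is $W$, not a constant, so one must argue that the finite-$t$ disintegrations are consistent and converge to the disintegration of $\mu^*$ along genuine spines. Here I would invoke Proposition \ref{21}(ii): the relation $W_{i,t}=\sum_{j\in\Pi_{(i)}(t)\cap\rep(\Pi(s))}W_{j,s}$ for $s>t$ shows the family $(W_{i,t})$ is a consistent flow along the tree, so the measures $\mu^*_t := \sum_{i\in\rep(\Pi(t))} W_{i,t}\,\delta_{\text{spine up to }t}$ on truncated paths form a projective system whose limit is exactly the pushforward of $\mu^*$ under $L\mapsto(|\Pi^\alpha_{(i_L(s))}(s)|)_s$, the key point being Theorem \ref{24}, which says $\mu^*$ is carried by proper leaves, i.e. by spines along which $\tau_i(t)\to\infty$, so the canonical process is defined for all times. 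A secondary technical point is checking the Lamperti time-substitution really turns the homogeneous-time subordinator $\xi$ into the self-similar-time object described by $\zeta$ in the statement; this is the same computation as in Section 2.1.4 relating $\tau$ and $|\Pi^\alpha_{(1)}|$, applied along the spine, and I would simply cite that. With those two points in place the proposition follows from the one-line Esscher computation above.
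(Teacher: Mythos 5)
Your argument is correct, and it reaches the conclusion by a route whose final step differs from the paper's. You share with the paper the two essential reductions: converting $\int \cdot \, d\mu^*$ into the sum $\sum_{i\in\rep(\Pi(t))}W_{i,t}(\cdots)$ over blocks at homogeneous time $t$ (which, as you note, rests on $\mu^*(\T_{(i,s^+)})=W_{i,\tau_i(s)}$, on the consistency relation of Proposition \ref{21}(ii), and on the fact that $\mu^*$-almost every leaf is proper so the Lamperti time change runs to infinity), and the factorization $W_{i,t}=|\Pi_{(i)}(t)|^{p^*}W'_{i,t}$ with $E[W]=1$ to integrate out the martingale limits. Where the paper then computes only the one-dimensional Laplace transform $Q(\e^{-p\xi_1})=\exp(-\psi(p+p^*))$ and verifies stationarity and independence of increments by a second, two-time application of the fragmentation property, you collapse the whole path functional at horizon $t$ by size-biasing to the tagged block, obtaining the tilt $E\big[|\Pi_{(1)}(t)|^{p^*-1}\mathbf{1}_{\{|\Pi_{(1)}(t)|\neq0\}}g\big((|\Pi_{(1)}(s)|)_{s\le t}\big)\big]$, and then invoke the Esscher transform for (killed) subordinators, which is legitimate here precisely because $\phi(p^*-1)=\psi(p^*)=0$ makes $|\Pi_{(1)}(t)|^{p^*-1}\mathbf{1}_{\{|\Pi_{(1)}(t)|\neq0\}}$ a mean-one martingale, and which delivers the exponent $q\mapsto\phi(q+p^*-1)=\psi(q+p^*)$ in one stroke, killing included (the tilted process is unkilled since $\phi^*(0)=0$, consistent with properness of the leaves). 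The trade-off: the paper's computation is self-contained and uses nothing beyond the fragmentation property and the one-time size-biased pick, while yours imports the classical exponential change of measure for L\'evy processes but thereby gets the full subordinator structure (all finite-dimensional marginals) at once instead of marginals plus increments separately; your path-level tilt is in fact exactly the law $P'$ constructed later in Section 5 (see (\ref{loietoile}) and Proposition \ref{32}), so your proof anticipates that material, whereas the paper keeps the present proposition independent of it. Your path-level size-biasing step deserves one explicit sentence of justification (condition on the process restricted to $\N\setminus\{1\}$, which carries the same ancestral mass paths, and use that $1$ falls in a given time-$t$ block with conditional probability equal to its asymptotic frequency), but this is the same level of detail at which the paper itself operates, so I do not regard it as a gap.
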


As before, the function $\phi^*$ can be seen as defined on $\R$, in which case it takes values in $[-\infty,\infty)$.

\begin{proof} Let us first show that, given a nonnegative and measurable function $f$ on $[0,+\infty)$ and a time $t$, we have
\begin{equation}\label{eq}
Q(f(x_{\zeta(t)}))=E\left[\sum_i |\Pi_i(t)|^{p^*} f(|\Pi_i(t)|)\right].
\end{equation}
To do this, notice that we have $\Pi^{\alpha}_{(i_L(t))}(\tau_{i_L(t)}^{-1}(t))=\Pi_{(i_L(t))}(t)$. Thus, using the definition of $\mu^*$, one can change the integral with the respect to $\mu^*$ into a sum on the different blocks of $\Pi(t)$:
	\[Q(f(x_{\zeta(t)}))=E\left[\sum_{i\in \rep(\Pi(t))} W_{i,t} f(|\Pi_{(i)}(t)|)\right].
\]
Finally, with the fragmentation property, one can write, for all $t$ and $i$, $W_{i,t}=|\Pi_{(i)}(t)|^{p^*}W'_{i,t}$ where $W'_{i,t}$ is a copy of $W$ which is independent of $|\Pi(t)|$. Since $E[W]=1$, we get formula \ref{eq}.

\medskip

Applying this to the function $f$ defined by $f(x)=x^p$ gives us our moments formula:
	\[Q(\e^{-p\xi_1})=Q(x_{\zeta(1)}^p)=E\left[\sum_i |\Pi_i(1)|^{p^*+p}\right]=E[\Pi_1(1)^{p+p^*-1}]=\exp[-(\phi(p+p^*-1)].
\]

Independence and stationarity of the increments is proved the same way. Let $s<t$, $f$ be any nonnegative measurable functions on $\R$ and $G$ be any nonnegative measurable function on $\D([0,s])$. Let us apply the fragmentation property for $\Pi$ at time $s$: for $i\in \rep(\Pi(s))$, the partition of $\Pi_{(i)}(s)$ formed by the blocks of $\Pi(t)$ which are subsets of $\Pi_{(i)}(s)$ can be written as $\Pi_{(i)}(s) \cap \Pi^i(t-s)$ where $(\Pi^i(u))_{u\geq0}$ is an independent copy of $\Pi$. Thus one can write
\begin{align*}
Q[f(\frac{x_{\zeta(t)}}{x_{\zeta(s)}})&G((x_{\zeta(u)})_{u\leq s})] \\
                                      &= E\left[\sum_{i\in \rep{\Pi(s)}}  |\Pi_{(i)}(s)|^{p^*}G((|\Pi_{(i)}(u)|)_{u\leq s}) \sum_{j\in\N} W^i_j|\Pi^i_j(t-s)|^{p^*}  f(|\Pi^i_j(t-s)|)\right], 
\end{align*}  
where the $W^i_j$ are copies of $W$ independent of anything happening before time $t$, which all have expectation $1$. We thus get
\begin{align*}
Q[f(\frac{x_{\zeta(t)}}{x_{\zeta(s)}})&G((x_{\zeta(u)})_{u\leq s})]\\
                                       &=E\left[\sum_{i\in \rep{\Pi(s)}}  |\Pi_{(i)}(s)|^{p^*}G((|\Pi_{(i)}(u)|)_{u\leq s})\right]E\left[\sum_j |\Pi_i(t-s)|^{p^*}  f(|\Pi_j(t-s)|)\right],
\end{align*}
which is what we wanted.

\end{proof}

\begin{lemma}\label{25} Assume that $\nu$ integrates the quantity $\sum_i \log(s_i)s_i^{p^*}$ and let $\underline{p}=\sup \{q\in \R: \phi^*(-q)>-\infty\}$. Then if $\gamma<1+ \frac{\underline{p}}{|\alpha|}$, we have
	\[E\left[\int_{\T} ht(L)^{-\gamma}d\mu^*(L)\right] < \infty.
\]
\end{lemma}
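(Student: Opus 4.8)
The plan is to push the integral onto the "spine" described by the measure $Q$, turning the left-hand side into a negative moment of an exponential functional of a subordinator, and then to estimate that moment by a recursion based on the first-passage times of the subordinator. Recall that $ht(L)$, the height of a leaf in the genealogy tree, is a deterministic functional of the trajectory $(|\Pi^\alpha_{(i_L(t))}(t)|)_{t\ge0}$ along the path to $L$: under $Q$ this trajectory is the canonical process $(x_t)$, and since $x_{\zeta(t)}=e^{-\xi_t}$ with $\zeta'(s)=x_{\zeta(s)}^{-\alpha}=e^{-|\alpha|\xi_s}$, one gets $ht(L)=\lim_{t\to\infty}\zeta(t)=\int_0^\infty e^{-|\alpha|\xi_s}\,ds$. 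Hence, by the definition of $Q$,
\[
E\Big[\int_{\T}ht(L)^{-\gamma}\,d\mu^*(L)\Big]=E\Big[\Big(\int_0^\infty e^{-|\alpha|\xi_s}\,ds\Big)^{-\gamma}\Big]=:E[I^{-\gamma}],
\]
where on the right $\xi$ is a subordinator with Laplace exponent $\phi^*$. Note $\phi^*(0)=\psi(p^*)=0$, so $\xi$ is not killed, and the hypothesis that $\nu$ integrates $\sum_i\log(s_i)s_i^{p^*}$ is exactly the statement that $\phi^*$ is differentiable at $0$ with finite derivative $(\phi^*)'(0)=c+\int_{\s}\big(\sum_i|\log s_i|\,s_i^{p^*}\big)d\nu=E[\xi_1]<\infty$. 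Writing $L^*$ for the Lévy measure of $\xi$, one computes $\int_{(0,\infty)}(e^{qx}-1)L^*(dx)=-\phi^*(-q)-cq$, so that $E[e^{q\xi_t}]=e^{-t\phi^*(-q)}<\infty$ and $\int_{[1,\infty)}e^{qx}L^*(dx)<\infty$ as soon as $q<\underline p$; in particular $I$ is a.s. finite and strictly positive.

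An elementary bound already gives part of the range: $I\ge\int_0^1 e^{-|\alpha|\xi_s}\,ds\ge e^{-|\alpha|\xi_1}$, whence $E[I^{-\gamma}]\le E[e^{|\alpha|\gamma\xi_1}]<\infty$ whenever $\gamma<\underline p/|\alpha|$. To gain the extra "$+1$", I would use the recursive structure of $I$. Let $\tau=\inf\{s:\xi_s\ge1\}$ and split $I=A+e^{-|\alpha|\xi_\tau}\hat I$, where $A=\int_0^\tau e^{-|\alpha|\xi_s}\,ds\ge e^{-|\alpha|}\tau$ (since $\xi_s<1$ on $[0,\tau)$) and, by the strong Markov property at $\tau$, $\hat I$ is a copy of $I$ independent of $\mathcal F_\tau$. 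For a decomposition $\gamma=\gamma_1+\gamma_2$ with $\gamma_1,\gamma_2>0$, weighted arithmetic--geometric inequality gives $I^{-\gamma}\le C_{\gamma_1,\gamma_2}A^{-\gamma_1}e^{|\alpha|\gamma_2\xi_\tau}\hat I^{-\gamma_2}\le C'\tau^{-\gamma_1}e^{|\alpha|\gamma_2\xi_\tau}\hat I^{-\gamma_2}$, and taking expectations,
\[
E[I^{-\gamma}]\le C'\,E\big[\tau^{-\gamma_1}e^{|\alpha|\gamma_2\xi_\tau}\big]\,E[I^{-\gamma_2}].
\]

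Now fix $\gamma<1+\underline p/|\alpha|$ and choose the split with $\gamma_1<1$ and $\gamma_2<\underline p/|\alpha|$. The factor $E[I^{-\gamma_2}]$ is finite by the elementary bound. For the other factor, split according to whether $\xi_\tau\le2$ or $\xi_\tau>2$. On $\{\xi_\tau\le2\}$ one bounds $e^{|\alpha|\gamma_2\xi_\tau}\le e^{2|\alpha|\gamma_2}$ and is left with $E[\tau^{-\gamma_1}]$, which is finite because $P(\tau<t)\le P(\xi_t\ge1)\le t\,(\phi^*)'(0)$ by Markov's inequality and $\gamma_1<1$ — this is where the integrability hypothesis is used. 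On $\{\xi_\tau>2\}$ the overshoot comes from a jump of size $>1$; applying the compensation formula for the Poisson process of jumps of $\xi$ with the predictable integrand $s^{-\gamma_1}e^{|\alpha|\gamma_2\xi_{s^-}}\mathbf1_{\{\xi_{s^-}<1\}}$ and using $\xi_{s^-}<1$, this contribution is at most $e^{|\alpha|\gamma_2}\big(\int_{(1,\infty)}e^{|\alpha|\gamma_2 x}L^*(dx)\big)\int_0^\infty s^{-\gamma_1}P(\xi_s<1)\,ds$; the first integral is finite since $\gamma_2<\underline p/|\alpha|$, and the second is finite because $\int_0^1 s^{-\gamma_1}\,ds<\infty$ ($\gamma_1<1$) while $\int_1^\infty P(\xi_s<1)\,ds=E[\inf\{s:\xi_s\ge1\}]<\infty$ (a consequence of $E[\xi_1]<\infty$). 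Combining the three displays gives $E[I^{-\gamma}]<\infty$.

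The genuinely delicate point is the gain of the "$+1$" over the naive exponent $\underline p/|\alpha|$: this is precisely what the first-passage decomposition supplies, the term $A\gtrsim\tau$ contributing the "$1$" through the fact that the first-passage time of $\xi$ over level $1$ has finite negative moments up to order $1$ (which rests on $(\phi^*)'(0)<\infty$), while the control of $e^{|\alpha|\gamma_2\xi_\tau}$, i.e. of the overshoot of $\xi$ above level $1$, is what caps $\gamma_2$ at $\underline p/|\alpha|$. The remaining steps — differentiation of $\phi^*$ under the integral sign, computing the Lévy measure $L^*$, and the bookkeeping in the compensation formula — are routine, and one should only take minimal care to record that $I>0$ and $I<\infty$ a.s., which follows from $\phi^*(0)=0$ together with $(\phi^*)'(0)<\infty$.
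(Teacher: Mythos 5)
Your proof is correct, and it follows the paper's first step exactly: using the definition of $Q$ and the Lamperti time change to rewrite the left-hand side as $E[I^{-\gamma}]$, where $I=\int_0^\infty \e^{\alpha\xi_t}dt$ is the exponential functional of the (unkilled, since $\phi^*(0)=\psi(p^*)=0$) subordinator with Laplace exponent $\phi^*$. Where you diverge is in estimating $E[I^{-\gamma}]$: the paper simply cites Bertoin--Yor, namely the recursion $E[I^{-\gamma}]=\frac{-\phi^*(-|\alpha|(\gamma-1))}{\gamma-1}E[I^{-\gamma+1}]$ valid for $1<\gamma<1+\underline{p}/|\alpha|$, reducing by induction to $E[I^{-1}]=(\phi^*)'(0^+)<\infty$, which is exactly where the hypothesis on $\int_{\s}\sum_i|\log s_i|s_i^{p^*}d\nu$ enters; you instead give a self-contained argument, getting the range $\gamma<\underline{p}/|\alpha|$ from $I\geq \e^{-|\alpha|\xi_1}$ and exponential moments, and the extra ``$+1$'' from the first-passage decomposition $I=A+\e^{-|\alpha|\xi_\tau}\hat I$ at level $1$, with $A\gtrsim\tau$ supplying negative moments of order $<1$ (via Markov's inequality, which is where you use $(\phi^*)'(0^+)<\infty$, i.e.\ the same hypothesis at the same conceptual spot) and the compensation formula controlling the overshoot. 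What your route buys is independence from the cited functional-equation results, at the price of more bookkeeping; the paper's route is shorter but imports \cite{BY02} and \cite{BY01}. Two cosmetic points you should fix if writing this up: when $\underline{p}=0$ (or more generally when $\gamma<1$) the split with $\gamma_1,\gamma_2>0$ and $\gamma_2<\underline{p}/|\alpha|$ is not available, and you should allow the degenerate choice $\gamma_2=0$, i.e.\ just $I\geq \e^{-|\alpha|}\tau$; and in the compensation-formula step the correct indicator is $\mathbf{1}_{\{\xi_{s^-}\leq1\}}$ (with $P(\xi_{s^-}\leq1)\leq P(\xi_u\leq1)$ for $u<s$ used to control the time integral), since $\xi_{\tau^-}=1$ is not a priori excluded and $P(\xi_{s^-}<1)$ versus $P(\xi_s<1)$ go in the wrong direction as written; both fixes cost only a harmless constant.
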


\begin{proof} We know that the height of the leaf is equal to the death time of the fragment it marks: $ht(L) = \inf \{t,\tau_{i_L(t)}(t)=\infty\}.$ Thus we can write, using the measure $Q$
	\[E\left[\int_{\T} ht(L)^{-\gamma}d\mu^*(L)\right]=E[I^{-\gamma}],
\]
where $I=\int_{0}^{\infty} \e^{\alpha \xi_t}dt$ is the exponential functional of the subordinator $\xi$ with Laplace exponent $\phi^*$. Following the proof of Proposition 2 in \cite{BY02}, one has, if $1<\gamma<1+\frac{\underline{p}}{|\alpha|}$,
	\[E[I^{-\gamma}]=\frac{-\phi^*(-|\alpha|(\gamma-1))}{\gamma-1}E[I^{-\gamma+1}].
\]
By induction we then only need to show that $E[I^{-\gamma}]$ is finite for $\gamma\in(0,1]$, and thus only need to show that $E[I^{-1}]$ is finite. However, it is well known (see for example \cite{BY01}), $E[I^{-1}]=(\phi^*)'(0^+)=c-\int_{\s} (\sum_i \log(s_i)s_i^{p^*})d\nu(\mathbf{s})$, which is finite by assumption.
\end{proof}
The assumption that $\int_{\s} (\sum_i \log(s_i)s_i^{p^*})d\nu(\mathbf{s})$ is finite is for example verified when $\nu$ has finite total mass, and $\mathbf{(H)}$ is satisfied: pick $\delta>0$ such that $\psi(p^*-\delta)>-\infty$, then pick $K>0$ such that $|\log(x)|\leq Kx^{-\delta}$ for all $x\in(0,1]$, then $\sum_i |\log(s_i)|s_i^{p^*}\leq K(1-(1-\sum_i s_i^{p^*-\delta}))$ and is indeed integrable.

\section{Tilted probability measures and a tree with a marked leaf}

Recall that $\D$ is the space of càdlàg $\p_{\N}$-valued functions on $[0,+\infty)$, and that it is endowed with the $\sigma$-field generated by all the evaluation functions. For all $t\geq0$, let us introduce the space $\D_t$ of càdlàg functions from $[0,t]$ to $\p_{\N}$, which we endow with the product $\sigma$-field.

As was done in \cite{BR}, we are going in this section to use the additive martingale to construct a new probability measure under which our fragmentation process has a special tagged fragment such that, heuristically, for all $t$, the tagged fragment is equal to a block $\Pi_i(t)$ of $\Pi(t)$ with "probability" $|\Pi_i(t)|^{p^*}$. Tagging a fragment will be done by forcing the integer $1$ to be in it, and for this we need some additional notation. If $\pi$ is a partition of $\N$, we let $R\pi$ be its restriction to $\N'=\N\setminus\{1\}$. Partitions of $\N'$ can still be denoted as sequences of blocks ordered with increasing least elements. Given a partition $\pi$ of $\N'$ and any integer $i$, we let $H_i(\pi)$ be the partition of $\N$ obtained by inserting $1$ in the $i$-th block of $\pi$. Similarly, let us also define a way to insert the integer $1$ in a finite-time fragmentation process with state space the partitions of $\N'$. Let $i\in\N$, $t\geq0$ and let $(\pi(s))_{s\leq t}$ be a family of partitions of $\N'$. Now let $j$ be any element of $\pi_i(t)$ (if this block is empty then the choice won't matter, one can just define $H^t_i(\pi)$ to be any fixed process) and, for all $0\leq s \leq t$, let $H^t_i(\pi)(s)$ be the partition which is the exact same as $\pi(s)$, except that $1$ is added to the block containing $j$. This defines a function $H^t_i$ which maps a process taking values in $\p_{\N'}$ to processes taking value in $\p_{\N}$. What is important to note is that, if we now take $(\pi(s))_{0\leq s\leq t} \in D_t$, then the process $(H^t_i R(\pi)(s))_{0\leq s \leq t}$ is càdlàg (because the restrictions to finite subsets of $\N$ are pure-jump with finite numbers of jumps) and the map $H^t_i R$ from $\D_t$ to itself is also measurable (because, for all $s$, $H^t_i(\pi)(s)$ is a measurable function of $\pi(s)$ and $\pi(t)$).

%With a slight abuse of notation, we are going to extend $H^t_i$ to all $\D_u$ with $u\geq t$ as well as $\D$ by letting, if $(\pi_s)_{0\leq s\leq u} \in D_u$, $H^t_i((\pi_s)_{0\leq s\leq u})=H^t_i((\pi_s)_{0\leq s\leq t}).$%

\subsection{Tilting the measure of a single partition}
Here, we are going to work in a simple setting: we consider a random exchangeable partition of $\N$ called $\Pi$ which has a positive Malthusian exponent $p^*$, in the sense that $E\big[\sum_i |\Pi_i|^{p^*}\big]=1$. Note that this implies that $E\big[|\Pi_1|^{p^*-1}\mathbf{1}_{|\Pi_1|\neq0}\big]=1$ as well (we will omit the indicator function from now on).

\medskip

Let us define two new random partitions $\Pi^*$ and $\Pi'$ through their distributions: we let, for nonegative measurable functions $f$ on $\p_{\N}$,
	\[E^*[f(\Pi^*)]=E\left[\sum_i |R\Pi_i|^{p^*} f(H_iR\Pi)\right]
\]
and
	\[E'[f(\Pi')]=E[|\Pi_1|^{p^*-1}f(\Pi)].
\]
These relations do define probability measures because $p^*$ is the Malthusian exponent of $\Pi$, as can be checked by taking $f=1$. We now state a few properties of these distributions.

\begin{prop} 
(i) The two random partitions $\Pi^*$ and $\Pi'$ have the same distribution.

(ii) If we call $m$ the law of the asymptotic frequencies of the blocks of $\Pi$, and $m'$ the law of the asymptotic frequencies of the blocks of $\Pi'$, we have
	\[m'(d\mathbf{s})=(\sum_i s_i^{p^*}) m(d\mathbf{s}).
\]
In particular, with probability $1$, $\Pi'$ is not the partition made uniquely of singletons.

(iii) Conditionally on the asymptotic frequencies of its blocks, the law of $\Pi'$ (or $\Pi^*$) can be described as follows: the restriction of the partition to $\N'$ is built with a standard paintbox process from the law $m'$. Then, conditionally on $R\Pi'$, for every integer $i$, $1$ is inserted in the block $R\Pi'_i$ with probability $\frac{|R\Pi'_i|^{p^*}}{\sum_j |R\Pi'_j|^{p^*}}$.
\end{prop}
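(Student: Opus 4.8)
**Proof proposal for Proposition (the final statement: properties of $\Pi^*$ and $\Pi'$).**

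The plan is to prove the three points in the order $(ii)$, $(i)$, $(iii)$, since the description of the conditional law in $(iii)$ is what actually makes $(i)$ transparent, and $(ii)$ is the cleanest computation to start with. For $(ii)$, I would simply unfold the definition of $\Pi'$: for a nonnegative measurable function $g$ on $\s$,
\[
E'[g(|\Pi'|^{\downarrow})] = E[|\Pi_1|^{p^*-1} g(|\Pi|^{\downarrow})].
\]
Now condition on the asymptotic frequencies $|\Pi|^{\downarrow}=\mathbf{s}$: by Kingman's theorem and the size-biased-pick consequence of it recalled in Section 2.1.1 (following \cite{Ber}, Corollary 2.4), conditionally on $|\Pi|^{\downarrow}=\mathbf{s}$ the frequency $|\Pi_1|$ is a size-biased pick from $\mathbf{s}$, so $E[|\Pi_1|^{p^*-1}\mid |\Pi|^{\downarrow}=\mathbf{s}] = \sum_i s_i\, s_i^{p^*-1} = \sum_i s_i^{p^*}$. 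This gives $E'[g(|\Pi'|^{\downarrow})] = E[(\sum_i |\Pi_i|^{p^*})\, g(|\Pi|^{\downarrow})]$, which is exactly $m'(d\mathbf{s})=(\sum_i s_i^{p^*})m(d\mathbf{s})$. Since $p^*$ is the Malthusian exponent, $\sum_i s_i^{p^*}=0$ forces $\mathbf{s}$ to be the zero sequence, but that mass-partition has $\sum_i s_i^{p^*}=0$ under the convention that $0^{p^*}=0$ (as $p^*>0$), so $m'$ assigns it zero mass; hence $\Pi'$ is a.s. not the all-singletons partition.

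Next I would establish $(iii)$, the conditional description, working with $\Pi^*$. Fix a bounded measurable $f$ on $\p_\N$ and start from $E^*[f(\Pi^*)]=E[\sum_i |R\Pi_i|^{p^*} f(H_i R\Pi)]$. Condition on $|\Pi|^{\downarrow}$; by Kingman, conditionally on $|\Pi|^{\downarrow}=\mathbf{s}$, the partition $\Pi$ is a paintbox $\kappa_{\mathbf{s}}$ built from i.i.d. uniforms $(U_k)_{k\ge 1}$, so $R\Pi$ is the paintbox built from $(U_k)_{k\ge 2}$ and $H_i R\Pi$ re-inserts $1$ according to $U_1$. The point is to split the expectation over the value of $U_1$: inserting $1$ into block $i$ of $R\Pi$ corresponds to $U_1$ landing in the interval assigned to that block, which conditionally on $R\Pi$ has length $|R\Pi_i|$ — but the weight $|R\Pi_i|^{p^*}$ in the sum, once normalized, reweights this so that block $i$ is chosen with probability $|R\Pi_i|^{p^*}/\sum_j |R\Pi_j|^{p^*}$. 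Making this precise: condition further on $R\Pi$, and note $\sum_i |R\Pi_i|^{p^*} f(H_i R\Pi)$ has conditional expectation $\big(\sum_j |R\Pi_j|^{p^*}\big)\sum_i \frac{|R\Pi_i|^{p^*}}{\sum_j |R\Pi_j|^{p^*}} f(H_i R\Pi)$, which is exactly the claimed two-step procedure once one checks the normalizing factor integrates correctly — which it does because $E[\sum_i |R\Pi_i|^{p^*}]$ and the law of $R\Pi$ combine to reproduce $m'$ from $(ii)$ (here one uses that the law of the frequencies of $R\Pi$ is the same as that of $\Pi$, a standard exchangeability fact). This also covers $\Pi'$ once $(i)$ is known, or can be read off symmetrically.

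Finally, $(i)$: both $\Pi^*$ and $\Pi'$ are exchangeable (for $\Pi'$, this is immediate from exchangeability of $\Pi$ and the fact that the tilting weight $|\Pi_1|^{p^*-1}$ is a function of the frequencies alone, which are permutation-invariant; for $\Pi^*$, one checks that the construction — tilt the frequencies, paintbox on $\N'$, insert $1$ — is invariant under permutations fixing nothing in particular, using that a permutation of $\N$ composed with the re-indexing of blocks gives back the same recipe). An exchangeable random partition is determined in law by the law of its asymptotic frequencies together with the fact that, conditionally on those, it is a paintbox. By $(ii)$ the frequency laws of $\Pi^*$ and $\Pi'$ agree (for $\Pi^*$ one computes $E^*[g(|\Pi^*|^{\downarrow})] = E[\sum_i |R\Pi_i|^{p^*} g(|\Pi|^{\downarrow})]$ and checks via the same size-biasing argument that this equals $(\sum_i s_i^{p^*})m(d\mathbf{s})$, using that adding the single integer $1$ back does not change asymptotic frequencies), and by $(iii)$ their conditional laws given the frequencies agree; hence $\Pi^* \overset{d}{=} \Pi'$.

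The main obstacle I expect is the bookkeeping in $(iii)$: being careful about the difference between the "rank" indexing of blocks (which can change when $1$ is inserted) and the "smallest element" indexing, and verifying that the weight $|R\Pi_i|^{p^*}$ divides cleanly into a normalizing constant (absorbed into the tilt that produces $m'$) times a genuine probability on the block into which $1$ is inserted. Everything else — exchangeability, the size-biased-pick computation — is routine given Kingman's theorem and the corollaries quoted from \cite{Ber} in Section 2.1.1.
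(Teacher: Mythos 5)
Your treatments of (ii) and (iii) are sound and essentially the paper's own: (ii) is the size-biased-pick computation for the tilt $|\Pi_1|^{p^*-1}$, and (iii) is the multiply-and-divide by $\sum_j |R\Pi_j|^{p^*}$ after observing that this weight is a function of the asymptotic frequencies only, so that the $\N'$-marginal of $\Pi^*$ is the paintbox directed by $m'$ and the conditional insertion law of $1$ is the $p^*$-biased one. The genuine problem is in (i). Your claim that $\Pi'$ (and $\Pi^*$) is exchangeable as a partition of $\N$ is false, and the justification offered --- that the weight $|\Pi_1|^{p^*-1}$ is ``a function of the frequencies alone'' --- is incorrect: $|\Pi_1|$ is the frequency of the particular block containing $1$, and for a permutation $\sigma$ with $\sigma(1)\neq 1$ one has $|(\sigma\Pi)_1|=|\Pi_{(\sigma(1))}|$, a different random variable. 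Concretely, if $m$ is a point mass at $\mathbf{s}=(2/3,1/3,0,\ldots)$, then under $P'$ the block containing $1$ has frequency $2/3$ with probability $(2/3)^{p^*}/\big((2/3)^{p^*}+(1/3)^{p^*}\big)$, whereas exchangeability together with Kingman's theorem would force this probability to be $2/3$; these differ unless $p^*=1$. This is exactly why the paper proves exchangeability only for the restriction $R\Pi^*$ to $\N'$: the tilted partitions are invariant in law under permutations fixing $1$, not under all permutations of $\N$.

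Because of this, your argument for (i) is as written either unsupported or circular: the route ``same law of $R\Pi$ plus same conditional insertion law of $1$'' is valid, but it requires statement (iii) for both $\Pi^*$ and $\Pi'$, and you establish (iii) only for $\Pi^*$, deferring $\Pi'$ to ``once (i) is known, or symmetrically.'' That symmetric computation is precisely the missing step, and it is in fact the paper's entire proof of (i): under $P$, conditionally on $R\Pi$, the integer $1$ joins the block $R\Pi_i$ with probability $|R\Pi_i|$ (size-biased pick) and otherwise forms a singleton, which together with the convention $0^{p^*-1}=0$ gives $E\big[|\Pi_1|^{p^*-1}f(\Pi)\big]=E\big[\sum_i |R\Pi_i|\,|R\Pi_i|^{p^*-1}f(H_iR\Pi)\big]=E^*[f(\Pi^*)]$, i.e. (i) in one line. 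If you insert this computation (either as a direct proof of (i), or as the proof of (iii) for $\Pi'$ feeding your (ii)+(iii) identification), your proposal is repaired; the exchangeability shortcut, however, does not work and should be removed.
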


\begin{proof} Item $(i)$ is a simple consequence of the paintbox description of $\Pi$: we know that, conditionally on the restriction of $\Pi$ to $\N'$, the integer $1$ will be inserted in one of these blocks in a size-biased manner. Thus we get, for nonnegative measurable $f$,
	\[E'[f(\Pi')]=E[|\Pi_1|^{p^*-1}f(\Pi)]=E\big[\sum_i |R\Pi_i||R\Pi_i|^{p^*-1} f(H_iR\Pi)\big]=E^*[f(\Pi^*)].
\]

To prove $(ii)$, we just need to use the definition of the law of $\Pi'$: take any positive measurable function $f$ on $\s$, we have
	\[E'[f(|\Pi'|^{\downarrow})]=E[|\Pi_1|^{p^*-1}f(|\Pi|^{\downarrow})]=\int_{\s}(\sum_i s_i s_i^{p^*-1})f(\mathbf{s})m(d\mathbf{s})=\int_{\s}(\sum_i s_i^{p^*})f(\mathbf{s})m(d\mathbf{s}),
\]
which is all we need.

For $(iii)$, first use the definition of $\Pi^*$ to notice that its restriction to $\N'$ is exchangeable: if we take a measurable function $f$ on $\p_{\N'}$, we have
\begin{align*}
E^*[f(\sigma(R\Pi^*))]&=E\Big[(\sum_i |R\Pi_i|^{p^*}) f(\sigma (R\Pi))\Big] \\
                      &=E\Big[(\sum_i |\sigma R\Pi_i|^{p^*}) f(\sigma (R\Pi))\Big] \\
                      &=E\Big[(\sum_i |R\Pi_i|^{p^*}) f(R\Pi)\Big] \\
                      &=E^*[f(R\Pi^*)].
\end{align*}
This exchangeability and Kingman's theorem then imply that the restriction of $\Pi^*$ to $\N'$ can indeed be built with a paintbox process. Now we only need to identify which block contains $1$, that is, find the distribution of $\Pi^*$ conditionally of $R\Pi^*$. Thus, we take a nonegative measurable function $f$ on $\p_{\N}$ and another one $g$ on $\p_{\N'}$ and compute $E^*[f(\Pi^*)g(R\Pi^*)]$:
\begin{align*}
E^*[f(\Pi^*)g(R\Pi^*)]&=E\Big[\sum_i |(R\Pi)_i|^{p^*} f(H_iR\Pi)g(R\Pi)\Big] \\
                      &=E\left[\sum_j |(R\Pi)_j|^{p^*} \left(\sum_i \frac{|(R\Pi)_i|^{p^*}}{\sum_j |(R\Pi)_j|^{p^*}}f(H_iR\Pi)\right)g(R\Pi)\right] \\
                      &=E^*\left[\left(\sum_i \frac{|(R\Pi^*)_i|^{p^*}}{\sum_j |(R\Pi^*)_j|^{p^*}}f(H_iR\Pi^*)\right)g(R\Pi^*)\right].
\end{align*}

This ends the proof.
\end{proof}

\subsection{Tilting a fragmentation process}
Here we aim to generalize the previous procedure to a homogeneous exchangeable fragmentation process. Let $t\geq0$, we are going to define two random processes $(\Pi^*(s))_{s\leq t}$ and $(\Pi'(s))_{s\leq t}$, with corresponding expectation operators $E^*_t$ and $E'_t$, by letting, for measurable functions $F$ on $\D_t$,
	\[E^*_t[F((\Pi^*(s))_{s\leq t})]=E\big[\sum_i |(R\Pi(t))_i|^{p^*}F((H^t_iR\Pi(s))_{s\leq t})\big]
\]
and
	\[E'_t[F((\Pi'(s))_{s\leq t})]=E\big[|\Pi_1(t)|^{p^*-1}F\big((\Pi(s))_{s\leq t}\big)\big].
\]
For the same reason as before, these define probability measures. We then want to use Kolmogorov's consistency theorem to extend these two probability measures to $\D$. To do this we have to check that, if $u<t$ and $(\Pi^*(s))_{s\leq t}$ has law $P^*_t$, then $(\Pi^*(s))_{s\leq u}$ has law $P^*_u$, and the same for $\Pi'$. The argument is that the block of $\Pi^*(t)$ that $1$ is inserted in only matters through its ancestor at time $u$: if $i$ and $j$ are such that $(R\Pi^*(t))_j\subset (R\Pi^*(u))_i$ then $(H^t_j\Pi(s))_{s\leq u}=(H^u_i\Pi(s))_{s\leq u}$. Taking any nonnegative measurable function $F$ on $\D$, we have
\begin{align*}
E^*_t[F\big((\Pi^*(s))_{s\leq u}\big)]&=E\big[\sum_j |(R\Pi(t))_j|^{p^*}F\big((H^t_j\Pi(s))_{s\leq u}\big)\big] \\
                  &=E\big[\sum_i \sum_{j: (R\Pi^*(t))_j\subset (R\Pi^*(u))_i} |(R\Pi(t))_j|^{p^*}F\big((H^u_i\Pi(s))_{s\leq u}\big)\big] \\
                  &=E\big[\sum_i F\big((H^u_i\Pi(s))_{s\leq u}\big) \sum_{j: (R\Pi^*(t))_j\subset (R\Pi^*(u))_i} |(R\Pi(t))_j|^{p^*}\big] \\
                  &=E\big[\sum_i F\big((H^u_i\Pi(s))_{s\leq u}\big) |(R\Pi(u))_i|^{p^*}\big].
\end{align*}
The last equation comes from the martingale property of the additive martingale $M_{k,u}$ where $k$ is any integer in $(R\Pi(u))_i$.
Consistency for $\Pi'$ is a little bit simpler: it is once again a consequence of the fact that the process $(M'_t)_{t\geq0}$ defined by $M'_t=|\Pi_1(t)|^{p^*-1}\mathbf{1}_{\{|\Pi_1(t)|\neq0\}}$ for all $t$ is a martingale, which itself is an immediate consequence of the homogeneous fragmentation property.

Kolmogorov's consistency theorem then implies that there exist two random processes $(\Pi^*(t))_{t\geq0}$ and $(\Pi'(t))_{t\geq0}$ defined on probability spaces with probability measures $P^*$ and $P'$ and expectation operators $E^*$ and $E'$ such that, for any $t\geq0$ and any nonnegative measurable function $F$ on $\D_t$,
\begin{equation}\label{loietoile}
E^*[F(\Pi^*(s))_{s\leq t}]=E\Big[\sum_i |(R\Pi(t))_i|^{p^*}F((H^t_i\Pi(s))_{s\leq t})\Big]
\end{equation}
and
	\[E'[F(\Pi'(s))_{s\leq t}]=E\big[|\Pi_1(t)|^{p^*-1}F((\Pi(s))_{s\leq t})\big].
\]

Just as in the previous section, these two definitions are in fact equivalent:
\begin{prop}\label{aaa} The two processes $(\Pi^*(t))_{t\geq0}$ and $(\Pi'(t))_{t\geq0}$ have the same law.
\end{prop}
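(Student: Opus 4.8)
The plan is to mimic the argument given in Proposition 4.1 (the single-partition case): there, the equality in law of $\Pi^*$ and $\Pi'$ followed from the fact that inserting the integer $1$ into the restriction of an exchangeable partition in a size-biased way produces the same object as conditioning on $\{|\Pi_1|\neq 0\}$ weighted by $|\Pi_1|^{p^*-1}$. Here we must upgrade this from a single partition to the whole c\`adl\`ag trajectory $(\Pi(s))_{s\le t}$, so the key extra ingredient is that the insertion rule $H^t_i$ is compatible with the paintbox structure along the \emph{entire} path, not just at the final time $t$.

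First I would fix $t\ge 0$ and a nonnegative measurable functional $F$ on $\D_t$, and reduce, by the uniqueness of laws determined by finite-dimensional marginals (the $\sigma$-field on $\D$ is generated by evaluation maps, as recalled in Section 2.1.2), to proving $E^*[F((\Pi^*(s))_{s\le t})]=E'[F((\Pi'(s))_{s\le t})]$ for every $t$ and every such $F$. Next, starting from the definition
\[
E'[F((\Pi'(s))_{s\le t})]=E\big[|\Pi_1(t)|^{p^*-1}F((\Pi(s))_{s\le t})\big],
\]
I would condition on the whole trajectory of the restricted process $(R\Pi(s))_{s\le t}$ together with the asymptotic frequencies of the blocks of $\Pi(t)$. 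By Kingman's theorem applied to the exchangeable random partition $\Pi(t)$ — more precisely by the size-biased pick description recalled in Section 2.1.1 (Corollary 2.4 of \cite{Ber}) — conditionally on this data the integer $1$ sits in the $i$-th restricted block with probability $|(R\Pi(t))_i|$, and on that event $|\Pi_1(t)|=|(R\Pi(t))_i|$ and $(\Pi(s))_{s\le t}=(H^t_iR\Pi(s))_{s\le t}$ (this last identity is exactly the point of the definition of $H^t_i$: adding $1$ back into the block containing any fixed $j\in(R\Pi(t))_i$ reconstitutes the original path). Therefore
\[
E\big[|\Pi_1(t)|^{p^*-1}F((\Pi(s))_{s\le t})\big]
=E\Big[\sum_i |(R\Pi(t))_i|\,|(R\Pi(t))_i|^{p^*-1}F((H^t_iR\Pi(s))_{s\le t})\Big],
\]
and the right-hand side is precisely $E^*[F((\Pi^*(s))_{s\le t})]$ by \eqref{loietoile}. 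This gives the claimed identity of finite-dimensional marginals and hence of laws.

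The main obstacle, and the step deserving the most care, is the identity $(\Pi(s))_{s\le t}=(H^t_iR\Pi(s))_{s\le t}$ on the event that $1\in(R\Pi(t))_i$: one must check that the block into which $1$ is inserted by $H^t_i$ really is, at every earlier time $s\le t$, the block of $\Pi(s)$ actually containing $1$. This is where the definition of $H^t_i$ via an arbitrary witness $j\in\pi_i(t)$ matters — since $\Pi$ is a fragmentation process its partitions only get finer in time, so if $1$ and $j$ lie in the same block of $\Pi(t)$ they lie in the same block of $\Pi(s)$ for all $s\le t$, making the reconstruction of the path unambiguous and independent of the choice of $j$. The exchangeability needed to invoke the size-biased pick description is available because $\Pi$ is an exchangeable process. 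Everything else is a routine conditioning and Fubini-type manipulation, and no integrability issue arises since all quantities are nonnegative.
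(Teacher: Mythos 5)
Your route is genuinely different from the paper's: the paper first notes that the one-dimensional marginals coincide (this is the single-partition proposition of Section 5.1 applied to $\Pi(t)$, since $H^t_i$ evaluated at the terminal time reduces to $H_i$), then proves a lemma stating that both tilted laws satisfy the homogeneous fragmentation property, and concludes by induction over finite-dimensional marginals. You instead try to identify the two laws on $\D_t$ in one stroke, by decomposing according to which block of $R\Pi(t)$ contains $1$ and integrating $1$ out along the whole trajectory. The part you single out as the main obstacle — the reconstruction identity $(\Pi(s))_{s\le t}=(H^t_iR\Pi(s))_{s\le t}$ on $\{1\sim_{\Pi(t)}j\}$ — is indeed correct, for exactly the reason you give (the partitions are nested), and the reduction to a fixed $t$ is fine since $P^*$ and $P'$ are defined through their restrictions to the spaces $\D_t$.

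The genuine gap is the conditioning step. Your computation needs: almost surely, $P\big(1\text{ joins }(R\Pi(t))_i \,\big|\, \sigma\big((R\Pi(s))_{s\le t}\big)\big)=|(R\Pi(t))_i|$, i.e. the size-biased placement of $1$ must hold conditionally on the \emph{entire restricted history}, not merely on $R\Pi(t)$ or on the frequencies of $\Pi(t)$. Corollary 2.4 of \cite{Ber}, and the paintbox argument of Section 5.1, give the statement only for the single exchangeable partition $\Pi(t)$; enlarging the conditioning $\sigma$-field is not automatic, since a priori the earlier restricted path could carry additional information about the location of $1$ at time $t$, so this stronger statement requires its own proof. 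It is true, and can be obtained either from process-level exchangeability (for a cylinder functional $G$ of the restricted path depending only on $\{2,\dots,m\}$ and finitely many times, invariance under the transposition $(1\,n)$ with $n>m$ gives $E[\mathbf{1}_{\{1\sim_{\Pi(t)}j\}}G]=E[\mathbf{1}_{\{n\sim_{\Pi(t)}j\}}G]$; average over $n$, use the a.s. existence of asymptotic frequencies and bounded convergence, then extend by a monotone class argument), or from the Poissonian construction, by conditioning on the restricted atoms and the erosion times and multiplying the size-biased choices of $1$ at each dislocation along the ancestral line of block $i$ with the erosion factor $\e^{-ct}$, which telescopes to $|(R\Pi(t))_i|$. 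With that statement established, your identity is exactly the defining relation of $E^*_t$, the event where $1$ lies in a singleton contributes nothing by the convention on nonpositive powers of $0$, and the rest of your argument goes through; so the proposal is salvageable, but as written the key step is not covered by the result you cite.
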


To prove this, we only need to show that these two processes have the same finite-dimensional marginal distributions. The $1$-dimensional marginals have already been proven to be the same and we will continue with an induction argument which uses the fact that the homogeneous fragmentation property generalizes to $P^*$ and $P'$.

\begin{lemma} Let $t\geq 0$, and $\Psi^*$ and $\Psi'$ be independent copies of respectively $\Pi^*$ and $\Pi'$. Then, conditionally on $(\Pi^*(s),s\leq t)$, the process $(\Pi^*(t+s))_{s\geq0}$ has the same law as $(\Pi^*(t)\cap\Psi^*(s))_{s\geq0}$ and, conditionally on $(\Pi'(s),s\leq t)$, the process $(\Pi'(t+s))_{s\geq0}$ has the same law as $(\Pi'(t)\cap\Psi'(s))_{s\geq0}.$
\end{lemma}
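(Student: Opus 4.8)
\emph{Proof plan.}
The plan is to prove both assertions by a single scheme; I would carry out the $\Pi'$ case in detail and obtain the $\Pi^*$ case from \eqref{loietoile} in the same way. Since a probability measure on $\D$ is determined by its finite-dimensional marginals, a monotone-class argument reduces the statement (for $\Pi'$) to the identity
\[
E'\big[G\, h(\Pi'(t+s_1),\dots,\Pi'(t+s_n))\big]=E'\big[G\,\Phi(\Pi'(t))\big]
\]
for every $n$, all $0\le s_1<\dots<s_n$, all bounded measurable $h$ on $\p_{\N}^n$ and all bounded measurable $G=G((\Pi'(u))_{u\le t})$, where $\Phi(\pi):=E'\big[h(\pi\cap\Psi'(s_1),\dots,\pi\cap\Psi'(s_n))\big]$ with $\pi$ frozen and $\Psi'$ an independent copy of $\Pi'$.

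First I would remove the tilting: applying the identity defining $P'$ at the horizon $t+s_n$, the left-hand side equals $E\big[|\Pi_1(t+s_n)|^{p^*-1}\,G((\Pi(u))_{u\le t})\,h(\Pi(t+s_1),\dots,\Pi(t+s_n))\big]$ under the law of the untilted homogeneous fragmentation $\Pi$. Next I would apply the homogeneous fragmentation property of $\Pi$ at time $t$: given $\mathcal{F}_t$, if $\Pi(t)=\pi$ then the blocks of $(\Pi(t+\cdot))$ are the blocks of the $\pi_j$ refined by independent copies $\Pi^{(j)}$ of $\Pi$, independent of $\mathcal{F}_t$. Since $1\in\pi_1$ always, the block of $\Pi(t+s)$ containing $1$ is $\Pi^{(1)}_1(s)\cap\pi_1$, of asymptotic frequency $|\Pi^{(1)}_1(s)|\,|\pi_1|$ by Corollary~2.5 of \cite{Ber}, so $|\Pi_1(t+s_n)|^{p^*-1}/|\Pi_1(t)|^{p^*-1}=|\Pi^{(1)}_1(s_n)|^{p^*-1}$, a function of the single copy $\Pi^{(1)}$ alone. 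Because $E\big[|\Pi_{(1)}(s)|^{p^*-1}\big]=e^{-s\psi(p^*)}=1$, tilting by this factor turns $\Pi^{(1)}$ into a copy of $\Pi'$ and leaves $\Pi^{(2)},\Pi^{(3)},\dots$ as independent ordinary copies of $\Pi$, all independent of $\mathcal{F}_t$. Thus, conditionally on $(\Pi'(u))_{u\le t}$ with $\Pi'(t)=\pi$, the future is obtained by refining $\pi_1$ by an independent copy of $\Pi'$ and each $\pi_j$, $j\ge2$, by an independent copy of $\Pi$.

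It then remains to recognise this decomposition as $(\pi\cap\Psi'(s))_{s\ge0}$, which is the heart of the matter. Here I would condition in addition on the asymptotic frequencies of $\pi$ and of $\Psi'(s_1),\dots,\Psi'(s_n)$ and use Kingman's paintbox representation together with Corollaries~2.4 and~2.5 of \cite{Ber}: restricting the single copy $\Psi'$ to the pairwise disjoint blocks $(\pi_j)_j$ involves disjoint families of uniform variables, so the restrictions are mutually independent, the one carried by $\pi_1$ having the law of $\Pi'$ (it is the block containing $1$) and each one carried by $\pi_j$, $j\ge2$, having the law of $\Pi$; this matches the previous paragraph, and substituting back yields the required identity. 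The $\Pi^*$ case is identical once one observes that in \eqref{loietoile} the weight $\sum_i|(R\Pi(t+s_n))_i|^{p^*}$ splits, block by block of $\Pi(t)$, as $\sum_j|(R\pi)_j|^{p^*}M^{(j)}(s_n)$ with $M^{(j)}$ the additive martingale of $\Pi^{(j)}$, which again isolates the tagged block and produces a copy of $\Pi^*$ there and ordinary copies of $\Pi$ elsewhere. The step I expect to be the main obstacle is exactly this last identification: one must check carefully that intersecting $\pi$ with a single fresh tilted copy $\Psi'$ reproduces ``a copy of $\Pi'$ on the tagged block and independent ordinary copies of $\Pi$ on the others'', i.e.\ that the size-bias inherent in $\Psi'$ is absorbed precisely by the tagged block and does not leak into the remaining blocks.
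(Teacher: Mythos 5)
Your steps 1--6 are sound and give the correct conditional structure: after untilting, applying the fragmentation property of $\Pi$ at time $t$, and absorbing the factor $|\Pi^{(1)}_1(s_n)|^{p^*-1}$ into the copy refining the tagged block, you obtain that, given the past, each block $\pi_j$ of $\Pi'(t)$ is refined by its \emph{own} independent copy (a copy of $\Pi'$ for $\pi_1$, copies of $\Pi$ for $j\ge2$). The genuine gap is exactly the step you flagged, and it is not a matter of missing care: the identification of that block-by-block picture with $(\pi\cap\Psi'(s))_{s\ge0}$ for a \emph{single} fresh copy $\Psi'$ fails. First, the restrictions of one copy $\Psi'$ to the disjoint blocks $\pi_j$ are not mutually independent, even conditionally on asymptotic frequencies: they are driven by the same atoms, so they split at the same instants. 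Concretely, take $c=0$, $\nu=\delta_{(1/2,1/2,0,\ldots)}$, so $p^*=1$ and $\Pi'$ has the law of $\Pi$; if $\pi$ has two infinite blocks of positive frequency, then by Corollary 2.5 of \cite{Ber} every block of $\Psi'(s)$ meets both $\pi_1$ and $\pi_2$ a.s., so $\pi\cap\Psi'(s)$ always has an even number of blocks, whereas the law produced by your steps 1--6 gives three blocks with positive probability (one block splits once, the other not). Second, the claimed marginals are also off: the restriction of $\Psi'$ to a block not containing $1$, relabelled, is not a copy of $\Pi$ --- by Proposition 5.1 (ii)--(iii) the restriction $R\Pi'$ is a paintbox directed by the tilted frequency law $(\sum_i s_i^{p^*})\,m(d\mathbf{s})$, so the size-bias does leak into the untagged blocks of a single copy. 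Hence the final substitution cannot be justified along the proposed lines, and the argument as written does not close.

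For comparison, the paper does not pass through a spine/per-block decomposition at all. It works directly with the defining identity \eqref{loietoile} at horizon $t+u$: it represents the post-$t$ evolution of the \emph{untilted} homogeneous process by an intersection $\Pi(t)\cap\Psi(\cdot)$ with one independent copy $\Psi$ of $\Pi$, and then rewrites the single weighted sum over blocks of $\Pi(t+u)$ as a double sum over blocks of $\Pi(t)$ and of $\Psi(u)$, using that each block of the intersection is $\Pi_i(t)\cap\Psi_j(u)$ with $|R(\Pi(t)\cap\Psi(u))|^{p^*}$ factorizing as $|R\Pi_i(t)|^{p^*}|R\Psi_j(u)|^{p^*}$; the two factors of the weight then tilt the past and the fresh copy separately, producing exactly $E^*[F(\Pi^*)G(\Pi^*(t)\cap\Psi^*(\cdot))]$ with $\Psi^*$ independent of $\Pi^*$. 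In other words, the representation of the conditional future by a \emph{single} fresh copy intersected with the time-$t$ state --- the very point you could not verify, and which your steps 1--6 show differs from the true branching description --- is taken as the input of the paper's computation rather than derived from the per-block fragmentation property. So your instinct that this identification is the crux is well founded; but your proposed justification of it (independence of the restrictions of one copy, with untilted law off the tagged block) is false, and without it your route only establishes the refinement-by-independent-copies form of the Markov property, not the statement as formulated.
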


\begin{proof}
Let $t\geq 0$ and $u\geq 0$, let $F$ be a nonnegative measurable function on $\D_t$ and $G$ be a nonnegative measurable function on $\D_u$. We have, by the fragmentation property,
\begin{align*}
E^*[F((\Pi^*(s))&_{0\leq s\leq t}) G((\Pi^*(t+s))_{0\leq s\leq u})] \\
 &= E\big[\sum_i |(R\Pi(t+u))_i|^{p^*}F((H^{t+u}_iR\Pi(s))_{0\leq s\leq t}) G((H^{t+u}_iR\Pi(t+s))_{0\leq s\leq u}) \big] \\
        &= E\big[\sum_i |R (\Pi(t)\cap\Psi(u))_i|^{p^*} F((H^{t+u}_iR\Psi)_{0\leq s\leq t}) G(H^{t+u}_i(R\Pi(t)\cap\Psi(s))_{0\leq s\leq u}\big],
\end{align*}
where $\Psi$ is an independent copy of $\Pi$.
The key now is to notice that a block of $\Pi(t)\cap\Psi(s)$ is the intersection of a block of $\Pi(t)$ and a block of $\Psi(s)$. Thus we replace our sum over integers $i$ (representing blocks of $\Pi(t)\cap\Psi(s)$) by two sums, one for the blocks of $\Pi(t)$ and another for those of $\Psi(s)$.

\begin{align*}
E^*&[F((\Pi(s))_{0\leq s\leq t}) G((\Pi(t+s))_{0\leq s\leq u})] \\&= E\big[\sum_i \sum_j |R\Pi_i(t)|^{p^*} |R\Psi_j(t)^{p^*}| F((H^{t}_iR\Pi(s))_{0\leq s\leq t}) G((H^t_i R\Pi(t)\cap H^u_j\Psi(s))_{0\leq s\leq u})\big] \\
                                                                   &= E^*[F((\Pi(s))_{0\leq s\leq t}) G((\Pi(t)\cap\Psi(s))_{0\leq s\leq u})].
\end{align*}

The proof for $\Pi'$ again uses the same ideas but is simpler, so we will omit it.
\end{proof}

We can now complete the proof of Proposition \ref{aaa}. Let $t_1<t_2<\ldots<t_{n+1}$ and assume that we have shown that $(\Pi^*(t_1),\ldots,\Pi^*(t_n))$ and $(\Pi'(t_1),\ldots,\Pi'(t_n))$ have the same law. Let $\Psi$ be an independent copy of $\Pi^*(t_{n+1}-t_n)$ (which is then also an independent copy of $\Pi^*(t_{n+1}-t_n)$, then

\begin{align*} (\Pi^*(t_1),\ldots,\Pi^*(t_{n+1})) &\overset{(d)}= (\Pi^*(t_1),\ldots,\Pi^*(t_n),\Pi^*(t_n)\cap\Psi) \\ 
&\overset{(d)}=(\Pi^*(t_1),\ldots,\Pi^*(t_n),\Pi^*(t_n)\cap\Psi) \\
&\overset{(d)}=(\Pi'(t_1),\ldots,\Pi'(t_{n+1})),
\end{align*}
and the proof is complete.
\qed

\medskip

We can now proceed to the main part of this section, which is the description of $\Pi^*$ with Poisson point processes. First, we let $k_{\nu}^*$ be the measure on $\p_{\N}$ defined by 
	\[k_{\nu}^*(d\pi)=|\pi_1|^{p^*-1}\mathbf{1}_{\{|\pi_1|\neq0\}}k_{\nu}(d\pi).
\]

Let $\Delta^1(t)_{t\geq0}$ be a P.p.p. with intensity $k_{\nu}^*$ and, for all $k\geq2$, $(\Delta^k(t))_{t\geq0}$ a P.p.p. with intensity $k_{\nu}$. Let also $T_2,T_3,\ldots$ be exponential variables with parameter $c$ (note that there is no $T_1$ in here). We assume that these variables are all independent. With these, we can create a $\p_{\N}$-valued process $\Pi^*$, just as is done in the case of classical fragmentation processes. We start with $\Pi^*(0)=(\N,\emptyset,\emptyset,\ldots)$. For every $t$ such that there is an atom $\Delta^k(t)$, we let $\Pi^*(t)$ be equal to $\Pi^*(t^-)$, except that we replace the block $\Pi_{k}^*(t^-)$ by its intersection with all the blocks of $\Delta^k(t)$. Also, for every $i$, we let $\Pi^*(T_i)$ be equal to $\Pi^*(T_i^-)$, except that the integer $i$ is removed from its block and placed into a singleton. Just as in the classical case, it might not be clear that this is well-defined. To make sure that it is the case, we are going to restrict this to finite subsets of $\N$. Let $n\in\N$, we now only need to look at integers $k\leq n$ and times $t$ such that $\Delta^k(t)$ splits $[n]$ into at least two blocks. Conveniently enough, this set is in fact finite: indeed, we have 
	\[k_{\nu}(\{[n] \text{ is split into two or more blocks}\})=\int_{\s}  (1-\sum_{i=1}^\infty s_i^n)d\nu(\mathbf{s})\leq \int_{\s} (1-s_1^n)d\nu(\mathbf{s})<\infty,
\]
as well as 
\begin{align*}
k_{\nu^*}(\{[n] \text{ is split into two or more blocks}\})&=\int_{\s}  (1-\sum_{i=1}^\infty s_i^n)\sum_{i=1}^\infty s_i^{p^*}d\nu(\mathbf{s}) \\
                            &=cp^*+ \int_{\s} (1-\sum_{i=1}^\infty s_i^n\sum_{i=1}^\infty s_i^{p^*})d\nu(\mathbf{s}) \\
                            &\leq \int_{\s} (1-s_1^{p^*+n})d\nu(\mathbf{s}) \\
                            &<\infty.
\end{align*}
Since the set $(T_2,\ldots,T_n)$ is also finite, the previous operations can be applied without ambiguity. From this, we get, for all $t$, a sequence $(\Pi^*(t)\cap{[n]})_{n\in\N}$ of compatible partitions, which determine a unique partition $\Pi^*(t)$ of $\N$.

\begin{theo} The process $(\Pi^*(t))_{t\geq0}$ constructed does have the law defined by \ref{loietoile}.
\end{theo}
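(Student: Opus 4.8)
The plan is to show that the Poissonian construction yields a process whose finite-dimensional laws coincide with those prescribed by \eqref{loietoile}, and since both are c\`adl\`ag $\p_{\N}$-valued processes characterized by their finite-dimensional marginals, this suffices. The core idea is to compare the Poissonian construction of $\Pi^*$ with that of the original homogeneous fragmentation $\Pi$ (built from P.p.p.'s with intensity $k_\nu$ and erosion clocks $T_i$): the only differences are that the clock $T_1$ has been removed and that the first P.p.p.\ has had its intensity tilted from $k_\nu$ to $k_\nu^* = |\pi_1|^{p^*-1}\mathbf 1_{\{|\pi_1|\neq 0\}}k_\nu$. Equivalently, $\Pi^*$ is obtained from $\Pi$ by a Girsanov-type change of measure on the driving Poisson/exponential data, restricted to the block containing $1$; the weight accumulated up to time $t$ should be exactly $|\Pi_1(t)|^{p^*-1}$, which is the density appearing in the definition of $\Pi'$. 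Since Proposition \ref{aaa} tells us $\Pi^*$ and $\Pi'$ have the same law, it is cleanest to prove that the Poissonian construction gives a process with the law of $\Pi'$, i.e.\ the law with density $|\Pi_1(t)|^{p^*-1}$ relative to $\Pi$ restricted to $\D_t$.

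First I would work at a fixed finite level $[n]$, where everything is genuinely a finite-jump process and there are no convergence issues. On $[n]$, restrict attention to the block of $\Pi$ containing $1$: its mass evolves as $e^{-c t_{\{1\}}}\prod (\text{splitting factors})$, driven by the erosion clock $T_1$ (rate $c$) and by the atoms of $\Delta^{k}$ that fall in the block containing $1$, with jump kernel governed by the size-biased pick within the dislocating partition. I would then write down, via the Master formula for Poisson point processes (\cite{RY}) and the elementary change of measure for an exponential clock, the explicit Radon--Nikodym derivative on $\D_t$ restricted to $[n]$ between the Poissonian construction of $\Pi^*$ and that of $\Pi$. The removal of $T_1$ contributes a factor $e^{ct}$ times (the erosion contribution to $|\Pi_1(t)|^{-1}$... more precisely $e^{c(p^*-1)t}$ after combining with the tilt of $k_\nu$), and the tilt $k_\nu \to k_\nu^*$ on $\Delta^1$ contributes, by the exponential-formula/Master-formula computation, exactly the product of the factors $|\Pi_1(s)/\Pi_1(s^-)|^{p^*-1}$ over the splitting times of the tagged block, together with the compensator term $\exp\big(-t\int_{\s}(1-\sum_i s_i^{p^*})\,d\nu\big)$ coming from the normalization $k_\nu^*(\p_{\N}) - k_\nu(\p_{\N})$ disagreement, which vanishes here since that integral is $\psi(p^*) - cp^* $ ... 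I would organize this so that all these pieces assemble into precisely $|\Pi_1(t)|^{p^*-1}\mathbf 1_{\{|\Pi_1(t)|\neq 0\}}$, which is $M'_t$, the martingale already identified in Section 5.2. Concretely: for a nonnegative measurable $F$ on $\D_t$ restricted to $[n]$, one checks
\[
E\big[F\big((\Pi^*(s)\cap[n])_{s\leq t}\big)\big] = E\big[|\Pi_1(t)\cap[n]|^{p^*-1}\,\text{(correction)}\,F\big((\Pi(s)\cap[n])_{s\leq t}\big)\big],
\]
where one must be slightly careful that at finite level $|\Pi_1(t)\cap[n]|$ is not literally the asymptotic frequency; the honest statement involves size-biasing within $[n]$ and one passes to the limit $n\to\infty$ at the end, using that $\frac1n\#(\Pi_1(t)\cap[n])\to|\Pi_1(t)|$ and dominated convergence (the $L^q$ bounds of Proposition \ref{Lq}, or just boundedness of the masses by $1$, control this). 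After taking $n\to\infty$, the construction yields a genuine $\p_{\N}$-valued c\`adl\`ag process whose law on $\D_t$ has density $|\Pi_1(t)|^{p^*-1}$ against that of $\Pi$, i.e.\ it has the law of $\Pi'$, hence by Proposition \ref{aaa} the law $P^*$ defined by \eqref{loietoile}.

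The main obstacle I anticipate is bookkeeping the change-of-measure identity correctly rather than any deep difficulty: one must carefully separate the contribution of the P.p.p.\ $\Delta^1$ driving the tagged block (intensity $k_\nu^*$) from the P.p.p.'s $\Delta^k$, $k\geq 2$, driving the other blocks (intensity $k_\nu$, unchanged), and similarly isolate the effect of deleting $T_1$ while keeping $T_2,T_3,\dots$; the subtlety is that ``the block containing $1$'' changes label over time, so the index $k$ of the relevant P.p.p.\ is itself random and time-dependent, exactly as in the consistency computation preceding the theorem, and one handles it by the same device of summing over which block $1$ lands in and using the martingale property of $M_{k,u}$. A secondary technical point is justifying the passage from the finite-$n$ identity to the statement on all of $\N$: this requires knowing the finite-$n$ Poissonian constructions are consistent (which is built into the construction, since restricting $\Pi^*(t)$ to $[n']\subset[n]$ gives the $[n']$-construction) and that the measurable map $H^t_i R$ interacts well with these restrictions, both of which were already verified in the text. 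Once the $n=\infty$ identity \eqref{loietoile} is established for functions $F$ on $\D_t$ for every $t$, the equality of the two laws on all of $\D$ follows because, as recalled in Section 2, the law of a $\p_{\N}$-valued process is determined by its finite-dimensional marginals.
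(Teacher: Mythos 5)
Your plan is essentially the paper's argument run in the opposite direction, and it does work once two points in your bookkeeping are repaired. The paper also reduces the problem to the law of $\Pi'$ (via Proposition \ref{aaa}), but instead of computing a Radon--Nikodym derivative of the constructed law with respect to that of $\Pi$, it extends the tilted law $P'$ so that it carries the driving data $\big((\Delta^k)_{k\in\N},(T_i)_{i\geq2}\big)$ and identifies the law of that data under $P'$: a Laplace-functional computation with the exponential formula shows that $\Delta^1$ is a Poisson point process with intensity $k_\nu^*$ under $P'$, and a second computation gives independence and the unchanged laws of the $\Delta^k$, $k\geq2$, and the $T_i$, $i\geq2$; the whole thing hinges on the Malthusian identity $\int_{\p_{\N}}(1-|\pi_1|^{p^*-1})k_{\nu}(d\pi)=-cp^*$. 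Your forward Girsanov computation is the mirror image of this and uses exactly the same cancellation, so the two routes buy essentially the same thing; the paper's direction merely avoids any approximation of the density.

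The two soft spots. First, the compensator term does \emph{not} vanish: it equals $\e^{-tcp^*}$ (since $\int_{\p_{\N}}(|\pi_1|^{p^*-1}-1)k_{\nu}(d\pi)=cp^*$), and it is this factor which, multiplied by the factor $\e^{ct}$ coming from suppressing $T_1$, produces $\e^{-ct(p^*-1)}$, i.e.\ exactly the erosion part of the target density $|\Pi_1(t)|^{p^*-1}\mathbf{1}_{\{|\Pi_1(t)|\neq0\}}$; this cancellation is precisely where the Malthusian hypothesis enters, so your sentence claiming the term vanishes must be replaced by it. Second, the finite-$[n]$ layer with an approximate density and a limit in $n$ is both unjustified as written and unnecessary: since $p^*-1\leq0$, the quantity $\big(\tfrac{1}{n}\#(\Pi_1(t)\cap[n])\big)^{p^*-1}$ is not bounded by $1$, so neither ``boundedness of the masses'' nor Proposition \ref{Lq} supplies the domination you invoke. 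The fix is to use the exact density $|\Pi_1(t)|^{p^*-1}\mathbf{1}_{\{|\Pi_1(t)|\neq0\}}$, which is a measurable functional of the driving data, directly in the change-of-measure identity for cylinder functions $F$ of $(\Pi^*(s)\cap[n])_{s\leq t}$; such functions generate the $\sigma$-field of $\D_t$, so no approximation of the density and no limit of densities is needed --- the restriction to $[n]$ is only required to make sense of the construction itself, as in the paper. With these corrections your argument is complete and equivalent to the paper's proof.
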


\begin{proof}
We start by extending the measure $P'$, so that it contains not only the fragmentation process, but also the underlying Poisson point processes and exponential variables: for $t\leq0$, and any nonegative measurable function $F$, let
	\[E'_t\big[F\big((\Delta^i(t)_{s\leq t})_{i\in\N},(T_i)_{i\in\N'}\big)\big]=E\big[|\Pi_1(t)|^{p^*-1}F\big((\Delta^i(t)_{s\leq t})_{i\in\N},(T_i)_{i\in\N'}\big)\big]
\]
(remember that, under $P$, $(\Delta^1(t)_{s\leq t})$ is a P.p.p. with intensity $k_\nu$, and not $k_\nu^*$.)
These probability measures are still compatible, and we can still use Kolmogorov's theorem to extend them to a single measure $P'$. Note that under $P'$, $1$ never falls in a singleton, which is why we have ignored $T_1$. With this new law $P'$, the partition-valued process $(\Pi'(t))_{t\geq0}$ is indeed built from the point processes $(\Delta^k(t))_{t\geq0}$ with $k\in \N$ and the $T_i$ with $i\in \N$, and all we need to do is now find their joint distribution. We start with the harder part, which is finding the law of $(\Delta^1(t))_{t\geq0}$, and will use a Laplace transform method and the exponential formula for Poisson point processes. If $t\geq0$ and $f$ is a nonnegative measurable function on $\p_{\N}\times \R$, we have

\begin{align*}
E'[\e^{-\sum_{s\leq t} f(\Delta^1_s,s)}] &= E\big[|\Pi_{1}(t)|^{p^*-1} \mathbf{1}_{\{|\Pi_{1}(t)|\neq 0\}}e^{-\sum_{s\leq t} f(\Delta^1_s,s)}\big] \\
                                                        &= \e^{-ct}\e^{-ct(p^*-1)}E\Big[\prod_{s\leq t} |\Delta^1_1(s)|^{p^*-1} \mathbf{1}_{|\Delta^1_{1}(s)|\neq 0}e^{-\sum_{s\leq t} f(\Delta^1(s),s)}\Big]\\
                                                        &= \e^{-ctp^*}E\left[\exp\left(-\sum_{s\leq t} (-(p^*-1)\log (|\Delta^1_1(s)|) + f(\Delta^1(s),s))\right)\right] \\
                                                        &= \e^{-ctp^*}\exp\left(-\int_0^t\int_{\p_{\N}} (1-\e^{-(-(p^*-1)\log (|\pi_1|)+f(\pi,s))})k_{\nu}(d\pi)ds\right) \\
                                                        &= \e^{-ctp^*}\exp\left(-\int_0^t\int_{\p_{\N}} (1-|\pi_1|^{p^*-1}e^{-f(\pi,s)})k_{\nu}(d\pi)ds\right)
\end{align*}
Now we use the the Malthusian hypothesis: we have $cp^*+\int_{\s} (1-\sum s_i^{p^*})d\nu(\mathbf{s})=0.$ Translating this in terms of $k_{\nu}$, we have 
\begin{align*}
\int_{\p_{\N}}(1-|\pi_1|^{p^*-1})k_{\nu}(d\pi)&=\int_{\s}\big(\sum_i s_i(1-s_i^{p^*-1}) +s_0\big)d\nu(\mathbf{s}) \\
                                              &= -cp^*.
\end{align*}
Thus, in the last integral with respect to $k_{\nu}$, we can replace $1$ by $|\pi_1|^{p^*-1}$, if we subtract $cp^*$ outside of the integral:
\begin{align*}
E'[\e^{-\sum_{s\leq t} f(\Delta^1_s,s)}] &=\e^{-ctp^*}\exp\left(-\int_0^t(-cp^*+\int_{\p_{\N}} (|\pi_1|^{p^*-1}-|\pi_1|^{p^*-1}e^{-f(\pi,s)})k_{\nu}(d\pi))ds\right) \\
                                         &=\exp\left(-\int_0^t\int_{\p_{\N}} (1-e^{-f(\pi,s)})|\pi_1|^{p^*-1}k_{\nu}(d\pi)ds\right).
\end{align*}
This means that the point process $(\Delta^1(t))_{t\geq0}$ does indeed have the law of a Poisson point process with intensity $|\pi_1|^{p^*-1}k_{\nu}(d\pi)$.

Let us now prove that the point processes and random variables are independent from each other and that, except for $(\Delta^1_t)_{t\geq0}$, they have the same law as under $P$. Take $n\in\N$ and $t\geq 0$, for every $i\in[n]$, $F_i$ a nonnegative measurable function on the space of random measures on $\p_{\N}\times[0,t]$, and for $2\leq i\leq n$, a nonnegative measurable function $g_i$ on $\R$. Using independence properties under $P$, we have

\begin{align*}
E'&\left[\prod_{i=1}^n F_i((\Delta^i(s))_{s\leq t})\prod_{i=2}^{n}g_i(T_i)\right]\\&=E\left[\prod_{s\leq t} |\Delta^1_1(s)|^{p^*-1} \mathbf{1}_{|\Delta^1_{1}(s)|\neq 0} F_1((\Delta^1(s))_{s\leq t})\prod_{i=2}^{n}F_i((\Delta^i(s))_{s\leq t})g_i(T_i)\right] \\
   &= E\left[\prod_{s\leq t} |\Delta^1_1(s)|^{p^*-1} \mathbf{1}_{|\Delta^1_{1}(s)|\neq 0} F_1((\Delta^1(s))_{s\leq t})\right] \prod_{i=2}^{n} E\big[F_i((\Delta^i(s))_{s\leq t})\big]\prod_{i=2}^{n}E[g_i(T_i)] \\
   &=E'[F_1\big((\Delta^1(s)_{s\leq t}))\big] \prod_{i=2}^{n} E\big[F_i((\Delta^i(s)_{s\leq t}))\big]\prod_{i=2}^{n}E[g_i(T_i)],
\end{align*}

which is all we need.
\end{proof}

\begin{rem}\label{31} Here is an alternative description of a Poisson point process $(\Delta^1(t))_{t\geq0}$ with intensity $k_{\nu}^*$. Let $(s(t),i(t))_{t\geq0}$ be a $\s\times\N$-valued Poisson point process with intensity $s_i^{p^*}d\nu(\mathbf{s})d\#(i)$, where $\#$ is the counting measure on $\N$ (otherwise said, $(s(t))_{t\geq0}$ has intensity $\sum_i s_i^{p^*} d\nu(\mathbf{s})$ and $i(t)$ is equal to an integer $j$ with probability $\frac{s_j^{p^*}}{\sum_i s_i^{p^*}}$). When there is an atom, construct a partition of $\N'$ using the paintbox method (using for example a coupled process of uniform variables), and then add $1$ to the $i(t)$-th block, where the blocks are ordered in decreasing order of their asymptotic frequencies.
\end{rem}

\subsection{Link between $\mu^*$ and $P^*$.}
Let $\T$ be the fragmentation tree derived from $\Pi^{\alpha}$, equipped with its list of death points $(Q_i)_{i\in\N}$, as well as the measure $\mu^*$ which has total mass $W$, and we keep the assumption that $E[W]=1$. Given any leaf $L$, we can build a new partition process $(\Pi_L^{\alpha}(t))_{t\geq0}$ from this, by declaring the "new death point" of $1$ to be $L$. More precisely, for all $t\geq0$, the restriction of $\Pi_L^{\alpha}(t)$ to $\N'$ is the same as that of $\Pi^{\alpha}(t)$, while $1$ is put in the block containing all the integers $j$ such that $Q_j$ is in the same tree component of $\T_{>t}$ as $L$. As in the proof of Proposition \ref{1more}, one can show that $\Pi_L^{\alpha}$ is decreasing and in $\D$. Our main result here is that, if $L$ is chosen with "distribution" $\mu^*$, then $\Pi^{\alpha}_L$ has the same distribution as the $\Pi^{*,\alpha}$, where $\Pi^{*,\alpha}$ is the "$\alpha$-self-similar" version of $\Pi^*$, obtained through the usual time-change.

\begin{prop}\label{32} Let $F$ be any nonnegative measurable function of $\D$, then $\int_{\T}F(\Pi^{\alpha}_L)d\mu^*(L)$ is a random variable and we have
	\[E\left[\int_{\T}F(\Pi^{\alpha}_L)d\mu^*(L)\right]=E^*[F(\Pi^{*,\alpha})].
\]
\end{prop}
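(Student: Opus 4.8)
The plan is to reduce the self-similar statement to a homogeneous one via the Lamperti time-change, and then to compute the left-hand side by unfolding the definition of $\mu^*$ into a sum over the blocks of the homogeneous process $\Pi(t)$ at a fixed time $t$. First I would address measurability: the map $L\mapsto\Pi^{\alpha}_L$ is built by the same recipe as in Proposition~\ref{1more} (declaring $1$ to lie in the block of those $j$ with $Q_j$ in the same tree component of $\T_{>t}$ as $L$), so the event $\{i\sim_{\Pi^{\alpha}_L(t)}j\}$ is a measurable function of $(\T,\mu^*,L)$; together with the measurability of $\mu^*$ this gives that $\int_{\T}F(\Pi^{\alpha}_L)\,d\mu^*(L)$ is a genuine random variable, at least for $F$ depending on finitely many coordinates, and then for general measurable $F$ by a monotone class argument.

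Next, since both $\Pi^{\alpha}_L$ and $\Pi^{*,\alpha}$ are obtained from their homogeneous versions $\Pi_L$ and $\Pi^{*}$ by the \emph{same} deterministic measurable time-change $G^{\alpha}:\D\to\D$ (the one sending $\Pi^{0,c}$ to $\Pi^{\alpha,c}$, applied here along the tagged fragment containing $1$), it suffices to prove
\[
E\left[\int_{\T}F(\Pi_L)\,d\mu^*(L)\right]=E^*[F(\Pi^{*})]
\]
for all nonnegative measurable $F$ on $\D$, where now $\Pi_L$ is the homogeneous tagged process and $\mu^*$ is transported along the time-change; the self-similar identity follows by applying $G^{\alpha}$ to both sides. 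By the characterization of laws on $\D$ by finite-dimensional marginals and a monotone class argument, it is enough to treat $F$ of the form $F\big((\pi(s))_{s\geq0}\big)=\prod_{k=1}^{m}g_k(\pi(t_k))$ with $0\le t_1<\dots<t_m$, and in fact, by the Markov/fragmentation structure, it is enough to handle a single time: show that for every $t\ge0$ and every nonnegative measurable $g$ on $\p_{\N}$,
\[
E\left[\int_{\T}g(\Pi_L(t))\,d\mu^*(L)\right]=E\left[\sum_{i}|(R\Pi(t))_i|^{p^*}\,g(H^{t}_iR\Pi(t))\right],
\]
the right-hand side being $E^*[g(\Pi^{*}(t))]$ by~\eqref{loietoile}. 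To get this, I would use the key observation already exploited in the proof of Theorem~\ref{24} and of the displayed formula~\eqref{eq}: the leaf $L$ only enters $\Pi_L(t)$ through which tree component of $\T_{>t}$ (equivalently, which block of $\Pi(t)$) it sits above, so $\int_{\T}g(\Pi_L(t))\,d\mu^*(L)=\sum_{i\in\rep(\Pi(t))}\mu^*\big(\T_{(i,t^+)}\big)\,g(H^t_iR\Pi(t))$ after transporting through the time-change. By Theorem~\ref{24}, $\mu^*(\T_{(i,t^+)})=W_{i,t}$, and by the fragmentation property one writes $W_{i,t}=|\Pi_{(i)}(t)|^{p^*}W'_{i,t}$ with $W'_{i,t}$ a copy of $W$ independent of $\mathcal{F}_t$ and $E[W'_{i,t}]=1$; taking expectations and summing over $i\in\rep(\Pi(t))$ yields exactly the right-hand side.

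The main obstacle I anticipate is not any single computation but the bookkeeping in passing from the \emph{single-time} identity to the full identity on $\D$: one must check that conditionally on $\mathcal{F}_t$ (or on the appropriate $\sigma$-field generated by the masses), the future of $\Pi_L$ \emph{after} time $t$, integrated against the disintegration of $\mu^*$ onto the tree component containing $L$, reproduces an independent tilted copy — this is the place where one uses that $\mu^*$ restricted to $\T_{(i,t^+)}$ is, conditionally, $|\Pi_{(i)}(t)|^{p^*}$ times an independent copy of $\mu^*$ on an independent copy of $\T$ (self-similarity of $\mu^*$, inherited from Theorem~\ref{24} and Proposition~\ref{21}(ii)), combined with the branching/consistency property of $P^*$ established in Proposition~\ref{aaa} and the lemma preceding it. Once the compatibility of these two disintegrations is set up cleanly, the induction on the number of time-marginals is routine and mirrors the end of the proof of Proposition~\ref{aaa}.
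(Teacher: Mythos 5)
Your reduction to the homogeneous setting via $G^{\alpha}$, the key identity writing the $\mu^*$-mass above a time-$t$ block as $|R\Pi_i(t)|^{p^*}$ times a mean-one copy of $W$ independent of $\mathcal{F}_t$, and the concluding monotone class argument are exactly the ingredients of the paper's proof. Where you diverge is in how much you ask of the one explicit computation: you prove the identity only for single-time functionals $g(\Pi_L(t))$ and then plan to recover the full law by an induction over finite-dimensional marginals, driven by the branching property of $P^*$ (Proposition \ref{aaa}) together with a disintegration of $\mu^*$ over the subtrees above the blocks of $\Pi(t)$ — namely that $\mu^*$ restricted to such a subtree is, conditionally on $\mathcal{F}_t$, $|\Pi_{(i)}(t)|^{p^*}$ times an independent copy of $\mu^*$ carried by an independent rescaled copy of $\T$. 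That last statement, which you yourself flag as the main obstacle, is a measure-level self-similarity of $\mu^*$ that is nowhere established in the paper: Theorem \ref{24} and Proposition \ref{21}(ii) only give the total masses $W_{i,t}$ and their additivity, and proving the full disintegration cleanly amounts to a conditional rerun of Theorem \ref{24}. As written, this central step is a genuine gap in your plan, not routine bookkeeping.

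The gap is also unnecessary, and this is precisely the paper's shortcut: for $s\leq t$ the partition $\Pi_L(s)$ depends on the leaf $L$ only through which block of $\Pi(t)$ the leaf lies above (the block of $1$ in $\Pi_L(s)$ is then the time-$s$ ancestor of that block), so your computation goes through verbatim for any functional of the whole path up to time $t$, i.e. for $F\big((\pi(s))_{s\geq0}\big)=K\big((\pi(s))_{0\leq s\leq t}\big)$: one gets
\[
\int_{\T}F(\Pi_L)\,d\mu^*(L)=\sum_i |R\Pi_i(t)|^{p^*}\,X_i\,K\big((H^t_iR\Pi(s))_{0\leq s\leq t}\big),
\]
with $X_i$ mean-one copies of $W$ independent of $\mathcal{F}_t$, and taking expectations gives $E^*[F(\Pi^*)]$ by \eqref{loietoile}. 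Since the sets $\{\pi\in\D:(\pi(s))_{0\leq s\leq t}\in B\}$ form a generating $\pi$-system — in particular they contain every finite-dimensional cylinder, by taking $t$ to be the largest time involved — the monotone class theorem finishes the proof with no induction and no branching property of $\mu^*$ or of $P^*$. So replace your single-time identity by this path-functional version (your own argument proves it) and delete the induction step.
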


\begin{proof} For any leaf $L$ of $\T$, we let $\Pi_L=G^{-\alpha}(\Pi^{\alpha}_L)$, then $\Pi^{\alpha}_L=G^{\alpha}(\Pi_L)$ (recall from Section $2.1.3$ that $G^{\alpha}$ and $G^{-\alpha}$ are the measurable functions which transform $\Pi$ to $\Pi^{\alpha}$ and back). By renaming, we are reduced to proving that, for any nonnegative measurable function $F$ on $\D$, $\int_{\T}F(\Pi_L)d\mu^*(L)$ is a random variable and
	\[	E\left[\int_{\T}F(\Pi_L)d\mu^*(L)\right]=E^*[F(\Pi^{*})].
\]
We let $M(F)=\int_{\T}F(\Pi_L)d\mu^*(L)$. Assume first that $F$ is of the form $F\big((\pi(s))_{s\geq0}\big)=K\big((\pi(s))_{0\leq s\leq t}\big)$, for a certain $t\geq0$ and a function $K$ on $\D_t.$ We then have, by definition of $\mu^*$,
	\[M(F)=\sum_i |R\Pi_i(t)|^{p^*}X_i K((H_i^t(R\Pi)(s))_{0\leq s \leq t}),
\]
where $X_i$ is defined for all $i$ by $X_i=\frac{W_{j,t}}{|R\Pi_i(t)|^{p^*}}$ for any choice of $j\in\Pi_i(t)$, so $X_i$ has the same law as $W$ and is independent of $(\Pi(s))_{s\leq t}$. We thus know that $M(F)$ is a random variable such that
	\[E[M(F)]=E[W]E\big[\sum_i |R\Pi_i(t)|^{p^*} K((H_i^t(R\Pi)(s))_{0\leq s \leq t})\big]=E^*[F(\Pi^*)].
\]
A measure theory argument then extends this to any nonnegative measurable function $F$. Let $\mathcal{A}$ be the set of measurable subsets $A\in\D$ such that $M(\mathbf{1}_A)$ is a random variable and $E[M(\mathbf{1}_A)]=P^*[\Pi^*\in A]$. Standard properties of integrals show that $\mathcal{A}$ is a monotone class, and since it contains the generating $\pi$-system of sets of the form $A=\{\pi\in\D,\:(\pi(s))_{0\leq s\leq t}\in B\}$ with $t\geq0$ and $B\subset \D_t$, the monotone class theorem implies that $\mathcal{A}$ is $\D$'s Borel $\sigma$-field. We then conclude by approximating $F$ by linear combinations of indicator functions.

%We will show that $\mathcal{A}$ is a monotone class (in the standard sense of, for example, \cite{LG}, Appendix A1), and since it contains the $\pi$-system formed by sets of the form $A=\{\pi\in\D,\:(\pi(s))_{0\leq s\leq t}\in B\}$ for a certain $t\geq0$ and a measurable subset $B$ of $\D_t$, the monotone class theorem will imply that $\mathcal{A}$ is indeed $\D$'s Borel $\sigma$-algebra.
%Checking that $\mathcal{A}$ is a monotone class is straightforward: it contains $\D$, since $M(1)=W$, is stable under complementation in $\D$ since, by linearity of the integral, if $A\in\mathcal{A}$, $M(\mathbf{1}_{\D\setminus A})=M(1 - \mathbf{1}_A)=W - M(\mathbf{1}_A)$ is a random variable with the correct expectation. Stability by disjoint union follows from the monotone convergence theorem: if $(A_i)_{i\in\N}$ is a sequence of disjoint elements of $\mathcal{A}$, then, letting $A=\cup_i A_i$, then $\mathbf{1}_{A}=\sum_{i=1}^{\infty}\mathbf{1}_{A_i}_{i\in\N}$ and thus is a random variable, and its expected value is $\sum_{i=1}^{\infty}P^*[\Pi^*\in A_i]=P^*[\Pi^*\in A]$. We then get stability by general complementation: if $A\in\mathcal{A}$ and $B\in\mathcal{A}$ are such that $A\subset B$, then $B\setminus A=\D\setminus\big(A\cup(\D\setminus B)\big)$, since the union is disjoint. Stability by increasing union is finally obtained by writing an increasing union as a disjoint union by taking complements.

%Finally, since any nonnegative measurable function $F$ can be written as the increasing limit of linear combinations of indicator functions, the result generalizes to any $F$.
\end{proof}

\subsection{Marking two points}
We now want to go further and mark two points on $\T$ with distribution $\mu^*$. However, in order to avoid having to manipulate partitions with both integers $1$ and $2$ being forced into certain blocks, we will instead work with the tree $\T^*=\TREE(\Pi^{*,\alpha})$. To make sure that this is properly defined, we need to check that $\Pi^{*,\alpha}$ satisfies the hypotheses of Lemmas \ref{13} and \ref{14}. The first one is immediate because, for all $t\geq0$, when restricted to the complement of $\Pi^{*,\alpha}_1(t)$, $(\Pi^{*,\alpha}(s)_{s\geq t})$ is an $\alpha$-self-similar fragmentation process, while the second one comes from the Poissonian construction.

Let us give an alternate description of $\T^*$ which we will use here. Let $(\Delta(t))_{t\geq0}$ be a Poisson point process with intensity measure $\kappa_{\nu}^{*}$, and, for all $t\geq0$, $\xi(t)=\e^{-ct}\prod_{s\leq t} |\Delta(s)|$. From this we define the usual time-change: for all $t\geq0, \tau(t)=\inf \{u, \int_0^u \xi(t)^{-\alpha}dr >t\}.$ The tree $\T^*$ is then made of a \emph{spine} of length $T=\tau^{-1}(\infty)$ on which we have attached many small independent copies of $\T$. More precisely, for each $t$ such that $(\Delta(s))_{s\geq0}$ has an atom at time $\tau(t)$, we graft on the spine at height $t$ a number of trees equal to the number of blocks of $\Delta(t)$ minus one (an infinite amount if $\Delta_t$ has infinitely many). These are indexed by $j\geq 2$ and, for every such $j$, we graft precisely a copy of $\Big((\xi(t^-)|\Delta_j(t)|)^{-\alpha}\T,(\xi(t^-)|\Delta_j(t)|)\mu\Big)$, which will be called $(\T'_{j,t}, \mu'_{j,t})$. All of these then naturally come with their copy of $\mu^*$ which we will call $\mu^*_{i,t}$. These can then all be added to obtain a measure $\mu^{**}$ on $\T$, which satisfies, for all $(i,t) \in \T^*$,
	\[\mu^{**}(\T^*_{i,t+})=\underset{s\to\infty}\lim \sum_{j\in\Pi^*(\tau_i(t)+s)\cap \rep(\Pi^*(\tau_i(t)))} |\Pi^*_j(t+s)|^{p^*}.
\]
The measure $\mu^{**}$ is the natural analogue of $\mu^*$ on the biased tree.

We will need a Gromov-Hausdorff-type metric for \emph{trees with two extra marked points}: let $(\T,\rho,d)$ and $(\T',\rho',d')$ be two compact rooted trees, and then let $(x,y)\in\T^2$ and $(x',y')\in(\T')^2$. We now let the $2$-pointed Gromov-Hausdorff $d^2_{GH}((\T,x,y),(\T',x',y'))$ be equal to
	\[\inf \big[ \max \big(d_{\mathcal{Z},H} (\phi(\T),\phi'(\T')),d_\mathcal{Z}(\phi(\rho),\phi'(\rho')),d_\mathcal{Z}(\phi(x),\phi'(x')),d_\mathcal{Z}(\phi(y),\phi'(y'))\big)\big],
\]
where the infimum is once again taken on all possible isometric embeddings $\phi$ and $\phi'$ of $\T$ and $\T'$ in a common space $\mathcal{Z}$. Taking classes of such trees up to the relation $d^2_{GH}$, we then get a Polish space $\TT^2$ which is the set of $2$-pointed compact trees. For more details in a more general context (pointed metric spaces instead of trees), the reader can refer to \cite{M09}, Section $6.4$.

\begin{prop}\label{34} Let $F$ be any nonnegative measurable function on $\TT^2$. Then $\\ \int_{\T} F(\T,L,L')d\mu^*(L)d\mu^*(L')$ is a random variable, and we have
	\[E\left[\int_{\T}\int_{\T} F(\T,L,L')d\mu^*(L)d\mu^*(L')\right]=E^*\left[\int_{\T^*}F(\T,L_1,L')d\mu^{**}(L')\right].
\]
\end{prop}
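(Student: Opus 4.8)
The plan is to reduce Proposition \ref{34} to the already-established Proposition \ref{32}, by conditioning on the first marked leaf $L$ and applying Proposition \ref{32} to the fragmentation process $\Pi^\alpha_L$ with the tree $\T$ re-rooted along the spine to $L$. More precisely, first I would reduce to the analogous statement on the homogeneous side, where the time-changes have been undone via the measurable maps $G^\alpha$, $G^{-\alpha}$ of Section 2.1.3; this is harmless since $\TREE$ intertwines these with the Lamperti transform of trees and since $F$ composed with these maps is still measurable on $\TT^2$. The measurability of $(\T,L,L')\mapsto \int\int F\,d\mu^*d\mu^*$ is handled exactly as the analogous point in the proof of Proposition \ref{32}: approximate $F$ by linear combinations of indicators of a generating $\pi$-system, use a monotone class argument, and invoke that $\mu^*\otimes\mu^*$ is a measurable function of $\Pi^\alpha$ (which follows from Theorem \ref{24} and the measurability of $\TREE$).

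**Key steps.** Fix $F$. By Fubini applied inside the expectation (once measurability is in place), write
\[
E\left[\int_\T\int_\T F(\T,L,L')\,d\mu^*(L)\,d\mu^*(L')\right]
= E\left[\int_\T \Big(\int_\T F(\T,L,L')\,d\mu^*(L')\Big) d\mu^*(L)\right].
\]
Now regard the inner integral, as a function of the leaf $L$, as a functional of the whole configuration consisting of $\Pi^\alpha$ together with the choice of $L$ as a distinguished "death point". The process $\Pi^\alpha_L$ of Section 5.3 encodes precisely this: its genealogy tree, built with $1$ forced onto the spine leading to $L$, is canonically $\T$ with $L$ marked, and the restriction of $\mu^*$ to that tree is preserved. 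Hence $\int_\T F(\T,L,L')\,d\mu^*(L')$ can be written as $\Phi(\Pi^\alpha_L)$ for a measurable functional $\Phi$ on $\D$, where $\Phi(\pi)=\int_{\TREE(\pi)}F(\TREE(\pi),\mathrm{spine\ leaf},L')\,d\mu^*(L')$. Applying Proposition \ref{32} to $\Phi$ gives
\[
E\left[\int_\T \Phi(\Pi^\alpha_L)\,d\mu^*(L)\right]=E^*[\Phi(\Pi^{*,\alpha})].
\]
It remains to identify $E^*[\Phi(\Pi^{*,\alpha})]$ with the right-hand side of the claimed identity. Under $P^*$, the tree $\T^*=\TREE(\Pi^{*,\alpha})$ is exactly the tree $\T$ with a distinguished spine leaf $L_1$, and by the spine decomposition recalled just before the statement (the grafting description of $\T^*$ and the construction of $\mu^{**}$), the measure $\mu^{**}$ on $\T^*$ is precisely the pushforward of $\mu^*$ under the identification of $\T$ with $\T^*$. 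Therefore $\Phi(\Pi^{*,\alpha})=\int_{\T^*}F(\T,L_1,L')\,d\mu^{**}(L')$, which yields the result.

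**Main obstacle.** The delicate point is not any single computation but the bookkeeping in Step 2: checking that "$\T$ with the first leaf $L$ marked" is genuinely recovered, isometrically and with root preserved, from $\TREE(\Pi^\alpha_L)$ with the spine leaf marked, and that under this identification $\mu^*$ is carried to $\mu^*$ on the nose. This requires matching the canonical description of leaves via the paths $(i_L(t),t)_{t<ht(L)}$ (end of Section 3.5) with the construction of $\Pi^\alpha_L$, and verifying that the measurable functional $\Phi$ is well-defined on all of $\D$, not just on the range of the fragmentation process — which is dealt with exactly as in the proof of Proposition \ref{12}, by setting $\Phi$ to an arbitrary measurable value off the relevant measurable subset of $\D$. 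A secondary technical point is ensuring the double integral is jointly measurable in $(\T,L,L')$ so that Fubini applies; this follows from the monotone-class argument already used for Proposition \ref{32}, applied now on $\TT^2$ rather than $\D$.
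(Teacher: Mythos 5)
Your proposal is correct and follows essentially the same route as the paper: encode the pair (tree, first marked leaf) through the 1-tagged process $\Pi_L$, integrate out the second leaf, apply Proposition \ref{32} in the $L$-variable, identify the result with the $\mu^{**}$-integral on $\T^*$ via the spine/grafting description, and settle measurability by a monotone class argument. The paper merely organizes the reduction slightly differently — it represents $L'$ by the integer path $(j_{L'}(t))_{t\geq0}$ and checks the identity on cylinder functionals, where the inner $\mu^*(dL')$-integral collapses to the explicit finite-time sum $\sum_j W_{j(t),t}K(\cdots)$, thereby avoiding the measurable version of $\pi\mapsto\mu^*$ that your one-shot application of Proposition \ref{32} to $\Phi$ requires — but this is bookkeeping rather than a different idea.
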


\begin{proof} As in the proof of Proposition \ref{32}, we let $\Pi_L^{\alpha}$ be the fragmentation-like process obtained by setting the leaf $L$ as the new death point of the integer $1$ in $\T$, and then we let $\Pi_L$ be its homogeneous version. The other leaf $L'$ will be represented by a sequence of integers $(j_{L'}^{\alpha}(t))_{0\leq t< ht(L')}$ where, for all $t$ with $0\leq t < ht(L')$, $j^{\alpha}_{L'}(t)$ is the smallest integer $j\neq1$ such that $(j,t)\leq L'$ in $\T^*$. We then let $(j_{L'}(t))_{t\geq0}$ we the image of $(j^{\alpha}_{L'}(t))_{0\leq t\leq ht(L')}$ through the reverse Lamperti transformation.

Notice that $(\T,L,L')$ is the image of $(\Pi_L(t),j_{L'}(t))_{t\geq0}$ by a measurable function. Indeed, going back to the representation in $\ell^1$ of our trees, $\T$ is no more than $\TREE(\Pi_L^{\alpha})$, $L_1$ is $Q_1$, while $L'$ is the limit as $t$ goes to infinity of $Q_{j_{L'}(t)}$.

Thus, with some renaming, we now just need to check that, if $F$ is a nonnegative measurable function on the space of $\p_{\N}\times \N$-valued càdlàg functions (equipped with the product $\sigma$-algebra generated by the evaluation functions), then $\int_{\T}F((\Pi_L(t),j_{L'}(t))_{t\geq0})d\mu^*(L)d\mu^*(L')$ is a random variable, and 
	\[E\left[\int_{\T}\int_{\T} F((\Pi_L(t),j_{L'}(t))_{t\geq0})d\mu^*(L)d\mu^*(L')\right]=E^*\left[\int_{\T^*}F((\Pi^*(t),j_{L'}(t))_{t\geq 0})d\mu^{**}_{i,t}(L')\right].
\]
This will be done the same way as before: suppose that $F$ is of the form $K((\pi(s),j(s))_{0\leq s \leq t})$, then one can write
	\[\int_{\T}\int_{\T} F((\Pi_L(t),j_{L'}(t))_{t\geq0})d\mu^*(L)d\mu^*(L')= 
                              \int_{\T}\sum_j W_{j(t),t} K((\Pi_L(s),j(s))_{0\leq s\leq t})d\mu^*(L).
                              \]
(In the right-hand side, $j(s)$ denotes the smallest element of the block of $\Pi_L(s)$ which contains $(\Pi_L(t))_j$.) By Proposition \ref{32}, this is a random variable, and we know that its expectation is equal to 
	\[E^*\left[\sum_j |\Pi^*_j(t)|^{p^*} K((\Pi^*(s),j(s))_{0\leq s\leq t})\right] = E^*\left[\int_{t^*}F((\Pi^*(t),j_{L'}(t))_{t\geq0})d\mu^{**}(L')\right].
\]
A monotone class argument similar to the one at the end of Proposition \ref{32} ends the proof.

\end{proof}
\section{The Hausdorff dimension of $\T$}
The reader is invited to read \cite{Falconer} for the basics on the Hausdorff dimension $\dim_{\mathcal{H}}$ of a set, which we will not recall here.
\subsection{The result}
\begin{theo}\label{theo} Assume $\mathbf{(H)}$, that is that the function $\psi$ takes at least one strictly negative value on $[0,1]$. Then there exists a Malthusian exponent $p^*$ for $(c,\nu)$ and, almost surely, on the event that $\Pi$ does not die in finite time, we have
	\[\dim_{\mathcal{H}}(\mathcal{L}(\T))= \frac{p^*}{|\alpha|}.
\]
If $\Pi$ does die in finite time, then the leaves of $\T$ form a countable set, which has dimension $0$.
\end{theo}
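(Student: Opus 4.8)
The proof splits into the standard upper and lower bound arguments for Hausdorff dimension, and I would organize it exactly that way. For the \emph{upper bound}, the plan is to use the self-similar covering of $\mathcal{L}(\T)$ by the tree components $\T_i(t)$ of $\T_{>t}$ together with the moment estimates coming from the tagged fragment. Concretely: at time $t$ the leaves are covered by the trees $\T_i(t)$, each of diameter at most a constant times $|\Pi^\alpha_i(t)|^{|\alpha|}$ times (the diameter of an independent copy of $\T$); by self-similarity and the fact that $\T$ is a.s. compact, control of $\sum_i |\Pi^\alpha_i(t)|^{(p^*+\varepsilon)}$ via $\psi$ (recall $\psi(p^*+\varepsilon)>0$ for $\varepsilon>0$ small, by strict monotonicity and finiteness of $\psi$ near $p^*$ under $\mathbf{(H)}$) gives that the expected sum of $(p^*/|\alpha| + \varepsilon')$-powers of the diameters of this cover tends to $0$ as $t\to\infty$. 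A Borel--Cantelli / subsequence argument then gives $\dim_{\mathcal H}(\mathcal L(\T)) \le p^*/|\alpha| + \varepsilon'$ a.s., and letting $\varepsilon'\downarrow 0$ finishes. This part essentially follows \cite{HM04}, with $\psi$ replacing the conservative exponent.

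For the \emph{lower bound}, which is the substantial part, the plan is to use the energy method (Frostman): it suffices to exhibit, on the non-extinction event, a measure carried by $\mathcal L(\T)$ with finite $\gamma$-energy $\int\int d(L,L')^{-\gamma}\,d\mu^*(L)\,d\mu^*(L')$ for every $\gamma < p^*/|\alpha|$. The natural candidate is the Malthusian measure $\mu^*$ from Theorem 4.4, which is indeed supported by the proper leaves and has total mass $W>0$ a.s. on non-extinction (so after normalizing it is a genuine probability measure there). To bound its energy I would use the two-point description from Proposition 5.10: the expectation $E[\int\int d(L,L')^{-\gamma}d\mu^*(L)d\mu^*(L')]$ is rewritten, via the marked-tree change of measure, as an expectation under $P^*$ of an integral against $\mu^{**}$ of a functional of the tree with one marked leaf $L_1$ (the spine) and a $\mu^{**}$-random second leaf $L'$. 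The distance $d(L_1,L')$ in $\T^*$ is $ht(L_1) + ht(L') - 2\,ht(L_1\wedge L')$, and $L_1\wedge L'$ lies on the spine; conditioning on the branch point, $ht(L') - ht(L_1\wedge L')$ is the height of a leaf in an independent rescaled copy of $\T$ grafted on the spine, so its negative moments are controlled by Lemma 4.7 (applied to the subordinator with Laplace exponent $\phi^*$, i.e. the exponential functional $I$), provided $\gamma < 1 + \underline p/|\alpha|$. One then integrates over the position of the branch point along the spine against the intensity of grafted mass — this is where the Poissonian description of $\T^*$ from Section 5.5 and the formula $\mu^{**}(\T^*_{i,t+})$ in terms of $\sum_j |\Pi^*_j(t+s)|^{p^*}$ come in — and checks the resulting integral converges. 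The key point is that the admissible range $\gamma < 1 + \underline p/|\alpha|$ must be shown to cover all $\gamma < p^*/|\alpha|$; this reduces to checking $\underline p \ge p^* - |\alpha|$, i.e. that $\phi^*(-(p^*-|\alpha|)) = \psi(|\alpha|) $... more precisely that $\phi^*$ is finite far enough to the left, which follows from $\mathbf{(H)}$ giving a finite negative value of $\psi$ on $[0,1]$ — this is precisely why assumption $\mathbf{(H)}$ is made, and verifying this alignment of exponents is the crux.

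I expect the main obstacle to be the lower bound energy computation: specifically, handling the fact that $\mu^*$ is a \emph{random} measure with random total mass $W$ (so one cannot directly say "the energy is finite a.s." from "the expected energy is finite" without first passing to the normalized measure and arguing on the event $\{W>0\}$ of positive probability), and carefully pushing the two-point formula of Proposition 5.10 through to an explicit integral in terms of $\phi^*$ and the exponential functional. Care is also needed because $\mu^*$ may a priori be trivial in pathological cases and because the four-point / branch-point structure on $\T^*$ must be tracked through the change of measure. Once the expected $\gamma$-energy of $\mu^*$ is shown finite for all $\gamma<p^*/|\alpha|$, Frostman's lemma applied on $\{W>0\}$ gives $\dim_{\mathcal H}(\mathcal L(\T)) \ge p^*/|\alpha|$ there, and combined with the upper bound and the observation that extinction in finite time makes $\mathcal L(\T)$ countable (there are then no proper leaves accumulating, only the countably many death points), the theorem follows.
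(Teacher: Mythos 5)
Your overall architecture (Frostman via $\mu^*$ and the two-point formula of Proposition \ref{34} for the lower bound, a covering argument for the upper bound) is the right one, but both halves have genuine gaps as written. For the upper bound, covering $\mathcal{L}(\T)$ by the tree components of $\T_{>t}$ and letting $t\to\infty$ does not work: the components of $\T_{>t}$ only contain the leaves of height strictly greater than $t$, and since $\alpha<0$ the tree $\T$ has almost surely finite height, so for $t$ beyond that height your cover is empty while $\mathcal{L}(\T)$ is not; the "mesh $\to0$ as $t\to\infty$" limit is therefore vacuous, and for smaller $t$ an uncountable set of leaves of height $\leq t$ is left uncovered. The covering must instead be indexed by a mass threshold, not by height/time: in the paper one stops each fragment at the stopping line "first time its mass drops below $\epsilon$", covers by balls centred at the corresponding skeleton points with radius the remaining extinction time of the block, checks that every \emph{proper} leaf is caught (its mass tends to $0$ continuously, hence crosses level $\epsilon$), treats the dead leaves separately (they are countable), and then bounds $E\big[\sum_i (\tau^{\epsilon}_{(i)})^{\gamma/|\alpha|}\big]$ using the extended fragmentation property, exponential moments of the extinction time and a size-biased computation, which is finite as soon as $\psi(\gamma)>0$. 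Your estimate "$E[\sum_i|\Pi^{\alpha}_i(t)|^{p^*+\varepsilon}]\to0$" does not substitute for this, and the sudden-death (dead-leaf) issue is precisely the non-conservative feature the covering has to accommodate.

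For the lower bound, the decisive gap is your claim that assumption $\mathbf{(H)}$ alone makes the energy integral converge for all $\gamma<p^*/|\alpha|$. The direct computation (the paper's Lemma \ref{41}) requires $E[W]=1$, the log-moment condition $\int_{\s}\sum_i|\log(s_i)|s_i^{p^*}d\nu(\mathbf{s})<\infty$ of Lemma \ref{25}, and finiteness of $\int_{\s}\sum_{i\neq j}s_i^{p^*-a}s_j^{p^*}d\nu(\mathbf{s})$; none of these follow from $\mathbf{(H)}$, and the bound it yields is only $\frac{A\wedge(|\alpha|+\underline{p})}{|\alpha|}$, which can be strictly smaller than $p^*/|\alpha|$. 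In particular $\underline{p}=p^*-\inf\{p:\psi(p)>-\infty\}$, so "$|\alpha|+\underline{p}\geq p^*$" amounts to $\psi$ being finite on $(|\alpha|,\infty)$, which $\mathbf{(H)}$ does not guarantee, and $E[W]=1$ is only known under some $\mathbf{(M_q)}$. The missing idea is the truncation step: replace $\nu$ by the reduced measures $\nu_{N,\epsilon}$ (keep $N$ fragments, only when $s_1\leq1-\epsilon$), for which $\mathbf{(M_q)}$ holds for every $q>1$, $A_{N,\epsilon}=p^*_{N,\epsilon}$ and $\underline{p}_{N,\epsilon}\geq p^*_{N,\epsilon}$, so the energy method applies to the coupled subtree $\T_{N,\epsilon}\subset\T$ and gives $\dim_{\mathcal{H}}(\mathcal{L}(\T))\geq p^*_{N,\epsilon}/|\alpha|$; one then shows $p^*_{N,\epsilon}\uparrow p^*$ and, via a Galton--Watson comparison of extinction probabilities, that on the non-extinction event of $\Pi$ some $\Pi^{N,\epsilon}$ also survives. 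Without this approximation-and-coupling step the Frostman argument you outline does not reach the exponent $p^*/|\alpha|$ under $\mathbf{(H)}$.
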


The last statement is a consequence of Proposition \ref{leaf}: if $\Pi$ does die in finite time, then there are no proper leaves, which implies that every leaf of $\T$ is the death point of some integer.
\subsection{The lower bound}
An elaborate use of Frostman's lemma (Theorem 4.13 in \cite{Falconer}) with the measure $\mu^*$ combined with a truncation of the tree similar to what was done in \cite{HM04} will show that $\dim_{\mathcal{H}}(\mathcal{L}(\T))\geq \frac{p^*}{|\alpha|}$ almost surely when $\Pi$ does not die in finite time.

\subsubsection{A first lower bound}
Here we assume that $E[W]=1$, and thus $\Pi$ dies in finite time if and only if $\mu^*$ is the zero measure. We also assume the integrability condition $\int_{\s}(\sum_i |log(s_i)|s_i^{p^*}) d\nu(\mathbf{s})<\infty$ of Lemma \ref{25}.
\begin{lemma}\label{41} Recall that $\underline{p}=\sup \{q\in \R: \phi^*(-q)>-\infty\}$. Let 
	\[A= \sup \{a\leq p^*: \int_{\s} \sum_{i\neq j} s_i^{p^*-a}s_j^{p^*}d\nu(\mathbf{s}) < \infty,\} \in [0,p^*].
\]
Then, on the even where $\Pi$ does not die in finite time, we have the lower bound:
	\[\dim_{\mathcal{H}}(\mathcal{L}(\T))\geq \frac{A\wedge(|\alpha|+\underline{p})}{|\alpha|}.
\]
\end{lemma}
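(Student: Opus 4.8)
The plan is to apply the energy method (Frostman's lemma, Theorem 4.13 of \cite{Falconer}) to the measure $\mu^*$. Since $\mu^*$ is a finite Borel measure on $\T$ of total mass $W$, carried by the proper leaves of $\T$ (Theorem \ref{24}), and since $W>0$ almost surely on the event that $\Pi$ does not die in finite time (because $E[W]=1$), it suffices to prove that for every $\beta$ with $0<\beta<\frac{A\wedge(|\alpha|+\underline p)}{|\alpha|}$ the $\beta$-energy
\[
I_\beta(\mu^*)=\int_{\T}\int_{\T}\frac{d\mu^*(L)\,d\mu^*(L')}{d(L,L')^{\beta}}
\]
is almost surely finite: Frostman's lemma then gives $\dim_{\mathcal H}(\mathcal L(\T))\geq\beta$ almost surely on non-extinction, and letting $\beta$ increase to $\frac{A\wedge(|\alpha|+\underline p)}{|\alpha|}$ along a sequence yields the claim. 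Almost sure finiteness will follow from finiteness of $E[I_\beta(\mu^*)]$, and the first step is to rewrite this expectation: Proposition \ref{34} applied to $F(\T,L,L')=d(L,L')^{-\beta}$ gives
\[
E[I_\beta(\mu^*)]=E^*\!\left[\int_{\T^*}\frac{d\mu^{**}(L')}{d(L_1,L')^{\beta}}\right].
\]

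Next I would exploit the spinal description of $\T^*$: a spine of length $T=\tau^{-1}(\infty)=\int_0^\infty\xi(r)^{|\alpha|}dr$, where $\xi(r)=\e^{-\xi_r}$ is the tagged-fragment mass under the tilted law, so that $\xi_r$ is a subordinator with Laplace exponent $\phi^*$; the spine carries, at the heights $\tau^{-1}(r)$ associated with the atoms of the tilted dislocation process at homogeneous time $r$, independent rescaled copies $(\T'_{j,r},\mu^*_{j,r})$ of $(\T,\mu^*)$, one for each $j\neq i(r)$, with $\T'_{j,r}$ a copy of $(\xi(r^-)s_j)^{|\alpha|}\T$ and $\mu^*_{j,r}$ a copy of $(\xi(r^-)s_j)^{p^*}\mu^*$, where $\mathbf{s}=(s_j)_{j\geq1}$ is the mass partition of the dislocation at $r$ and $i(r)$ the piece through which the spine continues; moreover $\mu^{**}=\sum_{r}\sum_{j\neq i(r)}\mu^*_{j,r}$. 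For $L'$ lying in $\T'_{j,r}$ the point $L_1$ sits at the tip of the spine, so $L_1\wedge L'$ is the grafting point and $d(L_1,L')=(T-\tau^{-1}(r))+d(\text{grafting point},L')\geq T-\tau^{-1}(r)$. Bounding $d(L_1,L')^{-\beta}\leq(T-\tau^{-1}(r))^{-\beta}$ and integrating out the grafted subtrees — which are globally independent of the spine and satisfy $E[\mu^*(\T)]=E[W]=1$ — gives
\[
E^*\!\left[\int_{\T^*}\frac{d\mu^{**}(L')}{d(L_1,L')^{\beta}}\right]\leq E^*\!\left[\sum_{r}\sum_{j\neq i(r)}\frac{(\xi(r^-)s_j)^{p^*}}{(T-\tau^{-1}(r))^{\beta}}\right].
\]
By the Markov property of the subordinator one has $T-\tau^{-1}(r)=\int_r^\infty\xi(u)^{|\alpha|}du=(\xi(r^-)s_{i(r)})^{|\alpha|}I'_r$ for an independent copy $I'_r$ of $I=\int_0^\infty\e^{\alpha\xi_u}du$ (the exponential functional of Lemma \ref{25}), with $I'_r$ independent of $\mathcal F_r$ and of the mark at $r$; hence the summand equals $\xi(r^-)^{p^*-|\alpha|\beta}\,s_{i(r)}^{-|\alpha|\beta}\big(\sum_{j\neq i(r)}s_j^{p^*}\big)\,(I'_r)^{-\beta}$.

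It then remains to apply the master formula for Poisson point processes (\cite{RY}, p.~475): by Remark \ref{31} the decorated spinal atoms $(r,\mathbf{s},i)$ form a Poisson process of intensity $dr\otimes s_i^{p^*}\,\nu(d\mathbf{s})\,d\#(i)$, while $\xi(r^-)^{p^*-|\alpha|\beta}$ is predictable and $I'_r$ independent of the past and of the mark, so that
\[
E[I_\beta(\mu^*)]\ \leq\ E[I^{-\beta}]\cdot\left(\int_0^\infty E\big[\xi(r)^{p^*-|\alpha|\beta}\big]\,dr\right)\cdot\int_{\s}\sum_{i\neq j}s_i^{p^*-|\alpha|\beta}\,s_j^{p^*}\,\nu(d\mathbf{s}),
\]
and I would check that each of the three factors is finite when $\beta<\frac{A\wedge(|\alpha|+\underline p)}{|\alpha|}$. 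Since then $\beta<1+\underline p/|\alpha|$ and the integrability hypothesis of Lemma \ref{25} is in force, that lemma (whose proof identifies $E[\int_\T ht(L)^{-\beta}d\mu^*(L)]$ with $E[I^{-\beta}]$) gives $E[I^{-\beta}]<\infty$; since $\beta<A/|\alpha|\leq p^*/|\alpha|$ we get $2p^*-|\alpha|\beta>p^*$, so $\phi^*(p^*-|\alpha|\beta)=\psi(2p^*-|\alpha|\beta)>\psi(p^*)=0$ and is finite, whence $\int_0^\infty E[\xi(r)^{p^*-|\alpha|\beta}]\,dr=\int_0^\infty\e^{-r\phi^*(p^*-|\alpha|\beta)}dr=\phi^*(p^*-|\alpha|\beta)^{-1}<\infty$ (using that $\xi_r$ has Laplace exponent $\phi^*$); and $|\alpha|\beta<A$ makes the last integral finite by the very definition of $A$. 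The main obstacle is the careful bookkeeping of the spinal decomposition of $\T^*$ — the exponents $p^*$ on the grafted measures and $|\alpha|$ on the grafted trees, and the identification of $\mu^{**}$ as the sum of the grafted copies of $\mu^*$ — together with the clean splitting, at an atom of the spinal Poisson process, into the predictable factor $\xi(r^-)^{p^*-|\alpha|\beta}$, the mark $(\mathbf{s},i)$, and the future exponential functional $I'_r$, which must be argued to be independent of the other two; the analytic estimates on $\psi$, $\phi^*$ and $E[I^{-\beta}]$ are then routine.
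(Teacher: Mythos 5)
Your proof is correct and follows the paper's overall strategy — Frostman's energy method with $\mu^*$, the identity of Proposition \ref{34}, the spinal Poisson description of $\T^*$, and a Master-formula computation — but with one genuine variation at the key estimate. The paper lower-bounds $d(L_1,L')$ by the distance from $L'$ to the root $X_{j,t}$ of its grafted subtree, so the exponential functional appears inside the grafted copies: the $I_{j,t}$ are i.i.d. marks independent of the spinal point process, the enriched process $(\Delta_t,(I_{j,t})_{j\geq2})_{t\geq0}$ is Poisson with product intensity, and the Master formula for predictable integrands applies verbatim. You instead lower-bound $d(L_1,L')$ by the remaining spine length $T-\tau^{-1}(r)=(\xi(r^-)s_{i(r)})^{|\alpha|}I'_r$, so your factor $(I'_r)^{-\beta}$ is a functional of the atoms of the \emph{same} point process after time $r$; the bare compensation formula does not literally apply, and factoring out $E[I^{-\beta}]$ requires the Mecke/Palm formula or a conditioning argument via the Markov property at the atom — you flag this, and it is a routine extension, but it is exactly the complication the paper's choice of bound avoids. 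Since the double sum $\sum_{i\neq j}s_i^{p^*-|\alpha|\beta}s_j^{p^*}$ is symmetric in $(i,j)$, the two routes yield the same three factors and hence the same constraints $\beta<A/|\alpha|$, $\beta<p^*/|\alpha|$ and $\beta<1+\underline{p}/|\alpha|$, giving the stated bound. Two minor points: measurability of $(\T,x,y)\mapsto d(x,y)^{-\beta}$ on $\TT^2$ should be checked (the paper does this via Lipschitz continuity of the distance with respect to $d^2_{GH}$), and in dropping the indicator $\mathbf{1}_{L\neq L'}$ you are implicitly using that $\mu^{**}(\{L_1\})=0$, so the right-hand side is unchanged; combined with the identity this even shows the diagonal contributes nothing, so the full energy is a.s. finite and Frostman's lemma applies as you state.
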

\begin{proof}
We want to apply Proposition \ref{34} to the function $F$ defined on $\TT^2$ by $F(\T,\rho,d,x,y)=d(x,y)^{-\gamma}\mathbf{1}_{x\neq y}$. To do this we need to check that it is measurable, which can be done by showing that $d(x,y)$ is continuous. In fact, it is even Lipschitz-continuous: for all $(\T,\rho,d,x,y)$ and $(\T',\rho',d',x',y')$ and any embeddings $\phi$ and $\phi'$ of $\T$ and $\T'$ in a common $\mathcal{Z}$, we have
	\[|d(x,y)-d'(x',y')|=|d_\mathcal{Z}(\phi(x),\phi(y))-d_\mathcal{Z}(\phi'(x'),\phi'(y'))|\leq d_\mathcal{Z}(\phi(x),\phi'(x'))+d_\mathcal{Z}(\phi(y),\phi'(y'))
\]
and then taking the infimum, we obtain
	\[|d(x,y)-d'(x',y')|\leq 2d^2_{GH}\big((\T,x,y),(\T',x',y')\big).
\]

\smallskip

Applying Proposition \ref{34} to $F$, we then get
	\[E\left[\int_{\T}\int_{\T} (d(L,L'))^{-\gamma} d\mu^*(L)d\mu^*(L')\right]=E^*\left[\int_{\T^*}(d(L_1,L'))^{-\gamma}d\mu^{**}(L')\right].
\]

Recall the Poisson description of $\T^*$ of Section 5.4. Let, for all relevant $j\geq 2$ and $t\geq0$, $X_{j,t}$ be the root of $T'_{j,t}$ and $Z_{j,t}=\int_{\T'_{j,t}} d(L',X_{k,t})^{-\gamma}d\mu^*(L')$ One can then write $Z_{j,t}=\big(\xi(t^-)|\Delta_j(t)|\big)^{p^*+\alpha\gamma} (I_{j,t})^{-\gamma}$ where $I_{i,t}$ is a copy of $I$ (defined in the proof of Lemma \ref{25}) which is independent from the process $(\Delta)_{t\geq0}$ and all the other $\T'_{k,s}$ for $(k,s)\neq(j,t)$. Thus, the process $(\Delta_t,(I_{j,t})_{j\geq2})_{t\geq0}$ is a Poisson point process whose intensity is the product of $\kappa_{\nu}^*$ and the law of an infinite sequence of i.i.d. copies of $I$. We then have

\begin{align*}
 E^*\big[\int d(L_1,L')^{\gamma}d\mu^*(L')\big] &= E^*\big[\sum_{t\geq0}\sum_{j\geq2}\int_{\T'_{j,t}}d(L_1,L')^{-\gamma}d\mu^{**}(L')\big] \\
                                         &\leq E^*\big[\sum_{t\geq0}\sum_{j\geq2}\int_{\T'_{j,t}}d(L',X_{i,t})^{-\gamma}d\mu^{**}(L')\big] \\
                                         &=E^*\big[\sum_{t\geq0}\sum_{j\geq2} \big(\xi(t^-)|\Delta_j(t)|\big)^{p^*+\alpha\gamma} (I_{j,t})^{-\gamma}\big] \\
                                            &= E\big[I^{-\gamma}]E^*[\int \xi_{t^-}^{p^*+\alpha\gamma} dt\big] \int_{\s}\sum_i s_i^{p^*} \sum_{j\neq i}s_j^{p^*+\alpha\gamma}d\nu(\mathbf{s}).
\end{align*}
The last equality directly comes from the Master Formula for Poisson point processes.

We have a product of three factors, and we want to know when they are finite. The case of the first factor has already been studied in Lemma \ref{25}, we know that it is finite when $\gamma<1+\frac{\underline{p}}{|\alpha|}$. For the second factor to be finite we simply need $\phi^*(p^*+\alpha\gamma)>0$, which is true as soon as $p^*+\alpha\gamma>0$ i.e. when $\gamma < \frac{p^*}{|\alpha|}$. Finally, by definition of $A$, the third factor is finite as soon as $\gamma < \frac{A}{|\alpha|}.$ Since $A\leq p^*$ by definition, Frostman's lemma implies Lemma \ref{41}.
\end{proof}

\subsubsection{A reduced fragmentation and the corresponding subtree}
Let $N\in \N$ and $\epsilon>0$, we define a function $G_{N,\epsilon}$ from $\s$ to $\s$ by
	\[G_{N,\epsilon}(\mathbf{s})= 
	\begin{cases}
	(s_1,\ldots,s_N,0,0,\ldots) & \text{if } s_1\leq 1-\epsilon \\
   (s_1,0,0,\ldots) & \text{if } s_1>1-\epsilon.
\end{cases}
\]
A similar function can be defined on partitions on $\p_{\N}$. If a partition $\pi$ does not have asymptotic frequencies (a measurable event which doesn't concern us), we let $G_{N,\epsilon}(\pi)=\pi$. If it does, we first reorder its blocks by decreasing order of their asymptotic frequencies by letting, for all $i$, $\pi_i^{\downarrow}$ be the block with $i$-th highest asymptotic frequency (if there is a tie, we just rank those blocks by increasing order of their first elements). Then we let
	\[G_{N,\epsilon}(\pi)= 
	\begin{cases}
	(\pi_1^{\downarrow},\ldots,\pi_N^{\downarrow},\text{singletons}) & \text{if } |\pi_1^{\downarrow}|\leq 1-\epsilon \\
   (\pi_1^{\downarrow},\text{singletons}) & \text{if } |\pi_1^{\downarrow}|>1-\epsilon.
\end{cases}
\]

We let $\nu_{N,\epsilon}$ be the image of $\nu$ by $G_{N,\epsilon}$. Then the image of $k_{\nu}$ by $G_{N,\epsilon}$ on $\p_{\N}$ is $k_{\nu_{N,\epsilon}}$. The following is immediate.

\begin{prop} Let $(\Delta_t,k_t)_{t\geq0}$ be a Poisson point process with intensity $k_{\nu}\otimes\#$, then $(G_{N,\epsilon}(\Delta_t),k_t)_{t\geq0}$ is a Poisson point process with intensity $k_{\nu_{N,\epsilon}}\otimes \#$. Using them, one gets two coupled fragmentation processes $(\Pi(t))_{t\geq0}$ and $(\Pi^{N,\epsilon}(t))_{t\geq0}$ such that, for all $t$, $\Pi^{N,\epsilon}(t)$ is finer than $\Pi(t)$. Also, $\T_{N,\epsilon}$, the tree built from $(\Pi^{N,\epsilon}(t))_{t\geq0}$, is naturally a subset of $\T$.
\end{prop}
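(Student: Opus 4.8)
The statement bundles three claims — the intensity of the reduced point process, the fact that the two fragmentations can be coupled so that one refines the other, and the embedding of the reduced tree — and I would dispatch them in that order.

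For the first claim I would simply invoke the mapping (image) theorem for Poisson point processes. The map $(\pi,k)\mapsto (G_{N,\epsilon}(\pi),k)$ on $\p_{\N}\times\N$ is measurable: asymptotic frequencies, their reordering into decreasing order, and the two threshold cases $|\pi_1^{\downarrow}|\le 1-\epsilon$ versus $|\pi_1^{\downarrow}|>1-\epsilon$ are all measurable, and on the measurable bad set where frequencies fail to exist $G_{N,\epsilon}$ is set to the identity. Hence the image of $(\Delta_t,k_t)_{t\ge 0}$ under this map is again a Poisson point process, whose intensity is the push-forward of $k_{\nu}\otimes\#$. Since the second coordinate is left untouched and the push-forward of $k_{\nu}$ by $G_{N,\epsilon}$ on $\p_{\N}$ is $k_{\nu_{N,\epsilon}}$ — exactly the identity recorded in the sentence just before the statement — this intensity is $k_{\nu_{N,\epsilon}}\otimes\#$.

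For the refinement claim I would run the Poissonian construction of Section 2.1.3 twice on the same probability space, with the \emph{same} erosion clocks $(T_i)_{i\in\N}$ and the same atom times, feeding the atom $\Delta_t$ into $\Pi$ and the atom $G_{N,\epsilon}(\Delta_t)$ into $\Pi^{N,\epsilon}$. To prove $\Pi^{N,\epsilon}(t)\preceq\Pi(t)$ for every $t$ it suffices, since partitions of $\N$ are determined by their restrictions to the sets $[n]$, to check $\Pi^{N,\epsilon}(t)\cap[n]\preceq\Pi(t)\cap[n]$ for each fixed $n$; on $[n]$ only finitely many atoms and finitely many of the $T_i$ ever act, so one inducts on these finitely many update times. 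Two elementary facts drive the induction: $G_{N,\epsilon}(\pi)$ is always finer than $\pi$ (it keeps some blocks and breaks the remaining ones into singletons), and a single update step — replacing the block selected by the mark by its intersection with a dislocating partition, or forcing an integer into a singleton — is monotone with respect to refinement of both the current partition and the dislocating partition, provided the block acted on in $\Pi^{N,\epsilon}(t^-)$ is contained in the block acted on in $\Pi(t^-)$. Erosion steps are literally identical in the two processes and trivially preserve the order. Letting $n\to\infty$ gives $\Pi^{N,\epsilon}(t)\preceq\Pi(t)$ for all $t$ at once, and taking left limits along this relation gives $\Pi^{N,\epsilon}(t^-)\preceq\Pi(t^-)$ as well; the same domination then survives the Lamperti time-change, because shrinking a block only speeds the clock $\tau_i$ up.

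Finally, for the embedding I would push the simultaneous stick-breaking construction of Section 3.3 through the refinement: since every block that ever appears in (the time-changed) $\Pi^{N,\epsilon}$ is contained in a block of $\Pi$, each pair $(i,t)$ representing a vertex of $\T_{N,\epsilon}$ is also a vertex of $\T$, and the genealogy functor is monotone along this nesting, so $\T_{N,\epsilon}$ is realized inside the same $\ell^1$ as a sub-$\R$-tree of $\T$ sharing the root — the verification being the same bookkeeping already done there for $\T_B\subset\T_C$ when $B\subset C$. The delicate point, and the one I expect to be the main obstacle, is precisely the monotonicity invoked in the middle step: one must arrange the coupling so that the block singled out by $k_t$ in $\Pi^{N,\epsilon}(t^-)$ genuinely sits inside the one singled out by $k_t$ in $\Pi(t^-)$, since a naïve "same rank'' coupling need not have this property, and one must keep track of the fact that $G_{N,\epsilon}(\Delta_t)$ can separate two integers strictly earlier than $\Delta_t$ does, so the separation times $D_{\{i,j\}}$ of $\Pi^{N,\epsilon}$ only dominate, rather than equal, those of $\Pi$; it is checking that these shortenings are compatible with the stick-breaking coordinates that makes the inclusion $\T_{N,\epsilon}\subset\T$ precise.
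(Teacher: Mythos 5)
Your first step (the mapping theorem for Poisson point processes, using that the push-forward of $k_{\nu}$ by $G_{N,\epsilon}$ is $k_{\nu_{N,\epsilon}}$) is fine, but the middle claim has a genuine gap — one you flag yourself and do not close — and it is not a routine verification: with the block-designation convention of Section 2.1.3 (the mark $k_t$ selects the $k_t$-th block, blocks being ranked by their least element), feeding $\Delta_t$ into $\Pi$ and $G_{N,\epsilon}(\Delta_t)$ into $\Pi^{N,\epsilon}$ with the \emph{same} mark does not preserve refinement pathwise. Concretely, at the first dislocation $G_{N,\epsilon}(\Delta_{t_1})$ turns some non-singleton blocks of $\Delta_{t_1}$ into singletons, and these extra singletons shift the least-element ranking: a surviving block can have index $3$ in $\Pi(t_1)$ but index $4$ in $\Pi^{N,\epsilon}(t_1)$. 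A later atom with mark $3$ then dislocates that block in $\Pi$ while acting on a singleton in $\Pi^{N,\epsilon}$, after which two integers are separated in $\Pi$ but still together in $\Pi^{N,\epsilon}$; this happens with positive probability. So the hypothesis driving your induction (that the block selected in $\Pi^{N,\epsilon}(t^-)$ is contained in the block selected in $\Pi(t^-)$) is not merely unproved — it is false for the naive same-index coupling, and since the paper gives no proof at all (it declares the proposition immediate), your argument has to stand on its own and currently does not.

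The repair is to let an atom act on the block designated by its \emph{least element} rather than by its rank (this does not change the law: each block is still dislocated at rate $k_{\nu}$). With that coupling one proves, by your restriction-to-$[n]$ induction, the stronger invariant that every non-singleton block of $\Pi^{N,\epsilon}(t)$ is actually a block of $\Pi(t)$: when a common block $B$ is hit by an atom with mark $\min B$, the same atom acts on $B$ in both processes and $B\cap G_{N,\epsilon}(\Delta_t)$ consists of certain blocks of $B\cap\Delta_t$ together with singletons, while an atom whose mark is a ``dead'' (already singletonized) integer acts trivially in $\Pi^{N,\epsilon}$ and, by the invariant, the corresponding block of $\Pi$ contains no non-singleton reduced block, so nothing is lost. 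Refinement is then immediate, and so is the tree statement: while an integer is alive its block (hence its mass, hence its Lamperti time change) is the same in both processes, so $D^{N,\epsilon}_{\{i,j\}}=\min\big(D_{\{i,j\}},D^{N,\epsilon}_i,D^{N,\epsilon}_j\big)$ and the map $(i,t)\mapsto(i,t)$ for $t\le D^{N,\epsilon}_i$ is a root-preserving isometric embedding of $\T_{N,\epsilon}$ into $\T$; this also settles, in one line, your concern about the compatibility of the stick-breaking coordinates, which your version leaves conditional on the unresolved coupling.
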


\subsubsection{Using the reduced fragmentation}

Recall the concave function $\psi$ defined from $\R$ to $[-\infty,+\infty)$ by 
	\[\psi(p)=cp+\int_{\s} (1-\sum_i s_i^p) d\nu(\mathbf{s}).
\]
We now assume $\mathbf{(H)}$: there exists $p>0$ such that $-\infty<\psi(p)<0$.

\begin{prop} For $N\in \N\cup\{\infty\}$, $\epsilon\in[0,1]$ and $p\in\R$, let $ \psi_{N,\epsilon}(p)=cp+\int_{\s} (1-\sum_i s_i^p) d\nu_{N,\epsilon}(\mathbf{s}).$ One can then write
	\[\psi_{N,\epsilon}(p) = cp+\int_{\s}\left( \Big(1-\sum_{i=1}^ N s_i^p\Big)\mathbf{1}_{\{s_1\leq 1-\epsilon\}} + (1-s_1^p)\mathbf{1}_{\{s_1> 1-\epsilon\}} \;\right)d\nu(\mathbf{s}).
\]

  (i)  This is a nonincreasing function of $N$ and a nondecreasing function of $\epsilon$.
  
  (ii) We have $\psi(p)=\underset{N,\epsilon} \inf \, \psi_{N,\epsilon}(p)$. 
  
  (iii) There exist $N_0$ and $\epsilon_0$ such that, for $N>N_0$ and $\epsilon<\epsilon_0$, the pair $(c,\nu_{N,\epsilon})$ satisfies $\mathbf{(H)}$ and has a Malthusian exponent $p^*_{N,\epsilon}$.
  
  (iv) We have $p^*=\underset{N,\epsilon} \sup \, p^*_{N,\epsilon}$.
\end{prop}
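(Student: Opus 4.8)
The plan is to dispatch the displayed formula together with (i) and (ii) by elementary pointwise arguments, to isolate a single dominated‑convergence statement, and then to read off (iii) and (iv) from it.

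First I would note that $\nu_{N,\epsilon}$ is by definition the image measure $(G_{N,\epsilon})_*\nu$, so that $\int_{\s} g\,d\nu_{N,\epsilon}=\int_{\s} g\circ G_{N,\epsilon}\,d\nu$ for $g\ge0$; taking $g(\mathbf s)=1-\sum_i s_i^p$ and splitting according to whether $s_1\le 1-\epsilon$ or not yields the displayed formula at once (and, since $(G_{N,\epsilon}\mathbf s)_1=s_1$ always, $\nu_{N,\epsilon}$ is a legitimate dislocation measure). For (i): on $\{s_1\le1-\epsilon\}$ the integrand $1-\sum_{i=1}^N s_i^p$ is pointwise nonincreasing in $N$, with the convention $0^p=0$; and as $\epsilon$ increases a point may move from the first region to the second, where the integrand jumps from $1-\sum_{i=1}^N s_i^p$ up to $1-s_1^p\ge 1-\sum_{i=1}^N s_i^p$ (because $\sum_{i=1}^N s_i^p\ge s_1^p$). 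Integrating preserves both monotonicities. For (ii), the index set contains $(N,\epsilon)=(\infty,0)$, for which the formula reads $\psi_{\infty,0}=\psi$; by (i), every $\psi_{N,\epsilon}\ge\psi_{\infty,\epsilon}\ge\psi_{\infty,0}=\psi$, so the infimum equals $\psi$.

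The heart of the matter is the following claim: \emph{if $\psi(p)$ is finite then $\psi_{N,\epsilon}(p)$ is finite for all $N,\epsilon$ and $\psi_{N,\epsilon}(p)\to\psi(p)$ as $N\to\infty$, $\epsilon\to0$.} To prove it I would first record the auxiliary integrability fact that $\mathbf s\mapsto\sum_{i\ge2}s_i^p$ lies in $L^1(\nu)$: indeed $\sum_{i\ge2}s_i^p=\bigl(\sum_i s_i^p-1\bigr)+\bigl(1-s_1^p\bigr)\le\bigl(\sum_i s_i^p-1\bigr)^+ + C_p(1-s_1)$, where $\int(1-s_1)\,d\nu<\infty$ by definition and $\int\bigl(\sum_i s_i^p-1\bigr)^+d\nu<\infty$ is precisely the statement that $\psi(p)>-\infty$ (the positive part of $1-\sum_i s_i^p$ is already $\le C_p(1-s_1)$, so finiteness of the integral is equivalent to integrability of the negative part). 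Then a direct computation gives
	\[\psi_{N,\epsilon}(p)-\psi(p)=\int_{\{s_1\le1-\epsilon\}}\Bigl(\sum_{i>N}s_i^p\Bigr)d\nu+\int_{\{s_1>1-\epsilon\}}\Bigl(\sum_{i\ge2}s_i^p\Bigr)d\nu\ge0,
\]
both integrands being bounded above by the $\nu$‑integrable function $\sum_{i\ge2}s_i^p$ and converging to $0$ $\nu$‑a.e.\ as $N\to\infty$, $\epsilon\to0$ (using $\nu(\{s_1=1\})=0$ and the fact, implied by $\psi(p)>-\infty$, that $\sum_i s_i^p<\infty$ $\nu$‑a.e.); dominated convergence finishes it. I expect this interchange of limit and integral to be the only real obstacle: $\nu$ is $\sigma$‑finite but generally of infinite total mass, with all its mass concentrating near $s_1=1$, so the convergence is genuinely nontrivial and rests on the dominating function above.

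Finally, (iii) and (iv). For (iii): assumption $\mathbf{(H)}$ provides $p_0\in[0,1]$ with $-\infty<\psi(p_0)<0$, and necessarily $p_0<1$ since $\psi(1)\ge0$; the claim applied at $p_0$ yields $N_0,\epsilon_0$ with $\psi_{N_0,\epsilon_0}(p_0)<0$, and then the monotonicities of (i) give $-\infty<\psi_{N,\epsilon}(p_0)<0$ for all $N>N_0$, $\epsilon<\epsilon_0$, so $(c,\nu_{N,\epsilon})$ satisfies $\mathbf{(H)}$; since $\psi_{N,\epsilon}(1)\ge0$ always, the remark following the definition of $\mathbf{(H)}$ shows $(c,\nu_{N,\epsilon})$ has a unique Malthusian exponent $p^*_{N,\epsilon}$, which lies in $(p_0,1]$ because $\psi_{N,\epsilon}$ is strictly increasing on the set where it is finite (the same lemma as for $\psi$, applied to $\nu_{N,\epsilon}$). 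For (iv): each $\psi_{N,\epsilon}$ is nondecreasing and $\psi_{N,\epsilon}\ge\psi$ by (ii), so $\psi_{N,\epsilon}(p^*)\ge\psi(p^*)=0$, whence $p^*_{N,\epsilon}\le p^*$ and $\sup_{N,\epsilon}p^*_{N,\epsilon}\le p^*$; conversely, for small $\delta>0$ the value $\psi(p^*-\delta)$ is finite and strictly negative (strict monotonicity plus finiteness of $\psi$ on $[p_0,1]$), so the claim supplies an admissible pair $(N,\epsilon)$ — after possibly increasing $N$ and decreasing $\epsilon$ — with $\psi_{N,\epsilon}(p^*-\delta)<0$, forcing $p^*_{N,\epsilon}>p^*-\delta$; letting $\delta\to0$ gives $\sup_{N,\epsilon}p^*_{N,\epsilon}\ge p^*$, hence equality.
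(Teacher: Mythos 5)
Your proof is correct, and its overall architecture matches the paper's: (i) by pointwise monotonicity of the integrands, and (iii)--(iv) deduced from an approximation statement $\psi_{N,\epsilon}(p)\to\psi(p)$, with $\psi_{N,\epsilon}(p^*)\geq\psi(p^*)=0$ giving $p^*_{N,\epsilon}\leq p^*$ and $\psi_{N,\epsilon}(p^*-\delta)<0$ (for suitable $N,\epsilon$) giving the reverse bound. Where you diverge is the convergence step. The paper reads (ii) as that limit statement and proves it in one stroke by monotone convergence: the integrands decrease, as $N\uparrow\infty$ and $\epsilon\downarrow0$, to $1-\sum_i s_i^p$ while staying below the single $\nu$-integrable majorant $C_p(1-s_1)$, an argument that also covers points where $\psi(p)=-\infty$. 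You instead settle (ii) literally by including the index $(N,\epsilon)=(\infty,0)$, and then prove the convergence that is actually needed via the exact error identity $\psi_{N,\epsilon}(p)-\psi(p)=\int_{\{s_1\leq1-\epsilon\}}\big(\sum_{i>N}s_i^p\big)d\nu+\int_{\{s_1>1-\epsilon\}}\big(\sum_{i\geq2}s_i^p\big)d\nu$, dominated by $\sum_{i\geq2}s_i^p$, whose $\nu$-integrability you correctly extract from $\psi(p)>-\infty$ together with $\int_{\s}(1-s_1)d\nu<\infty$ (and you rightly note $\nu(\{s_1=1\})=0$ for the a.e.\ convergence). This buys an explicit expression for the error, at the price of being restricted to finite $\psi(p)$ — harmless here, since (iii) and (iv) only invoke the convergence at $p_0$ and at $p^*-\delta\geq p_0$, where $\psi$ is finite; the paper's monotone-convergence route is marginally more economical and uniform in $p$.
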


\begin{proof} The first point is immediate. The second one is a straightforward application of the monotone convergence theorem as $N$ tends to infinity and $\epsilon$ tends to $0$, which is valid because we have, for all $\mathbf{s}$, the upper bound \[\left(1-\sum_{i=1}^N s_i^p\right)\mathbf{1}_{\{s_1\leq 1-\epsilon\}} + (1-s_1^p)\mathbf{1}_{\{s_1> 1-\epsilon\}} \leq (1-s_1^p)\leq C_p(1-s_1),
\] and $(1-s_1)$ is $\nu$-integrable. 

The third point is a direct consequence of the second: let $p\in[0,1]$ such that $\psi(p)<0$, there exist $N_0$ and $\epsilon_0$ such that $\psi_{N_0,\epsilon_0}(p)<0$. Then by monotonicity, for all $N> N_0$ and $\epsilon<\epsilon_0$, $\psi_{N,\epsilon}(p)<0$ and thus $\nu_{N,\epsilon}$ has a Malthusian exponent $p_{N,\epsilon}^*$.

Now for the last point: first notice that, for all $N$ and $\epsilon$, we have $\phi_{N,\epsilon}(p^*)\geq \phi(p^*)=0$ and thus, if it exists, $p^*_{N,\epsilon}$ is smaller than or equal to $p^*$. Then, for $p<p^*$, by taking $N$ large enough and $\epsilon$ small enough, we have $\psi_{N,\epsilon}(p)<0$ and thus $p_{N,\epsilon}^*\geq p$. This concludes the proof.

\end{proof}

\begin{prop} For all $N$ and $\epsilon$ such that $p^*_{N,\epsilon}$ exists, and for all $q>1$, the measure $\nu_{N,\epsilon}$ satisfies assumption $\mathbf{(M_q)}$: $\int_{\s}|1-\sum_{i=1}^{\infty}s_i^{p^*_{N,\epsilon}}|^q\,d\nu_{N,\epsilon}(\mathbf{s}) <\infty$.
\end{prop}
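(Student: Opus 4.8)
The plan is to unwind the definition of $\nu_{N,\epsilon}$ as the image measure $G_{N,\epsilon}{}_*\nu$ and to exploit the fact that every mass partition in the support of $\nu_{N,\epsilon}$ has at most $N$ nonzero coordinates, so that the integrand is bounded except possibly near $s_1 = 1$, where $\nu$ is only $\sigma$-finite and must be handled by the usual comparison with $1-s_1$. Write $p = p^*_{N,\epsilon}$, which exists and is $>0$ by hypothesis. By the change-of-variables formula for pushforwards,
\[
\int_{\s}\Big|1-\sum_{i=1}^{\infty}s_i^{p}\Big|^q\,d\nu_{N,\epsilon}(\mathbf{s}) \;=\; \int_{\s}\Big|1-\sum_{i=1}^{\infty}\big(G_{N,\epsilon}(\mathbf{s})\big)_i^{p}\Big|^q\,d\nu(\mathbf{s}),
\]
and I would split the right-hand side over the two events $\{s_1>1-\epsilon\}$ and $\{s_1\leq 1-\epsilon\}$ that appear in the definition of $G_{N,\epsilon}$.

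On $\{s_1>1-\epsilon\}$ one has $G_{N,\epsilon}(\mathbf{s})=(s_1,0,0,\ldots)$, so the integrand equals $(1-s_1^{p})^q$. Since $p>0$, the constant $C_p=\sup_{x\in[0,1[}\frac{1-x^{p}}{1-x}$ is finite, hence $1-s_1^{p}\leq C_p(1-s_1)$, and because $q>1$ and $1-s_1\in[0,1]$ we get $(1-s_1^{p})^q\leq C_p^{q}(1-s_1)^{q}\leq C_p^{q}(1-s_1)$, which is $\nu$-integrable by the defining property $\int_{\s}(1-s_1)\,d\nu(\mathbf{s})<\infty$ of a dislocation measure. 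On $\{s_1\leq 1-\epsilon\}$ two elementary observations finish the job: first, the restriction of $\nu$ to this set is a \emph{finite} measure, since $1-s_1\geq\epsilon$ there forces $\nu(\{s_1\leq 1-\epsilon\})\leq \epsilon^{-1}\int_{\s}(1-s_1)\,d\nu(\mathbf{s})<\infty$; second, $G_{N,\epsilon}(\mathbf{s})$ has at most $N$ nonzero coordinates, each lying in $[0,1]$, so $0\leq\sum_i\big(G_{N,\epsilon}(\mathbf{s})\big)_i^{p}\leq N$ and therefore $\big|1-\sum_i\big(G_{N,\epsilon}(\mathbf{s})\big)_i^{p}\big|^q\leq N^{q}$ is bounded. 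Multiplying the bound $N^{q}$ by the finite mass gives a finite contribution, and adding the two pieces yields $\mathbf{(M_q)}$.

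The argument is short, and the only real subtlety — the step I would state most carefully — is the region $\{s_1>1-\epsilon\}$: there $\nu$ need not be finite, so one cannot simply invoke boundedness of the integrand, and it is precisely the truncation of $G_{N,\epsilon}$ into the two cases, together with the bound $1-s_1^{p}\leq C_p(1-s_1)$ already used in the analysis of $\psi$, that resolves this. I would also remark that the proof uses $N$ finite (which is the only case needed, since the reduced fragmentations $\Pi^{N,\epsilon}$ are built from $G_{N,\epsilon}$ with $N\in\N$), and that if $\nu_{N,\epsilon}$ happens to be the zero measure the statement is trivially true.
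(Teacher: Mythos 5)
Your proof is correct and uses essentially the same ingredients as the paper's: the bound $1-s_1^{p}\leq C_{p}(1-s_1)$ together with $q>1$ on $\{s_1>1-\epsilon\}$, and the finiteness of $\nu(\{s_1\leq 1-\epsilon\})$ (forced by the integrability of $1-s_1$) combined with an $N$-dependent bound on the integrand on $\{s_1\leq 1-\epsilon\}$. The only cosmetic difference is that you split the integral over the two regions and bound $|1-\sum_i (G_{N,\epsilon}(\mathbf{s}))_i^{p}|^q$ by $N^q$ on the second region, whereas the paper sandwiches the whole integrand between $(1-N)\mathbf{1}_{\{s_1\leq 1-\epsilon\}}$ and $C_{p}(1-s_1)$; the substance is identical.
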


\begin{proof} It is simply a matter of bounding $(1-\sum_{i=1}^ N s_i^{p^*_{N,\epsilon}})\mathbf{1}_{s_1\leq 1-\epsilon} + (1-s_1^{p^*_{N,\epsilon}})\mathbf{1}_{s_1> 1-\epsilon}$ in such a way that both the upper and lower bound's absolute values have an integrable $q$-th power. For the upper bound, write
\[\left(1-\sum_{i=1}^N s_i^{p^*_{N,\epsilon}}\right)\mathbf{1}_{\{s_1\leq 1-\epsilon\}} + (1-s_1^{p^*_{N,\epsilon}})\mathbf{1}_{\{s_1> 1-\epsilon\}} \leq 1-s_1^{p^*_{N,\epsilon}} \leq C_{p^*_{N,\epsilon}}(1-s_1)
\]
and since $q>1$, we can bound $(1-s_1)^q$ by $1-s_1$ which is integrable. For the lower bound, write
	\[ \left(1-\sum_{i=1}^ Ns_i^{p^*_{N,\epsilon}}\right)\mathbf{1}_{\{s_1\leq 1-\epsilon\}} + (1-s_1^{p^*_{N,\epsilon}})\mathbf{1}_{\{s_1> 1-\epsilon\}} > 
	(1-N)\mathbf{1}_{\{s_1\leq 1-\epsilon\}}
\]
and then note that, since $\nu$ integrates $1-s_1$, the set $\{s_1\leq 1-\epsilon\}$ has finite measure.
\end{proof}

\begin{prop} Let $N,\epsilon$ be such that $p^*_{N,\epsilon}$ exists. Let then $A_{N,\epsilon}$ and $\underline{p}_{N,\epsilon}$ corresponding quantities to $A$ and $\underline{p}$ (see Lemma \ref{41}), replacing $\nu$ by $\nu_{N,\epsilon}$. Then $A_{N,\epsilon}=p^*_{N,\epsilon}$ and $\underline{p}_{N,\epsilon}\geq p^*_{N,\epsilon}$.
\end{prop}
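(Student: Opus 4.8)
The plan is to exploit the very rigid structure of the reduced dislocation measure $\nu_{N,\epsilon}$. Every mass partition in its support has at most $N$ nonzero coordinates, and — the crucial observation — $\nu_{N,\epsilon}$ can only charge mass partitions with two or more nonzero blocks through the restriction of $\nu$ to $\{\mathbf{s}\in\s:\ s_1\leq 1-\epsilon\}$. On that set $1-s_1\geq\epsilon$, so since $\int_{\s}(1-s_1)\,d\nu(\mathbf{s})<\infty$ we get $\nu(\{s_1\leq 1-\epsilon\})\leq \epsilon^{-1}\int_{\s}(1-s_1)\,d\nu(\mathbf{s})<\infty$; hence the restriction of $\nu_{N,\epsilon}$ to $\{\mathbf{s}\in\s:\ s_2\neq 0\}$ is a \emph{finite} measure. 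I would open the proof with this remark.

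To obtain $A_{N,\epsilon}=p^*_{N,\epsilon}$, I would check that the integral defining $A_{N,\epsilon}$ is already finite at the endpoint $a=p^*_{N,\epsilon}$; since $A_{N,\epsilon}\leq p^*_{N,\epsilon}$ by definition, this forces the supremum to be attained and equal to $p^*_{N,\epsilon}$. For any $a\leq p^*_{N,\epsilon}$ all exponents appearing are nonnegative, so each summand $s_i^{p^*_{N,\epsilon}-a}s_j^{p^*_{N,\epsilon}}$ is at most $1$; the sum $\sum_{i\neq j}$ is empty unless $\mathbf{s}$ has two distinct nonzero coordinates, and when it has at most $N$ nonzero coordinates there are at most $N^2$ terms. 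So the integrand is bounded by $N^2$ on $\{s_2\neq 0\}$ and vanishes elsewhere, whence $\int_{\s}\sum_{i\neq j}s_i^{p^*_{N,\epsilon}-a}s_j^{p^*_{N,\epsilon}}\,d\nu_{N,\epsilon}\leq N^2\,\nu_{N,\epsilon}(\{s_2\neq 0\})<\infty$.

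To obtain $\underline{p}_{N,\epsilon}\geq p^*_{N,\epsilon}$, I would recall that $\phi^*_{N,\epsilon}(p)=\psi_{N,\epsilon}(p+p^*_{N,\epsilon})$, so that $\underline{p}_{N,\epsilon}=\sup\{q\in\R:\ \psi_{N,\epsilon}(p^*_{N,\epsilon}-q)>-\infty\}$; it therefore suffices to show that $\psi_{N,\epsilon}(r)>-\infty$ for every $r>0$, which places the whole interval $(-\infty,p^*_{N,\epsilon})$ inside the set over which the supremum is taken. Using the displayed formula for $\psi_{N,\epsilon}$, I would split the integral into the part over $\{s_1\leq 1-\epsilon\}$, where for $r>0$ the integrand $1-\sum_{i=1}^N s_i^r$ has absolute value at most $N$ and the measure is finite, and the part over $\{s_1>1-\epsilon\}$, where the integrand is $1-s_1^r\leq C_r(1-s_1)$ with $C_r=\sup_{x\in[0,1)}\frac{1-x^r}{1-x}<\infty$ and $1-s_1$ is $\nu$-integrable. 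Both pieces are finite, so $\psi_{N,\epsilon}(r)\in\R$.

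I do not expect a genuine obstacle; the only subtlety to keep in mind is that $\nu_{N,\epsilon}$ is itself an infinite measure — mass accumulates near the trivial partitions $(s_1,0,0,\ldots)$ with $s_1\to 1$ — so one cannot simply invoke ``$\nu_{N,\epsilon}$ is finite''. The argument must separate the part of $\nu_{N,\epsilon}$ that records genuine ($\geq 2$-block) splittings, which is finite by the computation above, from the near-trivial part, which contributes nothing to the $A_{N,\epsilon}$-integral and is controlled inside $\psi_{N,\epsilon}$ by the universal bound $1-s_1^r\leq C_r(1-s_1)$ together with the integrability of $1-s_1$ built into the definition of a dislocation measure.
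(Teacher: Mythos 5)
Your proposal is correct and follows essentially the same route as the paper: both arguments hinge on the observation that $\nu(\{s_1\leq 1-\epsilon\})<\infty$ (equivalently, that $\nu_{N,\epsilon}$ restricted to genuine splittings is finite), bound the pair sum by $N^2$ times this finite mass to get $A_{N,\epsilon}=p^*_{N,\epsilon}$, and bound $\psi_{N,\epsilon}$ below by splitting the integral according to whether $s_1\leq 1-\epsilon$ to get $\underline{p}_{N,\epsilon}\geq p^*_{N,\epsilon}$. The only differences (checking the endpoint $a=p^*_{N,\epsilon}$ directly, and phrasing the second part via $\psi_{N,\epsilon}(r)>-\infty$ for $r>0$ instead of $\phi^*_{N,\epsilon}(-p)$) are cosmetic.
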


\begin{proof}
The important fact to note here is that, since $1-s_1$ is integrable with respect to $\nu$, we have $\nu(\{s_1\leq 1-\epsilon\})<\infty.$ Now notice that, for all $p<p^*_{N,\epsilon}$, we have
\begin{align*}
\int_{\s} \sum_{i=1}^{\infty} (1-s_i^{-p})s_i^{p^*_{N,\epsilon}}&d\nu_{N,\epsilon}(\mathbf{s}) = cp^*_{N,\epsilon}+\int_{\s} \left(1-\sum_{i=1}^{\infty} s_i^{p^*_{N,\epsilon}-p}\right) d\nu_{N,\epsilon}(\mathbf{s}) \\
                           &=cp^*_{N,\epsilon}+ \int_{\s} \left(1-\sum_{i=1}^N s_i^{p^*_{N,\epsilon}-p}\mathbf{1}_{\{s_1\leq 1-\epsilon\}} + (1-s_1^{p^*_{N,\epsilon}-p})\mathbf{1}_{\{s_1> 1-\epsilon\}}\right) d\nu(\mathbf{s}) \\
                           &\geq cp^*_{N,\epsilon}-(N-1)\nu(\{s_1\leq 1-\epsilon\}) \\
                           &> -\infty.
\end{align*}
This shows that $\underline{p}_{N,\epsilon}\geq p^*_{N,\epsilon}$. Similarly, for $a<p^*_{N,\epsilon}$, we have
\begin{align*}
\int_{\s} \sum_{i\neq j} s_i^{p^*_{N,\epsilon}-a}s_j^{p^*_{N,\epsilon}}d\nu_{N,\epsilon}(\mathbf{s}) &=\int_{\s} \sum_{i\neq j \leq N} s_i^{p^*_{N,\epsilon}-a}s_j^{p^*_{N,\epsilon}}\mathbf{1}_{\{s_1\leq 1-\epsilon\}}d\nu(\mathbf{s}) \\
&\leq N^2\nu(\{s_1\leq 1-\epsilon\}) \\
&<\infty.
\end{align*}
Thus $A_{N,\epsilon}=p^*_{N,\epsilon}$
\end{proof}

Combining all the previous results, we have proved the following:

\begin{prop} Assume $\mathbf{(H)}$. Then, on the event where at least one of the $\Pi^{N,\epsilon}$ does not die in finite time, we almost surely have
	\[\dim_{\mathcal {H}}(\T) \geq \frac{\sup_{N,\epsilon} p^*_{N,\epsilon}}{|\alpha|}=\frac{p^*}{|\alpha|}.
\]
\end{prop}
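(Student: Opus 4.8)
The plan is to reduce everything to Lemma~\ref{41} applied to each reduced fragmentation $\Pi^{N,\epsilon}$, and then to pass to the supremum over the pairs $(N,\epsilon)$. First I would fix a pair $(N,\epsilon)$ for which the Malthusian exponent $p^*_{N,\epsilon}$ exists and check that $(c,\nu_{N,\epsilon})$ meets the hypotheses under which Lemma~\ref{41} was established. Indeed $\nu_{N,\epsilon}$ satisfies $\mathbf{(M_q)}$ for every $q>1$, so Proposition~\ref{Lq} gives $L^q$-convergence, hence $L^1$-convergence, of its additive martingale, and thus $E[W_{N,\epsilon}]=1$; and the integrability condition $\int_{\s}\sum_i|\log s_i|\,s_i^{p^*_{N,\epsilon}}\,d\nu_{N,\epsilon}(\mathbf{s})<\infty$ of Lemma~\ref{25} holds because $G_{N,\epsilon}$ preserves the first coordinate (so $\nu_{N,\epsilon}$ still integrates $1-s_1$), the set $\{s_1\le 1-\epsilon\}$ has finite $\nu$-mass, and $x\mapsto|\log x|\,x^{p}$ is bounded on $(0,1]$ for $p>0$.

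Granting this, Lemma~\ref{41} applied to $\Pi^{N,\epsilon}$ gives, on the event $S_{N,\epsilon}$ that $\Pi^{N,\epsilon}$ does not die in finite time, the bound $\dim_{\mathcal{H}}(\mathcal{L}(\T_{N,\epsilon}))\ge\big(A_{N,\epsilon}\wedge(|\alpha|+\underline{p}_{N,\epsilon})\big)/|\alpha|$. Since we have already shown $A_{N,\epsilon}=p^*_{N,\epsilon}$ and $\underline{p}_{N,\epsilon}\ge p^*_{N,\epsilon}>0$, we have $|\alpha|+\underline{p}_{N,\epsilon}>p^*_{N,\epsilon}$, so the minimum equals $p^*_{N,\epsilon}$; and since $\T_{N,\epsilon}$ is a subset of $\T$, monotonicity of the Hausdorff dimension yields $\dim_{\mathcal{H}}(\T)\ge p^*_{N,\epsilon}/|\alpha|$ on $S_{N,\epsilon}$.

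It then remains to replace $p^*_{N,\epsilon}$ by $p^*=\sup_{N,\epsilon}p^*_{N,\epsilon}$ on the event $\bigcup_{N,\epsilon}S_{N,\epsilon}$. The point is that, in the common Poissonian coupling, for $N\le N'$ and $\epsilon\ge\epsilon'$ the partition $\Pi^{N,\epsilon}(t)$ is obtained from $\Pi^{N',\epsilon'}(t)$ by additionally sending some blocks to dust at the relevant atoms of the driving point process, so every positive-mass block of $\Pi^{N,\epsilon}(t)$ lies inside a positive-mass block of $\Pi^{N',\epsilon'}(t)$; reading survival off the Galton--Watson processes that count positive-mass blocks at integer times, this gives $S_{N,\epsilon}\subseteq S_{N',\epsilon'}$. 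Using the already-established facts that $p^*_{N,\epsilon}$ is nondecreasing in $N$ and nonincreasing in $\epsilon$ and that $\sup_{N,\epsilon}p^*_{N,\epsilon}=p^*$, I would pick $N_k\uparrow\infty$ and $\epsilon_k\downarrow 0$ with $p^*_{N_k,\epsilon_k}\uparrow p^*$; then $(S_{N_k,\epsilon_k})_k$ is increasing with union $\bigcup_{N,\epsilon}S_{N,\epsilon}$, so on this union any given outcome belongs to $S_{N_k,\epsilon_k}$ for all large $k$, whence $\dim_{\mathcal{H}}(\T)\ge p^*_{N_k,\epsilon_k}/|\alpha|$ for all large $k$, and letting $k\to\infty$ concludes.

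Most of the work is already contained in the preceding lemmas and propositions, so the assembly above is largely bookkeeping. The one genuinely new point, and the place to be careful, is the monotonicity of the survival events $S_{N,\epsilon}$ under the coupling, together with the verification that the supremum defining $p^*$ is attained along a sequence $(N_k,\epsilon_k)$ compatible with that monotonicity.
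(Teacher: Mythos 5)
Your proposal is correct and follows essentially the same route as the paper, which states this proposition as the direct combination of the preceding results: Lemma~\ref{41} applied to each reduced fragmentation (whose hypotheses hold by $\mathbf{(M_q)}$, Proposition~\ref{Lq} and the log-integrability check you give), the identities $A_{N,\epsilon}=p^*_{N,\epsilon}$ and $\underline{p}_{N,\epsilon}\geq p^*_{N,\epsilon}$, the inclusion $\T_{N,\epsilon}\subseteq\T$, and $\sup_{N,\epsilon}p^*_{N,\epsilon}=p^*$. The monotonicity of the survival events $S_{N,\epsilon}$ under the common Poissonian coupling, which you rightly flag as the one point needing care, is exactly the ingredient the paper itself uses implicitly (in the Galton--Watson lemma that follows), so your assembly matches the intended argument.
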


Thus, to complete our proof, we want to check the following lemma:
\begin{lemma} Almost surely, if $\Pi$ does not die in finite time, then for $N$ large enough and $\epsilon$ small enough, $\Pi^{N,\epsilon}$ also does not.
\end{lemma}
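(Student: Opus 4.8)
The plan is to reduce the statement to a convergence of Galton--Watson extinction probabilities, after first disposing of a degenerate case. If $\nu(\{(0,0,\ldots)\})=0$ the lemma is trivial: since $G_{N,\epsilon}$ never discards the largest piece of a mass partition, it fixes $(0,0,\ldots)$, so $\nu_{N,\epsilon}(\{(0,0,\ldots)\})=0$ as well, and when no dislocation can make a block vanish one checks directly that a.s. neither $\Pi$ nor any $\Pi^{N,\epsilon}$ dies in finite time — following the line of largest fragments, its mass is $\e^{-ct}\prod s_1$ (product over the dislocations it undergoes before time $t$), and since $\nu$ integrates $1-s_1$ the sum of these $1-s_1$ over $[0,t]$ is a.s. finite, so this line never reduces to a singleton. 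From now on I would assume $\nu(\{(0,0,\ldots)\})>0$.

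Next I would discretize. Let $Z_n$ (resp. $Z_n^{N,\epsilon}$) be the number of positive-mass blocks of $\Pi(n)$ (resp. $\Pi^{N,\epsilon}(n)$). By the homogeneous fragmentation property these are Galton--Watson processes, possibly infinite-valued (the relevant facts still hold, by Appendix 2), and since $\Pi$ is decreasing and the all-singletons state is absorbing, $\{\Pi\text{ dies in finite time}\}=\{Z_n=0\text{ for some }n\}$, and likewise for each $\Pi^{N,\epsilon}$. Using the coupling of the preceding Proposition, $\Pi^{N,\epsilon}(t)$ is finer than $\Pi(t)$ and $\Pi^{N',\epsilon'}(t)$ is coarser than $\Pi^{N,\epsilon}(t)$ whenever $N'\geq N$, $\epsilon'\leq\epsilon$; hence $\{\Pi\text{ dies}\}\subseteq\{\Pi^{N,\epsilon}\text{ dies}\}$ for every $N,\epsilon$, and the events $\{\Pi^{N,\epsilon}\text{ dies}\}$ decrease as $N\uparrow$, $\epsilon\downarrow$. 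Writing $q=P(\Pi\text{ dies})$ and $q_{N,\epsilon}=P(\Pi^{N,\epsilon}\text{ dies})$, continuity of measure along a cofinal sequence gives $P\big(\bigcap_{N,\epsilon}\{\Pi^{N,\epsilon}\text{ dies}\}\big)=\lim_{N\uparrow,\epsilon\downarrow}q_{N,\epsilon}$, so — as $\{\Pi\text{ dies}\}$ is contained in that intersection — the lemma reduces to $\lim_{N\uparrow,\epsilon\downarrow}q_{N,\epsilon}=q$. Recall $q$ and $q_{N,\epsilon}$ are the smallest fixed points in $[0,1]$ of $f(u)=E[u^{Z_1}]$ and $f_{N,\epsilon}(u)=E[u^{Z_1^{N,\epsilon}}]$ (with the convention $u^\infty:=0$).

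The crux is the a.s. convergence $Z_1^{N,\epsilon}\to Z_1$ in $\N\cup\{\infty\}$. Since passing from $\Delta$ to $G_{N,\epsilon}(\Delta)$ only singletonizes some blocks and never introduces a macroscopic split absent from $\Delta$, each positive-mass block of $\Pi^{N,\epsilon}(1)$ sits inside a \emph{distinct} positive-mass block of $\Pi(1)$, so $Z_1^{N,\epsilon}\leq Z_1$. For the reverse bound, fix a finite $K\leq Z_1$ and choose positive-mass blocks $B_1,\ldots,B_K$ of $\Pi(1)$ with least elements $i_1,\ldots,i_K$, so $B_j=\Pi_{(i_j)}(1)$. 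For each $j$, $|\Pi_{(i_j)}(t)|\geq|B_j|>0$ throughout $[0,1]$, so $i_j$ never falls into dust before time $1$ and the multiplicative factors it collects at its successive dislocations have product $|B_j|>0$; hence only finitely many of them are $\leq 1/2$, i.e. $i_j$ enters a piece of rank $\geq 2$ only finitely often in $[0,1]$. Taking $N$ larger than the largest such rank and $\epsilon$ smaller than $1-s_1$ over those finitely many dislocations, $G_{N,\epsilon}$ keeps every piece $i_j$ ever enters, so the block of $i_j$ in $\Pi^{N,\epsilon}$ agrees with the one in $\Pi$ on $[0,1]$ and $B_j$ is a positive-mass block of $\Pi^{N,\epsilon}(1)$; as the $B_j$ are distinct, $Z_1^{N,\epsilon}\geq K$ for $N$ large and $\epsilon$ small. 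Thus $Z_1^{N,\epsilon}\to Z_1$ a.s., and by bounded convergence $f_{N,\epsilon}(u)\to f(u)$ for every $u\in[0,1)$, while $f_{N,\epsilon}\geq f$ on $[0,1]$.

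Finally I would conclude using convexity. All these Galton--Watson processes are supercritical: $E[Z_1]=\e^{-\psi(0)}>1$ since $\psi(0)<\psi(p^*)=0$ ($\psi$ strictly increasing and $p^*>0$), and likewise $E[Z_1^{N,\epsilon}]=\e^{-\psi_{N,\epsilon}(0)}>1$ for $N$ large and $\epsilon$ small because $\psi_{N,\epsilon}(0)\downarrow\psi(0)<0$. Hence $q,q_{N,\epsilon}<1$, each is the unique fixed point of the corresponding strictly convex generating function in $[0,1)$, with $f(u)>u$ on $[0,q)$ and $f_{N,\epsilon}(u)>u$ on $[0,q_{N,\epsilon})$. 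Iterating $f_{N,\epsilon}\geq f$ from $0$ gives $q_{N,\epsilon}\geq q$; and if one had $q_{N,\epsilon}\geq q+\delta$ for arbitrarily large $N$ and small $\epsilon$, then $f_{N,\epsilon}(q+\delta/2)>q+\delta/2$ for those, contradicting $f_{N,\epsilon}(q+\delta/2)\to f(q+\delta/2)<q+\delta/2$. So $q_{N,\epsilon}\to q$, which gives the lemma. The main obstacle is exactly the a.s. convergence $Z_1^{N,\epsilon}\to Z_1$: the lower bound requires the lineage-matching argument above, carried out with care about how the coupling of the Proposition labels blocks and acts on them, together with the bookkeeping that along any fixed lineage only finitely many dislocations are affected by the truncation $G_{N,\epsilon}$; one must also check that the standard Galton--Watson facts used (smallest fixed point, the convexity dichotomy, supercriticality) survive an atom at $+\infty$ in the offspring law, as provided by the appendix.
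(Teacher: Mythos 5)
Your proposal is correct and follows essentially the same route as the paper: discretize to (possibly infinite-valued) Galton--Watson processes counting positive-mass blocks, use the coupling to reduce the lemma to $q_{N,\epsilon}\to q$, and conclude via generating functions and the two-fixed-point dichotomy from the appendix. The only differences are cosmetic: you supply an explicit lineage argument for $Z_1^{N,\epsilon}\to Z_1$ (a convergence the paper simply asserts when stating that $F_{N,1/N}$ converges pointwise to $F$), you get supercriticality directly from $\mathbf{(H)}$ rather than from assuming $q<1$, and you close with a monotonicity/contradiction argument at a point of $(q,1)$ instead of the paper's uniform-convergence-on-$[0,q_{N,1/N}]$ passage to the limit in the fixed-point relation.
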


\begin{proof} We will argue using Galton-Watson processes. Let, for all integers $n$, $Z(n)$ be the number of non-singleton and nonempty blocks of $\Pi(n)$ and, for all $N$ and $\epsilon$, $Z_{N,\epsilon}(n)$ be the number of non-singleton and nonempty blocks of $\Pi^{N,\epsilon}(n)$. These are Galton-Watson processes, which might take infinite values. We want to show that, on the event that $Z$ doesn't die, there exist $N$ and $\epsilon$ such that $Z_{N,\epsilon}$ also survives. By letting $q$ be the extinction probability of $Z$ and $q_{N,\epsilon}$ be the extinction probability of $Z_{N,\epsilon}$, this will be proved by showing that $q=\underset{N,\epsilon}\inf \, q_{N,\epsilon}$. By monotonicity properties, this infimum is actually equal to $q'=\underset{N\to\infty}\lim q_{N,\frac{1}{N}}$.

Assume that $q<1$ (otherwise there is nothing to prove). This implies that $E[Z(1)]>1$, and by monotone convergence, there exists $N$ such that $E[Z_{N,\frac{1}{N}}(1)]>1$, and thus $q_{N,\frac{1}{N}}<1$. Let, for $x\in[0,1]$, $F(x)=E[x^{Z(1)}]$ and, for all $N$ and $\epsilon$, $F_{N,\epsilon}(x)=E[x^{Z_{N,\epsilon}(1)}]$. The sequence of nondecreasing functions $(F_{N,\frac{1}{N}})_{N\in\N}$ converges simply to $F$. Since $F$ is continuous on the compact interval $[0,q_{N,\frac{1}{N}}]$, the convergence is in fact uniform on this interval. We can take the limit in the relation $F_{N,\frac{1}{N}}(q_{N,\frac{1}{N}})=q_{N,\frac{1}{N}}$ and get $F(q')=q'$. Since $q'<1$ and since $F$ only has two fixed points on $[0,1]$ which are $q$ and $1$, we obtain that $q=q'$.

%Let, for $x\in[0,1]$, $F(x)=E[x^{Z(1)}]$ and, for all $N$ and $\epsilon$, $F_{N,\epsilon}(x)=E[x^{Z_{N,\epsilon}(1)}]$. Since $Z(1)=\underset{N,\epsilon}\sup Z_{N,\epsilon}(1)$, the sequence of functions $(F_{N,\frac{1}{N}})_{N\in\N}$ converges simply to $F$. Since all of these functions are nondecreasing, this convergence is in fact uniform. We know that $q$ is the unique fixed point of $F$ which is not $1$, and the same goes for $q_{N,\frac{1}{N}}$ and $F_{N,\frac{1}{N}}$. By uniform convergence, one can take the limit in the relation $F_{N,\frac{1}{N}}(q_{N,\frac{1}{N}})=q_{N,\frac{1}{N}}$, we get $F(q')=q'$. Thus, to prove that $q=q'$, we now only need to check that $q'<1$ if $q<1$. This is simple: if $q<1$, we have $E[Z(1)]>1$, and by monotone convergence, there exists $N$ such that $E[Z_{N,\frac{1}{N}}(1)]>1$ which implies $q_{N,\frac{1}{N}}<1$.
\end{proof}

We have thus proved the lower bound of Theorem \ref{theo}: assuming $\mathbf{(H)}$, almost surely, if $\Pi$ does not die in finite time, then $\dim_{\mathcal{H}}(\mathcal{L}(\T))\geq \frac{p^*}{|\alpha|}.$

\subsection{Upper bound}
Here we will not need the existence of an exact Malthusian exponent, and we will simply let
	\[p' = \inf \Big\{p\geq 0, \psi(p)\geq 0 \Big\}.
\]

\begin{prop} We have almost surely
	\[\dim_{\mathcal{H}}\big(\mathcal{L(\T)}\big) \leq \frac{p'}{|\alpha|}.
\]

\end{prop}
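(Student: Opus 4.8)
The plan is to cover $\mathcal{L}(\T)$ by the subtrees hanging above a stopping line, estimate the sum of the $\gamma$-th powers of their diameters, and let the stopping line rise towards the leaves. Fix $\gamma>p'/|\alpha|$ and set $p:=|\alpha|\gamma>p'$; it suffices to show $\mathcal{H}^\gamma(\mathcal{L}(\T))=0$ almost surely, since letting $\gamma$ decrease to $p'/|\alpha|$ along a countable sequence then gives the claim. For $\eta\in(0,1)$ put $\sigma_i=\inf\{s\ge0:|\Pi^{\alpha}_{(i)}(s)|\le\eta\}$; since the fragment masses are càdlàg and nonincreasing along genealogical lines, $(\sigma_i)_{i\in\N}$ is a stopping line in the sense of Section 2.1.4, and we let $\mathcal{B}_\eta$ be the corresponding (exchangeable) partition of $\N$, all of whose blocks have mass $\le\eta$. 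For $B\in\mathcal{B}_\eta$ let $\overline{\T_B}$ be the subtree of $\T$ lying above the point of the stopping line associated with $B$. Because along the genealogical line to any proper leaf the fragment masses tend to $0$ (Proposition \ref{leaf}), every leaf of $\T$ lies in some $\overline{\T_B}$ — a dead leaf $(i,D_i)$ with $|\Pi^{\alpha}_{(i)}(D_i^-)|>\eta$ being simply the single point $\overline{\T_B}$ of a singleton block.

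By the extended fragmentation property (\cite{Ber}, Lemma 3.14) together with the self-similarity of $\T$, conditionally on the $\sigma$-field $\mathcal{G}_L$ of the stopping line the subtrees $(\overline{\T_B})_{B\in\mathcal{B}_\eta}$ are independent, $\overline{\T_B}$ being isometric to $|B|^{|\alpha|}\T^{(B)}$ for i.i.d. copies $\T^{(B)}$ of $\T$. Hence $\mathrm{diam}(\overline{\T_B})\le 2\,ht(\overline{\T_B})=2|B|^{|\alpha|}H_B$ with $H_B:=ht(\T^{(B)})$, and using $|B|\le\eta<1$ and $p>p'$,
\[
\sum_{B\in\mathcal{B}_\eta}\mathrm{diam}(\overline{\T_B})^\gamma\le 2^\gamma\sum_{B\in\mathcal{B}_\eta}|B|^{p}H_B^\gamma\le 2^\gamma\,\eta^{\,p-p'}\sum_{B\in\mathcal{B}_\eta}|B|^{p'}H_B^\gamma .
\]
Taking expectations and using that $(H_B)$ is independent of $\mathcal{G}_L$,
\[
E\Big[\sum_{B\in\mathcal{B}_\eta}\mathrm{diam}(\overline{\T_B})^\gamma\Big]\le 2^\gamma\,\eta^{\,p-p'}\;E\big[ht(\T)^\gamma\big]\;E\Big[\sum_{B\in\mathcal{B}_\eta}|B|^{p'}\Big].
\]

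It remains to control the last two factors. For the martingale factor, the block $B_1\in\mathcal{B}_\eta$ containing $1$ has a size-biased mass, so $E[\sum_B|B|^{p'}]=E[|B_1|^{p'-1}\mathbf{1}_{|B_1|>0}]$; writing $|B_1|=e^{-\xi_{T_\eta}}$ with $\xi$ the subordinator of Section 2.1.4 and $T_\eta$ its first passage above $-\log\eta$, and noting that $\psi(p')=\phi(p'-1)=0$ (which follows from the continuity of $\psi$ and $\psi(1)\ge0$), the process $(e^{(1-p')\xi_s})_{s\ge0}$ is a nonnegative martingale, so optional stopping and Fatou give $E[\sum_B|B|^{p'}]\le1$. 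The factor $E[ht(\T)^\gamma]$ is the one genuine analytic input: $ht(\T)$ is the extinction time of the $\alpha$-self-similar fragmentation, and one needs that it has finite moments of every positive order. This can be obtained from the recursion $ht(\T)\le D_1+\sup_j m_j^{|\alpha|}\zeta^{(j)}$, where $D_1=\int_0^\infty e^{\alpha\xi_s}\,ds$ is the death time of the tagged fragment — whose positive moments are all finite since $\phi(k|\alpha|)=\psi(k|\alpha|+1)>0$ for every $k\ge1$ — the $m_j$ are the masses of the sub-fragments shed along the tagged line, and the $\zeta^{(j)}$ are independent copies of $ht(\T)$; unrolling this identity presents $ht(\T)$ as the maximal displacement of an associated branching random walk, whose moments are finite.

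Finally, pick $\eta_n\downarrow0$ with $\sum_n\eta_n^{\,p-p'}<\infty$. By the displayed estimate and Borel–Cantelli, almost surely $\sum_{B\in\mathcal{B}_{\eta_n}}\mathrm{diam}(\overline{\T_B})^\gamma\to0$; and since $\T$ is compact, the mesh $\sup_{B\in\mathcal{B}_{\eta_n}}\mathrm{diam}(\overline{\T_B})$ also tends to $0$, an accumulation of components of a fixed size forcing an accumulation point of infinite height. Hence $\mathcal{H}^\gamma_{\delta_n}(\mathcal{L}(\T))\le\sum_{B\in\mathcal{B}_{\eta_n}}\mathrm{diam}(\overline{\T_B})^\gamma\to0$ with $\delta_n\to0$, so $\mathcal{H}^\gamma(\mathcal{L}(\T))=0$ and $\dim_{\mathcal{H}}(\mathcal{L}(\T))\le\gamma$ a.s.; letting $\gamma$ decrease to $p'/|\alpha|$ concludes. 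The main obstacle is the moment bound $E[ht(\T)^\gamma]<\infty$ for arbitrarily large $\gamma$, that is, controlling how fast the \emph{whole} fragmentation — not merely the tagged fragment — becomes extinct; by comparison the bookkeeping for dead leaves and the vanishing-mesh argument are soft.
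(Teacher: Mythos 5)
Your argument follows the paper's own route: stop each fragment when its mass first falls below a small threshold, use the extended fragmentation property to see the subtrees hanging above the stopping line as independent copies of $\T$ rescaled by $|B|^{|\alpha|}$, bound the expected $\gamma$-power sum of their diameters by a product of a moment of the extinction time and a size-biased (subordinator) estimate, then let the threshold go to $0$. Where you deviate you are sound, and even slightly sharper: the paper computes $E[\sum_i|\Pi^{\epsilon}_i|^{\gamma}]$ exactly via the killed subordinator and needs $\psi(\gamma)>0$, whereas you extract the factor $\eta^{p-p'}$ and bound the $p'$-weighted sum by optional stopping, which upgrades the conclusion to $\mathcal{H}^{\gamma}(\mathcal{L}(\T))=0$ and makes your mesh/compactness digression unnecessary (vanishing Hausdorff content already implies vanishing Hausdorff measure).

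Two steps need attention. First, $\psi(p')=0$ does not follow from ``continuity'': $\psi$ is only right-continuous at $p'$ and can jump from $-\infty$ to a strictly positive value there (e.g.\ $c=0$, $\nu=\delta_{\mathbf{s}}$ with $s_i\asymp i^{-2}(\log i)^{-4}$ and $\sum_i s_i^{1/2}<1$, for which $p'=\tfrac12$ and $\psi(p')>0$). Fortunately you only need $\psi(p')\geq 0$, which does hold by monotonicity and right-continuity; then $\e^{(1-p')\xi_s}\mathbf{1}_{\{\xi_s<\infty\}}$ is a nonnegative supermartingale and your optional-stopping bound $E[\sum_B|B|^{p'}]\leq 1$ survives unchanged. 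The genuine gap is the bound $E[ht(\T)^{\gamma}]<\infty$, which you rightly single out as the crux but do not establish: reducing it to ``the maximal displacement of a branching random walk, whose moments are finite'' is not a proof, since finiteness of moments of such a maximum requires quantitative control of the reproduction and displacement laws (here each individual has infinitely many offspring and the displacements involve the exponential functional $D_1$ and the rescaled masses). The fact you need is precisely what the paper imports at this point, namely that the extinction time of an $\alpha<0$ self-similar fragmentation has exponential, hence all polynomial, moments (\cite{H03}, Proposition 14). With that citation, or an actual proof of that estimate, in place of the sketch, your argument is complete and coincides in substance with the paper's.
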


This statement is in fact slightly stronger than the upper bound of Theorem \ref{theo}. In particular it states that, if there exists $p\leq0$ such that $\psi(p)\geq0$, then the Hausdorff dimension of the set of leaves of $\T$ is almost surely equal to zero.

\begin{proof} We will find a good covering of the set of proper leaves, in the same spirit as in \cite{HM04}, but which takes account of the sudden death of whole fragments. Let $\epsilon>0$. For all $i \in \mathbb{N}$, let 
	\[t_i^{\epsilon} = \inf \{t\geq0: |\Pi_{(i)}(t)|< \epsilon\}.
\] Note that this is in fact a stopping line as defined in section $2.1.4$. We next define an exchangeable partition $\Pi^{\epsilon}$ by saying that integers $i$ and $j$ are in the same block if $\Pi_{(i)}(t_i^{\epsilon})= \Pi_{(j)}(t_j^{\epsilon})$. This should be thought of as the partition formed by the blocks of $\Pi$ the instant they get small enough. Now, for all integers $i$, consider
	\[\tau_{(i)}^{\epsilon}= \underset{j\in \Pi_{(i)}(t_i^{\epsilon})}\sup \inf \{t\geq t_i^{\epsilon}:|\Pi_{(j)}(t)|=0\} - t_i^{\epsilon},
\] the time this block has left before it is completely reduced to dust. This allows us to define our covering. For all integers $i$, we let $b_i^{\epsilon}$ be the vertex of $[0,Q_i]$ at distance $t_i^{\epsilon}$ from the root. We take a closed ball with center $b_i^{\epsilon}$ and radius $\tau_{(i)}^{\epsilon}$. These balls are the same if we take two integers in the same block of $\Pi^{\epsilon}$, so we will only need to consider one integer $i$ representing each block of $\Pi^{\epsilon}.$

Let us check that this covers all of the proper leaves of $\T$. Let $L$ be a proper leaf and $(i(t))_{0\leq t\leq ht(L)}$ be any sequence of integers such that, for all $0\leq t \leq ht(L)$, $(i(t),t)\leq L$ in $\T$. By definition of a proper leaf, $|\Pi_{(i(t))}(t)|$ does not suddenly jump to zero, so there exists a $t<ht(L)$ such that $0<|\Pi_{(i(t))}(t)|\leq \epsilon$. This implies that $L$ is in the closed ball centered at $b_{i(t)}^{\epsilon}$ with radius $\tau_{(i(t))}^{\epsilon}.$

The covering is also \emph{fine} in the sense that $\sup_i \tau_i^{\epsilon}$ goes to $0$ as $\epsilon$ goes to $0$; indeed, if that wasn't the case, one would have a sequence $(i_n)_{n\in\N}$ and a positive number $\eta$ such that $\tau^{2^{-n}}_{i_n}\geq\eta$ for all $n$. By compactness, one could then take a limit point $x$ or a sequence $(b^{2^{-n}}_{i_n})_{n\in\N}$ , and we would have $\mu(\T_x)=0$ despite $x$ not being a leaf, a contradiction.

Now, for $0<\gamma\leq 1$, we have, summing one integer $i$ per block of $\Pi^{\epsilon}$, and using the extended fragmentation property with the stopping line $(t_i^{\epsilon})_{i\in\N}$,
\begin{align*}
E\left[\sum_{i\in \rep(\Pi^{\epsilon})} (\tau_{(i)}^{\epsilon} )^{\frac{\gamma}{|\alpha|}}\right] &\leq E\left[\sum_{i\in \rep(\Pi^{\epsilon})} E\big[\tau^{\gamma/|\alpha|}\big]|\Pi^{\epsilon}_{(i)}|^{\gamma} \right]  \\
																														 &\leq E\big[\tau^{\gamma/|\alpha|}\big] E\left[\sum_{i\in \rep(\Pi^{\epsilon})}  |\Pi^{\epsilon}_{(i)}|^{\gamma}\right].
\end{align*}
Since $\tau$ has exponential moments (see \cite{H03}, Proposition 14), the first expectation is finite and we only need to check when the second one is finite. Since $\Pi^{\epsilon}$ is an exchangeable partition, we know that, given its asymptotic frequencies, the asymptotic frequency of the block containing $1$ is a size-biased pick among them and we therefore have
\begin{align*} E\left[\sum_i |\Pi^{\epsilon}_i|^{\gamma}\right]         &= E\big[|\Pi^{\epsilon}_1|^{\gamma-1}\mathbf{1}_{\{|\Pi^{\epsilon}_1|\neq 0\}}\big] \\
																														 &= E\big[|\Pi_1(T_\epsilon)|^{\gamma-1}\mathbf{1}_{\{|\Pi_1(T_\epsilon)|\neq 0\}}\big]\\
																														 &\leq E\big[|\Pi_1(T_0^-)|^{\gamma-1}\big],
\end{align*}                                                       
where $T_{\epsilon}=\inf\{t,|\Pi_1(t)|\leq\epsilon\}$ and $T_0=\inf\{t,|\Pi_1(t)|=0\}$. Now recall that, up to a time-change which does not concern us here, the process $(|\Pi_1(t)|_{t\geq0})$ is the exponential of the opposite of a killed subordinator $(\xi(t))_{t\geq 0}$ with Laplace exponent $\phi$. This last expectation can be easily computed: let $k$ be the killing rate of $\xi$ and $\phi_0=\phi-k,$ $\phi_0$ is then the Laplace exponent of a subordinator $\xi'$ which evolves as $\xi$, but is not killed. By considering an independent random time $T$ following the exponential distribution with parameter $k$ and killing $\xi'$ at time $T$, one obtains a process with the same law as $\xi$. Thus, we have

	\[E[\e^{-(\gamma-1)\xi_{T^-}}]= E[\e^{-(\gamma-1)\xi'_{T^-}}]=\int_0^\infty k\e^{-kt} \e^{-t(\phi_0(\gamma-1))}dt=\int_0^\infty k\e^{-\phi(\gamma-1)t}dt.
\]

Thus, if $\psi(\gamma)>0$, then $\frac{\gamma}{|\alpha|}$ is greater than the Hausdorff dimension of the leaves of $\T$.
\end{proof}

\section{Some comments and applications}
\subsection{Comparison with previous results}
In \cite{HM04}, the dimension of some conservative fragmentation trees was computed. The result was, as expected, $\frac{1}{|\alpha|}$, but this was obtained with very different assumptions on the dislocation measure:

\begin{prop} Let $\nu$ be a conservative dislocation measure, $\alpha<0$, and let $\T$ be a fragmentation tree with parameters $(\alpha,0,\nu)$. Assume that $\nu$ satisfies the assumption $\mathbf{(H')}$ which we define by
	\[\int_{\s}(s_1^{-1}-1)d\nu(\mathbf{s}) < \infty.
\]
Then, almost surely, we have
	\[dim_{\mathcal{H}}(\mathcal{L}(\T))=\frac{1}{|\alpha|}.
\]
\end{prop}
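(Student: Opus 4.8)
This is \cite{HM04}; I would give a proof that also makes plain that the statement is covered by the methods of the previous sections, the hypothesis $\mathbf{(H')}$ being quoted only for comparison with \cite{HM04} and not really needed here.

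First, conservativity trivialises the auxiliary ingredients. Since $c=0$ and $\sum_i s_i=1$ $\nu$-a.e., $\psi(1)=0$, so the Malthusian exponent exists and $p^*=1$; the additive martingale is $M(t)=\sum_i|\Pi_i(t)|^{p^*}\equiv 1$, so $W=1$, $E[W]=1$, and the construction of $\mu^*$ (Theorem \ref{24}) is available. Putting $W_{i,t}=|\Pi_{(i)}(t)|$ into it gives $\mu^*(\T_{(i,t^+)})=|\Pi^{\alpha}_{(i)}(t)|$, so $\mu^*$ is the natural measure $\mu$ of $\T$ --- a probability measure carried by the proper leaves. Finally a conservative fragmentation creates no dust and its tagged subordinator has no killing, so $\Pi$ never dies in finite time and, by Proposition \ref{leaf}, $\mathcal L(\T)$ is never countable: the statement is unconditional.

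The upper bound $\dim_{\mathcal H}(\mathcal L(\T))\le 1/|\alpha|$ is then the proposition of Section 6.3 applied with $p'=\inf\{p\ge 0:\psi(p)\ge 0\}=1$ (indeed $\psi(1)=0$ while $\psi<0$ on $[0,1)$), and needs nothing beyond conservativity. For the lower bound I would run the reduced-fragmentation argument of Section 6.2 unchanged. That argument derives $\dim_{\mathcal H}(\mathcal L(\T))\ge p^*/|\alpha|$ from: (a) the existence of $p^*$; (b) $\sup_{N,\epsilon}p^*_{N,\epsilon}=p^*$; (c) the fact that each truncated measure $\nu_{N,\epsilon}$ satisfies the hypotheses of Lemma \ref{41} ($E[W_{N,\epsilon}]=1$, the log-integrability of Lemma \ref{25}, $A_{N,\epsilon}=p^*_{N,\epsilon}$, $\underline{p}_{N,\epsilon}\ge p^*_{N,\epsilon}$); and (d) a Galton--Watson comparison showing $\Pi^{N,\epsilon}$ survives whenever $\Pi$ does. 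Now (a) is immediate; (b) holds because $\psi<0$ on $[0,1)$ for every non-degenerate conservative $\nu$, so for $p<1$ one has $\inf_{N,\epsilon}\psi_{N,\epsilon}(p)=\psi(p)<0$ and hence $p^*_{N,\epsilon}>p$ for suitable $N,\epsilon$; (c) is proved in Section 6.2 using only $\nu(\{s_1\le 1-\epsilon\})<\infty$ and the block-cap, i.e. nothing about $\nu$ itself (in particular the log-integrability holds for $\nu_{N,\epsilon}$ because at most $N$ blocks are kept, even though it may fail for $\nu$); and (d) applies since $\Pi$ never dies. Thus $\dim_{\mathcal H}(\mathcal L(\T))\ge p^*/|\alpha|=1/|\alpha|$, and with the upper bound this is an equality.

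The one point requiring care --- and the step I expect to be the real obstacle --- is to verify that Section 6.2's reduced-fragmentation argument does not secretly use $\mathbf{(H)}$ beyond (a) and (b); this amounts to re-reading those lemmas and locating where $\mathbf{(H)}$ was invoked. Two remarks I would add. First, $\mathbf{(H')}$ is strictly more than needed and is in fact incomparable with $\mathbf{(H)}$: a binary conservative dislocation measure supported on partitions $(1-\varepsilon,\varepsilon,0,\dots)$ and heavy enough near $\varepsilon=0$ can have $\int(s_1^{-1}-1)\,d\nu<\infty$ while $\psi\equiv-\infty$ on $[0,1)$, so that $\mathbf{(H)}$ (hence Theorem \ref{theo}) fails, yet the dimension is still $1/|\alpha|$. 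Second, one could instead follow \cite{HM04} literally and deduce the lower bound from Frostman's lemma applied to $\mu=\mu^*$, the point being that $E[\int_{\mathcal L(\T)}\int_{\mathcal L(\T)} d(L,L')^{-\gamma}\,\mu(dL)\,\mu(dL')]<\infty$ for every $\gamma<1/|\alpha|$; via Proposition \ref{34} and the Poissonian description of $\T^*$ this is precisely the estimate of \cite{HM04}, in which $\mathbf{(H')}$ enters to control the large-jump tail of the tagged subordinator --- control which, as the truncation route shows, is ultimately dispensable.
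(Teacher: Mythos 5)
Your argument is sound, but it is genuinely different from what the paper does: the paper gives no proof of this proposition at all --- it is quoted from \cite{HM04} purely as a point of comparison between $\mathbf{(H)}$ and $\mathbf{(H')}$, and the examples $\nu_1,\nu_2$ that follow it exist precisely because Theorem \ref{theo} does not formally cover it. You instead rederive it (indeed an $\mathbf{(H')}$-free strengthening of it) from the paper's own machinery, and the step you hedge on does check out: in the lower bound, $\mathbf{(H)}$ is used only to produce $p^*$ (here $p^*=1$, $M\equiv1$, $W=1$) and in points $(iii)$--$(iv)$ of the proposition on $\psi_{N,\epsilon}$, and your substitute for the latter works because for nonzero conservative $\nu$ one has $\psi(p)\in[-\infty,0)$ for every $p<1$, while $\psi_{N,\epsilon}(p)>-\infty$ always (the integrand is at least $1-N$ on the finite-measure set $\{s_1\le 1-\epsilon\}$ and nonnegative elsewhere), so monotone convergence yields $N,\epsilon$ with $-\infty<\psi_{N,\epsilon}(p)<0$, hence $\mathbf{(H)}$ for $(0,\nu_{N,\epsilon})$ and $p^*_{N,\epsilon}>p$; the remaining inputs to Lemma \ref{41} for $\nu_{N,\epsilon}$ ($\mathbf{(M_q)}$ and thus $E[W_{N,\epsilon}]=1$, $A_{N,\epsilon}=p^*_{N,\epsilon}$, $\underline{p}_{N,\epsilon}\ge p^*_{N,\epsilon}$, the log-integrability of Lemma \ref{25}) indeed use only $\nu(\{s_1\le 1-\epsilon\})<\infty$ and the cap at $N$ blocks, and the Galton--Watson survival lemma applies on the whole probability space since a conservative $\Pi$ never dies. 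One point you should make explicit on the upper bound: for conservative $\nu$ there is no $\gamma\le 1$ with $\psi(\gamma)>0$, so the proof of the Section 6.3 proposition has to be run at the endpoint $\gamma=p'=1$, where it still closes because $E[\sum_i|\Pi^{\epsilon}_i|]=P(|\Pi^{\epsilon}_1|\neq0)\le 1$, making the covering estimate at most $E[\tau^{1/|\alpha|}]$ uniformly in $\epsilon$; your route also inherits, unchanged, whatever care the paper's own proof of Theorem \ref{theo} takes in passing from (proper) leaves of $\T_{N,\epsilon}$ to leaves of $\T$, which is not a new gap. In short, the paper's approach buys brevity --- the statement is \cite{HM04}'s and serves only as a foil for comparing hypotheses --- while yours buys genuine content, showing that the truncation argument makes the formula $\dim_{\mathcal H}(\mathcal L(\T))=1/|\alpha|$ valid for every nontrivial conservative dislocation measure, with $\mathbf{(H')}$ indeed superfluous.
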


This result complements ours - neither $\mathbf{(H)}$ nor $\mathbf{(H')}$ is stronger than the other, which we are going to show by producing two corresponding examples.

For all $n\geq 2$, let $s_1^n=1-\frac{1}{n}$ and, for $i\geq2$, $s_i^n=\frac{S}{n}\frac{1}{i(\log(i))^2}$, where $S=\Big(\sum_{i=2}^{\infty}\frac{1}{(i(\log(i))^{2})}\Big)^{-1}$ (this ensures that $\sum_i s_i^n=1$). Let then $\mathbf{s}^n=(s_i^n)_{i\in\N}\in\s$ and
	\[\nu_1=\sum_{n\geq 2} \frac{1}{n}\delta_{\mathbf{s}^n}.
\]
We will show that this $\sigma$-finite measure on $\s$ is a dislocation measure which satisfies $\mathbf{(H')}$ but not $\mathbf{(H)}$. First, $\int_{\s} (1-s_1)d\nu_1(\mathbf{s})=\sum_{n\geq 2}\frac{1}{n^2}<\infty$ so we do have a dislocation measure. Next, let us check $\mathbf{(H')}$: 
	\[\int_{\s}(s_1^{-1}-1)d\nu_1(\mathbf{s})=\sum_{n\geq 2}\frac{1}{n}(\frac{n}{n-1}-1)=\sum_{n\geq 2}\frac{1}{n(n-1)}<\infty.
\]
Finally, $\mathbf{(H)}$ is not verified: indeed, for any $p<1$, $n\geq 2$ and $i\geq 2$, $(s^n_i)^{p}=\frac{S^p}{n^p}\big(i(\log(i))^{2}\big)^{-p}$ which is the general term of a divergent series.

\medskip

Now we are going to do the same on the other side. For all $n\in\N$, let $t_1^n=\frac{1}{n}$ and, for $i\geq2$, let $t_i^n=T(1-\frac{1}{n})\frac{1}{i^2}$, where $T=\big(\sum_{i=2}^{\infty}(\frac{1}{i^2})\big)^{-1}$. Since $t_2^n>t_1^n$ for large $n$, the sequence $\mathbf{t}^n=(t_i^n)_{i\in\N}$ is not a mass partition (despite its sum being equal to $1$), and we will solve this problem by splitting its terms. Let $N(n)=\left\lceil\frac{t_2^n}{t_1^n}\right\rceil$, and then let $\mathbf{u}^n=(u_i^n)_{n\in\N}\in\s$ such that $u_1^n=t_1^n$ and, for $i\geq2$, $u_i^n=\frac{t_k^n}{N(n)}$ where $k$ is such that $i\in\{(k-2)N(n)+2,\ldots,(k-1)N(n)+1\}$. In other words, $\mathbf{u}^n$ starts with $t_1^n$, and then every term of $\mathbf{t}^n$ is divided by $N(n)$ and repeated $N(n)$ times. Now let us define
	\[\nu_2=\sum_{n\in\N} \frac{1}{n^2}\delta_{\mathbf{u}^n}.
\]
The measure $\nu_2$ integrates $1-s_1$ since it is finite, but $\sum_{n\in\N} \frac{1}{n^2}(\frac{1}{t_1^n}-1)=\sum_{n\in\N} \frac{1}{n}-\frac{1}{n^2}=\infty$, so $\mathbf{(H')}$ is not verified. On the other hand, for any $p<1$, we have
	\[\int_{\s} \sum_i s_i^{p^*} d\nu_2(\mathbf{s})=\sum_{n\in \N} \frac{1}{n^2} \left(\frac{1}{n^p}+N(n)\Big(\frac{T(1-\frac{1}{n})}{N(n)}\Big)^p\big(\sum_{i\geq2}\frac{1}{i^{2p}}\big)\right),
\]
which is finite as soon as $p>\frac{1}{2}$, since $N(n)$ is asymptotically equivalent to $\frac{Tn}{4}$ as $n$ goes to infinity. Thus $\nu_2$ satisfies $\mathbf{(H)}$.

\subsection{Influence of parameters on the Malthusian exponent}
We will here investigate what happens when we change some parameters of the fragmentation process. We start with a "basic" function $\psi$ to which we will add either a constant (which amounts to increasing $\nu(\{(0,0,\ldots)\})$) or a linear part (which amounts to adding some erosion). We let $p_0=\inf \{p\geq0, \psi(p)>-\infty\}$. We also exclude the trivial case where $\nu(s_2>0)=0$, where the tree is always a line segment.
\subsubsection{Influence of the killing rate}

We assume here that $\nu({(0,0,\ldots)})=0$, which implies that $\psi(0)<0$, while we do not make any assumptions on the erosion parameter $c\geq0$. We will quickly study how the Malthusian exponent changes when we add to $\nu$ a component of the form $k\delta_{(0,0,\ldots)}$ with $k\geq0$. Let therefore, for $k\geq0$, $\nu_k=\nu+k\delta_{(0,0,\ldots)}$ and, for $p\in\R$, $\psi_k(p)=cp + \int_{\s}(1-\sum_i s_i^p)d\nu_k(\mathbf{s})=\psi(p)+k$ and, if it exists, $p^*(k)$ the only number in $(0,1]$ which nulls the function $\psi_k$.

\begin{prop} Assume $\mathbf{(H)}$ for $(c,\nu)$, that is $\psi(p_0^+)<0$, and let $k_{\text{max}}=|\psi(p_0^+)|$. Then, for $k\in [0,k_{\text{max}})$, the pair $(c,\nu_k)$ also satisfies $\mathbf{(H)}$. Letting $p^*(k_{\text{max}})=p_0$ (though it is not a Malthusian exponent in our sense when $p_0=0$), the function $p^*(k)$ on $[0,k_{\text{max}}]$ is the inverse function of $-\psi$. It is thus strictly decreasing and is differentiable as many times as $\psi$.
 For $k\geq k_{\text{max}}$, $\mathbf{(H)}$ is no longer satisfied (in fact there is no Malthusian exponent if $k>k_{\text{max}}$), however we have in this case $p_0=\inf \{p\geq0, \psi_k(p)\geq 0\}$ which is the equivalent of $p'$ in Section $6.3$.
\end{prop}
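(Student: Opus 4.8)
The plan is to reduce everything to elementary monotonicity and concavity of the single function $\psi$, via the identity $\psi_k=\psi+k$. First I would record the basic facts about $\nu_k=\nu+k\delta_{(0,0,\ldots)}$: it is a genuine dislocation measure, since $\nu_k(\{(1,0,\ldots)\})=\nu(\{(1,0,\ldots)\})=0$ and $\int_{\s}(1-s_1)\,d\nu_k=\int_{\s}(1-s_1)\,d\nu+k<\infty$; and, with the convention that a nonpositive power of $0$ is $0$, a direct computation gives $\psi_k(p)=\psi(p)+k$ for every $p\geq0$. In particular the finite constant $k$ does not affect whether the defining integral is finite, so $\nu_k$ has the same $p_0$ as $\nu$, and $\psi_k(p_0^+)=\psi(p_0^+)+k=k-k_{\max}$. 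I would also spell out the equivalence used in the statement: $\psi$ is strictly increasing and continuous on the interior $(p_0,\infty)$ of its domain and $\psi(1)=c+\int_{\s}(1-\sum_i s_i)\,d\nu\in[0,\infty)$ is always finite, so $\psi(p_0^+)<0$ forces $p_0<1$ and hence exhibits points of $(p_0,1)\subseteq[0,1]$ at which $\psi$ is finite and strictly negative, i.e.\ $\mathbf{(H)}$; conversely $\mathbf{(H)}$ forces $\psi(p_0^+)<0$ by monotonicity. The same reasoning applies verbatim to $\psi_k$.

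For $k\in[0,k_{\max})$ we then have $\psi_k(p_0^+)=k-k_{\max}<0$, so $(c,\nu_k)$ satisfies $\mathbf{(H)}$ and hence (as already observed, $\mathbf{(H)}$ implies the existence of a Malthusian exponent) admits a unique Malthusian exponent $p^*(k)\in(0,1]$; its defining equation $\psi_k(p^*(k))=0$ reads $\psi(p^*(k))=-k$. Writing $p^*=p^*(0)$ for the original Malthusian exponent, one has $p^*>p_0$ (because $\psi(p^*)=0$ while $\psi(q)\leq\psi(p_0^+)=-k_{\max}<0$ for all $q\leq p_0$), and $\psi$ restricts to a strictly increasing continuous bijection from $(p_0,p^*]$ onto $(-k_{\max},0]$. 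Hence $\psi(p^*(k))=-k$ says exactly $p^*(k)=(-\psi)^{-1}(k)$; this already shows $k\mapsto p^*(k)$ is strictly decreasing on $[0,k_{\max})$, and setting $p^*(k_{\max}):=p_0$ makes it the inverse of $-\psi$ on all of $[0,k_{\max}]$, in particular continuous and strictly decreasing there.

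For the regularity claim I note that $\psi$, being strictly increasing and concave on $(p_0,\infty)$, has strictly positive derivative wherever it is differentiable: if $\psi'(p_1)=0$ at some interior $p_1$, concavity gives $\psi'\leq0$ on $[p_1,\infty)$ and hence, with $\psi'\geq0$, $\psi'\equiv0$ there, contradicting strict monotonicity. Thus $-\psi$ has nowhere-vanishing derivative, and the inverse function theorem shows that $p^*=(-\psi)^{-1}$ is as many times differentiable as $\psi$ on $(0,k_{\max})$.

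Finally, for $k\geq k_{\max}$, monotonicity does the rest: $\psi$ and $\psi_k$ share the same domain of finiteness (namely $(p_0,\infty)$ or $[p_0,\infty)$), and on it $\psi_k(p)=\psi(p)+k\geq\psi(p_0^+)+k=k-k_{\max}\geq0$ — using $\psi(p)\geq\psi(p_0^+)$ for $p>p_0$ by monotonicity and, when $p_0$ lies in the domain, also for $p=p_0$ by right-continuity of $\psi$ at $p_0$ (which follows from monotone convergence in the defining integral). So $\psi_k$ takes no finite strictly negative value and $\mathbf{(H)}$ fails; moreover, when $k>k_{\max}$, the same bound gives $\psi_k\geq k-k_{\max}>0$ on $(p_0,\infty)$, so $\psi_k$ never vanishes and there is no Malthusian exponent. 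Since $\psi_k=-\infty$ on $[0,p_0)$ while $\psi_k\geq0$ on the rest of its domain, $p'=\inf\{p\geq0:\psi_k(p)\geq0\}=p_0$, the quantity of Section~6.3. I expect no serious obstacle: the only mildly delicate points are the routine justification of differentiating under the integral sign (via $|\log x|\leq C_\varepsilon x^{-\varepsilon}$ on $(0,1]$ together with $\nu$-integrability of $1-s_1^{p-\varepsilon}$ for $p$ near $p_0$) and the care needed at the left endpoint $p_0$ — whether $\psi$ is finite and continuous there, and whether $p^*(k_{\max})=p_0$ is a Malthusian exponent in the strict sense — but the statement already flags the latter, so only the limiting identification is actually needed.
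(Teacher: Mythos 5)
Your proof is correct. The paper actually states this proposition without any proof, treating it as immediate from the identity $\psi_k=\psi+k$ together with the strict monotonicity, concavity and right-continuity at $p_0$ of $\psi$ — exactly the ingredients you spell out, so your argument is the intended one and merely fills in the endpoint details the paper leaves implicit.
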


\subsubsection{Influence of erosion}
Here we do not make any assumptions of $\nu$, and let, for nonnegative $c$ and any $p$, $\psi_c(p)=cp+\int_{\s}(1-\sum_i s_i^p)d\nu(\mathbf{s})$. Note that, unlike in the previous section, the standard coupling between $(\alpha,c,\nu)$-fragmentations of Section $2.1.3$ for all $c\geq0$ is such that, almost surely, if for one $c$, $\Pi^{0,c}$ dies in finite time, then $\Pi=\Pi^{0,c}$ dies in finite time for all $c$. Thus, placing ourselves on the event where they do not die in a finite time, and calling $\T_c=\TREE(\Pi^{\alpha,c})$, we have $dim_{\mathcal{H}}(\mathcal{L}(\T_c))=\frac{p^*(c)}{|\alpha|}$, $p^*(c)$ being the corresponding Malthusian exponent. 

\begin{prop} Assume $\mathbf{(H)}$ for $(0,\nu)$, that is $\psi(p_0^+)<0$. If $p_0=0$ then the couple $(c,\nu)$ satisfies $\mathbf{(H)}$ for all $c$, and its Malthusian exponent $p^*(c)$ tends to zero as $c$ tends to infinity with the following asymptotics:
	\[p^*(c)\underset{c\to\infty}\sim \frac{|\psi(0)|}{c}.
\]
If $p_0>0$, then $(c,\nu)$ satisfies $\mathbf{(H)}$ for $c<c_{\text{max}}$ with $c_{\text{max}}=\frac{|\psi(p_0^+)|}{p_0}$. By setting $p^*(c_{\text{max}})=p_0$, the function $c\to p^*(c)$ is decreasing and is differentiable as many times as $\psi$ is. For $c\geq c_{\text{max}}$ $\mathbf{(H)}$ is no longer satisfied, however we do have $p_0=\inf \{p\geq 0, \psi_k(p)\geq 0\}$.
\end{prop}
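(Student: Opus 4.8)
The plan is to reduce everything to elementary analysis of the one–parameter family $\psi_c(p)=cp+\psi(p)$, where $\psi=\psi_0$ denotes the erosion‑free function $\psi(p)=\int_{\s}(1-\sum_i s_i^p)\,d\nu(\mathbf{s})$. For every $c\geq 0$ the function $\psi_c$ is, like $\psi$, finite, strictly increasing and concave on $(p_0,\infty)$, equal to $-\infty$ on $[0,p_0)$, and satisfies $\psi_c(1)=c+\psi(1)\geq 0$ since $\psi(1)\geq 0$. Here strict monotonicity uses the standing exclusion of the trivial case $\nu(s_2>0)=0$. Two facts will be used repeatedly: $\mathbf{(H)}$ for $(c,\nu)$ is equivalent to $\psi_c(p_0^+)<0$, with $\psi_c(p_0^+)=cp_0+\psi(p_0^+)$; and when $\mathbf{(H)}$ holds, the Malthusian exponent $p^*(c)$ is the unique solution in $(p_0,1]$ of $\psi_c(p^*(c))=0$, unique by strict monotonicity. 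As a preliminary I would record that $\psi$ is left‑continuous at $p_0$ (and, when $p_0=0$, continuous at $0$): as $p\downarrow p_0$ the integrand $1-\sum_i s_i^p$ decreases to $1-\sum_i s_i^{p_0}$ and is dominated by $1-s_1^p\leq C(1-s_1)\in L^1(\nu)$, so dominated convergence gives $\psi(p_0^+)=\psi(p_0)$.

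For the case $p_0=0$: since $\psi_c(0^+)=\psi(0^+)=\psi(0)<0$ regardless of $c$, assumption $\mathbf{(H)}$ holds for all $c\geq 0$ and $p^*(c)$ exists. For the asymptotics I would rewrite the defining identity $\psi_c(p^*(c))=0$ as $c\,p^*(c)=-\psi(p^*(c))=|\psi(p^*(c))|$; since $\psi$ is bounded on $[0,1]$ this forces $p^*(c)=O(1/c)\to 0$, and then continuity of $\psi$ at $0$ yields $c\,p^*(c)=|\psi(p^*(c))|\to|\psi(0)|$, i.e. $p^*(c)\sim|\psi(0)|/c$.

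For the case $p_0>0$: $\mathbf{(H)}$ for $(0,\nu)$ gives $\psi(p_0^+)\in(-\infty,0)$, so $c_{\max}=|\psi(p_0^+)|/p_0$ is a well‑defined positive number and $\psi_c(p_0^+)<0\ \Leftrightarrow\ c<c_{\max}$, which proves the equivalence for $\mathbf{(H)}$. For $c>c_{\max}$ one gets $\psi_c(p_0^+)>0$, hence $\psi_c>0$ on $(p_0,1]$ and $\psi_c=-\infty$ below $p_0$, so there is no Malthusian exponent and $\inf\{p\geq 0:\psi_c(p)\geq 0\}=p_0$; the same identity holds at $c=c_{\max}$. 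For $c<c_{\max}$, $p^*(c)$ is the unique zero of $\psi_c$ in $(p_0,1]$; strict monotonicity of $c\mapsto p^*(c)$ follows from $\psi_{c_2}(p^*(c_1))=(c_2-c_1)p^*(c_1)>0=\psi_{c_2}(p^*(c_2))$ for $c_1<c_2$. To identify the left limit at $c_{\max}$, note $p^*(c)$ decreases as $c\uparrow c_{\max}$ to some $\ell\geq p_0$; if $\ell>p_0$, passing to the limit in $\psi_c(p^*(c))=0$ (legitimate since $\psi$ is continuous on the open interval $(p_0,\infty)$) gives $\psi_{c_{\max}}(\ell)=0$, contradicting $\psi_{c_{\max}}(\ell)>\psi_{c_{\max}}(p_0^+)=0$. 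Hence $\ell=p_0$, and setting $p^*(c_{\max})=p_0$ makes $c\mapsto p^*(c)$ continuous and decreasing on $[0,c_{\max}]$. Differentiability I would obtain from the implicit function theorem applied to $G(c,p)=cp+\psi(p)$: at any $c$ where $\psi$ is $k$ times differentiable near $p^*(c)$, $G$ is $C^k$ jointly and $\partial_p G(c,p^*(c))=p^*(c)+\psi'(p^*(c))\geq p^*(c)>0$, so $p^*$ is $C^k$ near $c$; since a concave $\psi$ is differentiable off a countable set, this gives the stated regularity.

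The genuinely routine computations aside, the steps requiring care — and which I'd regard as the only real obstacles — are the boundary issues: justifying $\psi(p_0^+)=\psi(p_0)$ and the continuity of $\psi$ at $0$ by the dominated‑convergence argument above (so that both the asymptotics and the sign of $\psi_c(p_0^+)$ are controlled), and the limiting identity $p^*(c)\to p_0$ as $c\uparrow c_{\max}$, which hinges on strict monotonicity of $\psi_{c_{\max}}$ together with $\psi_{c_{\max}}(p_0^+)=0$ rather than on any differentiability of $\psi$.
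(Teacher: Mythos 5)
The paper states this proposition without proof, and your elementary analysis of the family $\psi_c(p)=cp+\psi(p)$ (finiteness, strict monotonicity and concavity on $(p_0,\infty)$, the boundary identity $\psi(p_0^+)=\psi(p_0)$ via monotone/dominated convergence, the intermediate value theorem, and the implicit function theorem) is precisely the intended route and is correct, including the two genuinely delicate points you single out. One slip to correct: for $G(c,p)=cp+\psi(p)$ one has $\partial_p G(c,p)=c+\psi'(p)$, not $p^*(c)+\psi'(p^*(c))$; this is harmless, since $\psi'(p^*(c))>0$ (a strictly increasing concave function cannot have vanishing derivative at an interior point), so the nondegeneracy needed for the implicit function theorem still holds. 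Note also that, like the paper, you implicitly read $\psi(p_0^+)$ (and $\psi(0)$ in the case $p_0=0$) as finite, which is exactly what makes $c_{\text{max}}$ and the stated asymptotics meaningful, so this is a matter of interpretation rather than a gap.
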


\subsection{An application to the boundary of Galton-Watson trees}

In this part we generalize some simple well-known results on the boundary of discrete Galton-Watson trees (see for example \cite{Hawkes}) to trees where the branches have exponentially distributed lengths. Unsurprisingly, the Hausdorff dimension of this boundary is the same in both cases.

Let $\xi=\sum p_i \delta_i$ be a probability measure on $\N\cup\{0\}$ which is supercritical in the sense that $m=\sum_i ip_i>1$. Let $\T$ be a Galton-Watson tree with offspring distribution $\xi$ and such that the individuals have exponential lifetimes with parameter $1$. Seeing $\T$ as an $\R$-tree, we define a new metric on it by changing the length of every edge: let $a\in(1,\infty)$ and $e$ be an edge of $\T$ connecting a parent and the child, we define the new length of $e$ to be the old length of $e$ times $a^{-n}$, where the parent is in the $n$-th generation of the Galton-Watson process. We let $d'$ be this new metric.

The metric completion of $(\T,d')$ can then be seen as $\T\cup\partial\T$ where $\partial\T$ are points at the end of the infinite rays of $\T$.

\begin{prop} On the event where $\T$ is infinite, we have
	\[\dim_{\mathcal{H}}(\partial\T)=\frac{\log m}{\log a}.
\]
\end{prop}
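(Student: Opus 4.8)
The plan is to recognise $(\T,d')$ as (a constant rescaling of) the genealogy tree of a well-chosen self-similar fragmentation and then invoke Theorem~\ref{theo}. I would first treat the case where $\xi$ is supported on $\{0,1,\dots,K\}$, and afterwards remove this restriction by truncation. So assume $\xi(\{0,\dots,K\})=1$. Choose $\alpha<0$ small enough that $a^{1/|\alpha|}\ge K$ and $|\alpha|\tfrac{\log m}{\log a}\le 1$, put $b=a^{-1/|\alpha|}\in(0,1)$ (so $kb\le1$ for $k\le K$), take $c=0$ and $\nu=\sum_{k\ge0}p_k\,\delta_{\mathbf{s}^{(k)}}$, where $\mathbf{s}^{(k)}=(\underbrace{b,\dots,b}_{k},0,0,\dots)\in\s$. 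This $\nu$ is a finite dislocation measure with $\nu(\{(1,0,\dots)\})=0$, and a one-line computation gives $\psi(p)=\int_{\s}\bigl(1-\sum_i s_i^p\bigr)d\nu(\mathbf{s})=1-m\,b^{p}$, which is finite for every $p$, satisfies $\psi(0)=1-m<0$ (so $\mathbf{(H)}$ holds) and whose unique positive zero $p^{*}$ is given by $b^{p^{*}}=1/m$, i.e. $p^{*}=|\alpha|\tfrac{\log m}{\log a}\in(0,1]$. By the Poissonian construction followed by the Lamperti time-change, a block of mass $b^{n}$ ("generation $n$") waits an independent $\mathrm{Exp}\!\bigl(\nu(\s)\,b^{-n|\alpha|}\bigr)$ real time — an exponential variable of parameter (fixed constant)$\times a^{n}$ — before splitting into $k$ blocks of mass $b^{n+1}$ with probability $p_k$, and into only singletons when $k=0$. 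Up to multiplying all distances by one fixed constant (a similarity of the boundary, hence dimension-preserving), this genealogy tree is exactly $(\T,d')$.

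Under this identification, the proper leaves of the fragmentation tree are precisely the points of $\partial\T$ (each end of an infinite ray is the limit of points $(i_n,t_n)$ with $i_n$ in the ray's generation-$n$ block, of mass $b^{n}\to0$, so Proposition~\ref{leaf} applies; conversely a proper leaf lies at the end of a path along which the masses $b^{\,\cdot}$ tend to $0$, i.e. an infinite ray), the dead leaves are the endpoints of the finite rays (a countable set), and the event ``$\Pi$ does not die in finite time'' coincides with $\{\T\text{ infinite}\}$ (the homogeneous version of $\Pi$ has the branching structure of a continuous-time Galton--Watson process with offspring $\xi$, which becomes all-singletons in finite homogeneous time iff it goes extinct). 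Theorem~\ref{theo} then gives, almost surely on $\{\T\text{ infinite}\}$, $\dim_{\mathcal H}\mathcal L(\T_{\mathrm{frag}})=p^{*}/|\alpha|=\log m/\log a$; since $\mathcal L(\T_{\mathrm{frag}})$ is $\partial\T$ together with a countable set, $\dim_{\mathcal H}\partial\T=\log m/\log a$ a.s.\ on $\{\T\text{ infinite}\}$, while $\partial\T=\emptyset$ on $\{\T\text{ finite}\}$. This finishes the bounded case.

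For general $\xi$, the lower bound follows by truncation: let $\xi^{(K)}$ be $\xi$ with all its mass on $\{k>K\}$ moved to $0$, coupled with $\xi$ so that $\T^{(K)}\subseteq\T^{(K+1)}\uparrow\T$ and $\partial\T^{(K)}\subseteq\partial\T$. By the bounded case, on $\{\T^{(K)}\text{ infinite}\}$ one has $\dim_{\mathcal H}\partial\T^{(K)}=\log m_K/\log a$ with $m_K=\sum_{k\le K}k p_k\uparrow m$. Since the offspring generating functions increase pointwise to that of $\xi$, the extinction probabilities satisfy $q_K\downarrow q$, so the decreasing events $\{\T^{(K)}\text{ finite}\}$ have intersection of probability $q=P(\T\text{ finite})$; hence $\{\T^{(K)}\text{ infinite}\}\uparrow\{\T\text{ infinite}\}$ up to a null set, and almost surely on $\{\T\text{ infinite}\}$ one has $\T^{(K)}$ infinite for all large $K$, whence $\dim_{\mathcal H}\partial\T\ge\log m_K/\log a\to\log m/\log a$. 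For the matching upper bound I would cover $\partial\T$ by the cylinders $\partial\T_{v}$ with $v$ in generation $n$; the subtree above such a $v$ (with the $d'$-metric) is $a^{-n}$ times an independent copy of $(\T,d')$, so $\mathrm{diam}(\partial\T_v)=a^{-n}D_v$ with the $D_v$ (over $v$ in generation $n$) i.i.d.\ copies of $D:=\mathrm{diam}(\partial\T)$, independent of $Z_n:=\#(\text{generation }n)$. A routine maximal estimate ($D\le\sum_{j\ge0}M_j a^{-j}$ with $M_j$ the maximum of $Z_j$ i.i.d.\ $\mathrm{Exp}(1)$ variables, together with $E[Z_j^{\varepsilon}]\le m^{j\varepsilon}$ for small $\varepsilon$) shows $E[D^{\gamma}]<\infty$ for every $\gamma>0$, so that for $\gamma>\log m/\log a$
\[
E\Bigl[\textstyle\sum_{v\in\text{gen }n}\mathrm{diam}(\partial\T_v)^{\gamma}\Bigr]=a^{-n\gamma}E[Z_n]E[D^{\gamma}]=\bigl(m\,a^{-\gamma}\bigr)^{n}E[D^{\gamma}]\longrightarrow 0 .
\]
Since also $\sup_{v\in\text{gen }n}\mathrm{diam}(\partial\T_v)\to0$ a.s., Fatou's lemma gives $\mathcal H^{\gamma}(\partial\T)=0$ a.s., and letting $\gamma\downarrow\log m/\log a$ yields $\dim_{\mathcal H}\partial\T\le\log m/\log a$.

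The one genuine obstacle is the constraint $\sum_i s_i\le1$ on dislocation measures, which prevents encoding an unbounded offspring law as a single self-similar fragmentation and so forces the two detours above: the truncation argument for the lower bound (whose only non-routine ingredient is the a.s.\ identity $\{\T^{(K)}\text{ infinite}\}\uparrow\{\T\text{ infinite}\}$, obtained from the convergence of extinction probabilities) and the hands-on first-moment covering for the upper bound, where the real work is the moment bound on $\mathrm{diam}(\partial\T)$. The bounded case itself is essentially immediate once the fragmentation encoding is set up correctly.
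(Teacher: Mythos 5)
Your argument is correct, and its core is the same as the paper's: recognise $(\T,d')$ as a fragmentation tree for a suitable finite dislocation measure and apply Theorem \ref{theo}, treating bounded offspring first and then truncating. The differences are worth recording. In the bounded case the paper fixes the child masses to be $1/N$ ($N$ the maximal offspring) and tunes the index to $\alpha=-\log a/\log N$, whereas you fix a small $|\alpha|$ and tune the common child mass $b=a^{-1/|\alpha|}$; these are equivalent parameterizations of the same encoding (your extra condition $|\alpha|\log m/\log a\le 1$ is automatic since $b\le 1/K\le 1/m$ forces $\psi(1)\ge0$), and both yield $p^*/|\alpha|=\log m/\log a$. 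In the unbounded case the paper's truncation coupling, like yours, only delivers the lower bound (the truncated tree sits isometrically inside $\T$, and the extinction probabilities converge), and the paper explicitly leaves the remaining details aside; you go further and supply an upper bound that the paper does not spell out, via a first-moment covering by generation-$n$ boundary cylinders together with the moment estimate $E[\mathrm{diam}(\partial\T)^{\gamma}]<\infty$ (obtained from the maxima of the exponential lifetimes and $E[Z_j^{\varepsilon}]\le m^{j\varepsilon}$). That covering argument is sound — the mesh tends to $0$ a.s.\ by a union bound using the same moment estimate, and when $m=\infty$ no upper bound is needed — so your write-up is in this respect more complete than the paper's. Two small remarks: with your truncation (mass on $\{k>K\}$ moved to $0$) the generating functions \emph{decrease} pointwise to that of $\xi$, not increase; the conclusion $q_K\downarrow q$ is nevertheless correct, by exactly the uniform-convergence-on-a-compact argument the paper uses in Section 6.2.3 together with the two-fixed-point property of Appendix B. And in identifying the event of Theorem \ref{theo}, you rightly read ``$\Pi$ does not die in finite time'' for the homogeneous process, whose block genealogy is the continuous-time Galton--Watson process, so that this event is indeed $\{\T\ \text{infinite}\}$.
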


\begin{proof} We start with the case where there exists $N\in \N$ such that, for $i\geq N+1$, $p_i=0$. We aim to identify $(\T,d')$ as a fragmentation tree and apply Theorem \ref{00}. To do this, we first have to build a measure $\mu$ on it, as usual with Proposition \ref{01}. Let $x\in \T$, and let $n$ be its generation, we then let $m(x)=\frac{1}{N^n}$. What this means is that the mass of the whole tree is $1$, then each of the subtrees spawned by the death of the initial ancestor have mass $\frac{1}{N}$, then the death of each of these spawns trees with mass $\frac{1}{N^2}$, and so on.

We leave to the reader the details of the proof that $(\T,d',\mu)$ is a fragmentation tree, the corresponding parameters being $c=0$, $\alpha=-\frac{\log a}{\log N}$ and $\nu = \sum p_i \delta_{\mathbf{s}^i}$, with $\mathbf{s}^i=(s^i_1,s^i_2,\ldots)$ such that $s^i_j=\frac{1}{N}$ if $j\leq i$ and $s^i_j=0$ otherwise. One method of proof would be to couple $\T$ with an actual $(\alpha,0,\nu)$-fragmentation process which would be obtained by constructing the death points one by one, following the tree and choosing a branch uniformly at each branching point, which is possible since the branching points of $\T$ form a countable set.

We then just need to compute the Malthusian exponent and check condition $\mathbf{(H)}$. We are looking for a number $p^*$ such that $\int_{\s} (1 - \sum_{i=1}^{N} s_i^{p^*})d\nu(s) =0$. This can be rewritten:

\begin{align*}
\int_{\s} (1 - \sum_{j=1}^{N} s_j^{p^*})d\nu(s) &= \sum_i p_i(1 - i\frac{1}{N^{p^*}}) \\
                                                &= 1 - \frac{m}{N^{p^*}}.
\end{align*}

Thus we have $p^*=\frac{\log m}{\log N}$. Condition $\mathbf{(H)}$ is also easily checked, since $\psi(0)=1-m<0$ and we thus get
	\[\dim_{\mathcal{H}}(\partial\T) = \frac{p^*}{|\alpha|}=\frac{\log m}{\log a}.
\]

\medskip

The proof in the general case is once again done with a truncation argument, as in section $6.2.3$: once again leaving the details, we let, for all $N\in\N$, $\xi_N$ be the law of $X\wedge N$ where $X$ has law $\xi$. The monotone convergence theorem shows that the average of $\xi_N$ converges to that of $\xi$, and the tree $\T$ with offspring distribution $\xi$ can be simultaneously coupled with trees $(\T_N)_{N\in\N}$ with offspring distributions $(\xi_N)_{N\in\N}$, such that $\T$ has finite height (for its original metric) if and only if all the $(\T_N)_{N\in\N}$ also do.
\end{proof}

\begin{appendices}

\section{Proof of Proposition \ref{01}}
We will want to apply a variation of Caratheodory's extension theorem to a natural semi-ring of subsets of the tree $\T$ which generates the Borel topology. The reader is invited to look in \cite{Dud} for definitions and its Theorem 3.2.4 which is the one we will use.

\begin{defi} Let $x\in\T$, and $C$ be a finite subset of $\T_x$. We say that $C$ is a \emph{pre-cutset} of $\T_x$ if $x\leq y$ for all $y\in C$ and none of the elements of $C$ are on the same branch as another. We then let $B(x,C)=\T_x\setminus \underset{{y\in C}}\bigcup \T_{y}$. Such a set is called a \emph{pre-ball}. We let $\mathcal{B}$ be the set of all pre-balls of $\T$.
\end{defi}

Note that any set of the form $\T_x\setminus \underset{i\in[k]}\bigcup \T_{x_i}$ is a pre-ball, even if one does not specify that $\{x_i,\;i\in[k]\}$ is a pre-cutset of $x$. Indeed, if $x$ is not on the same branch as $x_i$ for some $i$, then we can remove this one from the union, if we have $x_i\leq x$ for some $i$ then we have just written the empty set, and, if for some $i\neq j$, we have $x_i\leq x_j$, we might as well remove $x_j$ from the union. All these removals leave us with a pre-cutset of $\T_x$. Also note that, given a pre-ball  $B$, there exists a unique $x\in\T$ and a unique finite pre-cutset $C$ which is unique up to reordering such that $B=B(x,C)$.

\begin{lemma} $\mathcal{B}$ is a semi-ring which contains all the $\T_x$ for $x\in\T$, and it generates the Borel $\sigma$-field of $\T$.
\end{lemma}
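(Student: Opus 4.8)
\emph{Proof plan.} I will verify the three semi-ring axioms, note that $\T_x=B(x,\emptyset)\in\mathcal{B}$, and then prove the two inclusions between $\sigma(\mathcal{B})$ and the Borel $\sigma$-field of $\T$.

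\emph{Semi-ring axioms and the sets $\T_x$.} For any $x$ one has $\emptyset=B(x,\{x\})$, so $\emptyset\in\mathcal{B}$; and $B(x,\emptyset)=\T_x$, so $\mathcal{B}$ contains every $\T_x$. The remaining two axioms — closure under finite intersection, and writing $A\setminus B$ as a finite disjoint union of elements of $\mathcal{B}$ — both reduce, via the remark after the definition of pre-ball, to the fact that a set of the form $\T_z$ minus a finite union of sets $\T_{z_i}$ is always a pre-ball; in particular $B\cap\T_z$ is a pre-ball whenever $B\in\mathcal{B}$. Given $B(x,C)$ and $B(x',C')$, a short case analysis on whether $x$ and $x'$ are comparable does it: if they are incomparable then $\T_x\cap\T_{x'}=\emptyset$ and both operations are trivial; if $x\le x'$ then
\[ B(x,C)\cap B(x',C') = \T_{x'}\setminus\bigcup_{y\in C\cup C'}\T_y, \]
a pre-ball, while
\[ B(x,C)\setminus B(x',C') = \Big(\T_x\setminus\big(\T_{x'}\cup\textstyle\bigcup_{y\in C}\T_y\big)\Big)\ \sqcup\ \bigsqcup_{y'\in C'}\big(B(x,C)\cap\T_{y'}\big) \]
is a finite disjoint union of pre-balls (disjointness of the last terms uses that $C'$ is a pre-cutset); the case $x'\le x$ is similar and simpler.

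\emph{The inclusion $\sigma(\mathcal{B})\subseteq$ Borel.} Each $\T_x$ is closed, being the zero set of the continuous map $w\mapsto d(x,w)+d(\rho,x)-d(\rho,w)$; hence every pre-ball is a closed set minus finitely many closed sets, so Borel.

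\emph{The inclusion Borel $\subseteq\sigma(\mathcal{B})$.} Since $\T$ is compact, hence separable, it suffices to show every open ball $B(z,r)$ lies in $\sigma(\mathcal{B})$. I will use two facts that follow from compactness. (a) For $t>0$ the set $\T_{\ge t}$ has at most countably many tree components, each of which equals $\T_{z_i}$ for its unique base point $z_i$ (at height $t$); consequently, for any $x$ and $\epsilon>0$, $\{w\ge x:\,d(x,w)<\epsilon\}=\T_x\setminus\bigcup_i\T_{z_i}$ (the $z_i$ being the base points lying in $\T_x$ of the tree components of $\T_{\ge ht(x)+\epsilon}$), a decreasing countable intersection of pre-balls, hence a member of $\sigma(\mathcal{B})$. (b) As $w$ ranges over $\T$, the point $z\wedge w$ takes only countably many values: each such point is $\rho$ or a branch point of $\T$, and a separable $\R$-tree has countably many branch points. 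Now split $B(z,r)$ according to $u=z\wedge w$: the part with $u=z$ is $\{w\ge z:\,d(z,w)<r\}\in\sigma(\mathcal{B})$ by (a); for each of the countably many other values $u$, the corresponding part is $\{w:\,z\wedge w=u\}\cap\{w\ge u:\,d(u,w)<r-d(z,u)\}$, where the second factor is in $\sigma(\mathcal{B})$ by (a) (and is empty when $r\le d(z,u)$), while the first equals $\T_u\setminus\bigcup_n\T_{z_n}$ for any sequence $z_n$ decreasing to $u$ along $\llbracket u,z\rrbracket$, again a countable intersection of pre-balls. Thus $B(z,r)\in\sigma(\mathcal{B})$, and separability of $\T$ finishes the argument.

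\emph{Main obstacle.} The last inclusion is where the work lies. Pre-balls are not a topological basis — a pre-ball $B(x,C)$ contains $x$ but none of the points of $\llbracket\rho,x\rrbracket$ immediately below $x$ — so a ball cannot be covered by pre-balls and must instead be assembled from them; the genuine difficulty is the portion of $B(z,r)$ lying strictly below its centre $z$, which is exactly what the decomposition over the countably many values of $z\wedge w$ handles.
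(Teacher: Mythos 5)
Your verification of the semi-ring axioms and of the inclusion $\sigma(\mathcal{B})\subseteq\mathcal{B}\mathrm{orel}$ is correct and essentially the same as the paper's. The gap lies in the reverse inclusion, and it is concentrated in your claim (b): it is not true that $z\wedge w$ takes only countably many values as $w$ ranges over $\T$. Whenever $w\leq z$ one has $z\wedge w=w$, so \emph{every} point of $\llbracket\rho,z\rrbracket$ occurs as a value of $z\wedge w$ — a continuum as soon as $z\neq\rho$. Consequently your decomposition of $B(z,r)$ is a union over uncountably many values of $u$; each individual piece is indeed in $\sigma(\mathcal{B})$ (for a non-branch point $u<z$ the piece is just the singleton $\{u\}$), but measurability of an uncountable union does not follow. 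The part that escapes is exactly the set of ancestors $u$ of $z$ with $0<d(z,u)<r$, i.e.\ precisely the below-centre portion that you yourself single out as the main difficulty; to close the argument you would additionally have to show that this sub-segment of $\llbracket\rho,z\rrbracket$ lies in $\sigma(\mathcal{B})$ (true, but it needs its own argument, e.g.\ writing the complement of $\llbracket\rho,z\rrbracket$ as a countable union of subtrees), and that step is missing. A second, smaller, error is in (a): in a general compact rooted $\R$-tree, which is the generality Proposition \ref{01} requires, $\T_{\geq t}$ can have uncountably many tree components — take a binary tree whose branch points accumulate at height $t$; its completion is compact and has uncountably many leaves at height exactly $t$, each a singleton component of $\T_{\geq t}$. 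Countability is only guaranteed for $\T_{>t}$ (a component of diameter $>\epsilon$ contains a point of height $>t+\epsilon/2$, and two such points in distinct components are at distance $>\epsilon$, so compactness leaves finitely many), so the conclusion of (a) has to be rederived, e.g.\ as $\{w\geq x: d(x,w)<\epsilon\}=\T_x\setminus\bigcap_n\T_{>ht(x)+\epsilon-1/n}$ with each $\T_{>s}$ written as a countable union of subtrees $\T_w$.

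For comparison, the paper avoids the wedge decomposition entirely: for a ball $B$ with centre $x$ and radius $r$ it takes $y$ to be the ancestor of $x$ at height $(ht(x)-r)\vee 0$, notes that $B\subseteq\T_y$ (since $ht(x\wedge w)\geq ht(x)-r$ for $w\in B$), and then writes $B$ as $\T_y$ minus the components of $\T_y\setminus B$, which are countably many by the compactness argument above and each of which is either a subtree $\T_{x_i}$ or a countable increasing union of subtrees; this exhibits every ball directly as a countable intersection of pre-balls. Either adopting that route, or supplementing your argument with the lemma that segments $\llbracket\rho,z\rrbracket$ (hence their sub-segments) belong to $\sigma(\mathcal{B})$, would repair the proof; as written, claims (a) and (b) are false and the measurability of the below-centre part of $B(z,r)$ is not established.
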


\begin{proof} The fact that $\mathcal{D}$ contains all the sets of the form $\T_x$ for $x\in\T$, as well as the empty set, is in the definition. Stability by intersection is easily proven: let $B\big(x,(x_i)_{i\in[k]}\big)$ and $\big(y,(y_i)_{i\in[l]}\big)$ be two pre-balls. If $x$ and $y$ are not on the same branch, then the intersection is the empty set, and otherwise, we can assume $y\geq x$, and we are left with $\T_x\setminus (\underset{i\in[k]}\bigcup \T_{x_i} \cup \underset{j\in[l]}\bigcup \T_{y_j})$ which is indeed a pre-ball.

Now let $B\big(x,(x_i)_{i\in[k]}\big)$ and $B\big(y,(y_i)_{i\in[l]}\big)$ be two pre-balls, we want to check that $\\ B\big(x,(x_i)_{i\in[k]}\big)\setminus \big(y,(y_i)_{i\in[l]}\big)$ is a finite union of disjoint pre-balls. Exceptionally, we will write here for any subset $A$ of $\T$, $\bar{A}=\T\setminus A$, for clarity's sake. We have:
\begin{align*}
B\big(x,(x_i)_{i\in[k]}\big)\cap\overline{B}\big(y,(y_i)_{i\in[l]}\big)&=\T_x\cap\underset{i\in[k]}\bigcap \bar{T}_{x_i} \cap (\bar{T_y} \cup \underset{y\in[l]}\bigcup \T_{y_i}) \\
                                                  &=(\T_x\cap \bar{\T_y}\cap\underset{i\in[k]}\bigcap \bar{T}_{x_i}) \cup \underset{y\in[l]}\bigcup (\T_x \cap \T_{y_i} \cap\underset{i\in[l]}\bigcap \bar{T}_{x_i}).
\end{align*}
Since for every $i$, $\T_x \cap \T_{y_i}$ is either equal to $\T_x$ or $\T_{y_i}$, we do have a finite union of pre-balls. This union is also disjoint, because $\bar{\T_y},\T_{y_1},\ldots,\T_{y_l}$ are all disjoint.

\smallskip
Finally, we want to check that $\mathcal{D}$ does indeed span the Borel $\sigma$-field of $\T$, which will be proven by showing that every open ball in $\T$ is the intersection of a countable amount of pre-balls. Let $x\in\T$ and $r\geq0$, and let $B$ the closed ball centered at $x$ with radius $r$. Let $y$ be the unique ancestor of $x$ such that $ht(y)=(ht(x)-r)\vee 0$. Since $\T_y$ is compact and $B\in\T_y$ is open, we know that $\T_y\setminus B$ has a countable amount of closed tree components, which we will call $(\T_{x_i})_{i\in\N}$. Writing out $B=\big(\T_y\setminus \underset{i\in\N}\cup\T_{x_i}\big) \setminus\{y\}$ then shows that it is indeed a countable intersection of pre-balls. As a consequence, there exists at most one measure on $\T$ such that $\mu(\T_x)=m(x)$ for all $x\in \T$, uniqueness in Proposition \ref{01} is proven.

\end{proof}

\begin{lemma} For every $x\in\T$ and every finite pre-cutset $C$, we let 
\[
\mu(B(x,C))=m(x)-\sum_{y\in C} m(y).
\]
This defines a nonnegative function on $\mathcal{D}$ which is $\sigma$-additive.
\end{lemma}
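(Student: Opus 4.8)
The plan is to establish, in turn, that $\mu$ is well defined and nonnegative on $\mathcal{B}$, that it is finitely additive, and finally that it is $\sigma$-additive, the last point being reduced to the special case of a full subtree $\T_x$. Well-definedness is immediate, since a pre-ball determines $x$ and determines $C$ up to reordering. For nonnegativity one must check $\sum_{y\in C}m(y)\le m(x)$ for every pre-cutset $C$ of $\T_x$, and I would proceed by induction on $\#C$. The cases $\#C\le 1$ follow from positivity and monotonicity of $m$. If $\#C\ge 2$ then $x\notin C$, so each $y\in C$ lies in a connected component of $\T_x\setminus\{x\}$; grouping the elements of $C$ by component into blocks $(C_j)$ and setting $w_j=\bigwedge_{y\in C_j}y$, one verifies that $w_j$ lies in the $j$-th component (so $ht(w_j)>ht(x)$), that $C_j$ is a pre-cutset of $\T_{w_j}$, and that the additive right-limit of $m$ in the direction of that component dominates $m(w_j)$ because $m$ is nonincreasing along $\llbracket\rho,w_j\rrbracket$. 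The induction hypothesis gives $\sum_{y\in C_j}m(y)\le m(w_j)$, and summing over $j$ and invoking the hypothesis $m(x)\ge m(x^+)$ yields the claim. (Left-continuity of $m$ is not needed here.)

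For finite additivity it is enough to treat decompositions of the form $\T_x=\bigsqcup_{n=1}^N B(x_n,C_n)$, because a general partition $B(x,C)=\bigsqcup_n B_n$ combines with the identity $\T_x=B(x,C)\sqcup\bigsqcup_{y\in C}\T_y$ into a partition of $\T_x$. I would induct on $N$: the case $N=1$ is uniqueness of representations. For $N\ge 2$, since the root $x$ of $\T_x$ must belong to one of the pieces, that piece has the form $B(x,C_1)$, and then $\bigsqcup_{n\ge 2}B_n=\T_x\setminus B(x,C_1)=\bigsqcup_{y\in C_1}\T_y$. Each $B_n$ is connected, hence contained in a single $\T_y$, so inside each $\T_y$ the induction hypothesis applies, and summing gives $\sum_{y\in C_1}m(y)=\sum_{n\ge 2}\mu(B_n)$, whence $\sum_{n=1}^N\mu(B_n)=m(x)$. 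Finite additivity on $\mathcal{B}$ extends $\mu$ to a finitely additive — hence monotone and finitely subadditive — set function on the ring generated by $\mathcal{B}$.

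For $\sigma$-additivity, let $B=\bigsqcup_{n\ge 1}B_n$ with $B,B_n\in\mathcal{B}$. The bound $\mu(B)\ge\sum_n\mu(B_n)$ comes from finite additivity applied to $B=\bigsqcup_{n\le N}B_n\sqcup(B\setminus\bigsqcup_{n\le N}B_n)$, letting $N\to\infty$. For the reverse bound I would first treat $B=\T_x$. Fix $\epsilon>0$; for each $n$, push the root of $B_n=B(x_n,C_n)$ a little way down the segment $\llbracket\rho,x_n\rrbracket$ to a point $x_n'$ with $m(x_n')\le m(x_n)+\epsilon 2^{-n}$, which is possible precisely because $m$ is left-continuous; the pre-ball $B_n'=B(x_n',C_n)$ then satisfies $B_n\subseteq\mathrm{int}(B_n')$ and $\mu(B_n')\le\mu(B_n)+\epsilon 2^{-n}$. (The case $x_n=\rho$, which can only occur when $x=\rho$, is handled by keeping $B_n$ and covering $\rho$ by a small ball, or directly by finite additivity applied to the unique piece containing $\rho$.) Since $\T_x=\bigcup_n B_n\subseteq\bigcup_n\mathrm{int}(B_n')$ and $\T_x$ is compact, a finite subfamily $F$ already covers $\T_x$, and monotonicity together with finite subadditivity of $\mu$ on the ring give $m(x)=\mu(\T_x)\le\sum_{n\in F}\mu(B_n')\le\sum_n\mu(B_n)+\epsilon$; letting $\epsilon\downarrow 0$ gives $\mu(\T_x)\le\sum_n\mu(B_n)$. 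The general case follows at once: a partition $B(x,C)=\bigsqcup_n B_n$ combines with $\T_x=B(x,C)\sqcup\bigsqcup_{y\in C}\T_y$ into a countable partition of the compact tree $\T_x$, to which the case just proved applies, and subtracting $\sum_{y\in C}m(y)$ yields $\mu(B(x,C))=\sum_n\mu(B_n)$.

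The main obstacle is the compactness step in the $\sigma$-additivity argument. It is the only place where left-continuity of $m$ is genuinely used — it is what permits enlarging each $B_n$ to a pre-ball with open interior of barely larger $\mu$-mass — and it can only be carried out cleanly once $\mu$ is already known to be finitely additive, hence monotone and finitely subadditive, on the ring generated by $\mathcal{B}$.
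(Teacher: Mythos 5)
Your nonnegativity argument has one step that does not hold as written. You induct on $\#C$ and, after grouping the elements of $C$ according to the connected components of $\T_x\setminus\{x\}$, invoke the induction hypothesis for each block $C_j$ viewed as a pre-cutset of $\T_{w_j}$, where $w_j=\bigwedge_{y\in C_j}y$. But nothing prevents all of $C$ from lying in a single component of $\T_x\setminus\{x\}$ (this happens whenever the first branch point above $x$ is strictly above $x$); in that case there is a single block $C_1=C$ with $\#C_1=\#C$, and the induction hypothesis does not apply --- the step is circular as stated. The repair is exactly the paper's ordering of the same ideas: first replace $x$ by the greatest common ancestor $w=\bigwedge_{y\in C}y$, noting $w\geq x$ so $m(w)\leq m(x)$ by monotonicity. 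Since $\#C\geq2$ and the elements of $C$ are pairwise incomparable, $w\notin C$, and the elements of $C$ cannot all lie in one component of $\T_w\setminus\{w\}$ (their common ancestor would then be strictly above $w$); hence every block of the grouping relative to $w$ has strictly fewer than $\#C$ elements, and your term-by-term right-limit bound together with $m(w)\geq m(w^+)$ finishes the induction step.

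Apart from this, your proof is correct and is essentially the paper's argument: positivity from $m\geq m(\cdot^+)$ via greatest common ancestors, finite additivity by reduction to partitions of full subtrees $\T_x$ and induction on the number of pieces, the easy half of $\sigma$-additivity from finite additivity (via the generated ring), and the hard half from a left-continuity enlargement of each piece plus compactness. Your one genuine streamlining is to reduce the compactness step to partitions of $\T_x$, which is already compact; the paper instead works with a general pre-ball, whose closure contains the outer cutset points, and must therefore also shrink that cutset to points $x_i(\epsilon)$ --- your reduction makes this extra approximation unnecessary. Your parenthetical treatment of the case $x_n=\rho$ is more delicate than needed: a pre-ball rooted at $\rho$ is already open (the subtrees $\T_y$ being closed), which is how the paper handles it.
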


\begin{proof} Let us first prove the positivity of $\mu$. This can be done by induction on the number of elements $k$ in the pre-cutset $C=\{x_i,\; i\in[k]\}$ of $\T_x$. If $k=0$ then there is nothing to do, $\mu(B(x,\emptyset))=m(x)\geq0$ by definition. Now assume $k\geq 1$ and that the positivity has been proved for $k-1$. Let $y$ be the greatest common ancestor of all the $(x_i)_{i\in[k]}$, we have $x\leq y$, and thus $m(x)\geq m(y)$, and it will suffice to prove $m(y)-\sum_{i=1}^k m(x_i)\geq0$. The set $T_y\setminus\{y\}$ has a finite, but strictly greater than $1$ number of connected components which contain the points $(x_i)_{i\in[k]}$, let us call them $C_1,\ldots,C_l$, with $1\leq l\leq k.$ Since every $C_l$ contains at most $l-1\leq k-1$ elements from the $(x_i)_{i\in[k]}$, one can use the induction hypothesis in every $C_j$: for all $j$, let $y_j \in C_j$ be such that, for all $i$ such that $x_i\in C_j$, $y_j\leq x_i$, then we have $m(y_j)\geq \underset{i:\,x_i\in C_j}\sum m(x_i)$. Now, by letting every $y_j$ converge to $y$, we end up with
	\[m(y)\geq m(y^+) \geq \sum_j \lim_{y_j\to y^+} m(y_j) \geq \sum_i{m(x_i)}
\]
which ends the proof of the positivity of $\mu$.

\smallskip
The proof that $\mu$ is $\sigma$-additive on $\mathcal{D}$ will be done in three steps. First, we will prove that it is finitely additive, i.e. that, if a pre-ball can be written as a finite disjoint union of pre-balls, then the $\mu$-masses add up properly. Next, we will prove that it is finitely subadditive, which means that if a pre-ball $B$ can be written as a subset of the finite union of other pre-balls $B_1,\ldots,B_n$, we have $\mu(B)\leq \sum_i \mu(B_i)$. The $\sigma$-additivity itself will then be proved  by proving both inequalities separately. 

First, we want to show that $\mu$ is finitely additive, i.e. that if a pre-ball $B=B\big(x,(x_i)_{i\in[k]}\big)$ can be written as the disjoint union of pre-balls $B^j=B\big(x^j,(x_i^j)_{i\in[k^j]}\big)$ for $1\leq j\leq n$, we have $\mu(B)=\sum_j \mu(B^j)$. Note that since $\mathcal{D}$ is not stable under union, one cannot simply prove this for $n=2$ and then do a simple induction. We will indeed do an induction on $n$, but it will be a bit more involved. The initial case, $n=1$ is immediate. Now assume that $n\geq 2$ and that, for every pre-ball which can be written as the disjoint union of fewer than $n-1$ pre-balls, the masses add up, and let $B=B\big(x,(x_i)_{i\in[k]}\big)$ be a pre-ball which is the union of $B^j=B\big(x^j,(x_i^j)_{i\in[k^j]}\big)$ for $1\leq j\leq n$. We are first going to show that we can restrict ourselves to the case where $B=\T_x$. To do this, first notice that, since the union is disjoint, for every $i$ with $1\leq i \leq k$, there is only one $j$, which we will call $j(i)$, such that $x_i$ is in the set $\{x_p^j,\; p\in[k^j]\}$. Thus, if we add $T_{x_{i}}$ to the pre-ball $B^{j(i)}$ and do this for all $i$, the result is that $T_x$ (which is none other than $B \cup \underset{1\leq i\leq k}\cup T_{x_i}$) is written as the disjoint union of pre-balls $A^j=B^j \cup \underset{i:j(i)=j}\cup T_{x_i}$. Since $\mu(T_x)=\mu(B)+\sum_{i=1}^k m(x_i)$ and, for all $j$, $\mu(A^j)=\mu(B_j)+ \underset{i:j(i)=j}\sum m(x_i)$, it suffices consider the case when $B=\T_x$. By reordering, one can also assume that $x^1=x$. Now, for every $i$ with $1\leq i\leq k^1$, consider the pre-balls $B^j$ with $j$ such that $x^1_i\leq x_j$. These are disjoint, and their union is none other than $\T_{x^1_i}$, and they are strictly less than $n$ in number. The induction hypothesis then tells us that $\mu(T_{x^1_l})$ is the sum of $\mu(B^j)$ for such $j$. Repeat this for all $i$, and we get $\sum_{j=2}^{n} \mu(B^j)=\sum_{i=1}^{k^1} \mu(\T_{x^1_i})=\mu(\T_x)-\mu(B^1)$, which is what we wanted.

Now we go on to $\mu$'s finite subadditivity. This can actually be proven with pure measure theory. Let $B$ be a pre-ball and $B_1,\ldots,B_n$ be pre-balls such that $B\subset \underset{i\in[n]}\cup B_i$. Let us first start with the case where $n=1$, in other words, let us show that $\mu$ is nondecreasing: since $\mathcal{D}$ is a semi-ring, $B_1\setminus B$ can be rewritten as a finite disjoint of pre-balls $C_1,\ldots,C_k$, and by finite additivity, we have $\mu(B_1)=\mu(B)+\sum_j \mu(C_j) \geq \mu(B)$. Now, going back to the general case, one can assume that for every $i$, we have $B_i\subset B$, because if it is not the case, one can replace $B_i$ by $B_i\cap B$. Now, consider the sequence $C_i$ defined by $C_1=B_1$ and, for $i\geq 2$, $C_i=B_i\setminus (B_1\cup B_2\ldots\cup B_{i-1})$. Since $\mathcal{D}$ is a semi-ring, every $B_i$ can be written as the disjoint union of a finiteamount of pre-balls: for every $i$, there exists disjoint pre-balls $D_1(i),\ldots,D_{k(i)}(i)$ such that $C_i=\underset{j=1}{\overset{k(i)}\cup} D_j(i)$. By finite additivity, we then have $\mu(B)=\sum_{i=1}^{n}\sum_{j=1}^{k(i)} \mu(D_j(i))$. Now all that is left to do is show that, for all $i$, we have $\underset{j=1}{\overset{k(i)}\sum} \mu(D_j(i)) \leq \mu(B_i)$, which is immediate because $B_i\setminus (\underset{j=1}{\overset{k(i)}\cup} D_j(i))$ is a disjoint finite union of pre-balls.

Finally, we can move on to $\mu$'s $\sigma$-additivity . Assume that a pre-ball $B=B\big(x,(x_i)_{i\in[k]}\big)$ can be written as the disjoint union of pre-balls $B^j=B\big(x^j,(x_i^j)_{i\in[k^j]}\big)$ for $j\in\N$. Let us first prove the easy inequality $\mu(B)\geq \sum_i \mu(B_i)$. Fix $n\in\N$, since $\mathcal{B}$ is a semi-ring, the set $B\setminus(\underset{1\leq i\leq n} \cup B_i)$ is a finite disjoint union of pre-balls, which we will call $C_1,\ldots,C_k$. By finite additivity, we have $\mu(B)=\sum_{i=1}^n \mu(B_i)+\sum_{j=1}^{k}\mu(C_j)\geq \sum_{i=1}^n \mu(B_i)$, and we just need to take the limit. To prove the reverse inequality, we will slightly modify our sets so that we can get a open cover of a compact set, and bring ourselves back to the finite case. Let $\epsilon>0$. For every $j$ such that $x^j\neq\rho$ (and $\epsilon$ small enough), let $x^j(\epsilon)$ be an ancestor of $x^j$ such that $m(x^j(\epsilon))-m(x^j)\leq\epsilon 2^{-j-1}$, and if $x_j=\rho$ we keep $x^j(\epsilon)=\rho$. In the same vein, for $1\leq i \leq k$, we choose an ancestor $x_i(\epsilon)$ such that $m(x_i(\epsilon))-m(x_i)\leq \frac{1}{k}$, and such that $(x_i(\epsilon))_{i\in[k]}$ is still a pre-cutset of $\T_x$. Now consider, for every $j$, the open set $D^j$ which is equal to $B\big(x^j(\epsilon),(x_i^j)_{i\in[k^j]}\big)\setminus\{x^j(\epsilon)\}$ if $x^j\neq\rho$, and equal to $B^j$ otherwise. These form a cover of $B\big(x,(x_i(\epsilon))_{i\in[k]}\big)$ and therefore also cover its closure, $B\big(x,(x_i(\epsilon))_{i\in[k]}\big)\cup \underset{1\leq i\leq k}\bigcup \{x_i(\epsilon)\}$. Since $\T$ is compact, $B\big(x,(x_i(\epsilon))_{i\in[k]}\big)$ can be covered by a finite amount of the $D^j$, which we can assume are $D^1,\ldots,D^n$. We can then use finite subadditivity:

\[
 \begin{array}{ll}  \mu(B) =m(x) - \displaystyle\sum_{i=1}^k m(x_i)  &\leq m(x) - \displaystyle\sum_{i=1}^k m(x_i(\epsilon)) + \epsilon \\
       \leq \mu(B(x,(x_i(\epsilon))_{i\in[k]})) + \epsilon  &\leq \displaystyle\sum_{j=1}^n \mu(D^j) +\epsilon \\
       \leq \;\;\;\;\;\displaystyle\sum_{j=1}^\infty \mu(D^j) + \epsilon  &\leq \displaystyle\sum_{j=1}^\infty \left(\mu(B^j)+\epsilon2^{-j-1}\right) + \epsilon \\
       \leq \;\;\;\;\; \displaystyle\sum_{j=1}^\infty \mu(B^j) + 2\epsilon. 
\end{array}
\]
This gives us our final inequality.

\end{proof}

Theorem 3.2.4 of \cite{Dud} ends the proof of Proposition \ref{01}.

\section{Possibly infinite Galton-Watson processes}
The purpose of this section is to extend the most basic results from the theory of discrete time Galton-Watson processes to the case where one parent may have an infinite amount of children. We refer to \cite{Ha} for the classical results. Let $Z$ be a random variable taking values in $\mathbb{N}\cup\{0\}\cup\{\infty\}$ with $P(Z\geq1)\neq 1$, and $(Z_n^i)_{i,n\in\mathbb{N}}$ be independent copies of $Z$. Let also, for $x\geq0$, $F(x)=E[x^Z]$. We define the process $(X_n)_{n\in\mathbb{N}}$ by $X_1=1$ and, for all $n$, $X_{n+1}=\sum_{i=1}^{X_n} Z_n^i$. 

\begin{prop} The following are all true:

(i) Almost surely, $X$ either hits $0$ in finite time or tends to infinity.

(ii) If $X$ hits the infinite value once, then it stays there almost surely.

(iii) If $E[Z]>1$ then the function $F$ has two fixed points on $[0,1]$: one is the probability of extinction $q$, and the other is $1$. If $E[Z]\leq 1$ then $q=1$ and F only has one fixed point.

\end{prop}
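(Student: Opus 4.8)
The plan is to regard $(X_n)_{n\ge1}$ as a time--homogeneous Markov chain on $\mathbb{N}\cup\{0,\infty\}$ with $0$ and $\infty$ absorbing, and to reprove the three items by adapting the classical arguments of \cite{Ha}, tracking only where the value $\infty$ enters; I will use the hypothesis $P(Z\ge1)\neq1$ throughout in the form $p_0:=P(Z=0)>0$. Item $(ii)$ is the quickest: if $X_n=\infty$ then $X_{n+1}=\sum_{i\ge1}Z_n^i$, and (the case $Z=0$ a.s.\ being trivial, since then $X$ never reaches $\infty$) the events $\{Z_n^i\ge1\}_{i\ge1}$ are i.i.d.\ with positive probability, so infinitely many of them hold a.s.\ and $X_{n+1}=\infty$ a.s.; an induction on $n$ then gives the claim.

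For $(i)$, set $T=\inf\{n:X_n=0\}$. By $(ii)$ it suffices to show that $X_n\to\infty$ on the event $\{T=\infty\}$, where $1\le X_n<\infty$ for all $n$ (if $X$ reaches $\infty$ it stays there and trivially tends to $\infty$). If this failed, $\ell:=\liminf_n X_n$ would be finite and $X_n=\ell$ for infinitely many $n$; but each visit of $X$ to a finite state $k\ge1$ is followed by the state $0$ with conditional probability at least $p_0^{\,k}>0$ independently of the past, so the conditional Borel--Cantelli lemma applied along the successive visits to $\ell$ forces $\{X_n=\ell\ \text{i.o.}\}\cap\{T=\infty\}$ to be null --- a contradiction.

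For $(iii)$, set $F(x)=E[x^Z]$ on $[0,1]$ with the conventions $x^\infty=0$ for $x<1$ and $1^\infty=1$, so that $F(1)=1$ and $1$ is always a fixed point, while on $[0,1)$ the series $F(x)=\sum_{k\ge0}P(Z=k)x^k$ is continuous, nondecreasing and convex, with $\lim_{x\to1^-}F(x)=1-P(Z=\infty)$; when $P(Z=\infty)=0$ it extends continuously to $[0,1]$ with left derivative $F'(1^-)=E[Z]$ at $1$. The branching property gives that $q_n:=P(X_n=0)$ satisfies $q_1=0$, $q_{n+1}=F(q_n)$, so $(q_n)$ is nondecreasing and increases to the extinction probability $q$. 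I would then study $g:=F-\mathrm{id}$ on $[0,1)$, which is convex with $g(0)=p_0\ge0$. If $E[Z]\le1$, then $P(Z=\infty)=0$ and $g'(x)\le g'(1^-)=E[Z]-1\le0$, so $g$ is nonincreasing, hence $g\ge0$ on $[0,1]$; moreover $g$ cannot vanish on a nondegenerate subinterval (that would make $F$ coincide with the identity there, hence everywhere by power--series uniqueness, forcing $P(Z=1)=1$ against $p_0>0$), so $g>0$ on $[0,1)$, $1$ is the only fixed point, and passing to the limit in $q_{n+1}=F(q_n)$ gives $q=1$. If $E[Z]>1$, then $g$ is negative just to the left of $1$ --- either because $g(1^-)=-P(Z=\infty)<0$, or because $g(1^-)=0$ with $g'(1^-)=E[Z]-1>0$; being convex and $\le0$ near $1$, $g$ then has exactly one zero $q^*\in[0,1)$, with $g>0$ on $[0,q^*)$ and $g<0$ on $(q^*,1)$, so $F$ has exactly the two fixed points $q^*$ and $1$; since $q_1=0\le q^*$ with $F$ nondecreasing and $F(q^*)=q^*$, an induction gives $q_n\le q^*$, whence $q=\lim q_n=q^*$ by continuity.

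The step I expect to be the main obstacle is this bookkeeping at $x=1$ in $(iii)$: when $P(Z=\infty)>0$ the generating function $F$ is discontinuous there and $E[Z]=+\infty$, so the familiar tangent--line picture at $1$ must be replaced by the elementary convexity argument above, and some care is needed to be sure that $\{g\le0\}$ contributes a single zero of $g$; everything else is a routine transcription of the finite--offspring case.
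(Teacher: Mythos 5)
Your proof is correct, and for items $(i)$ and $(ii)$ it is essentially the argument the paper has in mind (the paper handles $(ii)$ by the law of large numbers and refers $(i)$ to the classical transience-of-finite-states argument, which is exactly your conditional Borel--Cantelli step with the bound $p_0^{\,\ell}$). For $(iii)$, however, you take a genuinely different route. The paper first establishes $q<1$ in the case $P(Z=\infty)>0$ by truncation and domination: it picks $k$ with $E[\min(Z,k)]>1$ and observes that $X$ dominates the classical supercritical Galton--Watson process with offspring $\min(Z,k)$, and then it simply cites the classical fixed-point analysis for the remaining claims. You instead argue entirely inside the generating-function picture: you analyse $g=F-\mathrm{id}$ on $[0,1)$ by convexity, using the deficit $F(1^-)=1-P(Z=\infty)$ when there is an atom at infinity (and the slope $E[Z]-1$ when there is not), and you identify $q$ with the unique zero $q^*\in[0,1)$ of $g$ via the recursion $q_{n+1}=F(q_n)$, the induction bound $q_n\le q^*$ and continuity of $F$ on $[0,1)$. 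Your version is self-contained (no coupling with a truncated process) and has the additional merit of making explicit the convention $F(1)=1$, i.e.\ $1^\infty=1$, without which $1$ would not be a fixed point when $P(Z=\infty)>0$ --- a point the paper leaves implicit; the paper's truncation/domination argument is the more economical choice in context, since it settles $q<1$ in one line by outsourcing to the classical theory and the same truncation device is reused elsewhere in the paper (Section 6.2.3). Both arguments are sound.
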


\begin{proof} The proof of $(i)$ is the same proof as in the classical case. For $(ii)$, it is only a matter of seeing that, if we have $X_n=\infty$ for some $n$, then $P(Z=0)\neq1$ and $E[Z]>0$, thus $X_{n+1}$ is infinite by the law of large numbers. For $(iii)$, in the case where $P(Z=\infty)\neq0$, we first show that $q\neq1$ by taking an integer $k$ such that $E[\min(Z,k)]>1$, and noticing that $X$ dominates the classical Galton-Watson process where we have replaced, for all $n$ and $i$, $Z_n^i$ by $\min(Z_n^i,k)$, which is supercritical and thus has an extinction probability which is different from $1$. Then, the fact that $q$ is a fixed point of $F$ and that $F$ has at most two fixed points on $[0,1]$ are proved the same way as in the classical case.
\end{proof}

\end{appendices}
\bibliographystyle{ieeetr}

\bibliography{bibnotes}

\end{document}